\documentclass[12pt]{amsart}

\usepackage{calligra,mathrsfs}
\usepackage[all]{xy}
\usepackage{float, comment}
\usepackage{mathtools}
\usepackage{amsmath}
\usepackage{amsthm}
\usepackage{amssymb}
\usepackage{amsbsy}
\usepackage{amstext}
\usepackage{amsopn}
\usepackage[mathscr]{eucal}
\usepackage{enumerate}
\usepackage{xcolor}
\usepackage{graphicx} 
\usepackage{scalerel}
\usepackage{microtype} 
\usepackage[margin=1in,marginparwidth=0.8in, marginparsep=0.1in]{geometry}
\usepackage[bookmarks=true, bookmarksopen=true, bookmarksdepth=3,bookmarksopenlevel=2, colorlinks=true, linkcolor=blue, citecolor=blue, filecolor=blue, menucolor=blue, urlcolor=blue]{hyperref}
\usepackage{tikz}
\usepackage{bbm}
\usepackage[all]{xy}
\usepackage{xspace}
\usetikzlibrary{3d,arrows,calc,positioning,decorations.pathreplacing,matrix} 

\tikzset{dbl/.style={double,
		double distance=6.0,
		-implies,
		shorten >=10pt,
		shorten <=10pt}}

\DeclareMathOperator{\cHom}{\mathscr{H}\text{\kern -3pt {\calligra\large om}}\,}

\numberwithin{equation}{section}
\newtheorem{Theorem}[equation]{Theorem}
\newtheorem{Proposition}[equation]{Proposition} 
\newtheorem{Lemma}[equation]{Lemma}

\newtheorem{Corollary}[equation]{Corollary}
\newtheorem{Conjecture}[equation]{Conjecture}

\theoremstyle{definition}
\newtheorem{Remark}[equation]{Remark}

\newtheorem{Definition}[equation]{Definition}
\newtheorem{Construction}[equation]{Construction} 

\numberwithin{figure}{section}

\def\hz{\widehat{z}}

\def\H{{\widehat{G}}}
\def\cl{{\mathrm{cl}}}

\def\td{{d}}
\def\rd{{\widetilde{d}}}

\def\tm{{\widetilde{m}}}

\def\Spec{{\rm{Spec}}\,}

\def\ux{{\underline{x}}}

\def\la{\langle}
\def\ra{\rangle}

\def\l{\lambda}

\newcommand{\ttimes}{\mathbin{\widetilde{\times}}}
\newcommand{\tbox}{\mathbin{\widetilde{\boxtimes}}}

\def\O{\mathcal{O}}
\def\K{\mathcal{K}}

\newcommand{\fg}{\mathfrak{g}}

\newcommand{\A}{\mathbb{A}}
\newcommand{\C}{\mathbb{C}}
\newcommand{\D}{\mathbb{D}}

\renewcommand{\P}{\mathbb{P}}

\newcommand{\Z}{\mathbb{Z}}

\newcommand{\cE}{\mathcal{E}}
\newcommand{\cF}{\mathcal{F}}
\newcommand{\cG}{\mathcal{G}}
\newcommand{\cH}{\mathcal{H}}
\newcommand{\cI}{\mathcal{I}}
\newcommand{\cK}{\mathcal{K}}

\newcommand{\cM}{\mathcal{M}}
\newcommand{\cN}{\mathcal{N}}
\newcommand{\cO}{\mathcal{O}}
\newcommand{\cP}{\mathcal{P}}
\newcommand{\cQ}{\mathcal{Q}}

\newcommand{\cT}{\mathcal{T}}

\newcommand{\cV}{\mathcal{V}}
\newcommand{\cW}{\mathcal{W}}

\newcommand{\stable}{stable\xspace}

\newcommand{\tcF}{\widetilde{\mathcal{F}}}

\newcommand{\Catinfty}{\mathrm{Cat}_{\infty}}

\newcommand{\catC}{{\mathscr{C}}}
\newcommand{\catD}{{\mathscr{D}}}

\newcommand{\Kzl}{K}
\newcommand{\hs}{\heartsuit}

\newcommand{\al}{\alpha}
\newcommand{\be}{\beta}

\newcommand{\La}{\Lambda}

\newcommand{\ot}{\otimes}
\newcommand{\wt}{\widetilde}


\newcommand{\convspace}{\mathcal{S}}
\newcommand{\conv}{{\mathbin{\scalebox{1.1}{$\mspace{1.5mu}*\mspace{1.5mu}$}}}}

\newcommand{\into}{\hookrightarrow}
\newcommand{\onto}{\twoheadrightarrow}

\newcommand{\id}{{id}}

\newcommand{\N}{{N}}

\newcommand{\T}{\mathcal{T}}

\newcommand{\Gr}{\mathrm{Gr}}

\newcommand{\hGO}{{\widehat{G}_{\cO}}}
\newcommand{\hGOX}{{\widehat{G}_{\cO,X^I}}}

\newcommand{\hGK}{\widehat{G}_{\cK}}
\newcommand{\NO}{N_{\cO}}
\newcommand{\NK}{N_{\cK}}
\newcommand{\tpr}{u}

\newcommand{\hR}{{\mathcal{R}}}

\newcommand{\QCoh}{\mathrm{QCoh}}
\newcommand{\Coh}{\mathrm{Coh}}
\newcommand{\IndCoh}{\mathrm{IndCoh}}
\newcommand{\IC}{\mathrm{IC}}

\newcommand{\Rep}{\mathrm{Rep}}

\newcommand{\Sym}{\mathrm{Sym}}

\newcommand{\Maps}{\mathrm{Maps}}
\newcommand{\Map}{\mathrm{Map}}

\newcommand{\indGStkk}{\mathrm{indGStk}}

\newcommand{\indGStkkadtm}{\mathrm{indGStk}^{tm,ad}}

\newcommand{\Ind}{\mathrm{Ind}}
\newcommand{\CAlg}{\mathrm{CAlg}}
\newcommand{\Alg}{\mathrm{Alg}}

\newcommand{\PrL}{\mathcal{P}\mathrm{r}^{\mathrm{L}}}

\newcommand{\Corr}{\mathrm{Corr}}

\newcommand{\pt}{\mathrm{pt}}

\newcommand{\li}{1}
\newcommand{\mi}{2}
\newcommand{\ri}{3}

\newcommand{\curv}{X}

\newcommand{\loops}[2]{{#1}_{\cK,\curv^{#2}}}
\newcommand{\jets}[2]{{#1}_{\cO,\curv^{#2}}}

\newcommand{\KPcohGN}{{\mathcal{KP}}_{G,N}}
\newcommand{\KPcohGNeta}{{\mathcal{KP}}^\eta_{G,N}}

\newcommand{\KPGLn}{{\mathcal{KP}_n}}
\newcommand{\KPGLtwo}{{\mathcal{KP}_2}}
\newcommand{\TGN}{{{T}}_{G,N}}
\newcommand{\LGN}{L_{G,N}}

\newcommand{\KP}[2]{{\mathcal{KP}}_{#1,#2}}

\newcommand{\Theory}[1]{{{T}}_{#1}}
\newcommand{\CB}[1]{\cM_{#1}}

\newcommand{\StkC}{\mathrm{Stk}_\C}

\newcommand{\CAlgC}{\mathrm{CAlg}_\C}

\newcommand{\tmdefreasalg}{Def. 3.2}
\newcommand{\tmsubsecgeomtame}{Sec. 4.2}
\newcommand{\tmdefreasaffmorph}{Def. 4.6}
\newcommand{\tmproptamequotients}{Prop. 4.11}
\newcommand{\tmdefgeomcohpull}{Def. 4.12}
\newcommand{\tmdefweaklysmooth}{Def. 4.16}
\newcommand{\tmpropwprosmoothcohdiagb}{Prop. 4.18}
\newcommand{\tmsecindtamemorphisms}{Sec. 5.2}
\newcommand{\tmdefindP}{Def. 5.8}
\newcommand{\tmproptamefiltcolims}{Prop. 5.10}
\newcommand{\tmdefindgeomcohpull}{Def. 5.12}
\newcommand{\tmpropindPbaseprops}{Prop. 5.15}
\newcommand{\tmdefcohonindgstks}{Def. 5.19}
\newcommand{\tmpropupshriekupstargenICcaseIG}{Prop. 6.17}

\newcommand{\tmsecindextcohcase}{Sec. 7.1}
\newcommand{\tmpropeFXRupperstarcohpullindcase}{Prop. 7.16}

\newcommand{\igdefindgeomstack}{Def. 4.1}

\newcommand{\igpropindPtwoofthreeprops}{Prop. 4.14}

\newcommand{\igpropupshrieklowstargeom}{Prop. 6.8}

\newcommand{\igpropeFXfunctoriality}{Prop. 7.26}
\newcommand{\igpropeFXuppershriekcompatindgeom}{Prop. 7.35}

\newcommand{\igpropeFXRlowerstar}{Prop. 7.48}

\newcommand{\igpropeFXReFXindgeomcoh}{Prop. 7.54}

\DeclareFontFamily{U}{mathx}{\hyphenchar\font45}
\DeclareFontShape{U}{mathx}{m}{n}{
	<5> <6> <7> <8> <9> <10>
	<10.95> <12> <14.4> <17.28> <20.74> <24.88>
	mathx10
}{}
\DeclareSymbolFont{mathx}{U}{mathx}{m}{n}
\DeclareFontSubstitution{U}{mathx}{m}{n}
\DeclareMathAccent{\widecheck}{0}{mathx}{"71}

\DeclareMathSymbol{\shortminus}{\mathbin}{AMSa}{"39}
\DeclareMathSymbol{-}{\mathbin}{AMSa}{"39}

\DeclareRobustCommand{\SkipTocEntry}[5]{}

\DeclareMathOperator*{\colim}{colim}
\DeclareMathOperator{\Hom}{Hom}

\newcommand{\rmat}[1]{\mathbf{r}_{#1}}

\newcommand{\congto}{\xrightarrow{\sim}}

\newcommand{\arrtip}{latex'}

\begin{document}
\title{Canonical bases for Coulomb branches of 4d $\cN=2$ gauge theories}

\author[Sabin Cautis]{Sabin Cautis}
\address[Sabin Cautis]{University of British Columbia \\ Vancouver BC, Canada}
\email{cautis@math.ubc.ca}

\author[Harold Williams]{Harold Williams}
\address[Harold Williams]{University of California \\ Davis CA, USA}
\email{hwilliams@math.ucdavis.edu}

\begin{abstract}
We construct and study a nonstandard t-structure on the derived category of equivariant coherent sheaves on the Braverman-Finkelberg-Nakajima space of triples $\hR_{G,N}$, where $\N$ is a representation of a reductive group $G$. Its heart $\KPcohGN$ is a finite-length, rigid, monoidal abelian category with renormalized $r$-matrices. We refer to objects of $\KPcohGN$ as Koszul-perverse coherent sheaves. Simple objects of $\KPcohGN$ define a canonical basis in the quantized $K$-theoretic Coulomb branch of the associated gauge theory. These simples possess various characteristic properties of Wilson-'t Hooft lines, and we interpret our construction as an algebro-geometric definition of the category of half-BPS line defects in a 4d $\mathcal{N}=2$ gauge theory of cotangent type. 


\end{abstract}

\maketitle

\setcounter{tocdepth}{1}

\tableofcontents

\section{Introduction}\label{sec:intro}

\thispagestyle{empty}

Let $G$ be a complex reductive group and $N$ a finite-dimensional representation of $G$. The purpose of this paper is to associate to this data a tensor category $\KPcohGN$. We refer to the objects of $\KPcohGN$ as Koszul-perverse coherent sheaves. Our main motivations for introducing and studying these categories are as follows. 
\begin{enumerate}
	\item They define canonical bases for quantized $K$-theoretic Coulomb branches, objects which unify and generalize a range examples of independent interest in geometry and Lie theory. 	
	\item They provide a new class of tensor categories with various features, such as renormalized $r$-matrices and monoidal cluster structures, which are of broader interest in representation theory.
	\item They provide a mathematically precise definition of the category of half-BPS line defects in a 4d $\mathcal{N}=2$ gauge theory of cotangent type. 
\end{enumerate}

The definition of $\KPcohGN$ involves an infinite-dimensional space $\hR_{G,N}$ introduced by Braverman-Finkelberg-Nakajima \cite{BFN}. This space is an intersection of infinite-rank vector bundles over the affine Grassmannian $\Gr_G$. It is essential for our purposes that this intersection be taken in the sense of derived algebraic geometry. The space $\hR_{G,N}$ and its classical locus $\hR_{G,N}^{\cl}$ have the same homology and $K$-theory, hence they are equivalent for the purposes of \cite{BFN}, but they do not have the same coherent sheaf theory. 

By construction, $\hR_{G,N}$ is equipped with a closed embedding
$$ i: \hR_{G,N} \to \Gr_G \times N_\cO, $$
where $\cO := \C[[t]]$ and $N_\O := N \otimes_\C \O$. We write $\sigma: \Gr_G \to \Gr_G \times N_\O$ for the inclusion of the zero section. These maps are equivariant for the natural action of $\hGO := (G_\cO \rtimes \C^\times) \times \C^\times$ on the spaces involved: here the inner $\C^\times$ acts by multiplication on the loop variable $t$, while the outer $\C^\times$ acts by scalar multiplication on $N$. We let $\Coh^{\hGO}(\hR_{G,N})$ denote the derived category of $\hGO$-equivariant coherent sheaves on $\hR_{G,N}$.

\begin{Theorem}[cf. Theorem \ref{thm:koszul-perverse}]\label{thm:introKPdef}
	There exists a unique t-structure on $\Coh^{\hGO}(\hR_{G,N})$ such that the composition
	$$ \Coh^{\hGO}(\hR_{G,N}) \xrightarrow{i_*} \Coh^{\hGO}(\Gr_G \times N_\cO) \xrightarrow{\sigma^*} \Coh^{\hGO}(\Gr_G) $$
	is t-exact with respect to the Koszul-perverse t-structure on $\Coh^{\hGO}(\Gr_G)$. We refer to this as the Koszul-perverse t-structure on $\Coh^{\hGO}(\hR_{G,N})$ and write $\KPcohGN$ for its heart. 
\end{Theorem}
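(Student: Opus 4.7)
The plan is to define the t-structure tautologically: declare $\cF \in \Coh^{\hGO}(\hR_{G,N})^{\leq 0}$ (resp.\ $^{\geq 0}$) iff $F(\cF) := \sigma^* i_* \cF$ lies in $\Coh^{\hGO}(\Gr_G)^{\leq 0}$ (resp.\ $^{\geq 0}$) for the Koszul-perverse t-structure. With this definition the t-exactness of $F$ is immediate, and uniqueness reduces to the standard fact that a conservative t-exact functor reflects the t-structure: for any other candidate satisfying the conclusion, its truncation triangle $\tau^{\leq 0}\cF \to \cF \to \tau^{>0}\cF$ maps under $F$ to the truncation triangle of $F(\cF)$ in $\cD := \Coh^{\hGO}(\Gr_G)$, whence conservativity of $F$ forces the candidate's truncations to coincide with ours.

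The main preliminary is thus conservativity of $F$. The outer $\C^\times$-factor of $\hGO$ scales $N$ with strictly positive weight, inducing a contracting $\Gm$-action on both $\Gr_G \times N_\cO$ and $\hR_{G,N}$ whose fixed locus is the zero section $\Gr_G$; under this identification $F$ coincides with restriction to the fixed locus. A standard derived Nakayama argument for $\hGO$-equivariant coherent sheaves on a scheme with a positively-weighted contracting $\Gm$-action then yields conservativity: nonzero support must specialize into the fixed locus under the contraction, and the fiber there is exactly what $F$ computes.

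For existence, the strategy is to realize $\Coh^{\hGO}(\hR_{G,N})$ as dg-modules over the algebra object $\cA := F(\cO_{\hR_{G,N}})$ in $\cD$, via a descent argument for the adjunction underlying $F$; since $i$ is a closed embedding, $\cA$ inherits a natural commutative dg-algebra structure. Under this equivalence $F$ becomes the forgetful functor and the proposed subcategories become $\cA$-modules whose underlying $\cD$-object lies in $\cD^{\leq 0}$ (resp.\ $^{\geq 0}$). Constructing truncation triangles then reduces to truncating in $\cD$ and checking that the truncation inherits a canonical $\cA$-action; the Hom-vanishing between $^{\leq 0}$ and $^{\geq 1}$ follows from the corresponding vanishing in $\cD$ together with the bar spectral sequence computing $\Hom$s of $\cA$-modules. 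The main obstacle is this last compatibility of truncation with the $\cA$-action, which amounts to showing that $\cA$ is concentrated in the heart of the Koszul-perverse t-structure on $\cD$. This is the precise sense in which the Koszul shifts in the name are calibrated, and it is the step that forces the use of the derived enhancement $\hR_{G,N}$ rather than its classical locus $\hR_{G,N}^\cl$: only the derived structure supplies the Koszul degrees needed to place $\cA$ in the perverse heart.
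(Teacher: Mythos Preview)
Your high-level strategy---define the t-structure as the preimage under $F = \sigma^* i_*$ and reduce uniqueness to conservativity of $F$---is exactly right, and the paper does the same. The conservativity argument via the contracting $\Gm$-action is also morally correct, though the paper treats it more carefully since $\hR_{G,N}$ is only an ind-scheme.

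The genuine gap is in your existence argument. The proposed equivalence $\Coh^{\hGO}(\hR_{G,N}) \simeq \cA\text{-mod}$ in $\cD$, with $\cA := F(\cO_{\hR_{G,N}})$, does not hold. The functor $F = \sigma^* \circ i_*$ is a composite of a left adjoint and a right adjoint, so it has neither a left adjoint (needed for monadicity) nor good limit-preservation (needed for comonadicity). Concretely, on a finite model $Y = V_1 \times_W V_2$ over $X$, one computes $\sigma_1^* i_{1*}\cO_Y \cong \bigoplus_k \Lambda^k(W/V_2)^\vee[k]$, a shifted exterior algebra; its module category is Koszul dual to modules over a \emph{symmetric} algebra on $(W/V_2)[-2]$, which is not $\Coh^\H(Y)$. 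For example, take $V_1 = W$ and $V_2 = 0$: then $Y = X$ and $\Coh^\H(Y) = \Coh^\H(X)$, whereas $\cA = \bigoplus_k \Lambda^k W^\vee[k]$ has a strictly larger module category. So the descent step fails. (The paper does allude, in the paragraph after Theorem~\ref{thm:introKPdef}, to a genuine Koszul-duality equivalence with a Clifford algebra on $\Gr_G$, but this is a different algebra and is deferred to a companion paper.) A secondary issue is that $\cO_{\hR_{G,N}}$ is not a coherent sheaf on the ind-scheme $\hR_{G,N}$, so $\cA$ is not globally defined.

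The paper avoids this by \emph{not} seeking any module description. Instead it decomposes the problem into two steps, each governed by a clean adjunction: first lift the Koszul t-structure from $\Coh^\H(X)$ to $\Coh^\H(V_1)$ using $\pi^* \dashv \pi_*$ (checking $\pi_*\pi^*$ is left t-exact via the Koszul condition), then lift from $\Coh^\H(V_1)$ to $\Coh^\H(Y)$ using $i_{1*} \dashv i_1^!$ (checking $i_{1*}i_1^!$ is left t-exact). Both constructions take place in $\QCoh$, and the delicate part---where your ``truncation inherits an $\cA$-action'' step would have gone---is a separate argument that the resulting t-structure restricts to $\Coh$, using Noetherianity of the heart on $X$ in an essential way. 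The passage to the full ind-scheme $\hR_{G,N}$ is then a colimit argument over its finite-dimensional approximations.
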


The perverse t-structure on $\Coh^{\hGO}(\Gr_G)$ was introduced in \cite{AB10} and studied in \cite{BFM,CW1}. Its heart is referred to as the coherent Satake category. The Koszul-perverse t-structure on $\Coh^{\hGO}(\Gr_G)$ is its image under a simple regrading autoequivalence of $\Coh^{\hGO}(\Gr_G)$. 

By restriction, the Koszul-perverse t-structure on $\Coh^{\hGO}(\hR_{G,N})$ defines a t-structure on $\Coh^{\C^\times}\!(N_\O)$, the derived category of finitely generated graded modules over the symmetric algebra $\Sym^\bullet (N_\cO^\vee)$. This  restricted t-structure is familiar from the theory of Koszul duality \cite{BGS96}. 
It is distinguished by the fact that tensor products are t-exact. This is a special case of the t-exactness of convolution products implicit in Theorem \ref{thm:introstructuralsummary} below. Our terminology reflects the fact that the Koszul-perverse t-structure combines this ``Koszul'' t-structure with the perverse t-structure on $\Coh^{\hGO}(\Gr_G)$. In \cite{CW3} we make this more explicit, showing that a variant of linear Koszul duality \cite{MR10} relates $\hR_{G,\N}$ with a sheaf of infinite-dimensional Clifford algebras on $\Gr_G$, and that the objects of $\KPcohGN$ are the coherent sheaves which become perverse under the associated duality. 

Our main structural results about $\KPcohGN$ can be summarized as follows.

\begin{Theorem}[cf. Theorems \ref{thm:koszul-perverse}, \ref{thm:convsecmainthm}, \ref{thm:adjoints}, \ref{thm:rmatrix}]\label{thm:introstructuralsummary}
The category $\KPcohGN$ is a finite-length, rigid, monoidal abelian category with renormalized $r$-matrices. Its simple objects are, up to shifts, in bijection with pairs $(\l^\vee, \mu) \in (P^\vee \times P)/W$. 
\end{Theorem}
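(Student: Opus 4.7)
The plan is to establish the structural properties in the order: monoidality (i.e.\ t-exactness of convolution), rigidity, classification of simples and finite length, and finally renormalized $r$-matrices.

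\textbf{Monoidality.} The convolution product on $\Coh^{\hGO}(\hR_{G,N})$ is defined via the usual convolution diagram for $\Gr_G$ lifted to the BFN space. To show that $\KPcohGN$ is closed under convolution, I would use the defining property of the Koszul-perverse t-structure in Theorem \ref{thm:introKPdef}, which characterizes the heart via the t-exactness of $\sigma^* i_*$. This reduces t-exactness of convolution on $\hR_{G,N}$ to t-exactness of convolution on $\Coh^{\hGO}(\Gr_G)$ for the Koszul-perverse t-structure, which in turn follows from the corresponding fact for the coherent Satake category in \cite{CW1} together with the monoidality of the regrading autoequivalence intertwining the two t-structures. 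The reduction step requires a compatibility between the $\hR_{G,N}$- and $\Gr_G$-convolutions under $\sigma^* i_*$, provable by direct comparison of the convolution diagrams on both sides and using that the zero section $\sigma$ is a group-equivariant closed embedding.

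\textbf{Rigidity, simples, and finite length.} The stratification of $\Gr_G$ by $G_\cO$-orbits $\Gr_G^{\l^\vee}$, indexed by dominant coweights, pulls back to a stratification of $\hR_{G,N}$. Over a fixed orbit, the $\hGO$-equivariant coherent sheaf theory of $\hR_{G,N}$ reduces to representations of the stabilizer of $t^{\l^\vee}$ in $\hGO$; the two outer $\C^\times$-factors of $\hGO$ (loop rotation and scaling on $N$), combined with the character lattice of this stabilizer, produce simples which—after modding out by the integer shifts these $\C^\times$'s contribute—are labeled by a quotient of $P$ by the residual Weyl group $W_{\l^\vee}$. Assembling these local pictures across all $\l^\vee \in P^\vee/W$ yields the bijection with $(P^\vee \times P)/W$. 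Each such local simple extends uniquely to a global simple of $\KPcohGN$ by a Koszul-perverse IC-type construction, the Koszul half of the t-structure being what guarantees a unique minimal extension across the infinite-rank normal directions coming from $N_\cO$. Finite length then follows from noetherianity of the stratum-wise categories combined with support-finiteness of coherent sheaves. For rigidity, I would exhibit a contravariant monoidal involution on $\Coh^{\hGO}(\hR_{G,N})$ combining Grothendieck--Serre duality with the loop anti-involution $t \mapsto t^{-1}$, verify that it preserves the heart by the same reduction to the coherent Satake case, and then deduce two-sided duals for all objects from self-duality of the unit.

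\textbf{Renormalized $r$-matrices and main obstacle.} With monoidality and rigidity in hand, for simples $A, B$ the renormalized $r$-matrix is the (unique up to scalar) canonical nonzero morphism $A \conv B \to B \conv A$ extracted via the socle/cosocle construction of \cite{CW1}; its nontriviality reduces to an explicit calculation on the generic open stratum where $A \conv B$ and $B \conv A$ have isomorphic leading composition factor. I expect the main obstacle to be controlling the uniqueness and existence of Koszul-perverse IC-type extensions in the infinite-dimensional setting of $\hR_{G,N}$, where one must reconcile subtle derived intersection phenomena on the BFN side with the finite-dimensional perverse intuition inherited from $\Gr_G$; this is precisely the point at which working with the derived $\hR_{G,N}$ rather than its classical locus $\hR_{G,N}^{\cl}$ becomes essential, and where the Koszul grading ensures the necessary t-exactness and duality properties on the normal directions contributed by $N_\cO$.
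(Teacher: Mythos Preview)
Your approach to monoidality is essentially the paper's: reduce t-exactness of convolution on $\hR_{G,N}$ to that on $\Gr_G$ via the defining property of the t-structure. One subtlety you gloss over is that $\sigma^* i_*$ is \emph{not} monoidal (the paper notes this explicitly after the proof of Theorem \ref{thm:convsecmainthm}). The actual argument first pushes $\cF_1$ forward to $\T$, uses that simples in $\Coh^{\hGO}(\T)^\heartsuit_K$ are pullbacks from $\Gr$ (the infinite-rank analogue of Proposition \ref{prop:koszul2}), and then does induction on the length of $i_{2*}(\cF_1)$; only after this reduction does one get a clean comparison with convolution on $\Gr$.

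The genuine gap is in your construction of renormalized $r$-matrices. You propose to extract them ``via the socle/cosocle construction of \cite{CW1}'', but this inverts the logic: those socle/cosocle statements (as in Corollary \ref{cor:introLeclerc}) are \emph{consequences} of already having renormalized $r$-matrices, not a method for producing them. There is no a priori reason, in a rigid monoidal abelian category, for a canonical nonzero morphism $A \conv B \to B \conv A$ to exist between simples. The paper's construction (Section \ref{sec:factorization}) is geometric: one builds the factorizable family $\hR_{X^I}$ over $X^I$ (the BFN analogue of the Beilinson--Drinfeld Grassmannian), shows it carries compatible convolution and unital structures, and then deforms $\cF \conv \cG$ and $\cG \conv \cF$ over $\A^2$ so that both have generic fiber $\cF \boxtimes \cG$. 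The $r$-matrix is obtained by renormalizing the generic isomorphism by a power of $z_1 - z_2$ and specializing to the diagonal. This factorization input is essential and is entirely absent from your sketch.

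For rigidity, your idea of a contravariant monoidal involution built from Grothendieck--Serre duality and the loop anti-involution is in the right spirit, but ``deduce two-sided duals from self-duality of the unit'' is not a valid inference: the unit is self-dual in any monoidal category. The paper instead proves an abstract result (Proposition \ref{prop:cohrigidity}) that any convolution category $\Coh(Y \times_Z Y)$ satisfying the hypotheses (\ref{eq:convhypos}) is rigid, by showing directly via a chain of base-change isomorphisms in $\IndCoh$ that the right adjoint to $\lambda_\cF = \cF \conv -$ is again given by convolution with an explicit object. The formulas $\cF^R \cong \D_1(\cF)^* \cong \D_2(\cF^*)$ then drop out (Proposition \ref{prop:cohdualformula}).
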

 
Here $P$ and $P^\vee$ are the weight and coweight lattices of $G$ and the Weyl group $W$ acts diagonally. Like the coherent Satake category, $\KPcohGN$ is not semisimple nor symmetric monoidal in general. In the remainder of the introduction we elaborate on the main motivations listed above. We then discuss some further directions and illustrate our construction in the simple (but still illuminating) example of $(G,\N) = (GL_1,\C)$.

\subsection{Canonical bases for Coulomb branches}

Coulomb branches are geometric objects attached to certain supersymmetric quantum field theories. The case relevant to our construction is that of a 4d $\mathcal{N}=2$ theory on ${\mathbb{R}}^3 \times S^1$. In the case of a 4d $\cN=2$ gauge theory $\TGN$ of cotangent type --- a theory defined by the data $G$ and $N$ we have been discussing --- the Coulomb branch $\CB{G,N}$ is the spectrum of $K^{G_\O}(\hR_{G,N})$, the $G_\O$-equivariant algebraic $K$-theory of $\hR_{G,N}$, equipped with its convolution product \cite{Nak16, BFN19}. One refers to this incarnation of $\CB{G,N}$ as the $K$-theoretic Coulomb branch of $\TGN$, indicating that the construction does not specify the hyperk\"ahler metric but does specify an integral form. It has a canonical quantization defined by working equivariantly with respect to loop rotation. The following is an immediate consequence of Theorem \ref{thm:introstructuralsummary}. 

\begin{Corollary}\label{cor:introbasis}
	The classes of simple Koszul-perverse coherent sheaves define a canonical positive integral basis in the quantized $K$-theoretic Coulomb branch of $\Theory{G,N}$. 
\end{Corollary}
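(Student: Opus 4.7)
The plan is to deduce this directly from Theorem~\ref{thm:introstructuralsummary} by combining standard facts about Grothendieck groups of abelian hearts with the monoidal t-exactness implicit in the theorem.

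First, I would identify the quantized $K$-theoretic Coulomb branch with $K_0(\Coh^{\hGO}(\hR_{G,N}))$. By construction in \cite{BFN19}, the Coulomb branch is $K^{\hGO}(\hR_{G,N})$ equipped with its convolution product; the outer $\C^\times$ factor of $\hGO$ provides the quantization, and the associated loop-rotation character $q$ gives the $\Z[q^{\pm 1}]$-module structure. Since the convolution product in \cite{BFN19} is defined via derived pushforward and pullback along the BFN correspondence, it lifts naturally to the monoidal structure on $\Coh^{\hGO}(\hR_{G,N})$ appearing in Theorem~\ref{thm:introstructuralsummary}.

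Next, because $\KPcohGN$ is the heart of a bounded t-structure on $\Coh^{\hGO}(\hR_{G,N})$ and is finite-length, a standard argument (inductively resolving any object by its Koszul-perverse cohomology sheaves) shows that the inclusion induces an isomorphism $K_0(\KPcohGN) \xrightarrow{\sim} K_0(\Coh^{\hGO}(\hR_{G,N}))$, with classes of simple objects forming a $\Z$-basis of the former. Theorem~\ref{thm:introstructuralsummary} classifies simples up to grading shifts by $(P^\vee \times P)/W$, so their classes yield a $\Z[q^{\pm 1}]$-basis of the quantized Coulomb branch indexed by $(P^\vee \times P)/W$. Positivity is then automatic: if $L_1, L_2 \in \KPcohGN$ are simple, then by the monoidality part of Theorem~\ref{thm:introstructuralsummary} their convolution $L_1 \conv L_2$ again lies in the heart and, by finite length, admits a finite composition series by simples with shifts, forcing $[L_1]\cdot [L_2] = \sum_i n_i(q)\,[L_i]$ with each $n_i(q) \in \Z_{\geq 0}[q^{\pm 1}]$.

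The main obstacle, in my view, is the identification of $K_0$ of the derived category of coherent sheaves with the convolution $K$-theory of \cite{BFN, BFN19}, which by construction is insensitive to whether one uses $\hR_{G,N}$ or its classical locus $\hR_{G,N}^{\cl}$. The paper emphasizes that passing to the derived structure is essential at the coherent-sheaf level, so verifying that the convolution product inherited from $\Coh^{\hGO}(\hR_{G,N})$ agrees on $K_0$ with that of \cite{BFN19} requires some care. I would handle it by unwinding the derived pushforward/pullback formulas and comparing them with the classical construction, using the agreement of the two $K$-theories noted in the excerpt; once this comparison is in place, the corollary follows from the steps above with no further input beyond Theorem~\ref{thm:introstructuralsummary}.
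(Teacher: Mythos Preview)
Your proposal is correct and is exactly the standard unpacking of why the corollary is, as the paper states, ``an immediate consequence of Theorem~\ref{thm:introstructuralsummary}''; the paper provides no further argument. One small slip: it is the \emph{inner} $\C^\times$ in $\hGO = (G_\cO \rtimes \C^\times) \times \C^\times$ (loop rotation) that furnishes the quantization parameter, not the outer scaling $\C^\times$, though this does not affect the logic of your argument.
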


The significance of \cite{BFN} stems in part from the fact that many objects of independent interest in geometry and Lie theory can be recognized as (quantized) K-theoretic Coulomb branches. Examples include the spherical DAHA,  certain open Richardson varieties in affine flag manifolds, multiplicative Nakajima quiver varieties of type $A^{(1)}_n$, and certain wild character varieties of marked surfaces. Corollary \ref{cor:introbasis} thus provides a uniform construction of categorified canonical bases for these objects. In some cases, such as the spherical DAHA, this is the first such construction we are aware of. In general, one expects these bases to be close relatives of the dual canonical/upper global basis of Lusztig and Kashiwara (cf. \cite{FF21}).  

\subsection{Renormalized $r$-matrices and cluster categorification}

Structurally, $\KPcohGN$ is closely related to two other well-studied classes of tensor categories: finite-dimensional representations of affine quantum groups and of quiver Hecke (or KLR) algebras. The root of their similarities is that they are all equipped with renormalized $r$-matrices, a weak relative of a braided monoidal structure. Renormalized $r$-matrices play a fundamental role in the representation theory of quantum affine algebras \cite{Kas02}, and were constructed in the setting of quiver Hecke algebras in \cite{KKK13}. 

In the context of quantum affine algebras, renormalized $r$-matrices are constructed by renormalizing the dependence of the universal $r$-matrix on a spectral parameter. Their construction in the setting of $\KPcohGN$ replaces this spectral parameter with the global coordinate appearing in the extension of $\hR_{G,N}$ to a factorization space (see Section \ref{sec:factorization}). This global version of $\hR_{G,N}$ specializes to the Beilinson-Drinfeld Grassmannian \cite{BD} in the case $N = 0$, and was first considered in general in \cite{BFN19}. 

Renormalized $r$-matrices have many implications for the structure of a tensor category. For example, by results of \cite{KKKO15a, KKKO18} they imply the following. 

\begin{Corollary}[{cf. \cite[Thm. 5.15]{CW1}}]\label{cor:introLeclerc}
Let $\cF$, $\cG \in \KPcohGN$ be simple and suppose either $\cF \conv \cF$ or $\cG \conv \cG$ is simple. Then the head and socle of $\cF \conv \cG$ are simple and each occur only once as a factor in any composition series of $\cF \conv \cG$. Moreover,  if $[\cF]$ and $[\cG]$ $q$-commute in $K^{\hGO}(\hR_{G,N})$ then $\cF \conv \cG$ is simple and isomorphic to $\cG \conv \cF$ up to a loop shift. 
\end{Corollary}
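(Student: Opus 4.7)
The plan is to adapt the strategy of Kang--Kashiwara--Kim--Oh \cite{KKKO15a, KKKO18}, mirroring what was done for the coherent Satake case in \cite[Thm. 5.15]{CW1}. The essential input is the system of renormalized $r$-matrices on $\KPcohGN$ provided by Theorem \ref{thm:rmatrix}: for any pair of simple objects $\cF, \cG$ one has a canonical nonzero morphism $r_{\cF,\cG} \colon \cF \conv \cG \to \cG \conv \cF$, well-defined up to scalar and up to a loop shift $[k(\cF,\cG)]$. The key formal properties are that (i) $r_{\cF,\cG}$ is compatible with convolution in each slot (a hexagon-type relation), (ii) the loop shift $k$ is additive under convolution, and (iii) rigidity of $\KPcohGN$ (Theorem \ref{thm:adjoints}) yields the usual Hom--tensor adjunctions so that internal Homs between simples are controlled by $r$-matrices.

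First, I would show that $\hd(\cF \conv \cG)$ is simple and appears with multiplicity one under the reality assumption. The image of $r_{\cF,\cG}$ is a subquotient common to both $\cF \conv \cG$ and $\cG \conv \cF$. Any nonzero map from $\cF \conv \cG$ to a simple $\cS$ corresponds, via rigidity, to a nonzero map $\cF \to \cS \conv \cG^\vee$, and iterating one sees that such $\cS$ must be (a shift of) the image of $r_{\cF,\cG}$. The reality of $\cF$ (or symmetrically $\cG$) enters through the fact that $r_{\cF,\cF}$ must be an isomorphism up to loop shift; composing with $r_{\cF,\cG}$ then shows that a second copy of the simple head in a Jordan--H\"older filtration would force a contradiction with the simplicity of $\cF \conv \cF$. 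The socle statement follows by a dual argument applied to $\cF^\vee$ and $\cG^\vee$, or equivalently by running the same argument on the morphism $r_{\cG,\cF}$.

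Finally, I would derive the $q$-commutation case from the head/socle analysis. The hypothesis that $[\cF]$ and $[\cG]$ $q$-commute in $K^{\hGO}(\hR_{G,N})$ means that $[\cF \conv \cG]$ and $[\cG \conv \cF]$ agree up to a power of $q$, so the Jordan--H\"older content of $\cF \conv \cG$ and $\cG \conv \cF$ coincides up to a single loop shift. Combined with the first part, the head of $\cF \conv \cG$ and the socle of $\cG \conv \cF$ are the same simple and each occur with multiplicity one; the only way this is compatible with equal Jordan--H\"older content is that $\cF \conv \cG$ and $\cG \conv \cF$ are themselves simple, and the $r$-matrix $r_{\cF,\cG}$ is an isomorphism after the appropriate loop shift, yielding $\cF \conv \cG \cong \cG \conv \cF[k]$.

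The main obstacle is not the combinatorial/diagrammatic shuffle above, which is essentially a black-box application of \cite{KKKO15a, KKKO18}, but rather verifying that the renormalized $r$-matrices constructed on $\KPcohGN$ genuinely satisfy the formal properties (composition/hexagon compatibility, additivity of the loop shift under convolution, and compatibility with rigidity duals) required to run the KKKO machinery. In the coherent Satake setting of \cite{CW1} this was handled by leveraging the factorization structure on $\Gr_G$ and the nearby-cycles construction of $r$-matrices; here the analogous step requires running the same construction on the factorization version of $\hR_{G,N}$ discussed in Section \ref{sec:factorization}, which is where the technical substance of Theorem \ref{thm:rmatrix} lives. Once that is in hand, the corollary is a formal consequence.
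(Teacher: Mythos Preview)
Your proposal is correct and matches the paper's approach exactly: the paper does not give a standalone proof of this corollary but simply states it as a formal consequence of Theorem~\ref{thm:rmatrix} together with the results of \cite{KKKO15a, KKKO18}, pointing to \cite[Thm.~5.15]{CW1} for the parallel deduction in the coherent Satake case. Your sketch of how the KKKO argument runs is more detailed than what the paper provides, but the logical structure---renormalized $r$-matrices plus rigidity feed into the KKKO machinery---is precisely what the paper intends.
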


Informally, this says that the bases of Corollary \ref{cor:introbasis} satisfy a version of Leclerc's conjecture on the dual canonical bases of quantum unipotent coordinate rings \cite{Lec03}. 

A key insight of \cite{KKKO18} is that renormalized $r$-matrices provide a structural explanation for monoidal cluster categorifications. In \cite{HL10} and its successors \cite{HL13, Nak11, KQ14, Qin17,KKOP20,KKOP21} it was shown that certain representation categories of quantum affine algebras are categorified cluster algebras. In \cite{KKKO18} it was shown that representation categories of quiver Hecke algebras provide further examples of such. 

The existence of renormalized $r$-matrices suggests that the categories $\KPcohGN$ provide a new class of monoidal cluster categorifications.  This was established in \cite{CW1} for $\KP{GL_n}{0}$. Moreover, in Section \ref{sec:GLn} we compute enough of the canonical basis induced by $\KP{GL_2}{\C^2}^\eta$ (where $\KP{GL_2}{\C^2}^\eta$ is a variant of $\KP{GL_2}{\C^2}$ involving the center of $GL_2$) to show the following. 

\begin{Theorem}[cf. Theorem \ref{thm:qcatthminthesec}]\label{thm:introGL2clusters}
The category $\KP{GL_2}{\C^2}^\eta$ is a quantum monoidal cluster categorification of type~$A_2^{(1)}$ with one frozen variable. 
\end{Theorem}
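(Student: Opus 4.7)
The plan is to follow the same strategy as in \cite{CW1}, adapting it to the present setting where $N = \C^2$ and we have passed to the $\eta$-variant. Because $\KP{GL_2}{\C^2}^\eta$ is monoidal, finite-length, and rigid with renormalized $r$-matrices (Theorem \ref{thm:introstructuralsummary}), the combinatorial structure needed to verify a quantum monoidal cluster categorification is already available; what remains is to exhibit an initial cluster of simple objects and a finite sequence of exchange short exact sequences whose induced $K$-theory identities match the mutations in the rank-$3$ affine cluster algebra of type $A_2^{(1)}$ with one frozen generator.

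First I would carry out, in the body of Section \ref{sec:GLn}, the explicit computation of enough simple objects of $\KP{GL_2}{\C^2}^\eta$ to produce a candidate initial seed. Concretely, I would identify four simple Koszul-perverse coherent sheaves $\cF_0, \cF_1, \cF_2, \cF_3$ labeled by pairs $(\lambda^\vee,\mu) \in (P^\vee \times P)/W$ coming from fundamental (co)minuscule classes together with the generator of the center of $GL_2$ (the latter being the frozen one). I would compute their classes in $K^{\hGO}(\hR_{GL_2,\C^2})$ and verify, using the quantum torus description of the localized $K$-theory, that they $q$-commute in the patterns dictated by the exchange matrix of $A_2^{(1)}$. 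By Corollary \ref{cor:introLeclerc}, pairwise $q$-commutativity upgrades for free to isomorphisms $\cF_i \conv \cF_j \simeq \cF_j \conv \cF_i$ (up to loop shift), which confirms that the monomials in the $\cF_i$ are simple and supply the ``quantum cluster monomials'' of the initial seed.

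Next I would produce the three mutation relations. For each mutable vertex $i \in \{1,2,3\}$ I need a short exact sequence in $\KP{GL_2}{\C^2}^\eta$
\[
0 \to \cA_i \to \cF_i \conv \cF_i^{\,\prime} \to \cB_i \to 0,
\]
where $\cF_i^{\,\prime}$ is the simple obtained by mutation at $i$ and $\cA_i, \cB_i$ are the appropriate monomials in the neighbors of $i$ predicted by the $A_2^{(1)}$ exchange relation. The existence of $\cF_i^{\,\prime}$ as a simple object follows from the classification of simples in Theorem \ref{thm:introstructuralsummary} together with the computation of classes; the existence of the short exact sequence will be extracted from the t-exactness of convolution (Theorem \ref{thm:convsecmainthm} as cited in Theorem \ref{thm:introstructuralsummary}) applied to the head/socle analysis provided by Corollary \ref{cor:introLeclerc}. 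The key is that the head and socle of $\cF_i \conv \cF_i^{\,\prime}$ are each simple and appear with multiplicity one, and by matching $K$-theory classes they must be precisely $\cB_i$ and $\cA_i$.

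The main obstacle will be the third step: verifying that the mutation at each vertex returns the category to itself in a way compatible with \emph{all} further mutations, i.e.\ that the initial seed really does generate a subcategory of simples realizing the full $A_2^{(1)}$ cluster pattern rather than just a few neighboring seeds. The affine type $A_2^{(1)}$ cluster algebra is of infinite mutation type within its finite rank, so one cannot check all clusters by hand. Following the strategy of \cite{CW1}, I would get around this by checking only the three elementary mutations from the initial seed and then appealing to the general principle (as in \cite{KKKO18}, imported via renormalized $r$-matrices) that a monoidal seed together with one round of short exact exchange sequences, plus $q$-commutation of the resulting cluster variables, suffices to certify categorification of the entire cluster algebra. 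The remaining bookkeeping -- identifying the frozen vertex with the central simple, checking positivity of exchange matrix entries, and matching the quasi-commutation matrix -- is then a routine comparison carried out in the proof of Theorem \ref{thm:qcatthminthesec}.
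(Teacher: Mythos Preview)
Your proposal has a genuine gap at the step you yourself flag as the main obstacle. The ``general principle'' you invoke---that one round of exchange short exact sequences plus $q$-commutation of the new variables suffices to certify categorification of the entire cluster algebra---does not exist in \cite{KKKO18} or \cite{CW1} in that form. What those references give is that a successful mutation of a monoidal seed (in the presence of renormalized $r$-matrices) yields another monoidal seed; but at each subsequent mutation you must again identify the new cluster variable as a specific simple and produce the corresponding short exact sequence. There is no bootstrap that propagates automatically past one step.

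The paper's route is quite different from what you sketch. Rather than abstract head/socle analysis via Corollary~\ref{cor:introLeclerc}, the exact sequences are built by hand from the lattice model of $\hR_{GL_2,\C^2}$: explicit Koszul resolutions, tautological sections, and divisor calculations (Propositions~\ref{prop:mutation1}, \ref{prop:oldmutations}, \ref{prop:commute}, \ref{prop:mutation3}). The reason this handles \emph{all} clusters, not just one round, is that the $A_2^{(1)}$ exchange graph has a known periodic structure (displayed explicitly in the paper): the cluster variables are exactly $\cP_{\pm 2,0}$ and the infinite family $\cP_{\pm 1,\ell}$ for $\ell\in\Z$, and the geometric computations are carried out uniformly in $\ell$. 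So the infinitely many mutations are covered by a finite list of propositions each proved for arbitrary $\ell$. Your proposal misses this: you assert that ``one cannot check all clusters by hand,'' but in fact the periodicity makes this both possible and the actual method of proof.

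A secondary issue: your proposed extraction of the exchange sequences from head/socle simplicity does not by itself determine that $\cF_i \conv \cF_i'$ has length two---you would still need an independent $K$-theory computation to rule out extra composition factors, and you have not indicated how to carry that out without the explicit geometry the paper uses.
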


We also confirm in this example that the duality functors in $\KP{GL_2}{\C^2}^\eta$ categorify the twist (or DT) automorphism of the relevant cluster algebra (Proposition \ref{prop:dualsforGLn}). Related to this is the observation that, since sheaves supported on the identity are manifestly invariant under double duals, the duality functors define a discrete integrable system on the K-theoretic Coulomb branch for general $G$ and $N$. 

\subsection{Line defects in 4d $\mathcal{N}=2$ theories} A prominent role in 4d $\cN=2$ quantum field theory is played by line defects which preserve a half-BPS subalgebra \cite{Kap06b,Kap06a,KS09,GMN13,CN14,CiDZ17}. These defects are organized into a monoidal category, which we denote by $\LGN$ for the gauge theory $\TGN$. Many of our results about $\KPcohGN$ correspond to expected properties of $\LGN$. Based on these parallels, we interpret $\KPcohGN$ as providing a mathematical definition of $\LGN$. 

A half-BPS subalgebra contains a supercharge $Q_{HT}$ which defines a holomorphic-topological twist of a 4d $\cN=2$ theory (with some caveats) \cite{Kap06b}. The category of line defects in the twist of $\TGN$ can be interpreted as boundary conditions for an equivariant $B$-model with target $\hR_{G,N}$, hence as the derived category $\Coh^{G_\cO}(\hR_{G,N})$. Preserving $Q_{HT}$ is a weaker condition than preserving the full half-BPS subalgebra, hence $\LGN$ should be a proper subcategory of $\Coh^{G_\cO}(\hR_{G,N})$. 

The basic examples of half-BPS line defects are Wilson-'t Hooft lines \cite{Kap06a}. In \cite{Kap06b,KS09} it is argued that these induce a basis of the Grothendieck ring of the category of line defects. Wilson-'t Hooft lines are labeled by their electric and magnetic charges, which correspond to pairs of a weight and coweight up to the Weyl group action. This matches our finding that $\KPcohGN$ is finite-length and its simple objects are classified by the same data. 

In \cite{GMN13} it is argued that the basis of simple half-BPS line defects has the structure of a cluster algebra (or a close variant thereof). The type of this cluster structure is given by the BPS quiver of the theory, which in the case of $\TGN$ has a known recipe \cite{ACCERV14}. We confirmed that $\KP{GL_n}{0}$ categorifies the expected cluster algebra in \cite{CW1}, and we confirm that $\KP{GL_2}{\C^2}$ does in Theorem \ref{thm:introGL2clusters}. Despite this being the simplest example with $G$~nonabelian and $\N$ nonzero, Theorem \ref{thm:introGL2clusters} is still a very nontrivial consistency check, requiring multiple independent computations to align in a precise way. 

\subsection{Further directions}

We mention some expectations regarding $\KPcohGN$ which are not pursued in this paper. Writing $\fg$ for the adjoint representation, the space $\hR_{G,\fg}$ is a variant of the affine Grassmannian Steinberg variety of $G$. In this case one expects the following, where $G^\vee$ is the Langlands dual of $G$. 

\begin{Conjecture}\label{con:introSdual}
	There is a monoidal equivalence $\KP{G^\vee}{\fg^\vee} \congto \KP{G}{\fg}$. The induced bijection between the simple objects on each side is compatible with their labelings by $(P \times P^\vee)/W$. 
\end{Conjecture}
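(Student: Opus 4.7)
The plan is to view this conjecture as a mathematical incarnation of $S$-duality for $4d$ $\mathcal{N}=4$ super-Yang-Mills, i.e.\ the cotangent-type theory $\Theory{G,\fg}$, and to build up the equivalence in three stages: first on Grothendieck rings, then as a monoidal triangulated equivalence, and finally as a t-exact equivalence of Koszul-perverse hearts matching the labels on simples.

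First, I would pin down the decategorified shadow. The quantized $K$-theoretic Coulomb branch $K^{\hGO}(\hR_{G,\fg})$ with its convolution product is expected to be identified with Oblomkov's spherical DAHA of $G$, and the Cherednik Fourier transform provides a canonical isomorphism between the spherical DAHAs of $G$ and $G^\vee$ swapping the two commuting polynomial subalgebras labeled by $P$ and $P^\vee$. As a concrete subgoal, I would verify that this Fourier isomorphism carries the canonical basis of simple Koszul-perverse classes on one side (Corollary \ref{cor:introbasis}) to that on the other, swapping the $P$-- and $P^\vee$--labels as required. This alone would pin down the conjectural bijection of simples and would already be a highly nontrivial positivity test, since neither basis is manifestly Fourier-dual from its geometric definition.

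Second, I would try to upgrade this to a monoidal triangulated equivalence using the Koszul-duality reformulation promised in [CW3]: objects of $\KP{G}{\fg}$ should correspond to perverse coherent sheaves on $\Gr_G$ equipped with a compatible module structure over a sheaf of Clifford algebras built from $\fg_\cO$. The $N=0$ specialization of the conjecture is a spherical, monoidal form of Bezrukavnikov's equivalence between Iwahori-equivariant coherent sheaves on the Steinberg variety of $G$ and the affine Hecke category of $G^\vee$, so the adjoint case should be a ``doubled'' Bezrukavnikov equivalence exchanging the coherent and Clifford sides for $G$ with those for $G^\vee$. A natural candidate kernel lives on a fiber product interpolating between the two Clifford-Koszul descriptions, modelled on the affine Springer-theoretic presentations of DAHAs due to Vasserot and Oblomkov-Yun.

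The main obstacle, in my view, is the final step: checking t-exactness with respect to the Koszul-perverse t-structures on both sides. Via the regrading autoequivalence relating the Koszul-perverse t-structure to the perverse coherent t-structure, this amounts to matching the perverse coherent t-structure on $\Coh^{\hGO}(\Gr_G)$ with the corresponding one for $G^\vee$ while simultaneously tracking the Clifford-module grading. Bezrukavnikov's equivalence is notoriously insensitive to t-structures, and the known strategies for pinning down exactness involve constructing enough geometrically meaningful objects (standards, costandards, Wakimoto-type sheaves) on both sides, which is already hard in the $N = 0$ case. I would therefore run the construction in parallel for $G = GL_1$ and $G = GL_2$ using the explicit simples and convolution tables from Section \ref{sec:GLn} as both a sanity check and a source of local geometric input for the general construction.
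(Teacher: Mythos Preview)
The statement you are addressing is a \emph{conjecture}, not a theorem: the paper does not prove it and offers no argument for it. It appears in the ``Further directions'' portion of the introduction as an expected consequence of the broader conjectural equivalence $\Coh^{G^\vee_\cO}(\hR_{G^\vee,\fg^\vee}) \simeq \Coh^{G_\cO}(\hR_{G,\fg})$ attributed to Bezrukavnikov--Finkelberg--Mirkovi\'c, and the paper simply records that the Koszul-perverse t-structures and simple-object labelings should match under that equivalence. There is therefore nothing in the paper to compare your proposal against.

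Your proposal is not a proof but a research program, and you seem to be aware of this. As a program it is reasonable and touches the right circle of ideas (spherical DAHA, Cherednik--Fourier transform, Bezrukavnikov-type equivalences, the Koszul reformulation of \cite{CW3}). But you should be clear that none of the three stages you outline is currently a theorem in the literature at the level of generality needed here, and the paper makes no claim otherwise. In particular, your ``main obstacle'' --- t-exactness of any candidate equivalence for the Koszul-perverse t-structures --- is precisely the new content the conjecture adds over the ambient derived conjecture, and the paper offers no mechanism for it.
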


This should be read as an extension of the conjecture of \cite[Section 7.9]{BFM} that $\Coh^{G^\vee_\cO}(\hR_{G^\vee,\fg^\vee})$ and $\Coh^{G_\cO}(\hR_{G,\fg})$ are equivalent, which in turn is a classical, local manifestation of the geometric Langlands correspondence. Conjecture \ref{con:introSdual} implies in particular the following coherent version of the geometric Satake equivalence. It should be interpreted geometrically as reflecting the relationship between ${\mathcal D}$-modules on $\Gr_G$ (in the guise of $\Rep\, G^\vee$) and coherent sheaves on the cotangent bundle of $\Gr_G$ (in the guise of $\KP{G}{\fg}$). 

\begin{Conjecture}
There is a monoidal functor $\Rep\, G^\vee \to \KP{G}{\fg}$ which takes the irreducible $G^\vee$-representation with highest weight $\omega^\vee$ to the simple object labeled by $(0,\omega^\vee) \in P \times P^\vee$. 
\end{Conjecture}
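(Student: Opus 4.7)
The plan is to construct the desired monoidal functor $\mathcal{S}: \Rep G^\vee \to \KP{G}{\fg}$ by combining the classical geometric Satake equivalence $\Rep G^\vee \simeq \mathrm{Perv}_{G_\cO}(\Gr_G)$ with a coherent realization of Satake IC sheaves inside $\KP{G}{\fg}$. The target simple object, having trivial weight label and coweight label $\omega^\vee$, should be supported on the zero section of $\fg_\cO$, reflecting the absence of any ``electric'' contribution. The natural candidate is therefore $\cF_{\omega^\vee} := \sigma_* \IC^{coh}_{\overline{\Gr^{\omega^\vee}}}$, where $\IC^{coh}_{\overline{\Gr^{\omega^\vee}}}$ is the coherent IC object of the Koszul-perverse coherent Satake category on $\Gr_G$ and $\sigma: \Gr_G \hookrightarrow \hR_{G,\fg}$ is the zero section of the projection to $\Gr_G$ through the zero of $\fg_\cO$.

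The key steps are: (i) use Theorem \ref{thm:introKPdef} to verify that each $\cF_{\omega^\vee}$ lies in $\KP{G}{\fg}$ and is simple with the stated label; this is essentially automatic since $\sigma^* i_* \sigma_*$ recovers the original sheaf on $\Gr_G$ up to an expected shift, so the defining $t$-exactness condition pins down $\cF_{\omega^\vee}$ to a Koszul-perverse simple with the correct support. (ii) Define $\mathcal{S}$ on objects by $V_{\omega^\vee} \mapsto \cF_{\omega^\vee}$ and on morphisms via the functorial structure transported from Satake. (iii) Equip $\mathcal{S}$ with a monoidal structure by producing natural isomorphisms $\cF_{\omega^\vee_1} \conv \cF_{\omega^\vee_2} \cong \mathcal{S}(V_{\omega^\vee_1} \otimes V_{\omega^\vee_2})$, preferably through the factorization extension of $\hR_{G,\fg}$ discussed in Section \ref{sec:factorization}, so that associativity and commutativity constraints emerge from a Mirkovi\'c-Vilonen-style fusion interpretation.

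The main obstacle is step (iii). Since $\KP{G}{\fg}$ is not semisimple, one must show that convolutions of the $\cF_{\omega^\vee}$'s remain semisimple inside $\KP{G}{\fg}$ and decompose with the correct $G^\vee$-tensor-product multiplicities. A direct route is to run a coherent version of the Mirkovi\'c-Vilonen nearby-cycles/fusion construction on the factorization space, using $t$-exactness of convolution from Theorem \ref{thm:introstructuralsummary} to reduce the problem to the classical Satake calculation on the generic fiber. A conditional but cleaner route, consistent with the authors' observation that Conjecture \ref{con:introSdual} implies the present conjecture, is to transfer the desired functor through the S-duality equivalence $\KP{G^\vee}{\fg^\vee} \simeq \KP{G}{\fg}$: on the $G^\vee$ side, the simples of interest arise as $\sigma$-pushforwards from $\Gr_{G^\vee}$ and derived geometric Satake in the spirit of Bezrukavnikov-Finkelberg-Mirkovi\'c identifies their convolutions with tensor products, after which the S-duality transports this structure over to $\KP{G}{\fg}$. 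Either way, the critical technical input is $t$-exactness of factorization convolution for the Koszul-perverse t-structure, which parallels and refines the classical $t$-exactness of spherical convolution on $\Gr_G$.
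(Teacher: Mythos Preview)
The statement you are attempting to prove is a \emph{conjecture} in the paper, not a theorem; the paper offers no proof. It is stated in the introduction as a consequence of the preceding Conjecture~\ref{con:introSdual} (the S-duality equivalence $\KP{G^\vee}{\fg^\vee} \simeq \KP{G}{\fg}$), which is itself left open. So there is nothing in the paper to compare your proposal against.

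As for the proposal itself: it is not a proof but a sketch of possible strategies, and you essentially acknowledge this. Your step (iii) is the entire content of the conjecture, and you leave it unresolved. The ``conditional'' route via S-duality is exactly the implication the paper already notes, so it adds nothing. The ``direct'' route --- running a coherent Mirkovi\'c--Vilonen fusion argument on the factorization space --- is plausible in spirit but far from complete: you would need to show that nearby cycles (or an appropriate coherent analogue) on $\hR_{G,\fg,X^2}$ land in the Koszul-perverse heart, that the resulting convolution of the $\cF_{\omega^\vee}$ is not merely in the heart but \emph{semisimple} with the correct multiplicities, and that the commutativity constraint obtained this way matches that of $\Rep G^\vee$. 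None of these follows from t-exactness of convolution alone; in a non-semisimple category, t-exactness gives you an object in the heart, not a decomposition into the expected simples. This is precisely why the statement remains a conjecture.
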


Another case of interest is that of quiver gauge theories. Let $\Gamma$ be an ADE root system, $Q$ an orientation of its Dyknin diagram, and $\alpha = \sum_{i \in Q_0} n_i \alpha_i$ a positive root. Consider the group $G = \prod_{i \in Q_0} GL_{n_i}$ and the $G$-representation $N = \bigoplus_{i \to j \in Q_1} \Hom(\C^{n_i}, \C^{n_j})$. The associated quantized K-theoretic Coulomb branch is a quantization of the coordinate ring of an open Richardson variety $R_{w,v}$ in the flag variety of type $\Gamma^{(1)}$, the affinization of $\Gamma$ (here $w, v$ are affine Weyl group elements determined by~$\alpha$) \cite{FKRD18}. On the other hand, it is known that this quantized coordinate ring is the localization of the Grothendieck ring of a certain monoidal subcategory $C_{w,v}$ of finite-dimensional graded modules over a quiver Hecke algebra of type $\Gamma^{(1)}$ \cite{KKOP17}. 

\begin{Conjecture}
There is a monoidal functor $C_{w,v} \to \KPcohGNeta$ which takes distinct simples to distinct simples. In particular, the bases defined by the classes of simples in each category coincide (up to inverting certain classes of simples in $C_{w,v}$). 
\end{Conjecture}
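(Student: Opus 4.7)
The plan is to build the functor by leveraging two pieces of established structure: the Kang-Kashiwara-Kim generalized Schur-Weyl duality, which realizes $C_{w,v}$ as a subcategory of representations of a quantum affine algebra of type $\Gamma^{(1)}$, and the parametrization of simples of $\KPcohGNeta$ by $(P^\vee \times P)/W$ from Theorem \ref{thm:introstructuralsummary}. The two categories have Grothendieck rings quantizing the same coordinate ring of $R_{w,v}$ by \cite{FKRD18, KKOP17}, so the functor is forced on K-theory; the content is to lift this match to the categorical level.

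First I would identify a distinguished family of fundamental simples in $\KPcohGNeta$ in bijection with the fundamental KLR modules $L(i,k) \in C_{w,v}$, indexed by $(i,k) \in Q_0 \times \Z$. The natural candidates are simples labeled by pairs $(\lambda^\vee,\mu) \in (P^\vee \times P)/W$ in which $\lambda^\vee$ is a minuscule coweight of the $GL_{n_i}$ factor and $\mu$ records the spectral parameter via the $\C^\times$ loop-rotation weight. Verifying that the Grothendieck classes of these candidates agree with the cluster variables attached to the $L(i,k)$ under the isomorphism of \cite{FKRD18, KKOP17} is an explicit computation in the quantized Coulomb branch; once enough of these classes are matched to generate the full coordinate ring, the remaining identifications are determined.

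Second I would extend the assignment to all simples using the renormalized $r$-matrices on both sides, exploiting the fact that every simple in each category arises as the head of a convolution product of fundamentals, with factorization controlled by $r$-matrix data and Leclerc-type simplicity criteria. The parallel results on the KLR side from \cite{KKKO15a, KKKO18} and their analogue on the coherent side in Corollary \ref{cor:introLeclerc} supply precisely the compatibility needed to promote the assignment on simples to a monoidal functor. Distinctness of the image simples then follows from the distinctness of their Grothendieck classes, after the necessary localization at the frozen variables of $C_{w,v}$.

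The main obstacle will be the construction of the functor itself, not the verification of its properties, since there is no direct categorical bridge known between graded KLR modules and equivariant coherent sheaves on a BFN space. One natural avenue is through the Koszul/Clifford description of $\KPcohGNeta$ alluded to in \cite{CW3}: a sheaf of Clifford algebras on $\Gr_G$ may be close enough to an affine-type deformed convolution algebra to admit a direct comparison with a KLR algebra. A second avenue is to work through the (conjectural) coherent Satake-type description of $\KPcohGNeta$ together with the Kang-Kashiwara-Kim embedding of $C_{w,v}$ into quantum affine modules, comparing the two inside a common ambient category. Either path requires substantial additional machinery beyond what is developed in this paper, and in the quiver-gauge setting one also has to check that the bijection between simples is compatible with the highest-weight/crystal labeling on the KLR side, an explicit check that would need to be carried out quiver by quiver.
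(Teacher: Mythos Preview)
This statement is a \emph{Conjecture} in the paper, explicitly listed under ``Further directions'' with the preface ``We mention some expectations regarding $\KPcohGN$ which are not pursued in this paper.'' There is no proof in the paper to compare your proposal against.

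Your proposal is not a proof but a research outline, and you seem aware of this: you repeatedly flag that the key step---constructing the functor---``requires substantial additional machinery beyond what is developed in this paper,'' and you offer two speculative avenues (the Clifford/Koszul description of \cite{CW3}, or a comparison through quantum affine modules) without carrying either out. As a sketch of where one might look, the outline is reasonable: matching fundamental simples via explicit K-theory computations and then propagating via renormalized $r$-matrices and head-of-product descriptions is a natural strategy given the parallel structures on both sides. But the honest status is that neither you nor the paper has a proof; the paper simply records the conjecture, and your write-up should be framed as a discussion of possible approaches rather than as a proof attempt.
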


\subsection{An abelian example}\label{sec:abex}

\newcommand{\CO}{\C_\O}
\newcommand{\CK}{\C_\K}
\newcommand{\hRCstar}{\hR_{\C^\times\hspace{-.5mm},\hspace{0mm}\C}}
\newcommand{\KPetaCstar}{{\mathcal{KP}}^\eta_{\C^\times\hspace{-.5mm},\hspace{0mm}\C}}

For concreteness, let us explicitly describe the category of Koszul-perverse coherent sheaves on $\hR := \hR_{\C^\times,\C}$ where $\C^\times$ acts on $\C$ by multiplication. Though the perverse and standard t-structures on $\Gr_{\C^\times}$ coincide, this example still usefully illustrates the interaction between derived geometry, Koszul duality, and cluster theory. 

To simplify the exposition we replace all spaces with their reduced locus (the sheaves we care about always have reduced support), we do not explicitly distinguish between ordinary algebras and pro-algebras (we speak as if $\CK$ is a scheme rather than an ind-scheme), and we ignore the action of loop rotation. 

The space $\hR$ is the disjoint union of infinitely many components $\{\hR_m\}_{m \in \Z}$ where $\hR_m$ is the derived intersection of $t^m \CO$ and $\CO$ inside $\CK$. 
Note that $\CK$ and $\CO$ are really just~$\K$ and $\O$, but we use the former notation when treating them as geometric objects rather than vector spaces or rings. 
The naive classical intersection $\hR^\cl_m$ is just the affine space $t^{m_+}\CO$, where $m_+ := \max\{0,m\}$. Thus at the level of coordinate rings we have
$$ \C[\hR^\cl_m] \cong \Sym^\bullet((t^{m_+}\cO)^\vee). $$
The coordinate ring of the derived intersection, meanwhile, is a graded commutative ring with additional generators in cohomological degree $-1$:
$$ \C[\hR_m] \cong \Sym^\bullet((t^{m_+}\cO)^\vee) \ot \Sym^\bullet ((\cK/t^{m_-}\cO)^\vee[1]), $$ 
where $m_- := \min\{0,m\}$. 

We let the subgroup $\eta: \C^\times \into \C^\times_\O$ of constant loops assume the role of the scaling $\C^\times$ in $\hGO$. This determines a Koszul-perverse t-structure on $\Coh^{\C^\times_\O}(\hR)$ whose heart is denoted by $\KPetaCstar$ (see the discussion in Section \ref{sec:eta2}). In this case $\eta$ has an evident splitting $\C^\times_\O \to \C^\times$, defining a $\C_\O^\times$-equivariant structure on the trivial line bundle on $\hR$. We use the notation $\cF \mapsto \cF\la 1 \ra$ to mean the operation of twisting by this line bundle, and the term Koszul shift to mean $\cF \mapsto \cF[1]\la -1 \ra$. 

The classical intersection $\hR_m^\cl$ is naturally a closed subscheme of $\hR_m$, hence of $\hR$ itself. 
Via pushforward, the structure sheaf of $\hR_m^\cl$ then gives rise to a $\C^\times_\O$-equivariant coherent sheaf~$\O_{\hR_m^\cl}$ on $\hR$. Anticipating the notation of Section \ref{sec:GLn}, we write $\O_{\hR_m^\cl}$ as $\cP_{m,0}$. One can check that $\cP_{m,0}$ is an irreducible Koszul-perverse sheaf, and that any irreducible is a Koszul shift of some $\cP_{m,0}$. 

The convolution products of these irreducibles are Koszul shifts of the structure sheaves of more general subschemes of $\hR$. For example, one finds that $\cP_{1,0} \conv \cP_{-1,0}$ and $\cP_{-1,0} \conv \cP_{1,0}$ are respectively the structure sheaves of the closed subschemes 
$$Z_{1,-1} := t\CO \ \ \text{ and } \ \ Z_{-1,1} := \CO \times_{t^{-1}\CO} \CO.$$
The former is a codimension-one subscheme of $\hR_0^\cl = \CO$, while the latter is a derived scheme lying between $\hR_0^\cl$ and $\hR_0$. At the level of coordinate rings we have 
\begin{equation}\label{eq:introabconvvars} \C[Z_{1,-1}] \cong \Sym^\bullet((t\cO)^\vee), \quad \C[Z_{-1,1}] \cong \Sym^\bullet(\cO^\vee) \ot \Sym^\bullet ((t^{-1}\cO/\cO)^\vee[1]), \end{equation}
both viewed as quotients of 
\begin{equation}\label{eq:introhrzero} \C[\hR_0] \cong \Sym^\bullet(\cO^\vee) \ot \Sym^\bullet ((\cK/\cO)^\vee[1]) \end{equation}
in the obvious way. 

These convolution products fit into short exact sequences
\begin{align}\label{eq:abexseq}
\begin{split}
 &0 \to \cP_{0,0} \to \cP_{1,0} \conv \cP_{-1,0} \to \cP_{0,0}[1] \la -1 \ra \to 0 \\
 &0 \to \cP_{0,0}[1] \la -1 \ra \to \cP_{-1,0} \conv \cP_{1,0} \to \cP_{0,0} \to 0
\end{split}
\end{align}
in $\KPetaCstar$. To see the first, note that the Koszul resolution associated to the codimension one embedding $Z_{1,-1} = t\CO \subset \CO = \hR_0^\cl$ gives rise to an exact sequence
\begin{equation*}
0 \to \O_{\hR^\cl_0} \la -1 \ra \to \O_{\hR^{\cl}_0} \to \O_{Z_{1,-1}} \to 0
\end{equation*}
in the heart of the {\it standard} t-structure on $\Coh^{\C^\times_\cO}(\hR_0^\cl)$. Rotating the associated exact triangle yields the desired exact sequence in $\KPetaCstar$. 

To see the second, note that the factor $\Sym^\bullet ((t^{-1}\cO/\cO)^\vee[1])$ in the right-hand side of (\ref{eq:introabconvvars}) is two-dimensional as a complex vector space. 
Its subspace $(t^{-1}\cO/\cO)^\vee[1]$ generates a $\C[\hR_0]$-submodule of $\C[Z_{-1,1}]$ which is isomorphic to the Koszul shift of $\C[\hR_0^\cl]$. Since the quotient by this submodule is just $\C[\hR_0^\cl]$ itself, we obtain the desired exact sequence. 

The exact sequences (\ref{eq:abexseq}) do not split, and consequently $\cP_{1,0} \conv \cP_{-1,0}$ and $\cP_{-1,0} \conv \cP_{1,0}$ are not isomorphic. On the other hand, we can see they are connected by nonzero maps in either direction, namely the compositions
\begin{gather*} 
	r_{\cP_{1,0}, \cP_{-1,0}}: \cP_{1,0} \conv \cP_{-1,0} \onto \cP_{0,0}[1]\la -1 \ra  \into \cP_{-1,0} \conv \cP_{1,0}, \\
	r_{\cP_{-1,0}, \cP_{1,0}}: \cP_{-1,0} \conv \cP_{1,0} \onto \cP_{0,0} \into \cP_{1,0} \conv \cP_{-1,0}
\end{gather*}
of the maps appearing in the exact sequences (\ref{eq:abexseq}). These maps are the renormalized $r$-matrices of Theorem~\ref{thm:introstructuralsummary}.

Noting that $\cP_{0,0}$ is the monoidal unit, the relation in $K^{\C^\times_\cO}(\hR)$ implied by (\ref{eq:abexseq}) can be recognized as the unique exchange relation of the cluster algebra of the following quiver with one frozen and one unfrozen vertex.
\begin{equation}\label{eq:introquiver}
\begin{tikzpicture}[baseline=(current  bounding  box.center),thick,>=\arrtip]
	
	\coordinate (a) at (0,0);
	\coordinate (b) at (2,0);
	\node (c) at (-1.2,0) {$\cP_{0,0}[1]\la -1 \ra$};
	\node (c) at (2.6,0) {$\cP_{1,0}$};
	
	\newlength\squareEdgeLength
	\setlength\squareEdgeLength{0.12cm} 
	
	\draw (a) ++(-0.5\squareEdgeLength,-0.5\squareEdgeLength) rectangle ++(\squareEdgeLength,\squareEdgeLength);
	
	\fill (b) circle (.06);
	\draw [->,shorten <=1.7mm,shorten >=1.7mm] ($(a)$) to ($(b)$);
\end{tikzpicture}
\end{equation}
Here $[\cP_{1,0}]$ and $[\cP_{-1,0}]$ correspond to the two unfrozen cluster variables in this cluster algebra and $[\cP_{0,0}[1]\la -1 \ra]$ to the frozen cluster variable. 
After checking some further commutativity relations one confirms by inspection that $\KPetaCstar$ is indeed a monoidal cluster categorification. 

\subsection{Acknowledgements}
We are deeply grateful to Sam Raskin, Hiro Lee Tanaka, Aaron Mazel-Gee, and Chang-Yeon Chough for taking the time to discuss numerous technical issues that arose in the preparation of this paper and its companions \cite{CWig, CWtm}. S.C. was supported by NSERC Discovery Grant 2019-03961 and H. W. was supported by NSF grants DMS-1801969 and DMS-2143922. 

\section{Conventions}\label{sec:convnot}

This section summarizes our notation and terminology. 
It also serves as a brief digest of \cite{CWig,CWtm}, sufficient to follow our use of the basic notions from these companion papers.  
For general categorical and geometric background our default references are \cite{LurHTT,LurHA,LurSAG}, and we follow their conventions up to a few exceptions below. 
\subsection*{Categorical conventions}

\begin{itemize}
	\item We use cohomological indexing for t-structures. If $\catC$ has a t-structure $(\catC^{\leq 0}, \catC^{\geq 0})$ with heart $\catC^\heartsuit$ we write $\tau^{\leq n} : \catC \to \catC^{\leq n}$, $H^n: \catC \to \catC^\heartsuit$, etc., for the associated functors. 
	\item We say category and $\infty$-category interchangeably, and say ordinary category to specify a category in the traditional sense. We write $\Map_\catC(X,Y)$ for the mapping space between $X, Y \in \catC$, and regard ordinary categories as $\infty$-categories with discrete mapping spaces. We write $\Catinfty$ for the $\infty$-category of (possibly large) $\infty$-categories. 
	\item All limit or colimit diagrams are implicitly small unless otherwise stated. Thus in ``let $X \cong \colim X_\al$ be a filtered colimit'' the indexing diagram is assumed to be small. 
	\end{itemize}

\subsection*{Algebraic conventions}
\begin{itemize}
	\item We write $\CAlgC$ for the $\infty$-category of nonpositively graded commutative dg $\C$-algebras. Given $A \in \CAlgC$, we write $\CAlg_A := (\CAlgC)_{A/}$ for the category of (commutative dg) $A$-algebras and $\tau_{\leq n} \CAlg_A$ for the category of $n$-truncated $A$-algebras (i.e. $B \in \CAlg_A$ such that $H^k(B) \cong 0$ for $k < -n$). Note the implicit use of homological indexing when referring to truncations, to avoid excessive signs (notationally this is  reflected in the use of subscripts rather than superscripts). An algebra is truncated if it is $n$-truncated for some $n$. 
	\item An $A$-algebra $B$ is finitely $n$-presented if it is a compact object of $\tau_{\leq n} \CAlg_A$. It is \textbf{strictly tamely $n$-presented} if, for every finitely $n$-presented $A$-algebra $C$, every morphism $C \to B$ in $\CAlg_A$ factors through a flat morphism $C' \to B$ such that $C'$ is also finitely $n$-presented over~$A$ \cite[\tmdefreasalg]{CWtm}. Equivalently, $B$ can be written as a filtered colimit of finitely $n$-presented $A$-algebras over which it is flat. 
	\item An $A$-algebra $B$ is almost finitely presented if its truncation $\tau_{\leq n} B$ is finitely $n$-presented for all $n$. It is (almost) \textbf{strictly tamely presented} if $\tau_{\leq n} B$ is strictly tamely $n$-presented for all $n$. We omit ``almost'' by default for the latter but not the former, ``finitely presented'' having a different, stronger meaning in derived contexts. 
	\item If $A$ is an ordinary Noetherian ring, an ordinary $A$-algebra $B$ is almost finitely presented if and only if it is finitely presented in the usual sense. It is strictly tamely presented if and only if it is the union of the finitely presented $A$-subalgebras over which it is flat. For example,  $\C[x_1,x_2,\dotsc]$ is strictly tamely presented over $\C$.
	\item Strictly tamely presented $\C$-algebras are coherent (i.e. $H^0(A)$ is a coherent ordinary ring and $H^n(A)$ is finitely presented over  $H^0(A)$ for all $n$), but are more well-behaved than arbitrary coherent rings (e.g. they are closed under tensor products). 
\end{itemize}

\subsection*{Geometric conventions}

\begin{itemize}
	\item We write $\StkC$ for the category of stacks, i.e. functors from $\CAlgC$ to the category of (possibly large) spaces which are sheaves for the fpqc topology.  If $G$ is a group scheme acting on an ind-scheme $X$, we write $X/G$ for the fpqc quotient. We write $X^\cl$ for the underlying classical stack of $X$ (i.e. its restriction to ordinary $\C$-algebras). 
	\item A stack $X$ is geometric if its diagonal $X \to X \times X$ is affine and there exists faithfully flat morphism $\Spec A \to X$. This follows \cite[Ch. 9]{LurSAG}, but we caution that terminology varies in the literature. A geometric stack $X$ is truncated (resp. classical) if $A$ is truncated (resp. an ordinary ring) for any flat $\Spec A \to X$. A morphism $X \to Y$ in $\StkC$ is geometric if  $X \times_Y \Spec A$ is geometric for any $\Spec A \to Y$. 
	\item An affine morphism $X \to Y$ is strictly tamely presented if the coordinate ring of $X \times_Y \Spec A$ is a strictly tamely presented $A$-algebra for any $\Spec A \to Y$. A geometric morphism $X \to Y$ is \textbf{tamely presented} if for any $\Spec A \to Y$, there exists a strictly tamely presented flat cover $\Spec B \to \Spec A \times_Y X$ such that $B$ is a strictly tamely presented $A$-algebra \cite[\tmdefreasaffmorph]{CWtm}. 
	\item A geometric stack $X$ is tamely presented if it is so over $\Spec \C$. A  (quasi-compact, semi-separated) scheme $X$ is tamely presented if it is so as a geometric stack. If $G$ is an affine group scheme acting on a tamely presented scheme $X$, then $X/G$ is a tamely presented geometric stack.  
	\item A stack $X$ is convergent if $X(A) \cong \lim X(\tau_{\leq n})$ for any $A \in \CAlgC$. An \textbf{ind-geometric stack} is a convergent stack $X$ which admits an expression $X \cong \colim_{\al} X_\al$ as a filtered colimit of truncated geometric stacks along closed immersions (in the category of convergent stacks) \cite[\igdefindgeomstack]{CWig}. Any classical ind-scheme $X \cong \colim X_\al$ is an ind-geometric stack (assuming the $X_\al$ are quasi-compact and semi-separated), as is the quotient $X/G$ of $X$ by a classical affine group scheme $G$. An ind-geometric stack or ind-scheme is \textbf{ind-tamely presented} if we can choose the $X_\al$ to be tamely presented and the maps among them almost finitely presented \cite[\tmdefindP]{CWtm}. 
	\item  A morphism $f: X \to Y$ of geometric stacks is proper if for any $\Spec A \to Y$, the fiber product $X \times_Y \Spec A$ is proper over $\Spec A$ in the sense of \cite[Def. 5.1.2.1]{LurSAG} (in particular, $f$ is a relative algebraic space). 
	\item A geometric stack $X$ is locally Noetherian if it admits a flat cover $\Spec A \to X$ such that $A$ is Noetherian. It is \textbf{admissible} if it admits an affine morphism to a locally Noetherian geometric stack. An ind-geometric stack is admissible if it admits a presentation $X \cong \colim X_\al$ such that each $X_\al$ is admissible. 
\end{itemize}
	
\subsection*{Sheaf-theoretic conventions}	
\begin{itemize}
	\item We write $\QCoh(X)$ for the $\infty$-category of quasi-coherent sheaves on $X$. When $X$ is a truncated geometric stack $\Coh(X) \subset \QCoh(X)$ denotes the subcategory of coherent sheaves, i.e. the bounded, almost perfect objects. When $X$ is tamely presented these are the bounded objects with coherent cohomology sheaves in the abelian sense. 
	\item  A morphism $f: X \to Y$ of truncated geometric stacks has \textbf{coherent pullback} if $f^*: \QCoh(Y) \to \QCoh(X)$ takes $\Coh(Y)$ to $\Coh(X)$. It has \textbf{\stable coherent pullback} if for any truncated $Y'$ and any tamely presented $Y' \to Y$, the base change $f': X \times_Y Y' \to Y'$ has coherent pullback \cite[\tmdefgeomcohpull]{CWtm}. If $Y$ is a tamely presented scheme, $f$ having coherent pullback implies it has \stable coherent pullback. 
	\item  If $Y$ is an ind-tamely presented ind-geometric stack and $Y \cong \colim Y_\al$ a presentation as such, we have $\Coh(Y) \cong \colim \Coh(Y_\al)$. If in addition $Y$ is admissible, the category $\IndCoh(Y)$ is the ind-completion of $\Coh(Y)$. A geometric morphism $X \to Y$ has \stable coherent pullback if its base change to each $Y_\al$ does in the sense defined above \cite[\tmdefindgeomcohpull]{CWtm}. 
	\item A morphism $f: X \to Y$ of truncated geometric stacks is \textbf{weakly smooth} if it is flat, tamely presented, and the diagonal $X \to X \times_Y X$ has \stable coherent pullback \cite[\tmdefweaklysmooth]{CWtm}. A tamely presented truncated geometric stack is weakly smooth if it is so over $\Spec \C$. Any smooth scheme is weakly smooth, while e.g. $\A^\infty := \Spec \C[x_1,x_2,\dotsc]$ is weakly smooth but not smooth. 
	\item Categories of equivariant sheaves are understand as being defined via quotient stacks. That is, if $G$ is a group scheme acting on an scheme $X$, we write $\QCoh^G(X)$ for $\QCoh(X/G)$. The expressions $\Coh^G(X)$ and $\IndCoh^G(X)$ are understood similarly, assuming the quotient $X/G$ is such that these categories are defined. 
\end{itemize}

\section{Koszul t-structures}\label{sec:Koszul}

In this section we define and study Koszul t-structures. We work in the setting of a classical Noetherian scheme $X$ acted on by an affine group scheme $\H$, together with a finite-rank $\H$-equivariant vector bundle $W \to X$ and two  $\H$-equivariant sub-bundles $V_1, V_2 \subset W$. We write $Y$ for the derived intersection of $V_1$ and $V_2$ in $W$. In our application $X$ will be a closed $\hGO$-invariant subscheme of $\Gr_G$, the group $\H$ will be $\hGO$, and $Y$ will be a finite-dimensional approximation of the restriction of $\hR_{G,\N}$ to $X$. 

We assume that $\H$ contains a central subgroup $\eta:  \C^\times \to \H$ which acts trivially on $X$ and with weight one on $W$. We say a t-structure on $\Coh^\H(X)$ or $\Coh^\H(Y)$ is Koszul if it satisfies a certain t-exactness condition with respect to $\eta$ (Definition \ref{def:Koszult}). This condition is related to the Koszul resolution of $\sigma_{k*}(\cO_X)$, where $\sigma_k: X \to V_k$ denotes the zero section for $k=1,2$. In our application $\eta$ is given either by the scaling $\C^\times \subset \hGO$, or induced by a central cocharacter of $G$ which acts with weight one on $\N$. 

Our main goal is to show that Koszul t-structures on $\Coh^{\H}(X)$ lift canonically to Koszul t-structures on $\Coh^{\H}(Y)$. This is captured by the following result, where $i_k: Y \to V_k$ denotes the natural map for $k=1,2$.

\begin{Theorem}\label{thm:koszul}
Given a bounded, Noetherian, Koszul t-structure on $\Coh^{\H}(X)$, there exists a unique t-structure on $\Coh^{\H}(Y)$ such that
$$\sigma^*_1 i_{1*}: \Coh^{\H}(Y) \to \Coh^{\H}(X)$$
is t-exact. Symmetrically, it is the unique t-structure such that $\sigma^*_2 i_{2*}$ is t-exact. This t-structure is bounded, Noetherian, and Koszul over $X$, and is Artinian if the t-structure on $\Coh^\H(X)$ is. 
\end{Theorem}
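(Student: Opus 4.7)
My plan is to build the t-structure on $\Coh^\H(Y)$ in three stages: first lift the given Koszul t-structure from $\Coh^\H(X)$ to $\Coh^\H(V_1)$ along $\sigma_1^*$; transfer it to $\Coh^\H(Y)$ along the closed-immersion pushforward $i_{1*}: \Coh^\H(Y) \to \Coh^\H(V_1)$; and finally verify the symmetric statement that $\sigma_2^*i_{2*}$ is t-exact for the resulting t-structure. The guiding observation is that the derived intersection $Y = V_1 \times_W V_2$ furnishes a Koszul resolution $i_{1*}\cO_Y \simeq \Sym_{\cO_{V_1}}((W/V_2)^\vee[1])$ on $V_1$, so by base change, $\sigma_1^* i_{1*}\cF$ is computed by pulling $\cF$ back to the derived scheme $X \times_W V_2$ (whose structure sheaf over $X$ is $\Sym_{\cO_X}((W/V_2)^\vee[1])$) and pushing to $X$; symmetrically, $\sigma_2^* i_{2*}\cF$ involves $\Sym((W/V_1)^\vee[1])$.

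For the first step, I would declare $\cG \in \Coh^\H(V_1)^{\leq 0}$ precisely when $\sigma_1^*\cG \in \Coh^\H(X)^{\leq 0}$. Although $\sigma_1^*$ is not conservative on all of $\Coh^\H(V_1)$, it is conservative on the essential image of $i_{1*}$, which is where we use it. The Koszul hypothesis on $X$ is exactly the input needed to check that this prescription respects truncation, since the monad $\sigma_1^*\sigma_{1*}$ is tensoring with the Koszul algebra $\Sym(V_1^\vee[1])$. Transferring along the fully faithful $i_{1*}$ then produces a t-structure on $\Coh^\H(Y)$ for which $\sigma_1^* i_{1*}$ is tautologically t-exact, and uniqueness follows from conservativity of $\sigma_1^* i_{1*}$ on $\Coh^\H(Y)$.

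The main obstacle is the symmetric statement. Let $\sigma: X \to W$ be the common composite zero section (via either $V_k$) and $i_W: Y \to W$ the composite closed immersion. Computing $\sigma^* i_{W*}\cF$ in the two available ways, using the Koszul resolutions of $\cO_{V_1}$ and $\cO_{V_2}$ in $\cO_W$, yields the identity
$$\sigma_1^* i_{1*}\cF \otimes_{\cO_X} \Sym((W/V_1)^\vee[1]) \simeq \sigma^* i_{W*}\cF \simeq \sigma_2^* i_{2*}\cF \otimes_{\cO_X} \Sym((W/V_2)^\vee[1])$$
in $\Coh^\H(X)$. Tensoring with each $\Sym((W/V_k)^\vee[1])$ is t-exact by the Koszul hypothesis, and the augmentation $\Sym((W/V_k)^\vee[1]) \twoheadrightarrow \cO_X$ splits the unit, exhibiting $\sigma_k^* i_{k*}\cF$ as a direct summand of the common object $\sigma^* i_{W*}\cF$. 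Since the heart is closed under retracts, one concludes that $\sigma_1^* i_{1*}\cF$ lies in the heart iff $\sigma^* i_{W*}\cF$ does iff $\sigma_2^* i_{2*}\cF$ does; hence the two functors define the same t-structure, yielding the symmetry and a second proof of uniqueness.

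The remaining properties—boundedness, Noetherianity, Artinianity, and Koszulness over $X$—transfer along the conservative t-exact functor $\sigma_1^* i_{1*}$ from the corresponding properties on $\Coh^\H(X)$ by standard inheritance arguments. Koszulness of the new t-structure is built into the construction, since the Koszul shift $[1]\la -1\ra$ on $\Coh^\H(Y)$ is compatible with both $i_{1*}$ and $\sigma_1^*$ by design. The delicate technical point throughout is formulating the Koszul hypothesis on $X$ robustly enough to control tensoring with $\Sym(V^\vee[1])$ for any $\eta$-weight-one bundle $V$ on $X$, not only the distinguished bundles $V_1, V_2$ appearing in the construction.
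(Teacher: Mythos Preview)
Your overall architecture matches the paper's: lift the Koszul t-structure from $X$ to $V_1$ (this is Proposition~\ref{prop:koszul-t-bundles}), transfer it to $Y$ along $i_{1*}$, and then establish the symmetry in $V_1 \leftrightarrow V_2$. The gap is in the middle step. Full faithfulness of $i_{1*}$ alone does not let you pull back a t-structure: you must show that the essential image of $i_{1*}$ in $\Coh^\H(V_1)$ is closed under the Koszul truncation functors, equivalently (working first in $\QCoh$ and applying Proposition~\ref{prop:induce}) that $i_{1*}i_1^{!}$ is left t-exact on $\QCoh^\H(V_1)$. You never address this. The paper handles it by pushing to $W$: since $j_{1*}$ is conservative and t-exact (Lemma~\ref{lem:koszul1}), it suffices to show $j_{1*}i_{1*}i_1^{!}$ is left t-exact, and base change rewrites this as $j_{2*}j_2^{!}j_{1*}$, each factor t-exact again by Lemma~\ref{lem:koszul1}. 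Ironically, the Koszul resolution of $i_{1*}\cO_Y$ you already wrote down would also fill this gap: the stupid filtration of $\cHom_{V_1}(i_{1*}\cO_Y,\cG)$ has graded pieces $\cG \otimes \pi_{V_1}^*\Lambda^i(W/V_2)[-i]$, and tensoring with these is t-exact by the Koszul hypothesis, giving left t-exactness of $i_{1*}i_1^{!}$ directly. But you do not make this connection. Two minor points: $\sigma_1^*$ is in fact conservative on all of $\Coh^\H(V_1)$ because the central $\C^\times$ contracts $V_1$ onto $X$; and your formula $i_{1*}\cO_Y \simeq \Sym_{\cO_{V_1}}((W/V_2)^\vee[1])$ is only correct as a filtered object, since the Koszul differential on $V_1$ is contraction with the generally nonzero section $V_1 \hookrightarrow W \twoheadrightarrow W/V_2$.

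Your symmetry argument, on the other hand, is a genuinely different and clean alternative to the paper's. The paper again routes through $W$: it checks $\sigma_2^* j_2^* j_{2*} i_{2*} = \sigma_2^* j_2^* j_{1*} i_{1*}$ is t-exact factor by factor via Lemma~\ref{lem:koszul1}. Your retract argument avoids that lemma entirely. Once one knows the identity
\[
\sigma_1^* i_{1*}\cF \otimes \Sym\bigl((W/V_1)^\vee[1]\bigr) \;\cong\; \sigma^* i_{W*}\cF \;\cong\; \sigma_2^* i_{2*}\cF \otimes \Sym\bigl((W/V_2)^\vee[1]\bigr),
\]
the Koszul hypothesis makes tensoring with each $\Sym((W/V_k)^\vee[1])$ t-exact, and the degree-zero splitting exhibits each $\sigma_k^* i_{k*}\cF$ as a retract of the common middle term. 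Since both $(-)^{\leq 0}$ and $(-)^{\geq 0}$ are closed under retracts, you get $\sigma_1^* i_{1*}\cF \in \Coh^\H(X)^{\leq 0}$ iff $\sigma^* i_{W*}\cF \in \Coh^\H(X)^{\leq 0}$ iff $\sigma_2^* i_{2*}\cF \in \Coh^\H(X)^{\leq 0}$, and likewise for $\geq 0$; so the two pullback functors define the same t-structure on $\Coh^\H(Y)$. This is more symmetric and arguably more conceptual than the paper's factor-by-factor check, though it still presupposes that the transfer step has been carried out for at least one of the $V_k$.
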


We begin in Section \ref{sec:tlifting} by considering such lifting problems in general. In Section~\ref{sec:koszulity} we introduce and discuss Koszul t-structures. In Section \ref{sec:koszullift} we explain how to lift Koszul t-structures from $\Coh^\H(X)$ to $\Coh^\H(V)$ for any $\H$-equivariant vector bundle $V \to X$ (Proposition~\ref{prop:koszul-t-bundles}), thus proving Theorem \ref{thm:koszul} in the case $W = V_1 = V_2$. In Section \ref{sec:koszul-ints} we further lift this t-structure from $V_1$ or $V_2$ to obtain a Koszul t-structure on $\Coh^\H(Y)$, proving Theorem~\ref{thm:koszul} in general. Finally, in Section \ref{sec:simples} we classify simple objects in the heart of this t-structure, with the main result being Theorem \ref{thm:koszulsimples}. 

\subsection{Lifting t-structures}\label{sec:tlifting} 
In this section we discuss the general problem of extending or lifting a t-structure along a functor $\Phi: \catC \to \catD$ so that the functor becomes t-exact. The ideas involved are an elaboration of those of \cite[Sec. 2]{Pol07}, and in turn of \cite{AJS03,AP06}.

Recall that a t-structure on a stable $\infty$-category $\catC$ consists of a pair of full subcategories $(\catC^{\leq 0}, \catC^{\geq 0})$ which are closed under isomorphism and satisfy the following conditions. 
	\begin{enumerate}
	\item $\Maps_\catC(X,Y[-1])$ is contractible for all $X \in \catC^{\leq 0}$, $Y \in \catC^{\geq 0}$.
	\item $\catC^{\leq 0}[1] \subset \catC^{\leq 0}$ and $\catC^{\geq 0}[-1] \subset \catC^{\geq 0}$.
	\item Every $X \in \catC$ fits into an exact triangle $ X' \to X \to X''$ with $X' \in \catC^{\leq 0}$ and $X'' \in \catC^{\geq 0}[-1]$.
\end{enumerate}
A t-structure on $\catC$ is the same data as a t-structure on its homotopy category $h\catC$ in the classical sense \cite{BBD82}. 

Recall that a category is presentable if it admits small colimits and is accessible, a mild set-theoretic condition. In this setting the adjoint functor theorem makes it easy to construct t-structures, as illustrated by the following result. Here given a subcategory $\catC' \subset \catC$ we write $\catC'^\perp$ for its right orthogonal (the full subcategory of $Y \in \catC$ such that $\Map_\catC(X,Y)$ is contractible for all $X \in \catC'$) and $\la \catC' \ra$ for the smallest full subcategory of~$\catC$ containing $\catC'$ and closed under extensions and colimits (so if $X' \to X \to X''$ is an exact triangle with $X', X'' \in \la \catC' \ra$, then $X \in \la \catC' \ra$). The condition of being compatible with filtered colimits means that $\catC^{\geq 0}$ is closed under filtered colimits in $\catC$. 

\begin{Proposition}\label{prop:generatedtstructure}
Let $\catC$ be a presentable stable $\infty$-category and $\catC' \subset \catC$ a small subcategory. The subcategories 
$ \catC^{\leq 0} := \la \catC' \ra$ and $\catC^{> 0} := \catC'^\perp $ 
are presentable and define a t-structure on $\catC$. If $\catC'$ consists of compact objects this t-structure is compatible with filtered colimits. 
\end{Proposition}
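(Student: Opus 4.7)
The strategy follows the standard Bousfield-localization template, modelled on \cite[Prop.~1.4.4.11]{LurHA}. The plan is to first establish that $\catC^{\leq 0} := \la \catC' \ra$ is a presentable $\infty$-subcategory of $\catC$. In a stable $\infty$-category an extension $X' \to X \to X''$ realizes $X$ as a pushout, so ``closure under colimits and extensions'' can be reformulated as closure under a family of (small) colimit-type operations built from $\catC'$. Standard accessibility results (cf.\ \cite[Ch.~5]{LurHTT}) then identify $\la \catC' \ra$ as presentable, and its inclusion into $\catC$ preserves small colimits by construction. The adjoint functor theorem supplies a right adjoint $\tau^{\leq 0}: \catC \to \catC^{\leq 0}$.

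Given this, I would define $\tau^{>0} X$ as the cofiber of the counit $\tau^{\leq 0} X \to X$. Applying $\Map(X',-)$ for $X' \in \catC' \subset \catC^{\leq 0}$, the counit becomes an equivalence by adjunction, so its cofiber lies in $\catC'^\perp$; this produces the required truncation triangle, verifying axiom (3). The key containment for checking the remaining axioms is $\catC^{\leq 0} \subset {}^\perp(\catC'^\perp)$, which holds trivially on $\catC'$ and is preserved by colimits and extensions, hence on all of $\la \catC' \ra$. Unwinding conventions, $\catC^{\geq 0}$ is forced to be $\catC'^\perp[1]$, so for $X \in \catC^{\leq 0}$ and $Y \in \catC^{\geq 0}$ one has $Y[-1] \in \catC'^\perp$ and therefore $\Map(X, Y[-1]) = 0$, verifying axiom (1). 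The shift axiom $\catC^{\leq 0}[1] \subset \catC^{\leq 0}$ holds because $X[1]$ is the cofiber of $X \to 0$, a colimit; its dual $\catC^{\geq 0}[-1] \subset \catC^{\geq 0}$ follows from the same key containment together with the identity $\Map(X', Y[-1]) \simeq \Map(X'[1], Y)$.

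When $\catC'$ consists of compact objects, each $\Map(X',-)$ preserves filtered colimits, so $\catC'^\perp$ is closed under filtered colimits --- precisely compatibility with filtered colimits. The main obstacle I anticipate is the initial presentability step: making the transfinite closure under colimits and extensions rigorously accessible is the one part of the argument that goes beyond formal manipulations, and I would handle it by a small-object-style argument paralleling the proof of the cited result in \cite{LurHA}.
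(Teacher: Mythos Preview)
Your proposal is correct and follows essentially the same approach as the paper. The paper simply cites \cite[Prop.~1.4.4.11]{LurHA} for the existence of the t-structure and presentability (whereas you spell out the mechanism of that proposition in detail), and then proves the identification $\la \catC' \ra^\perp = \catC'^\perp$ via exactly your closure argument: ${}^\perp X$ contains $\catC'$ and is closed under colimits and extensions, hence contains $\la \catC' \ra$.
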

\begin{proof}
Existence of the stated t-structure follows from \cite[Prop. 1.4.4.11]{LurHA} and smallness of $\catC'$, as does presentability of $\catC^{\leq 0}$ and $\catC^{> 0}$. By definition $\catC^{> 0} = \la \catC' \ra^\perp$. Trivially $\la \catC' \ra^\perp \subset \catC'^\perp$, while $ \catC'^\perp \subset \la \catC' \ra^\perp$ follows as in \cite[Lem. 3.1]{AJS03}: if $X \in \catC'^\perp$ then  ${}^\perp X$ contains $\la \catC' \ra$ since it contains~$\catC'$ and is closed under colimits and extensions. The final claim is immediate from $\catC^{> 0} := \catC'^\perp$.
\end{proof}

We will mostly use Proposition \ref{prop:generatedtstructure} as packaged by the following construction. There are other possible t-structures for which $\Phi$ is t-exact, but the choices below are characterized by their minimizing $\catD^{\le 0}$ and maximizing $\catC^{\le 0}$, respectively. 

\begin{Proposition}\label{prop:induce}
Let $\catC$ and $\catD$ be presentable stable $\infty$-categories and $\Phi: \catC \leftrightarrows \catD: \Phi^R$ an adjoint pair. 
\begin{enumerate}
\item Suppose $\catC' \subset \catC$ is a small subcategory such that $\Phi^R \Phi$ is left t-exact for the t-structure given by $\catC^{\leq 0} := \la \catC'\ra$. Then $\Phi$ is t-exact with respect to the t-structure on $\catD$ given by $\catD^{\le 0} := \la \Phi(\catC') \ra$.
\item Suppose $(\catD^{\leq 0}, \catD^{\geq 0})$ is a t-structure such that $\catD^{\leq 0}$ is presentable and $\Phi \Phi^R$ is left t-exact. Then $\Phi$ is t-exact with respect to the t-structure on $\catC$ given by $\catC^{\le 0} := \Phi^{-1}(\catD^{\leq 0})$. 
\end{enumerate}
\end{Proposition}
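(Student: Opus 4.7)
The plan for both parts is to first produce the claimed t-structure via a presentability-based existence criterion, then verify the t-exactness of $\Phi$ by combining the adjunction with the hypothesis on the composite.

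For part~(1), existence of the t-structure on $\catD$ is immediate from Proposition~\ref{prop:generatedtstructure} applied to the small subcategory $\Phi(\catC') \subset \catD$. Right t-exactness of $\Phi$ is automatic: as a left adjoint between stable $\infty$-categories, $\Phi$ preserves colimits and cofiber sequences, so $\Phi(\la \catC' \ra) \subset \la \Phi(\catC') \ra = \catD^{\leq 0}$. For left t-exactness, I would fix $X \in \catC^{\geq 0}$ and aim to show $\Phi X \in \catD^{\geq 0}$. Since $\catD^{\leq -1} = \la \Phi(\catC')[-1] \ra$ and the vanishing condition $\Map_\catD(-, \Phi X) \simeq \ast$ is closed under colimits, extensions, and shifts, it suffices to check $\Map_\catD(\Phi(X')[-1], \Phi X) \simeq \ast$ for each $X' \in \catC'$. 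By adjunction this equals $\Map_\catC(X'[-1], \Phi^R\Phi X)$, and the hypothesis that $\Phi^R\Phi$ is left t-exact puts $\Phi^R\Phi X \in \catC^{\geq 0}$ while $X'[-1] \in \catC^{\leq -1}$, forcing this mapping space to be trivial.

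For part~(2), I would first verify that $\catC^{\leq 0} := \Phi^{-1}(\catD^{\leq 0})$ genuinely is the lower half of a t-structure on $\catC$. It is closed under colimits, cofiber sequences, and the shift $[1]$, since $\Phi$ preserves these operations and $\catD^{\leq 0}$ enjoys the analogous closure properties. The subtler point is presentability: $\catC^{\leq 0}$ may be described as the accessible pullback $\catC \times_\catD \catD^{\leq 0}$, with $\Phi$ accessible as a colimit-preserving functor between presentable categories and $\catD^{\leq 0}$ presentable by hypothesis. Invoking \cite[Prop.~1.4.4.11]{LurHA} then produces the t-structure. Right t-exactness of $\Phi$ is tautological from the definition. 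For left t-exactness I would pass through the adjunction equivalence $\Phi$~left t-exact~$\iff \Phi^R$~right t-exact, and note that $\Phi^R(\catD^{\leq 0}) \subset \Phi^{-1}(\catD^{\leq 0})$ unwinds, via the very definition of $\catC^{\leq 0}$, to $\Phi\Phi^R(\catD^{\leq 0}) \subset \catD^{\leq 0}$, which packages the t-exactness hypothesis imposed on $\Phi \Phi^R$.

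The main obstacle lies in part~(2): one must confirm that $\Phi^{-1}(\catD^{\leq 0})$ is actually presentable, not merely endowed with good closure properties. This is a set-theoretic check on accessible pullbacks, cleanest when one takes $\catD^{\leq 0} \simeq \Ind_\kappa(\catD^{\leq 0,\kappa\text{-cpt}})$ and argues that $\Phi^{-1}$ preserves $\kappa$-accessibility for suitable $\kappa$. Once presentability is in hand, the existence of the t-structure is a routine application of \cite[Prop.~1.4.4.11]{LurHA}, and the remaining t-exactness verifications boil down to the adjunction manipulations sketched above.
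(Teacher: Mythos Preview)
Your treatment of part~(1) is correct and matches the paper's argument (which, despite announcing itself as a proof of~(2), visibly uses the generators $\catC'$ and the hypothesis on $\Phi^R\Phi$ from part~(1); the paper appears to have a labeling typo here).

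Part~(2) has a genuine gap in the left t-exactness step. The asserted equivalence ``$\Phi$ left t-exact $\iff \Phi^R$ right t-exact'' is not a consequence of the adjunction $\Phi \dashv \Phi^R$; what the adjunction actually gives is ``$\Phi$ right t-exact $\iff \Phi^R$ left t-exact,'' and there is no formal relation in the other direction. For an explicit failure even in the setting of~(2), take $\catD = \QCoh(\mathbb{P}^1)$ with its standard t-structure, $\catC = \Vect$, and $\Phi = p^*$ for $p: \mathbb{P}^1 \to \mathrm{pt}$. Then $\Phi^{-1}(\catD^{\leq 0})$ is the standard t-structure on $\catC$, the hypothesis $\Phi\Phi^R = p^*p_*$ left t-exact holds, and $\Phi = p^*$ is indeed left t-exact; but $\Phi^R = p_*$ is \emph{not} right t-exact, since $p_*\cO(-2) \cong \C[-1]$. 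Moreover, even granting your equivalence, the condition you unwind to is $\Phi\Phi^R(\catD^{\leq 0}) \subset \catD^{\leq 0}$, which is \emph{right} t-exactness of $\Phi\Phi^R$, whereas the hypothesis in~(2) is \emph{left} t-exactness. So your argument never actually engages the stated hypothesis. Your existence and right t-exactness portions are fine; it is specifically the passage to left t-exactness of $\Phi$ that must be reworked to use $\Phi\Phi^R(\catD^{\geq 0}) \subset \catD^{\geq 0}$.
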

\begin{proof}
We prove (2) as the other claim is similar. Note that $\Phi^{-1}(\catD^{\leq 0})$ is presentable since $\catD^{\leq 0}$ is and $\Phi$ preserves colimits. If $X \in \catC^{> 0}$ then $\Phi^R\Phi(X) \in \catC^{> 0}$ since $\Phi^R \Phi$ is left t-exact. This implies $\Map_{\catD}(\Phi(X'),\Phi(X)) \cong \Map_{\catC}(X',\Phi^R\Phi(X))$ is contractible for any $X' \in \catC'$, hence $\Phi(X) \in \catD^{> 0}$ and $\Phi$ is left t-exact. On the other hand, if $Y \in \catD^{> 0}$ then for any $X' \in \catC'$ we have that $\Map_{\catC}(X',\Phi^R(Y)) \cong \Map_{\catC}(\Phi(X'),Y)$ is contractible. Thus $\Phi^R$ is also left t-exact, hence its left adjoint $\Phi$ is right t-exact. 
\end{proof}

We say a t-structure on $\catC$ restricts to a subcategory $\catC_0 \subset \catC$ if $(\catC^{\leq 0} \cap \catC_0, \catC^{\geq 0} \cap \catC_0)$ is a t-structure on $\catC_0$. This is equivalent to $\catC_0$ being stable under the truncation functors in~$\catC$. 

\begin{Proposition}\label{prop:restrict}
Let $\Psi: \catC \to \catD$ be a t-exact functor between stable $\infty$-categories equipped with t-structures. Suppose $\catC_0 \subset \catC$ and $\catD_0 \subset \catD$ are stable subcategories such that $\Psi(\catC_0) \subset \catD_0$ and such that the t-structure on $\catD$ restricts to one on $\catD_0$. Then the t-structure on $\catC$ restricts to $\catC_0$ if either of the following conditions are satisfied.
\begin{enumerate}
		\item If $X \in \catC^{\leq 0}$ and $\Psi(X) \in \catD_0$, then $X \in \catC_0$. 
		\item If $X \in \catC^{\geq 0}$ and $\Psi(X) \in \catD_0$, then $X \in \catC_0$. 
\end{enumerate}
\end{Proposition}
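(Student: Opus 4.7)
The plan is to use the criterion stated just before the proposition: the t-structure restricts to $\catC_0$ if and only if $\catC_0$ is preserved by the truncation functors $\tau^{\leq n}$ and $\tau^{\geq n}$. Since $\catC_0$ is a stable subcategory and therefore closed under shifts, it suffices to treat $n = 0$, and in fact to show that $\tau^{\leq 0}$ preserves $\catC_0$; preservation of $\tau^{\geq 1}$ then follows from closure of $\catC_0$ under cofibers of its morphisms (and the general case from shifting).

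The main input is the canonical exact triangle $\tau^{\leq 0} X \to X \to \tau^{\geq 1} X$ for $X \in \catC_0$. Since $\Psi$ is t-exact, its image is an exact triangle in $\catD$ whose outer terms lie respectively in $\catD^{\leq 0}$ and $\catD^{\geq 1}$, and must therefore be identified with the truncation triangle for $\Psi(X)$. Using that $\Psi(X) \in \catD_0$ by assumption and that the t-structure on $\catD$ already restricts to $\catD_0$, I conclude that $\Psi(\tau^{\leq 0} X) \cong \tau^{\leq 0} \Psi(X)$ and $\Psi(\tau^{\geq 1} X) \cong \tau^{\geq 1} \Psi(X)$ both lie in $\catD_0$.

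Under hypothesis (1), I apply the assumption to $\tau^{\leq 0} X \in \catC^{\leq 0}$, whose image under $\Psi$ lies in $\catD_0$, to deduce $\tau^{\leq 0} X \in \catC_0$; the complementary truncation $\tau^{\geq 1} X$ then lies in $\catC_0$ as the cofiber of a morphism in $\catC_0$. Under hypothesis (2) the roles are reversed: I apply the assumption to $\tau^{\geq 1} X \in \catC^{\geq 1} \subset \catC^{\geq 0}$ to get $\tau^{\geq 1} X \in \catC_0$, with $\tau^{\leq 0} X \in \catC_0$ following as the fiber. No genuine obstacle arises; the argument is a direct unwinding of t-exactness together with the closure properties of a stable subcategory, and the only micro-step worth noting is verifying that the intermediate object to which the hypothesis is applied really does lie in $\catC^{\leq 0}$ or $\catC^{\geq 0}$, which is immediate from the definition of a t-structure.
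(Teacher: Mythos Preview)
Your proof is correct and follows essentially the same approach as the paper: both use t-exactness of $\Psi$ to identify $\Psi(\tau^{\le 0}X)$ with $\tau^{\le 0}\Psi(X) \in \catD_0$, apply the relevant hypothesis to place one truncation in $\catC_0$, and then use stability of $\catC_0$ to handle the other. Your write-up just spells out a few steps (the triangle, the cofiber/fiber argument) more explicitly than the paper's terse version.
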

\begin{proof}
For $X \in \catC_0$, we have $\Psi(\tau_{\catC}^{\le 0}(X)) \cong \tau_{\catD}^{\le 0}(\Psi(X)) \in \catD_0$ since $\Psi$ is t-exact. Assuming the first condition, we then have $\tau_{\catC}^{\le 0}(X) \in \catC_0$. We further have $\tau_{\catC}^{> 0}(X) \in \catC_0$ since $\catC_0$ is stable, hence the t-structure on $\catC$ restricts to $\catC_0$. The same argument works assuming the second condition.
\end{proof}

Recall that a t-structure is {\it Noetherian} (resp. {\it Artinian}) if every object in its heart satisfies the ascending (resp. descending) chain condition, is \emph{finite-length} if it is Noetherian and Artinian (equivalently, every object in its heart is finite-length), and is {\it bounded} if $\catC = \cup_{n} \catC^{\geq n}$  and $\catC = \cup_{n} \catC^{\leq n}$.

\begin{Proposition}\label{prop:conservative}
Let $\Psi: \catC \to \catD$ be a conservative t-exact functor between stable $\infty$-categories equipped with t-structures. If the t-structure on $\catD$ is Noetherian (resp. Artinian, finite-length, bounded), so is the t-structure on $\catC$.
\end{Proposition}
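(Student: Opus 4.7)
The plan is to exploit the fact that a t-exact conservative functor between stable $\infty$-categories restricts to an exact and conservative functor on hearts, and to pull back each finiteness property along $\Psi$. A key preliminary observation is that for any morphism $f$ in $\catC$, t-exactness gives $\Psi(\mathrm{cone}(f)) \cong \mathrm{cone}(\Psi(f))$, so conservativity of $\Psi$ implies that $\Psi(f)$ being an isomorphism forces $f$ to be one. In particular, on hearts, $\Psi$ sends short exact sequences to short exact sequences, preserves and reflects monomorphisms and epimorphisms, and $\Psi(X) \cong 0$ implies $X \cong 0$.

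For the Noetherian case, suppose $X_0 \hookrightarrow X_1 \hookrightarrow \cdots$ is an ascending chain of subobjects in $\catC^\heartsuit$. Applying $\Psi$ yields an ascending chain in $\catD^\heartsuit$, which stabilizes by hypothesis: $\Psi(X_i) \to \Psi(X_{i+1})$ is an isomorphism for $i \gg 0$. By conservativity the maps $X_i \to X_{i+1}$ are themselves isomorphisms, so $\catC^\heartsuit$ satisfies ACC. The Artinian case is completely symmetric with descending chains and epimorphisms, and finite-length follows by combining the two.

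For boundedness, let $X \in \catC$. Since $\catD = \bigcup_n \catD^{\geq n}$, we have $\Psi(X) \in \catD^{\geq n}$ for some $n$, i.e.\ $\tau^{<n}_\catD \Psi(X) \cong 0$. By t-exactness this equals $\Psi(\tau^{<n}_\catC X)$, so conservativity gives $\tau^{<n}_\catC X \cong 0$ and hence $X \in \catC^{\geq n}$. The dual argument using $\catD = \bigcup_n \catD^{\leq n}$ yields $X \in \catC^{\leq m}$ for some $m$. This routine bookkeeping is the entire argument; there is no real obstacle, the only point requiring care being the translation of conservativity in the stable sense (reflecting zero objects) into reflection of isomorphisms via the cone, which is then what lets one transfer chain conditions and truncation vanishing from $\catD$ back to $\catC$.
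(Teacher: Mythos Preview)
Your proof is correct and follows essentially the same approach as the paper's. The paper only spells out the Noetherian case explicitly (concluding via $\Psi(X_{i+1}/X_i)\cong 0 \Rightarrow X_{i+1}/X_i\cong 0$ rather than your equivalent phrasing that $\Psi(X_i\to X_{i+1})$ being an isomorphism forces $X_i\to X_{i+1}$ to be one) and then says the remaining properties follow similarly; you have supplied those details, including the boundedness argument via $\Psi(\tau^{<n}_\catC X)\cong\tau^{<n}_\catD\Psi(X)$, which is exactly what the paper intends.
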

\begin{proof}
Suppose $\catD^\heartsuit$ is Noetherian. Let $X \in \catC^\heartsuit$ and let $X_1 \subset X_2 \subset \cdots$ be an ascending chain of subobjects of $X$. Since $\Psi$ is t-exact we get an ascending chain $\Psi(X_1) \subset \Psi(X_2) \subset \cdots$ of sub-objects of $\Psi(X) \in \catD^\heartsuit$. Since $\catD^\heartsuit$ is Noetherian this chain stabilizes. It follows that $\Psi(X_{i+1}/X_i) \cong 0$ for $i \gg 0$. Since $\Psi$ is conservative $X_{i+1}/X_i \cong 0$ for $i \gg 0$, hence $\catC^{\heartsuit}$ is Noetherian. The remaining properties are established similarly. 
\end{proof}

Finally, we record the following result for the proof of Proposition \ref{prop:koszul-t-bundles}.  

\begin{Lemma}\label{lem:noetherian-extend}
	Let $\catC$ be a presentable stable $\infty$-category and $\catC_0 \subset \catC$ a small stable subcategory which is closed under isomorphisms and equipped with a Noetherian t-structure. If we extend this to a t-structure on $\catC$ such that $\catC^{\leq 0} := \la \catC_0^{\leq 0} \ra,$  then $\catC_0^\heartsuit$ is closed under subobjects in $\catC^\heartsuit$. 
	That is, if $X \into Y$ is a monomorphism in $\catC^\heartsuit$ and $Y \in  \catC_0^\heartsuit$, then $X \in  \catC_0^\heartsuit$. 
\end{Lemma}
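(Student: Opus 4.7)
The plan is to show that the maximal $\catC_0^\heartsuit$-subobject $X^*$ of $X$ --- whose existence is guaranteed by Noetherianity --- actually equals $X$. I would first note that the inclusion $\catC_0 \hookrightarrow \catC$ is t-exact: right t-exactness follows from $\catC_0^{\leq 0} \subset \la \catC_0^{\leq 0} \ra = \catC^{\leq 0}$, and for left t-exactness, if $A \in \catC_0^{\geq 1}$ and $Z \in \catC_0^{\leq 0}$ then $\Map_\catC(Z,A) \cong \Map_{\catC_0}(Z,A)$ is contractible, placing $A$ in $(\catC_0^{\leq 0})^\perp = \catC^{>0}$. In particular $\catC_0^\heartsuit \hookrightarrow \catC^\heartsuit$ is a fully faithful exact embedding of abelian categories, so kernels, cokernels, and images between $\catC_0^\heartsuit$-objects agree in both hearts.

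I would then record two consequences of full faithfulness of $\catC_0 \hookrightarrow \catC$. First, if $W_1, W_2 \subset Y$ are subobjects in $\catC_0^\heartsuit$, then $W_1 + W_2$, computed as $\im(W_1 \oplus W_2 \to Y)$ in $\catC^\heartsuit$, coincides with the image formed in $\catC_0^\heartsuit$ and hence lies in $\catC_0^\heartsuit$. Second, for $A, B \in \catC_0^\heartsuit$, both $\Ext^1_{\catC^\heartsuit}(A,B)$ and $\Ext^1_{\catC_0^\heartsuit}(A,B)$ are canonically identified with $\Hom_\catC(A,B[1]) = \Hom_{\catC_0}(A,B[1])$, so any short exact sequence in $\catC^\heartsuit$ whose outer terms lie in $\catC_0^\heartsuit$ has middle term there as well. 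Combined with ACC for subobjects of $Y$ in $\catC_0^\heartsuit$, the first point produces the desired maximum $X^*$.

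For the contradiction step, assume $\bar X := X/X^* \neq 0$ and observe that it sits inside $\bar Y := Y/X^* \in \catC_0^\heartsuit$. For any $Z \in \catC_0^{\leq 0}$, t-structure orthogonality makes $\pi_i \Map_\catC(Z,\bar X) = \Hom(Z,\bar X[-i]) = 0$ for $i \geq 1$, so $\Map_\catC(Z,\bar X)$ is contractible precisely when $\Hom(Z,\bar X) = 0$; the identification $(\catC_0^{\leq 0})^\perp = \catC^{>0}$ then forces some nonzero $Z \to \bar X$, which factors through $Z' := \tau^{\geq 0} Z \in \catC_0^\heartsuit$. The image $W$ of $Z' \to \bar X$ agrees with the image of $Z' \to \bar Y$, so $W \in \catC_0^\heartsuit$ by the first consequence. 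Its preimage $\widetilde W \subset X$ is then an extension of $W$ by $X^*$ in $\catC^\heartsuit$, and the second consequence places $\widetilde W$ in $\catC_0^\heartsuit$. Since $W \neq 0$, $\widetilde W$ strictly contains $X^*$, contradicting maximality.

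The main subtlety is the ever-present risk of circularity: one cannot, for instance, conclude $\im(Z' \to X) \in \catC_0^\heartsuit$ directly, since this would pass through the subobject $\ker(Z' \to X) \subset Z'$ in $\catC^\heartsuit$ whose membership in $\catC_0^\heartsuit$ is precisely what the lemma asserts. The proof circumvents this by always forming images and extensions relative to an ambient object already known to be in $\catC_0^\heartsuit$ (namely $Y$, or the $\catC_0^\heartsuit$-quotient $\bar Y$), and by using the $\Ext^1$ identification to ensure that the extension $\widetilde W$ reconstructed inside $\catC^\heartsuit$ is genuinely a $\catC_0^\heartsuit$-extension rather than a larger $\catC^\heartsuit$-extension.
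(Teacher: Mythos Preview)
Your proof is correct and follows essentially the same approach as the paper: both use the characterization $\catC^{>0} = (\catC_0^{\leq 0})^\perp$ to produce a nonzero map from some $Z' \in \catC_0^\heartsuit$, then observe that its image lies in $\catC_0^\heartsuit$ because it coincides with the image computed inside the ambient $\catC_0^\heartsuit$-object $Y$ (or $\bar Y$), and finally invoke Noetherianity. The only difference is organizational --- the paper builds a nonstabilizing ascending chain $X_1 \subset X_2 \subset \cdots$ directly, while you first extract the maximal $\catC_0^\heartsuit$-subobject $X^*$ and then contradict its maximality --- and you make explicit (via t-exactness and the $\Ext^1$ identification) the extension-closure step that the paper uses implicitly.
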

\begin{proof}
The claim is trivial if $X \cong 0$, so assume $X \ncong 0$. We claim there exists a nonzero map $X' \to X$ with $X' \in \catC_0^{\heartsuit}$. It suffices to find a nonzero map $X' \to X$ with $X' \in \catC_0^{\leq 0}$, since $X \in \catC^{\geq 0}$ and thus $\tau^{\geq 0}(X') \to X$ is still nonzero by adjunction. If no such $X' \to X$ exists, then since $\catC^{> 0} = (\catC_0^{\leq 0})^\perp$ we have $X \in \catC^{> 0} \cap \catC^{\heartsuit}$, hence $X = 0$. 
	
	Let $X_1$ denote the image of $X'\to X$. It is nonzero by construction, and it belongs to $\catC_0^\heartsuit$ since it is isomorphic to the image of $X' \to Y$. Now consider the monomorphism $X/X_1 \into Y/X_1$. If $X \notin \catC_0^\heartsuit$ then $X/X_1 \notin \catC_0^\heartsuit$, and the same argument implies that $X/X_1$ contains a nonzero subobject which belongs to $\catC_0^{\heartsuit}$. Letting $X_2 \subset X$ denote its preimage and repeating, we get a nonstabilizing ascending chain $X_1 \subset X_2 \subset \cdots$ of subobjects of $Y$ in $\catC_0^\heartsuit$. This contradicts $Y$ being Noetherian, so we must have $X \in \catC_0^\heartsuit$. 
\end{proof}

\subsection{Local and Koszul t-structures}\label{sec:koszulity}

Given an affine group scheme $\H$ acting on a classical Noetherian scheme $X$, we consider the following class of t-structures. 

\begin{Definition}\label{def:localt}
	Let $\pi: Y \to X$ be a $\H$-equivariant morphism of Noetherian schemes with $X$ classical. A t-structure on $\Coh^\H(Y)$ is {\it local over $X$} if $- \otimes \pi^* (\cV)$ is t-exact for any locally free sheaf $\cV \in \Coh^\H(X)$.
\end{Definition}

Our terminology follows that used in the nonequivariant setting in \cite{Pol07}, the definition there agreeing with ours by Proposition \ref{prop:Utstructure}. Perverse t-structures are local (including the standard t-structure, corresponding to the trivial perversity), and at least in the smooth, nonequivariant case every local t-structure on $X$ itself is of this form \cite[Cor. 2.3.6]{Pol07}. 

Now suppose we have a central cocharacter $\eta: \C^\times \to \H$ whose action on $X$ is trivial. This induces a weight decomposition $\Coh^{\H}(X) = \bigoplus_{n \in \Z} \Coh^\H_n(X)$ as follows. We begin with evident weight decomposition $\Coh^{\H \times \C^\times}(X) = \bigoplus_{n \in \Z} \Coh_n^{\H \times \C^\times}(X)$, where $\Coh_n^{\H \times \C^\times}(X) \cong \Coh^{\H}(X)$ for all $n$. The multiplication homomorphism $\H \times \C^\times \to \H$ induces a map $X/(\H \times \C^\times) \to X/\H$, and pullback provides a conservative functor $\Coh^\H(X) \to \Coh^{\H \times \C^\times}(X)$. We then define $\Coh^\H_n(X) \subset \Coh^\H(X)$ as the preimage of $\Coh^{\H \times \C^\times}_n(X) \subset \Coh^{\H \times \C^\times}(X)$. 

The weight decomposition of $\Coh^{\H}(X) $ leads to the following variant of Definition \ref{def:localt}. 

\begin{Definition}\label{def:Koszult}
	Let $\pi: Y \to X$ be a $\H$-equivariant morphism of Noetherian schemes with $X$ classical. A t-structure on $\Coh^\H(Y)$ is {\it Koszul over $X$} if $- \otimes \pi^* (\cV)[-n]$ is t-exact for any $n \in \Z$ and any locally free sheaf $\cV \in \Coh^\H_n(X)$.
\end{Definition}

Local and Koszul t-structures are in correspondence via the following regrading procedure. 

\begin{Construction}\label{construct}
	The autoequivalence of $\Coh^\H(X)$ which acts by $[-n]$ on $\Coh_n^\H(X)$ identifies local t-structures with Koszul t-structures. 
\end{Construction}

We use subscripts as in $(\Coh^\H(Y)_K^{\leq 0}, \Coh^\H(Y)_K^{\geq 0})$ and $\Coh^\H(Y)_{\Kzl}^\heartsuit$ to indicate a Koszul t-structure. By Proposition \ref{prop:generatedtstructure} this extends to a t-structure on $\IndCoh^\H(Y)$ and $\QCoh^\H(Y)$ by taking $\IndCoh^\H(Y)^{\leq 0}_\Kzl$ and $\QCoh^\H(Y)^{\leq 0}_\Kzl$ to be $\la \Coh^\H(Y)^{\leq 0}_\Kzl \ra$. The former extension is compatible with filtered colimits, and from this and the fact that the t-structure restricts to $\Coh^\H(Y)$ it follows that
\begin{equation}\label{eq:ICKoszul}
	\IndCoh^\H(Y)^{\leq 0}_\Kzl \cong \Ind(\Coh^\H(Y)^{\leq 0}_\Kzl) \quad \IndCoh^\H(Y)^{> 0}_\Kzl \cong \Ind(\Coh^\H(Y)^{> 0}_\Kzl). 
\end{equation}
The extension to $\QCoh^\H(Y)$ is less well-behaved, but will still be useful in some constructions. 

We also refer to these extended t-structures as Koszul, as they still satisfy the condition of Definition \ref{def:Koszult}. We have $\QCoh^\H(Y)^{> 0}_\Kzl = (\Coh^\H(Y)^{\leq 0}_\Kzl)^\perp$, hence $- \otimes \pi^*(\cV)[n]$ is left t-exact for any locally free $\cV \in \Coh_n^{\H}(X)$ since its left adjoint $- \ot \pi^*(\cV^\vee)[-n]$ preserves $\Coh^\H(Y)_K^{< 0}$. That $- \otimes \pi^*(\cV)[n]$ is right t-exact follows since $- \otimes \pi^*(\cV^\vee)[-n]$ is also its right adjoint, and is left t-exact by the same argument. The same applies to $\IndCoh^\H(Y)$. 

Given a closed $\H$-equivariant subscheme $i: Z \to X$ and a complementary open  subscheme $j: U \to X$, we denote by $\pi_Z: Y_Z \to Z$ and $\pi_U: Y_U \to U$ the obvious base changes. Abusing notation we denote the base changes $i: Y_Z \to Y$ and $j: Y_U \to Y$ of $i$ and $j$ by the same letters. Our next goal is to show that a Koszul or local t-structure on $\Coh^\H(Y)$ induces one on $\Coh^\H(Y_Z)$ and $\Coh^\H(Y_U)$. This will require us to assume $X/\H$ has the resolution property, i.e. that any $\cF \in \Coh^\H(X)^\heartsuit$ can be written as the quotient of a locally free sheaf. 

\begin{Proposition}\label{prop:Ztstructure}
	Let $(\Coh^\H(Y)_K^{\le 0}, \Coh^\H(Y)_K^{\ge 0})$ be a t-structure which is Koszul over $X$, let $i: Z \to X$ be a closed $\H$-invariant subscheme, and suppose that $X/\H$ has the resolution property. Then there exists a unique t-structure on $\Coh^\H(Y_Z)$ such that $i_*$ is t-exact, and this t-structure is also Koszul over $X$. The same statements hold for local t-structures. 
\end{Proposition}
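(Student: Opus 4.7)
The plan is to define a candidate t-structure on $\Coh^\H(Y_Z)$ by declaring
\[ \Coh^\H(Y_Z)^{\leq 0} := \{\cF : i_*\cF \in \Coh^\H(Y)^{\leq 0}_K\}, \qquad \Coh^\H(Y_Z)^{\geq 0} := \{\cF : i_*\cF \in \Coh^\H(Y)^{\geq 0}_K\}, \]
and verify the t-structure axioms directly. Uniqueness is automatic from full faithfulness of $i_*$: any t-structure on $\Coh^\H(Y_Z)$ making $i_*$ t-exact is forced into this form. The orthogonality and shift axioms follow immediately from their counterparts on $\Coh^\H(Y)$ together with full faithfulness of $i_*$. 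The substance of the proof lies in producing truncation triangles, which is equivalent to showing that the essential image of $i_*$ in $\Coh^\H(Y)$ is stable under the Koszul truncation functors $\tau^{\leq 0}_K$ and $\tau^{> 0}_K$.

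To establish this stability, I would exploit the resolution property of $X/\H$. Since $\eta$ acts trivially on $X$, the ideal $\cI_Z$ has weight zero, so one may choose an $\H$-equivariant locally free sheaf $\cV$ on $X$ of weight zero with a surjection $\cV \onto \cI_Z$. The induced morphism $\pi^*\cV \to \cO_Y$ yields, for any $\cG \in \Coh^\H(Y)$, a natural map $\cG \otimes \pi^*\cV \to \cG$. Because $\cV$ has weight zero, the Koszul condition forces $-\otimes \pi^*\cV$ to be t-exact, so it commutes with $\tau^{\leq 0}_K$; naturality then identifies the Koszul truncation of this map with $\tau^{\leq 0}_K\cG \otimes \pi^*\cV \to \tau^{\leq 0}_K\cG$.

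The core observation is that if $\cG$ lies in the essential image of $i_*$, i.e.\ $\cG \otimes^L \cI_{Y_Z} \cong 0$, then the map $\cG \otimes \pi^*\cV \to \cG$ factors through the zero object and is thus nullhomotopic. By naturality the same holds on $\tau^{\leq 0}_K\cG$. Upgrading this nullity to the derived vanishing $\tau^{\leq 0}_K\cG \otimes^L \cI_{Y_Z} \cong 0$ requires resolving $\cO_Z$ by a complex $\cV_\bullet$ of weight-zero $\H$-equivariant locally free sheaves on $X$ (using the resolution property again) and propagating the vanishing through successive cofiber triangles, invoking t-exactness of each $-\otimes \pi^*\cV_i$ at each stage. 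Once the derived vanishing is established, $\tau^{\leq 0}_K\cG$ lies in the essential image of $i_*$, and the same reasoning handles $\tau^{> 0}_K\cG$; both truncations remain coherent since the Koszul t-structure on $\Coh^\H(Y)$ restricts to $\Coh^\H(Y)$.

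Finally, to confirm that the induced t-structure is Koszul over $X$, I would combine the projection formula $i_*(\cF \otimes \pi_Z^*\cV[-n]) \cong i_*\cF \otimes \pi^*\cV[-n]$ for locally free $\cV \in \Coh^\H_n(X)$ with t-exactness of the corresponding functor on $\Coh^\H(Y)$ and the t-exactness and conservativity (full faithfulness) of $i_*$; this forces $-\otimes \pi_Z^*\cV[-n]$ on $\Coh^\H(Y_Z)$ to be t-exact. The proof for local t-structures is obtained by setting every Koszul shift equal to zero, so weight-zero locally frees already suffice. The main obstacle is the derived upgrade: passing from nullity of a single multiplication map $\cG \otimes \pi^*\cV \to \cG$ after truncation to the derived vanishing $\tau^{\leq 0}_K\cG \otimes^L \cI_{Y_Z} \cong 0$, where the iterative resolution argument and careful control of the associated spectral sequence are precisely where the resolution property has teeth.
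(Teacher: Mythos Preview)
Your argument has a genuine gap: you repeatedly invoke full faithfulness of $i_*$, but $i_*: \Coh^\H(Y_Z) \to \Coh^\H(Y)$ is \emph{not} fully faithful in the derived sense. For the simplest closed immersion $i: \Spec k \hookrightarrow \A^1$ one has $\Hom_{D(k)}(k,k) = k$ while $\Hom_{D(k[x])}(k,k) = k \oplus k[-1]$. This undermines two load-bearing steps. First, your orthogonality argument for the candidate t-structure collapses: you cannot identify $\Hom_{Y_Z}(\cF,\cG[-1])$ with $\Hom_Y(i_*\cF, i_*\cG[-1])$. Second, and relatedly, your characterization of the essential image of $i_*$ as $\{\cG : \cG \otimes^L \cI_{Y_Z} \cong 0\}$ is incorrect. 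That condition is equivalent to the unit $\cG \to i_*i^*\cG$ being an isomorphism, which fails even for $\cG = i_*\cO_{Y_Z}$ (in the $\A^1$ example, $k \otimes^L_{k[x]} (x) \cong k \neq 0$). So the entire inductive truncation scheme, which aims to propagate this vanishing through a resolution of $\cO_Z$, is targeting the wrong criterion.

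The paper's proof sidesteps both problems by first working on $\QCoh$ rather than $\Coh$. There one uses the \emph{right} adjoint $i^!$ to $i_*$, and the abstract lifting criterion (Proposition~\ref{prop:induce}(2)) reduces existence of the t-structure to left t-exactness of $i_*i^! \cong \cHom(\pi^* i_*\cO_Z, -)$. This is exactly where the resolution property enters, via Lemma~\ref{lem:cHomlefttexact}: a weight-zero locally free resolution of $i_*\cO_Z$ shows this sheaf-Hom functor is left t-exact. The t-structure on $\QCoh^\H(Y_Z)$ then restricts to $\Coh^\H(Y_Z)$ by Proposition~\ref{prop:restrict}, using $i_*^{-1}(\Coh^\H(Y)) = \Coh^\H(Y_Z)$. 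Uniqueness follows from conservativity of $i_*$ (not full faithfulness), and your projection formula argument for the Koszul property is correct and matches the paper.
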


This result is an extension of \cite[Thm. 2.3.5]{Pol07}.
The proof will use the following Lemma, where $\cHom(\pi^*(\cF),-)$ denotes both the usual functor on $\QCoh^\H(Y)$ and the unique continuous functor on $\IndCoh^\H(Y)$
with the same restriction to $\IndCoh^\H(Y)^+ \cong \QCoh^\H(Y)^+$.  

\begin{Lemma}\label{lem:cHomlefttexact}
	Let $(\Coh^\H(Y)_K^{\le 0}, \Coh^\H(Y)_K^{\ge 0})$ be a t-structure which is Koszul over $X$, let $\cF \in \Coh^\H_{\,0}(X)^\heartsuit$, and suppose that $X/\H$ has the resolution property. Then $\cHom(\pi^*(\cF),-)$ is left t-exact for the induced t-structures on $\QCoh^\H(Y)$ and $\IndCoh^\H(Y)$. The same holds for a local t-structure and any $\cF \in \Coh^\H(X)^\heartsuit$. 
\end{Lemma}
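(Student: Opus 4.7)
The plan is to use tensor-$\cHom$ adjunction to convert the claim into right t-exactness of $-\otimes \pi^*\cF$, then establish this via a resolution of $\cF$ by locally free sheaves together with the Koszul (or local) hypothesis.

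First, $\cHom(\pi^*\cF, -)$ is right adjoint to $-\otimes \pi^*\cF$, so left t-exactness of $\cHom(\pi^*\cF, -)$ is equivalent to right t-exactness of $-\otimes \pi^*\cF$. Since $\QCoh^\H(Y)_K^{\leq 0}$ is generated by $\Coh^\H(Y)_K^{\leq 0}$ under colimits and extensions by Proposition \ref{prop:generatedtstructure}, it suffices to prove $\cH \otimes \pi^*\cF \in \QCoh^\H(Y)_K^{\leq 0}$ for every $\cH \in \Coh^\H(Y)_K^{\leq 0}$. Granting the $\QCoh$ statement, the $\IndCoh$ version follows by writing any $\cG \in \IndCoh^\H(Y)_K^{\geq 0} \cong \Ind(\Coh^\H(Y)_K^{\geq 0})$ as a filtered colimit of objects of $\Coh^\H(Y)_K^{\geq 0} \subset \QCoh^+$, applying the $\QCoh$ version termwise, and using continuity of the extended $\cHom$-functor together with the fact that $\IndCoh^\H(Y)_K^{\geq 0}$ is closed under filtered colimits.

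Second, by the resolution property for $X/\H$, I would construct a (possibly infinite) resolution
\begin{equation*}
\cdots \to \cV^{-2} \to \cV^{-1} \to \cV^0 \to \cF \to 0
\end{equation*}
in $\Coh^\H(X)^\heartsuit$ with each $\cV^p$ locally free. In the Koszul case, $\cF$ has weight zero, and each $\cV^p$ can inductively be chosen of weight zero as well: given any locally free surjection onto the weight-zero $p$-th syzygy, passing to its weight-zero direct summand preserves both surjectivity and local freeness (direct summands of locally free coherent sheaves on a Noetherian scheme are locally free). Since $\cV^p$ is locally free, $\pi^*\cV^p$ is locally free hence flat on $Y$, so the chain complex $\pi^*\cV^\bullet$ represents $\pi^*\cF$ in the derived category.

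Third, for $\cH \in \Coh^\H(Y)_K^{\leq 0}$, I would analyze $\cH \otimes \pi^*\cF \simeq \cH \otimes \pi^*\cV^\bullet$ via the spectral sequence of the column filtration,
\begin{equation*}
E_1^{p,q} = H^q_K(\cH \otimes \pi^*\cV^p) \Longrightarrow H^{p+q}_K(\cH \otimes \pi^*\cF),
\end{equation*}
where cohomology is taken in the Koszul t-structure. The $E_1^{p,q}$ term vanishes for $p > 0$ since $\cV^p = 0$, and for $p \leq 0$ and $q > 0$ because the Koszul hypothesis applied to the locally free weight-zero sheaf $\cV^p$ gives $\cH \otimes \pi^*\cV^p \in \Coh^\H(Y)_K^{\leq 0}$ (in the local variant of the lemma one applies the corresponding hypothesis with no weight restriction). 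Hence $E_1^{p,q} = 0$ whenever $p + q > 0$; since each diagonal $p + q = n$ meets only finitely many terms with $p, q \leq 0$, the spectral sequence converges and delivers $H^n_K(\cH \otimes \pi^*\cF) = 0$ for $n > 0$, as required. The main technical obstacle lies in justifying the spectral sequence when $\cV^\bullet$ is unbounded below. Combinatorial convergence is automatic from the third-quadrant support of $E_1$, but one must realize the column filtration appropriately inside the stable $\infty$-category $\QCoh^\H(Y)$ --- for example, by passing through a K-flat replacement of $\pi^*\cF$ or by taking the colimit of the natural Postnikov tower on the brutal truncations $\sigma^{\geq -n}\cV^\bullet$. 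This bookkeeping step is the principal thing to verify carefully.
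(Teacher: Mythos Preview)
Your proposal is correct and follows essentially the same route as the paper: reduce to right t-exactness of $-\otimes\pi^*\cF$ via adjunction, resolve $\cF$ by weight-zero locally free sheaves, control the unbounded resolution by its stupid truncations, and deduce the $\IndCoh$ case from continuity. The paper bypasses the spectral-sequence packaging by writing $\cF\cong\colim\cF_n$ (stupid truncations, using left separatedness of the standard t-structure on $\QCoh^\H(X)$), showing inductively via the triangles $\cF_{n-1}\to\cF_n\to\cV_n[n]$ that each $-\otimes\pi^*\cF_n$ is right t-exact, and then using that $\QCoh^\H(Y)^{\le 0}_K$ is closed under colimits --- precisely your second suggested fix for the convergence issue.
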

\begin{proof}
	For $\QCoh^\H(Y)$ we may show the left adjoint $- \ot \pi^*(\cF)$ is right t-exact. By hypothesis $\cF$ has a resolution whose degree $-n$ term $\cV_n$ is locally free and of weight zero. Writing $\cF_n \in \QCoh^\H(X)^{\geq - n}$ for its stupid truncation, we have $\cF \cong \colim \cF_n$ since the standard t-structure is left separated. It thus suffices to show $- \ot \pi^* (\cF_n)$ is right t-exact for all~$n$. But $- \ot \pi^*(\cV_n)[n]$ is right t-exact for all $n$ by hypothesis. Given the exact triangles $\cF_{n-1} \to \cF_n \to \cV_n[n]$, the claim now follows by induction on $n$. 
	
	Since $\cHom(\pi^*(\cF),-)$ is continuous on $\IndCoh^\H(Y)$ and $\IndCoh(Y)^{\geq 0}_\Kzl \cong \Ind(\Coh(Y)^{\geq 0}_\Kzl)$ is closed under filtered colimits, it suffices to show $\cHom(\pi^*(\cF),\cF') \in \IndCoh(Y)^{\geq 0}_\Kzl$ for any $\cF' \in \Coh(Y)^{\geq 0}_\Kzl$. This is equivalent to $\Map_{\IndCoh^\H(Y)}(\cF'', \cHom(\pi^*(\cF),\cF'))$ being contractible for any $\cF'' \in \Coh(Y)^{< 0}_\Kzl$. But we have shown this for the corresponding mapping space in $\QCoh^\H(Y)$, and these are are equivalent since $\cF'$ and thus $\cHom(\pi^*(\cF),\cF')$ are bounded below for the standard t-structure. The local case follows the same way. 
\end{proof}

\begin{proof}[Proof of Prop. \ref{prop:Ztstructure}]
	We first claim $\QCoh^\H(Y_Z)^{\leq 0}_\Kzl := i_*^{-1} \QCoh^\H(Y)^{\leq 0}_\Kzl$ defines a t-structure on $\QCoh^\H(Y_Z)$ for which $i_*$ is t-exact. By Proposition \ref{prop:induce} it suffices to show $i_* i^!: \QCoh^\H(Y) \to \QCoh^\H(Y)$ is left t-exact. This follows from Lemma \ref{lem:cHomlefttexact} since $i_* i^! \cong \cHom(i_*(\cO_{Y_Z}),-) \cong \cHom(\pi^* i_* (\cO_Y),-)$. 
	
	This t-structure on $\QCoh^\H(Y_Z)$ restricts to $\Coh^\H(Y_Z)$ by Proposition \ref{prop:restrict}, given the fact that $i_*^{-1}(\Coh(Y)) = \Coh(Y_Z)$ \cite[Prop. 5.6.1.1]{LurSAG}, hence $i_*^{-1}(\Coh^\H(Y)) = \Coh^\H(Y_Z)$. We caution that $\Coh(Y_Z)$ is not necessarily the preimage of $\Coh(Y)$ in $\IndCoh(Y_Z)$. The restricted t-structure is uniquely determined by t-exactness of $i_*$ since $i_*$ is conservative on $\Coh^\H(Y_Z)$, and it is Koszul over $X$ by the projection formula.  The local case follows the same way. 
\end{proof}

\begin{Proposition}\label{prop:Utstructure}
	Let $(\Coh^\H(Y)_K^{\le 0}, \Coh^\H(Y)_K^{\ge 0})$ be a t-structure which is Koszul over $X$, let $j: U \to X$ be an open $\H$-invariant subscheme, and suppose that $X/\H$ has the resolution property. Then there exists a unique t-structure on $\Coh^\H(Y_U)$ such that $j^*$ is t-exact, and this t-structure is also Koszul over $X$. The same statements hold for local t-structures. 
\end{Proposition}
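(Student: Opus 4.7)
The approach is to closely mirror the proof of Proposition \ref{prop:Ztstructure}, with the closed embedding $i$ replaced by the open immersion $j$. I will apply Proposition \ref{prop:induce}(1) to the adjunction
\[
  j^*: \QCoh^\H(Y) \leftrightarrows \QCoh^\H(Y_U): j_*
\]
with $\catC' = \Coh^\H(Y)^{\le 0}_\Kzl$. This will produce a t-structure on $\QCoh^\H(Y_U)$ with $\QCoh^\H(Y_U)^{\le 0}_\Kzl := \la j^*(\Coh^\H(Y)^{\le 0}_\Kzl) \ra$ and with $j^*$ t-exact, provided I verify that $j_* j^*$ is left t-exact on $\QCoh^\H(Y)$ for the Koszul t-structure.

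This left t-exactness of $j_* j^*$ is the main technical step and where I expect the principal difficulty. Using flat base change for the flat open immersion $j: U \to X$ together with the projection formula, one has a natural isomorphism $j_* j^*(\cF) \cong \cF \otimes \pi^*(j_* \cO_U)$. The sheaf $j_* \cO_U$ is quasi-coherent on $X$, flat over $\cO_X$ (as localization is flat), and of $\eta$-weight zero. Unlike $i_* \cO_{Y_Z}$ in the closed case, it is typically not coherent, so Lemma \ref{lem:cHomlefttexact} does not apply to it directly. The plan is to write $j_* \cO_U$ as a filtered colimit of its coherent, $\H$-equivariant, weight-zero subsheaves $\cE_\al$ (using Noetherianity of $X$ and the resolution property) and then combine two ingredients: Lemma \ref{lem:cHomlefttexact} applied to the right adjoint of each $- \otimes \pi^*(\cE_\al)$ gives right t-exactness of the colimit functor, while flatness of $j_* \cO_U$ upgrades this to t-exactness via the observation that tensoring with a weight-zero flat sheaf pulled back from $X$ commutes with the weight decomposition and hence, by Construction \ref{construct}, reduces Koszul-exactness to standard-exactness on each weight component. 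Working in $\IndCoh^\H(Y)$, where the Koszul t-structure is compatible with filtered colimits by~(\ref{eq:ICKoszul}), will help keep the colimit argument clean before transferring back to $\QCoh^\H(Y)$.

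Once this is established, the desired t-structure on $\QCoh^\H(Y_U)$ exists with $j^*$ t-exact. To restrict it to $\Coh^\H(Y_U)$ I will argue directly, as Proposition \ref{prop:restrict} does not apply in the form used for the closed case: any $\cF \in \Coh^\H(Y_U)$ admits by Noetherianity a coherent $\H$-equivariant extension $\tilde\cF \in \Coh^\H(Y)$; since the Koszul t-structure on $\QCoh^\H(Y)$ restricts to $\Coh^\H(Y)$, the truncation $\tau^{\le 0}_\Kzl \tilde\cF$ remains coherent; t-exactness of $j^*$ combined with its preservation of coherence then gives $\tau^{\le 0}_\Kzl(\cF) \cong j^*(\tau^{\le 0}_\Kzl \tilde\cF) \in \Coh^\H(Y_U)$. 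Uniqueness follows because the same extension-and-truncation recipe necessarily computes the truncation in any t-structure on $\Coh^\H(Y_U)$ for which $j^*$ is t-exact, so both halves of the t-structure are forced. The Koszul-over-$X$ property of the restricted t-structure is immediate from the flat base change isomorphism $j^* \pi^* \cong \pi_U^* j^*$ together with the already-verified t-exactness of $j^*$, and the local case of the proposition is proved by the same argument with all weight-related considerations and references to Construction \ref{construct} dropped.
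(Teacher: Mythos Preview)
Your strategy diverges from the paper at the crucial step of showing that $j_*j^*$ is left t-exact, and the argument you sketch does not go through.

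The central problem is your claim that $j_*\cO_U$ is flat. This is only true when $Z = X \smallsetminus U$ is a divisor. If $Z$ has codimension $\geq 2$ (say $X = \A^2$, $Z = \{0\}$), then the derived pushforward $Rj_*\cO_U$ has nonvanishing higher cohomology, and tensoring with $\pi^*(Rj_*\cO_U)$ is not exact even for the standard t-structure. If instead you meant the underived pushforward, the derived projection formula you invoke fails. So the flatness-based upgrade from right t-exactness to full t-exactness breaks down in general. There is a second, independent gap: your appeal to Construction~\ref{construct} and a ``weight decomposition'' does not apply on $Y$. That construction is a statement about $\Coh^\H(X)$, where $\eta$ acts trivially; on $Y$ the $\eta$-action is typically nontrivial (this is the whole point of the setup), so there is no weight decomposition of $\Coh^\H(Y)$ and no regrading autoequivalence available there.

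The paper takes a completely different route to left t-exactness: rather than analyzing $j_*\cO_U$, it uses the local-cohomology triangle
\[
\colim_{Z'} i_{Z'*}\, i_{Z'}^!(\cF) \;\to\; \cF \;\to\; j_*j^*(\cF)
\]
in $\IndCoh^\H(Y)$, the colimit running over $\H$-invariant thickenings $Z'$ of $Z$. Proposition~\ref{prop:Ztstructure} gives each $i_{Z'*} i_{Z'}^!(\cF) \in \IndCoh^\H(Y)^{\ge 0}_K$, and Lemma~\ref{lem:i!i*v2} shows that every $\cH^0_K(i_{Z'*} i_{Z'}^!(\cF)) \to \cH^0_K(\cF)$ is a monomorphism; since the t-structure is compatible with filtered colimits, the long exact sequence then forces $j_*j^*(\cF) \in \IndCoh^\H(Y)^{\ge 0}_K$. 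This reduction to the closed case is the missing idea in your proposal. Your argument for restricting the t-structure to $\Coh^\H(Y_U)$ and for the Koszul property is essentially correct and matches the paper (which phrases the extension step via retracts of $j^*(\tilde\cF)$ rather than exact extensions).
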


\begin{Lemma}\label{lem:i!i*v2}
	Under the hypotheses of Proposition \ref{prop:Ztstructure}, the natural map $\cH_K^0(i_*i^! \cF) \to \cH^0_\Kzl(\cF)$ is a monomorphism for any for any $\cF \in \IndCoh^\H(Y)^{\geq 0}_K$. 
\end{Lemma}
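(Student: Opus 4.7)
The strategy is to identify the cofiber of the counit $i_*i^!\cF \to \cF$ and show that it lies in $\IndCoh^\H(Y)^{\geq 0}_\Kzl$, so that the long exact sequence in Koszul cohomology forces $\cH^0_\Kzl(i_*i^!\cF) \to \cH^0_\Kzl(\cF)$ to be a monomorphism.

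First I would use the identification (already invoked in the proof of Lemma \ref{lem:cHomlefttexact}) that $i_*i^!\cF \cong \cHom(i_*\cO_{Y_Z},\cF)$, with the counit induced by the canonical map $\cO_Y \to i_*\cO_{Y_Z}$. Letting $\cJ$ denote the fiber of this map and applying $\cHom(-,\cF)$ to the triangle $\cJ \to \cO_Y \to i_*\cO_{Y_Z}$ produces the cofiber triangle
$$i_*i^!\cF \to \cF \to \cHom(\cJ,\cF).$$
Base change for the closed immersion $Z \to X$ along $\pi: Y \to X$ gives $i_*\cO_{Y_Z} \cong \pi^*(i_*\cO_Z)$, and since $\pi^*$ preserves fibers we obtain $\cJ \cong \pi^*\cJ_Z$, where $\cJ_Z$ is the ideal sheaf of $Z$ in $X$.

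Since $\eta$ acts trivially on $X$, every $\H$-equivariant coherent sheaf on $X$ is concentrated in weight zero; in particular $\cJ_Z \in \Coh^\H_{\,0}(X)^\heartsuit$. Applying Lemma \ref{lem:cHomlefttexact} to $\cJ_Z$ therefore shows that $\cHom(\pi^*\cJ_Z, -)$ is left t-exact for the Koszul t-structure on $\IndCoh^\H(Y)$, so $\cHom(\cJ,\cF) \in \IndCoh^\H(Y)^{\geq 0}_\Kzl$ whenever $\cF$ is. The relevant portion of the long exact sequence
$$\cdots \to \cH^{-1}_\Kzl(\cHom(\cJ,\cF)) \to \cH^0_\Kzl(i_*i^!\cF) \to \cH^0_\Kzl(\cF) \to \cdots$$
then has vanishing left-hand term, which is the desired monomorphism.

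The only input beyond Lemma \ref{lem:cHomlefttexact} is the base change identification $\pi^*(i_*\cO_Z) \cong i_*\cO_{Y_Z}$, so the one point to check is that $Y_Z = Y \times_X Z$ is interpreted as a derived fiber product, so that this identification holds without any flatness hypothesis on $\pi$. Given the derived-geometric conventions already in force in the paper this should be immediate, but it is the only step that would require a moment's care.
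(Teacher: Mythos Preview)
Your approach is essentially the same as the paper's: identify the cofiber of the counit $i_*i^!\cF \to \cF$ with $\cHom(\pi^*\cI,\cF)$ for $\cI$ the ideal sheaf of $Z$ in $X$, apply Lemma~\ref{lem:cHomlefttexact} to see this cofiber lies in $\IndCoh^\H(Y)^{\geq 0}_\Kzl$, and read off the monomorphism from the long exact sequence.

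One correction to your justification: it is \emph{not} true that every $\H$-equivariant coherent sheaf on $X$ has $\eta$-weight zero. The action of $\eta$ on the scheme $X$ being trivial does not constrain the equivariant structure --- for instance, $\cO_X$ twisted by any character of $\H$ that restricts nontrivially to $\eta$ gives a sheaf of nonzero weight. What you actually need (and what the paper asserts directly) is that $\cO_X$ and $i_*\cO_Z$, equipped with their \emph{canonical} equivariant structures, lie in $\Coh^\H_0(X)^\heartsuit$; the fiber $\cJ_Z$ then inherits weight zero. With this fix your argument is complete and matches the paper's.
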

\begin{proof}
	Consider the exact triangle $\cI \to \cO_X \to i_*(\cO_Z)$ in $\Coh^\H_{\,0}(X)$. The counit $i_* i^!(\cF) \to \cF$ is equivalently the first map in the associated triangle
	$$ \cHom(\pi^*i_*(\cO_Z), \cF) \to \cHom(\pi^*(\cO_X), \cF) \to \cHom(\pi^*(\cI), \cF) $$ in $\IndCoh^\H(Y)$. 
	By Lemma \ref{lem:cHomlefttexact} all three terms belong to $\IndCoh^\H(Y)^{\geq 0}_\Kzl$, hence it follows from the associated long exact sequence in $\IndCoh^\H(Y)^\heartsuit_\Kzl$ that $\cH_\Kzl^0(i_*i^! (\cF)) \to \cH_\Kzl^0(\cF)$ is a monomorphism. 
\end{proof}

\begin{proof}[Proof of Prop. \ref{prop:Utstructure}]
	We first claim $\IndCoh(Y_U)^{\leq 0}_\Kzl := \la j^* \Coh^\H(Y)^{\leq 0}_\Kzl \ra$ defines a t-structure on $\IndCoh^\H(Y_U)$ for which $j^*$ is t-exact. By Proposition \ref{prop:induce} it suffices to show $j_* j^*$ is left t-exact. Since $j_* j^*$ is continuous and $\IndCoh^\H(Y)^{\ge 0}_K \cong \Ind(\Coh^\H(Y)^{\geq 0}_K)$ is closed under filtered colimits, it further suffices to show $j_* j^*(\cF) \in \IndCoh^\H(Y)^{\ge 0}_K$ for any $\cF \in \Coh^\H(Y)^{\geq 0}_K$. 
	
	Consider the exact triangle
	\begin{equation}\label{eq:ijtriangle}
		\colim_{Z} i_{Z*} i_Z^! (\cF) \to \cF \to j_* j^* (\cF)
	\end{equation}
	in $\IndCoh^\H(Y)$, where the colimit is over all $\H$-invariant closed subschemes $i_Z: Z \to X$ with set-theoretic support $X \smallsetminus U$ (again we write $i_Z$ for the base change $Y_Z \to Y$). Note that exactness of (\ref{eq:ijtriangle}) follows from its exactness in the nonequivariant setting \cite[Prop. 7.4.5]{GR14}, since all terms belong to $\IndCoh^\H(Y)^+$ and the forgetful functor is conservative on $\IndCoh^\H(Y)^+$ (and among closed subschemes supported on $X \smallsetminus U$, the $\H$-invariant ones are left cofinal). It follows from Proposition \ref{prop:Ztstructure} that each $i_{Z*} i_Z^! (\cF)$ belongs to $ \IndCoh^\H(Y)^{\ge 0}_K$, and by Lemma \ref{lem:i!i*v2} each $\cH^0_K(i_{Z*} i_Z^! (\cF)) \to \cH^0_K(\cF)$ is a monomorphism. But $\cH^0_K(\colim i_{Z*} i_Z^! (\cF)) \to \cH^0_K(\cF)$ is then a monomorphism, since the t-structure is compatible with filtered colimits, hence $\cH^0_\Kzl$ is continuous and we have $\cH^0_K(\colim i_{Z*} i_Z^! (\cF)) \cong \colim \cH^0_K(i_{Z*} i_Z^! (\cF))$. It now follows from the long exact sequence associated to (\ref{eq:ijtriangle}) that $j_* j^*(\cF) \in \IndCoh^\H(Y)^{\ge 0}_K$. 
	
	Finally, we show that this t-structure restricts to $\Coh^\H(Y_U)$. Given $\cF \in \Coh^\H(Y_U)$, it follows from $X$ being Noetherian, $\H$ being affine, and $\cF \cong j^*j_*(\cF)$ that
	we can find  $\tcF \in \Coh^\H(Y)$ so that $\cF$ is a retract of $j^* (\tcF)$ \cite[Prop. 9.5.2.3]{LurSAG}. Then $\tau^{\geq 0}_\Kzl \cF$ is a retract of $\tau^{\geq 0}_\Kzl j^* (\tcF) \cong j^* \tau^{\geq 0}_\Kzl (\tcF)$, and the claim follows since $\Coh^\H(Y)^{\geq 0}_\Kzl$ is closed under retracts. A similar argument shows this t-structure is Koszul over $X$, and the local case is the same. 
\end{proof}

\subsection{Koszul t-structures on bundles}\label{sec:koszullift}

In this section we prove the following result for lifting a Koszul t-structure on $X$ to a $\H$-equivariant vector bundle $\pi: V \to X$ on which 
$\C^\times$ acts with weight one. This is the specialization of Theorem \ref{thm:koszul} in the case $W = V_1 = V_2$. 

\begin{Proposition}\label{prop:koszul-t-bundles}
	Given a bounded, Noetherian, Koszul t-structure on $\Coh^{\H}(X)$, there exists a unique t-structure on $\Coh^{\H}(V)$ such that the pullback
	$$\sigma^*: \Coh^{\H}(V) \to \Coh^{\H}(X)$$
along the zero section $\sigma: X \to V$ is t-exact. This t-structure is bounded, Noetherian, and Koszul over $X$, and is Artinian if the t-structure on $\Coh^\H(X)$ is. 
\end{Proposition}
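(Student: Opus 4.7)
My strategy is to first construct the t-structure on the ambient presentable category $\QCoh^\H(V)$ using Proposition~\ref{prop:induce}(2), and then restrict to $\Coh^\H(V)$ using Proposition~\ref{prop:restrict}, following the template of Propositions~\ref{prop:Ztstructure} and~\ref{prop:Utstructure}.

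First extend the Koszul t-structure from $\Coh^\H(X)$ to $\QCoh^\H(X)$ as in Section~\ref{sec:koszulity}, and consider the adjunction $\sigma^* : \QCoh^\H(V) \rightleftarrows \QCoh^\H(X) : \sigma_*$. The key input is the Koszul resolution of $\cO_X$ over $\cO_V$, which computes
\[
\sigma^*\sigma_*(\cG) \;\cong\; \cG \otimes_{\cO_X} \bigoplus_{n \geq 0} (\wedge^n V^\vee)[n].
\]
Since $\eta$ acts with weight one on $V$, each $\wedge^n V^\vee$ is a locally free sheaf on $X$ of $\C^\times$-weight $-n$, so by Definition~\ref{def:Koszult} each functor $-\otimes\pi^*(\wedge^n V^\vee)[n]$ is Koszul-t-exact; hence so is $\sigma^*\sigma_*$. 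In particular $\sigma^*\sigma_*$ is left t-exact, so Proposition~\ref{prop:induce}(2) yields a t-structure on $\QCoh^\H(V)$ with $\QCoh^\H(V)^{\leq 0}_K := (\sigma^*)^{-1}\QCoh^\H(X)^{\leq 0}_K$ making $\sigma^*$ t-exact. Koszulity of the resulting t-structure over $X$ follows from the projection formula together with the Koszulity of the t-structure on~$X$.

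To restrict this t-structure to $\Coh^\H(V)$, apply Proposition~\ref{prop:restrict} with $\Psi = \sigma^*$, $\catC_0 = \Coh^\H(V)$, $\catD_0 = \Coh^\H(X)$. The nontrivial input is condition~(1): if $\cF \in \QCoh^\H(V)^{\leq 0}_K$ and $\sigma^*\cF \in \Coh^\H(X)$, then $\cF \in \Coh^\H(V)$. This is a derived graded Nakayama statement, made available by the fact that $\eta$ acts with weight one on the fibers of $\pi$, so $\cO_V$ is generated over $\cO_X$ by elements of strictly negative $\C^\times$-weight. Concretely, coherence and boundedness of $\sigma^*\cF$ bound the weights of $\cF$ above, and the Koszul resolution $\sigma^*\cF \cong \cF \otimes_V \wedge^\bullet V^\vee$ then lets one read off the weight components of $\cF$ from those of $\sigma^*\cF$, yielding coherence of the $\cF_n$ on $X$ and boundedness of the cohomological amplitude of $\cF$ on~$V$. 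This is the main obstacle of the proof.

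Once the t-structure on $\Coh^\H(V)$ is in hand, uniqueness is automatic: $\sigma^*$ is conservative on $\Coh^\H(V)$ (again by graded Nakayama together with the contraction of $V$ onto $X$ by $\eta$), so any t-structure with $\sigma^*$ t-exact agrees with the one constructed. Boundedness, the Noetherian property, and (given the corresponding hypothesis on $X$) the Artinian property of the lifted t-structure then follow from Proposition~\ref{prop:conservative} applied to the conservative t-exact functor $\sigma^* : \Coh^\H(V) \to \Coh^\H(X)$, while the Koszulity over $X$ has already been noted.
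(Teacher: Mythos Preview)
Your approach via Proposition~\ref{prop:induce}(2) with $\Phi = \sigma^*$ is natural, but the restriction step has a genuine gap. Condition~(1) of Proposition~\ref{prop:restrict} fails for the t-structure you build: there exist non-coherent $\cF \in \QCoh^\H(V)^{\leq 0}_K = (\sigma^*)^{-1}\QCoh^\H(X)^{\leq 0}_K$ with $\sigma^*\cF$ coherent. Take $X=\pt$, $V=\A^1$, $\H=\C^\times$ acting with weight one, and $\cF = \C[x,x^{-1}]$ as a graded $\C[x]$-module. Then $\sigma^*\cF \cong [\C[x,x^{-1}] \xrightarrow{x} \C[x,x^{-1}]] \cong 0$ since $x$ is invertible, so $\sigma^*\cF$ is trivially coherent and $\cF \in (\sigma^*)^{-1}\QCoh^\H(X)^{\leq 0}_K$, yet $\cF$ is not coherent. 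Your ``graded Nakayama'' sketch cannot repair this: Nakayama-type arguments require the weights of $\cF$ to be bounded above to get the induction started, and nothing in the hypothesis $\cF \in (\sigma^*)^{-1}\QCoh^\H(X)^{\leq 0}_K$ provides that bound. Indeed $\C[x,x^{-1}]$ is precisely the standard obstruction to graded Nakayama.

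The paper avoids this by constructing the t-structure on $\QCoh^\H(V)$ via $\pi^*$ rather than $\sigma^*$ (Proposition~\ref{prop:induce}(1) with $\catC' = \Coh^\H(X)^{\leq 0}_K$), which yields a strictly smaller aisle $\langle\pi^*\Coh^\H(X)^{\leq 0}_K\rangle$. It then does \emph{not} appeal to Proposition~\ref{prop:restrict}; instead it argues directly that for $\cF \in \Coh^\H(V)$ the top cohomology $\cH^b_K(\cF)$ is coherent, by producing a nonzero coherent subobject $\cH^0_K(\pi^*\pi_*\cF') \hookrightarrow \cF'$ and iterating. The crucial ingredient is Lemma~\ref{lem:noetherian-extend}, which is where the Noetherian hypothesis on $\Coh^\H(X)^\heartsuit_K$ is actually used: it guarantees this ascending chain of coherent subobjects terminates. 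The paper is also careful that $\sigma^*$ is only known to be conservative on $\Coh^\H(V)$, not on $\QCoh^\H(V)$, and treats the possibility $\Coh^\H(V)^\perp \neq 0$ explicitly (Lemma~\ref{lem:koszulconserv}). Your argument implicitly assumes conservativity of $\sigma^*$ on the larger category, which need not hold when $\H$ is of infinite type---exactly the case of interest.
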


We write $m$ for the rank of $V$ and $\cV \in \Coh^\H_1(X)$ for its sheaf of sections. An essential role is played by the Koszul resolution of $\sigma_*(\cO_X)$. Its degree $-\ell$ term is $\pi^*(\Lambda^\ell(\cV^\vee))$, and we write $K_\ell \in \Coh^\H(V)$ for its stupid truncation to degrees~$\geq - \ell$. In particular, we have $K_0 \cong \cO_V$, $K_m \cong \sigma_*(\cO_X)$, and the cone over $K_{\ell -1} \to K_\ell$ is $\pi^*(\Lambda^\ell(\cV^\vee))[\ell]$. 

We begin by inducing a t-structure on $\QCoh^\H(V)$ for which $\pi^*$ is t-exact and $\pi_*$ left t-exact. This is done using Proposition \ref{prop:induce} with $\Phi = \pi^*: \QCoh^{\H}(X) \to \QCoh^\H(V)$ and $\catC' = \Coh^\H(X)^{\le 0}_K$. The composition $\Phi^R \Phi \cong \pi_* \pi^* \cong - \otimes \pi_*(\O_V)$ is left t-exact since $\pi_*(\O_V) \cong \bigoplus_\ell \Sym^\ell (\cV^\vee)$ and since the t-structure on $\Coh^\H(X)$ is Koszul. Explicitly, the resulting t-structure is given by
$$ \QCoh^\H(V)^{\leq 0}_{\Kzl} := \la \pi^*\Coh^\H(X)^{\leq 0}_{\Kzl} \ra, \quad \QCoh^\H(V)^{\geq 0}_{\Kzl} := (\pi^*\Coh^\H(X)^{< 0}_{\Kzl})^\perp. $$ 

\begin{Lemma}\label{lem:koszultexists}
The functors $\sigma^*$ and $\sigma_*$ are t-exact with respect to the Koszul t-structures on $\QCoh^\H(X)$ and $\QCoh^\H(V)$.
\end{Lemma}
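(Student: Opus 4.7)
The plan is to establish t-exactness of $\sigma^*$ and $\sigma_*$ via two complementary arguments: an easy adjunction argument handles the right t-exactness of $\sigma^*$ and hence the left t-exactness of $\sigma_*$, while an inductive argument based on the Koszul resolution of $\sigma_*(\cO_X)$ handles the remaining two directions.

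For the first pair, I would unfold the definition $\QCoh^\H(V)^{\leq 0}_K := \langle \pi^*\Coh^\H(X)^{\leq 0}_K \rangle$. Since $\sigma^*$ preserves colimits and extensions and satisfies $\sigma^*\pi^* \cong \id$, it sends the generating set into $\QCoh^\H(X)^{\leq 0}_K$ and is therefore right t-exact on all of $\QCoh^\H(V)^{\leq 0}_K$. Its right adjoint $\sigma_*$ is then automatically left t-exact.

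For the remaining two directions, I would work from the Koszul resolution $K_\bullet$ of $\sigma_*(\cO_X)$, exploiting the exact triangles $K_{\ell-1} \to K_\ell \to \pi^*(\Lambda^\ell \cV^\vee)[\ell]$ with $K_0 \cong \cO_V$ and $K_m \cong \sigma_*(\cO_X)$. To see $\sigma_*$ is right t-exact, take $\cF \in \QCoh^\H(X)^{\leq 0}_K$ and use the projection formula to identify $\sigma_*(\cF) \cong \pi^*(\cF) \otimes \sigma_*(\cO_X)$. Then induct on $\ell$ to show $\pi^*(\cF) \otimes K_\ell \in \QCoh^\H(V)^{\leq 0}_K$: the base case uses t-exactness of $\pi^*$, while the inductive step uses that $- \otimes \pi^*(\Lambda^\ell \cV^\vee)[\ell]$ is t-exact, which is the Koszul property of the extended t-structure (noted after Construction \ref{construct}). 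To see $\sigma^*$ is left t-exact, I would observe that $\sigma_*$, being now t-exact and fully faithful (hence conservative), reduces the claim to verifying that $\sigma_*\sigma^*\cF \cong \cF \otimes \sigma_*(\cO_X)$ lies in $\QCoh^\H(V)^{\geq 0}_K$ whenever $\cF$ does; this is the same induction with the inequalities reversed.

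The main obstacle is the bookkeeping around the extended Koszul t-structure on $\QCoh^\H$: I need to invoke the Koszul property in its $\QCoh$-level form for both directions of the inequality, and I need to confirm that the projection formula and conservativity of $\sigma_*$ hold in the appropriate derived setting. Once these foundations are in place, the two Koszul-resolution inductions are parallel and essentially formal.
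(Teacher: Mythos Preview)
Your proposal is correct and matches the paper's proof in the first two steps (right t-exactness of $\sigma^*$ via $\sigma^*\pi^*\cong\id$, and right t-exactness of $\sigma_*$ via the Koszul-resolution induction on $K_\ell$). The only genuine difference is in the final step, left t-exactness of $\sigma^*$. You run the Koszul induction a second time, using conservativity and t-exactness of $\sigma_*$ to reduce to showing $\cF\otimes K_\ell\in\QCoh^\H(V)^{\geq 0}_K$. The paper instead observes that once $\sigma_*$ is right t-exact its right adjoint $\sigma^!$ is left t-exact, and since $\sigma^!\cong\sigma^*(-)\otimes\Lambda^m(\cV)[-m]$ with $\Lambda^m(\cV)$ of weight $m$, the Koszul property of the t-structure on $X$ gives left t-exactness of $\sigma^*$ immediately. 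The paper's route is a one-line shortcut exploiting the determinant-of-normal-bundle relation between $\sigma^*$ and $\sigma^!$; yours is a parallel induction that avoids introducing $\sigma^!$ but costs a few more lines.
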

\begin{proof}
Suppose $\cF \in \QCoh^\H(X)^{> 0}_K$ and take $\cF' \in \Coh^\H(X)^{\le 0}_K$. Then 
$$\Map_{\QCoh^\H(V)}(\pi^* \cF', \sigma_*(\cF)) \cong \Map_{\QCoh^\H(X)}(\cF', \pi_* \sigma_* \cF) \cong \Map_{\QCoh^\H(X)}(\cF', \cF)$$ 
is contractible. It follows that $\sigma_*$ is left t-exact and its left adjoint $\sigma^*$ is right t-exact. 

To see that $\sigma_*$ is right t-exact, fix $\cF \in \Coh^\H(X)^{\le 0}_K$ and notice that 
	$$\sigma_*(\cF) \cong \sigma_* \sigma^* \pi^*(\cF) \cong \pi^*(\cF) \otimes \sigma_*(\O_X) \cong \pi^*(\cF) \otimes K_m.$$
	We have an exact triangle
	$$\pi^*(\cF) \otimes K_{\ell-1} \to \pi^*(\cF) \otimes  K_{\ell} \to \pi^*(\cF \otimes \Lambda^\ell(\cV^\vee) [\ell])$$
	for any $\ell > 0$. We have $\cF \otimes \Lambda^\ell(\cV^\vee) [\ell] \in \Coh^\H(X)^{\leq 0}_\Kzl$ since the t-structure is Koszul. By induction it follows that $\pi^*(\cF) \otimes  K_{\ell} \in \la \pi^* \Coh^\H(X)_K^{\le 0} \ra$ for all $\ell \geq 0$, the $\ell = 0$ case holding since $K_0 \cong \O_V$. 
 It further follows that the right adjoint $\sigma^!$ of $\sigma_*$ is left t-exact. But $\sigma^!$ differs from $\sigma^*$  by tensoring with $\Lambda^m(\cV)[-m]$, hence $\sigma^*$ is also left t-exact since the t-structure is Koszul. 
\end{proof}

\begin{Lemma}\label{lem:pi}
The smallest stable subcategory of $\QCoh^\H(V)$ containing $\pi^*(\Coh^\H(X))$ and closed under retracts is $\Coh^\H(V)$.
\end{Lemma}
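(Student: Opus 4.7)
The plan is to prove the nontrivial inclusion $\Coh^\H(V) \subseteq \cC$, where $\cC$ denotes the smallest stable subcategory of $\QCoh^\H(V)$ containing $\pi^*(\Coh^\H(X))$ and closed under retracts. The reverse inclusion is immediate since $\pi$ is flat of finite type (so $\pi^*$ preserves coherence) and $\Coh^\H(V)$ is itself stable and closed under retracts in $\QCoh^\H(V)$. Using boundedness of the standard t-structure on $\Coh^\H(V)$ together with induction on cohomological amplitude via the triangles $\tau^{\leq n-1}\cF \to \tau^{\leq n}\cF \to H^n(\cF)[-n]$, it suffices to show $\cF \in \cC$ whenever $\cF$ lies in the standard heart.

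For such $\cF$, my goal is to produce a finite resolution of length $m := \rk \cV$,
$$ 0 \to \pi^*(\cE_m) \to \pi^*(\cE_{m-1}) \to \cdots \to \pi^*(\cE_0) \to \cF \to 0, $$
with $\cE_i \in \Coh^\H(X)^\heartsuit$; such a resolution realizes $\cF$ as a finite iterated cone of pullback shifts and hence places it in $\cC$. Setting $\cF_0 := \cF$, I would at each stage choose $\cE_i \in \Coh^\H(X)^\heartsuit$ together with a surjection $\cE_i \twoheadrightarrow \cF_i/\cI \cF_i$ (where $\cI$ is the ideal sheaf of the zero section, the surjection being available from the resolution property on $X/\H$), lift to a map $\pi^*(\cE_i) \to \cF_i$, and check its surjectivity via graded Nakayama. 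The Nakayama argument applies because $\Sym^\bullet(\cV^\vee)$ has non-positive $\eta$-weights, so the weights of any finitely generated module over it are bounded above. The kernel $\cF_{i+1}$ is again coherent, and one iterates.

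The hard part will be showing that this iteration terminates with $\cF_m$ itself a pullback. Applying $L\sigma^*$ to the exact triangle $\cF_{i+1} \to \pi^*(\cE_i) \to \cF_i$ and using $L\sigma^* \pi^* \cong \mathrm{id}$ (a consequence of $\pi \sigma = \mathrm{id}$), one obtains a long exact sequence which -- when the cover $\cE_i \to \cF_i/\cI \cF_i$ is an isomorphism -- inductively yields the shift identity $L^{-j}\sigma^*(\cF_i) \cong L^{-j-i}\sigma^*(\cF)$ for all $j \geq 0$. Since $\sigma$ is a regular closed embedding of codimension $m$, the Koszul resolution of $\sigma_* \cO_X$ confines $L\sigma^*(\cF)$ to cohomological degrees $[-m, 0]$, so $L^{-j}\sigma^*(\cF_m) = 0$ for all $j \geq 1$. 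A final graded Nakayama argument then identifies any such $\sigma$-Tor-free object with the pullback of its underived restriction, giving $\cF_m \cong \pi^*(\cE_m)$ with $\cE_m \cong L^{-m}\sigma^*(\cF) \in \Coh^\H(X)^\heartsuit$. The main technical subtlety I foresee is that the covers $\cE_i \cong \cF_i/\cI \cF_i$ may need to be chosen as strict isomorphisms rather than mere surjections -- forcing a careful use of the $\H$-equivariant structure and possibly a minimal-cover construction -- but the shift identity appears robust enough that termination after $m$ steps should survive.
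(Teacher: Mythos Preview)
Your syzygy approach is quite different from the paper's, which instead compactifies $V$ to the projective bundle $\P_V$, extends $\cF$ (up to retract) to some $\tilde{\cF} \in \Coh^\H(\P_V)$, and invokes the relative Beilinson resolution of the diagonal on $\P_V \times_X \P_V$. This expresses $\tilde{\cF}$ as an iterated cone of sheaves $\pi^*\!\big(\pi_*(\tilde{\cF} \otimes \Lambda^i(\cV/\cO_{\P_V}(-1)))\big) \otimes \cO_{\P_V}(-i)$, and since $\cO_{\P_V}(-i)|_V$ is trivial, restricting to $V$ exhibits $\cF$ as a retract of an iterated cone of pullbacks.

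Your argument, however, has a genuine gap at the termination step: the assertion that a $\sigma$-Tor-free coherent sheaf in the heart is isomorphic to a pullback is false, and graded Nakayama does not rescue it. Take $\H = \C^\times$ (with $\eta = \id$), $X = \A^1 = \Spec k[y]$ with trivial action, and $V = X \times \A^1$, so $m=1$ and $B = k[y][x]$ with $x$ of $\eta$-weight $-1$. Let $\cG = (x,y) \subset B$. This is a graded $\H$-equivariant coherent sheaf, and from $0 \to \cG \to B \to k \to 0$ together with the Koszul resolution of $k = B/(x,y)$ one computes $L^{-j}\sigma^*\cG \cong \mathrm{Tor}^B_{j+1}(k,\,B/xB) = 0$ for all $j \geq 1$. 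But $\sigma^*\cG \cong k\{-1\} \oplus k[y]$, and $\pi^*(\sigma^*\cG) \cong (B/yB)\{-1\} \oplus B$ carries $y$-torsion, whereas $\cG$ is an ideal in a domain and hence torsion-free. So $\cG$ is not a pullback of anything. The same obstruction blocks your minimal-cover strategy: lifting the identity on $\sigma^*\cG$ to a map $\pi^*(\sigma^*\cG) \to \cG$ would force $y \cdot x = 0$ in $\cG$, which fails. With merely surjective covers the shift identity does hold for $j \geq 1$ and yields a Tor-free $\cF_m$, but as this example shows that is not enough to terminate --- one can keep producing nonzero syzygies indefinitely. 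The paper's compactification argument sidesteps this issue entirely, producing a finite Beilinson-type resolution of length $m$ without any appeal to minimality or splittings.
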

\begin{proof}
This is standard and follows the proof of \cite[Thm. 5.4.17]{CG97}, but we include a sketch since the cited statement only concerns K-theory. Write $j$ for the open embedding of $V$ into its projective completion $\P_V$. Given $\cF \in \Coh^\H(V)$, let $\tcF \in \Coh^\H(\P_V)$ be such that $\cF$ is a retract of $j^*(\tcF)$. On $\P_V \times_X \P_V$ we have the Beilinson resolution of the diagonal $\O_{\Delta} \in \Coh^\H(\P_V \times_X \P_V)$, whose terms are of the form $\cE_i := \pi_1^* \Lambda^i(\cV/\O_{\P_V}(-1)) \otimes \pi_2^* \O_{\P_V}(-i)$. Here $\pi_1,\pi_2: \P_V \times_X \P_V \to \P_V$ are the two natural projections. 

Using this resolution we compute the left-hand side of $\pi_{2*}(\pi_1^*(\tcF) \otimes \O_\Delta) \cong \tcF$. Each term $\pi_{2*}(\pi_1^*(\tcF) \otimes \cE_i)$ can be rewritten using base change and the projection formula as 
$$\pi^* \pi_*(\tcF \otimes  \Lambda^i(\cV/\O_{\P_V}(-1))) \otimes \O_{\P_V}(-i),$$
where we still write $\pi: \P_V \to X$ for the projection. Since $\pi_*(\tcF \otimes  \Lambda^i(\cV/\O_{\P_V}(-1))) \in \Coh^\H(X)$ and $\O_{\P_V}(-i)$ restricts to a trivial bundle on $V$, this implies $j^*(\tcF)$ is an iterated cone of pullbacks from $\Coh^\H(X)$, and the claim follows.  
\end{proof}

\begin{Lemma}\label{lem:koszulconserv}
If $\cF \in \QCoh^\H(V)_{\Kzl}^{+}$ then either $\cF \in \Coh^\H(V)^\perp$ or there exists $n$ such that $\cH^n_\Kzl(\sigma^*(\cF))$ and $\cH^n_\Kzl(\pi_*(\cF))$ are nonzero but $\cH^k_\Kzl(\sigma^*(\cF)) \cong \cH^k_\Kzl(\pi_*(\cF)) \cong 0$ for all $k < n$. In particular, $\sigma^*$ is conservative on $\QCoh^\H(V)_{\Kzl}^{\geq 0}$. 
\end{Lemma}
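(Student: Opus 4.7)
The plan is to analyze $\cF$ through $\pi_*\cF$, using Lemma~\ref{lem:pi} together with the Koszul filtration of $\sigma_*\cO_X$. By Lemma~\ref{lem:pi}, the full subcategory of $\cG \in \Coh^\H(V)$ with $\Map(\cG,\cF)$ contractible is stable and closed under retracts, so it exhausts $\Coh^\H(V)$ iff it contains $\pi^*\Coh^\H(X)$. Combined with the adjunction $\Map(\pi^*\cE,\cF) \cong \Map(\cE,\pi_*\cF)$, this yields the basic dichotomy
\[\cF \in \Coh^\H(V)^\perp \ \Longleftrightarrow\  \pi_*\cF \in \Coh^\H(X)^\perp.\]
Assume henceforth that $\pi_*\cF \notin \Coh^\H(X)^\perp$; in particular $\pi_*\cF \not\cong 0$. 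Since $\pi_*$ is left t-exact (as the right adjoint of the t-exact $\pi^*$), $\pi_*\cF$ stays in $\QCoh^\H(X)^+_\Kzl$ and has a well-defined first nonzero Koszul cohomology at some degree $n$.

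The next step is to propagate $n$ to $\sigma^*\cF$ via the Koszul filtration $K_0 \to K_1 \to \cdots \to K_m \cong \sigma_*\cO_X$, whose successive cofibers are $\pi^*(\Lambda^\ell \cV^\vee)[\ell]$. Tensoring with $\cF$ and applying $\pi_*$ together with the projection formula produces exact triangles
\[\pi_*(\cF \otimes K_{\ell-1}) \to \pi_*(\cF \otimes K_\ell) \to \pi_*\cF \otimes \Lambda^\ell \cV^\vee [\ell],\]
with $\pi_*(\cF \otimes K_0) \cong \pi_*\cF$ and $\pi_*(\cF \otimes K_m) \cong \pi_*\sigma_*\sigma^*\cF \cong \sigma^*\cF$. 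Since $\Lambda^\ell \cV^\vee \in \Coh^\H_{-\ell}(X)$ is locally free, Definition~\ref{def:Koszult} applied to $Y = X$ makes $-\otimes \Lambda^\ell \cV^\vee [\ell]$ t-exact on $\QCoh^\H(X)_\Kzl$, so the cofiber also has first nonzero Koszul cohomology exactly in degree $n$. A short induction on $\ell$ via the long exact sequence then shows that every $\pi_*(\cF \otimes K_\ell)$ has first nonzero Koszul cohomology at $n$: vanishing of $\cH^{<n}_\Kzl$ on both outer terms forces vanishing on the middle, while the inclusion $\cH^n_\Kzl(\pi_*(\cF \otimes K_{\ell-1})) \hookrightarrow \cH^n_\Kzl(\pi_*(\cF \otimes K_\ell))$ (coming from $\cH^{n-1}_\Kzl$ of the cofiber being zero) keeps $\cH^n_\Kzl$ nonzero. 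Specializing to $\ell = m$ yields the main claim.

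For the final conservativity statement, suppose $\cF \in \QCoh^\H(V)^{\geq 0}_\Kzl$ with $\sigma^*\cF \cong 0$. The dichotomy just proved forces $\cF \in \Coh^\H(V)^\perp$, hence $\pi_*\cF \in \Coh^\H(X)^\perp$. Boundedness of the Koszul t-structure on $\Coh^\H(X)$ gives $\Coh^\H(X) = \bigcup_N \Coh^\H(X)^{\leq N}_\Kzl$, and combined with $\QCoh^\H(X)^{>N}_\Kzl = (\Coh^\H(X)^{\leq N}_\Kzl)^\perp$ from Proposition~\ref{prop:generatedtstructure}, this forces all Koszul cohomology of $\pi_*\cF$ to vanish; since $\pi_*\cF$ is bounded below, $\pi_*\cF \cong 0$, and then $\cF \cong 0$ because the affine morphism $\pi$ has conservative pushforward. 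The main delicacy I anticipate is the sign bookkeeping that makes $-\otimes\pi^*(\Lambda^\ell \cV^\vee)[\ell]$ preserve Koszul degree—the $[\ell]$ shift must exactly cancel the weight $-\ell$ shift prescribed by Definition~\ref{def:Koszult}—but once this is in hand the LES induction and the final boundedness argument are routine.
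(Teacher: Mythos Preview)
Your proof of the main dichotomy is correct and follows the paper's approach essentially verbatim: reduce to $\pi_*\cF \notin \Coh^\H(X)^\perp$ via Lemma~\ref{lem:pi} and adjunction, then filter $\sigma^*\cF \cong \pi_*(\cF \otimes K_m)$ by the Koszul stages $\pi_*(\cF \otimes K_\ell)$ and induct on $\ell$, using that $-\otimes \Lambda^\ell\cV^\vee[\ell]$ is Koszul-t-exact to keep the first nonvanishing degree fixed throughout. Your explicit observation about the injection $\cH^n_\Kzl(\pi_*(\cF \otimes K_{\ell-1})) \hookrightarrow \cH^n_\Kzl(\pi_*(\cF \otimes K_\ell))$ is exactly what the paper leaves implicit.

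The gap is in your argument for the ``in particular''. Your final step asserts that $\pi_*\cF$ being Koszul-bounded-below with all Koszul cohomology vanishing forces $\pi_*\cF \cong 0$. But unwinding your own chain of identifications gives
\[
\bigcap_N \QCoh^\H(X)^{>N}_\Kzl \;=\; \bigcap_N (\Coh^\H(X)^{\le N}_\Kzl)^\perp \;=\; \Big(\bigcup_N \Coh^\H(X)^{\le N}_\Kzl\Big)^\perp \;=\; \Coh^\H(X)^\perp,
\]
so you are reducing the claim to $\Coh^\H(X)^\perp = 0$ in $\QCoh^\H(X)$ --- precisely the kind of condition the paper flags as unclear for infinite-type $\H$ (see the remark about $\Coh^\H(V)^\perp$ in the proof). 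Being bounded below adds nothing: any object of $\Coh^\H(X)^\perp$ already lies in every $\QCoh^\H(X)^{>N}_\Kzl$, hence in $\QCoh^\H(X)^{\ge 0}_\Kzl$. The paper does not spell out a proof of the ``in particular'' either; in its later applications (e.g.\ the proof of Proposition~\ref{prop:koszul-t-bundles}) it rules out the $\Coh^\H(V)^\perp$ alternative separately, by exhibiting a nonzero map from a coherent object, rather than by invoking abstract conservativity.
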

\begin{proof}
Suppose $\cF \in \QCoh^\H(V)_K^+$ with $\cF \not\in \Coh^\H(V)^\perp$. The latter condition means that there exists some $\cF' \in \Coh^\H(V)$ so that $\Map(\cF',\cF)$ is not contractible. By Lemma \ref{lem:pi} this means that there exists some $\cF'' \in \Coh^\H(X)$ such that $\Map(\pi^*(\cF''), \cF)$ is not contractible. By adjunction this subsequently implies that $\pi_*(\cF) \not\in \Coh^\H(X)^\perp$. Note that $\Coh^\H(V)^\perp \cong 0$ if $\H$ is an algebraic group, but in examples with infinite-type $\H$ this is not clear. 

Since $\pi_*$ is left t-exact it follows that there exists a largest $n$ such that $\pi_*(\cF) \in  \QCoh^\H(X)_K^{\ge n}$. To prove the claim it then suffices to show $\sigma^*(\cF) \in  \QCoh^\H(X)_{\Kzl}^{\ge n}$ and $\cH^n_\Kzl(\sigma^*(\cF)) \ncong 0$. To see this, first note that $\sigma^*(\cF) \cong \pi_* \sigma_* \sigma^* (\cF) \cong \pi_*(\cF \otimes \sigma_*(\O_X)) \cong \pi_*(\cF \otimes K_m)$. For all $\ell > 0$ we have an exact triangle 
$$\pi_*(\cF \otimes K_{\ell-1}) \to \pi_*(\cF \otimes  K_{\ell}) \to \pi_*(\cF) \otimes \Lambda^\ell(\cV^\vee) [\ell]$$
where we use $ \pi_*(\cF \otimes \pi^*(\Lambda^\ell(\cV^\vee)) \cong \pi_*(\cF) \otimes \Lambda^\ell(\cV^\vee)$ in writing the last term. But $\pi_*(\cF) \otimes \Lambda^\ell(\cV^\vee) [\ell]$ belongs to  $\QCoh^\H(X)_{\Kzl}^{\ge n}$ for all $\ell \geq 0$, hence by induction so does $\pi_*(\cF \otimes  K_{\ell})$ (recall that $K_0 \cong \cO_V$). But then it further follows by induction that $\cH^n_\Kzl(\pi_*(\cF \otimes  K_{\ell})) \ncong 0$ for all $\ell \geq 0$, and the claim follows taking $\ell = m$. 
\end{proof}

\begin{proof}[Proof of Proposition \ref{prop:koszul-t-bundles}]
Consider $\cF \in \Coh^\H(V)$ so that $\sigma^*(\cF) \in \Coh^\H(X)_\Kzl^{[a,b]}$. We have $\sigma^* \tau^{> b}_{\Kzl}(\cF) \cong \tau^{> b}_{\Kzl} \sigma^* (\cF) \cong 0$, so  $\tau^{> b}_{\Kzl}(\cF) \cong 0$ by Lemma \ref{lem:koszulconserv} (the map $\cF \to \tau^{> b}_{\Kzl}(\cF)$ rules out the possibility that $\tau^{> b}_{\Kzl}(\cF) \in \Coh^\H(V)^\perp$).  Now let us consider $\cF \to \tau^{\ge b}_{\Kzl}(\cF) \cong \cH^b_{\Kzl}(\cF)$. If we can show that $\cF' := \cH^b_{\Kzl}(\cF)$ is coherent, then by Proposition \ref{prop:restrict} and descending induction we may conclude that the Koszul t-structure restricts to $\Coh^\H(V)$. 

We first show that the map 
	\begin{equation}\label{eq:inj0}
	\cH^0_\Kzl(\pi^* \pi_* \cF') \to \cF' 
	\end{equation} 
	induced by the adjunction $\pi^* \pi_* \cF' \to \cF'$ is a monomorphism. By Lemma \ref{lem:koszulconserv} it suffices to show the map
	\begin{equation}\label{eq:inj}
		\cH^0_\Kzl(\pi_* (\cF')) \cong \cH^0_\Kzl(\sigma^* \pi^* \pi_* (\cF'))  \to \sigma^*(\cF')
		\end{equation}
	obtained by applying $\sigma^*$ to (\ref{eq:inj0}) is a monomorphism. On the other hand, (\ref{eq:inj}) is also obtained from the adjunction map $\cF' \to \sigma_* \sigma^*(\cF')$ by applying $\pi_*$ and then $\cH^0_\Kzl$. Since $\pi_*$ is left t-exact and $\sigma_* \sigma^*(\cF') \in \QCoh^\H(X)^\heartsuit$ by Lemma~\ref{lem:koszultexists}, it follows that (\ref{eq:inj}) is a monomorphism if $\cF' \to \sigma_* \sigma^* (\cF')$ is a monomorphism. Applying $\sigma^*$ again and using that it is conservative on $\QCoh^\H(V)_{\Kzl}^\heartsuit$ it suffices to show that $\sigma^*(\cF') \to \sigma^* \sigma_* \sigma^*(\cF')$ is a monomorphism. But this is clearly the case since this map splits. 

Since $\sigma^*(\cF')$ is coherent and (\ref{eq:inj}) is a monomorphism, it follows by Lemma \ref{lem:noetherian-extend} that $\cH^0_\Kzl(\pi_* (\cF'))$ is coherent, hence so is $\pi^*\cH^0_\Kzl(\pi_* (\cF'))$. Since $\sigma^*(\cF')$ is nonzero so is $\cH^0_\Kzl(\pi_* (\cF'))$ by Lemma \ref{lem:koszulconserv}, hence so is $\pi^*\cH^0_\Kzl(\pi_* (\cF'))$ since $\pi^*$ is conservative. We have $\pi^*\cH^0_\Kzl(\pi_* (\cF')) \cong \cH^0_\Kzl(\pi^* \pi_* (\cF'))$ since $\pi^*$ is t-exact, hence (\ref{eq:inj0}) being a monomorphism implies that $\cF'_1 :=\cH^0_\Kzl( \pi^*\pi_* (\cF'))$ is a nonzero coherent subobject of $\cF'$. 
	
	If $\cF'$ is not coherent, then repeating the construction with $\cF'/\cF'_1$ we obtain an ascending chain $\cF'_1 \subset \cF'_2 \subset \cdots$ of coherent subobjects of $\cF'$. But since $\sigma^*(\cF') \in \Coh^\H(X)^{\heartsuit}_K$ is Noetherian and $\sigma^*$ is conservative on $\QCoh^\H(V)_K^{\heartsuit}$ (Lemma \ref{lem:koszulconserv}) this is not possible. Thus $\cF'$ must be coherent. 
	
The functor $\sigma^*: \Coh^\H(V) \to \Coh^\H(X)$ is conservative since the scaling $\C^\times \subset \H$ ensures that the closure of any $\H$-orbit (and hence the support of any $\H$-equivariant coherent sheaf on~$V$) intersects~$X$ nontrivially. By Proposition \ref{prop:conservative} this means that the t-structure on $\Coh^\H(V)$ inherits the remaining properties from the t-structure on $\Coh^\H(X)$. Moreover, the t-structure on $\Coh^\H(V)$ is Koszul over $X$ since $\sigma^*(\cF \otimes \pi^*(\cV)) \cong \sigma^*(\cF) \otimes \cV$. 
\end{proof}
	
We end with the following compatibility, which will be used in the next section.

\begin{Lemma}\label{lem:koszul1}
	Let $j: V \to W$ be the inclusion of a $\H$-equivariant sub-bundle. 
	The functors $j_*$, $j^*$, and $j^!$ are t-exact with respect to the Koszul t-structures on $\Coh^\H(V)$ and $\Coh^\H(W)$. 
\end{Lemma}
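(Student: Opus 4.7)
The plan is to address each of the three functors in turn, using as a common foundation two facts established in the proof of Proposition \ref{prop:koszul-t-bundles}: that $\sigma_V^*$ and $\sigma_W^*$ are conservative on $\Coh^\H(V)$ and $\Coh^\H(W)$ respectively, and that they are t-exact for the respective Koszul t-structures. Combined with the tautological identity $\sigma_W \cong j \circ \sigma_V$ (both $V$ and $W$ being bundles over $X$), these will suffice once we have computed the pullback of $j_*\cF$ along $\sigma_W$.

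For $j^*$, the relation $\sigma_V^* \circ j^* \cong \sigma_W^*$ gives t-exactness directly. Indeed, if $\cF \in \Coh^\H(W)^{\leq 0}_\Kzl$ then
$$\sigma_V^*\bigl(\tau^{>0}_\Kzl j^*\cF\bigr) \cong \tau^{>0}_\Kzl \sigma_V^* j^*\cF \cong \tau^{>0}_\Kzl \sigma_W^*\cF \cong 0$$
by t-exactness of $\sigma_V^*$ and $\sigma_W^*$, so conservativity of $\sigma_V^*$ forces $\tau^{>0}_\Kzl j^*\cF \cong 0$. The argument for left t-exactness is symmetric.

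The main step is $j_*$. Since $V \subset W$ is an $\H$-equivariant sub-bundle, $j$ is a regular closed immersion of codimension $d = \rk(W/V)$, and the ideal sheaf of $V$ is the image of the tautological section $s$ of $\pi_W^*(W/V)^\vee$. Accordingly there is an $\H$-equivariant Koszul resolution
$$ j_*\cO_V \simeq \bigl[\pi_W^*\Lambda^d((W/V)^\vee) \to \cdots \to \pi_W^*((W/V)^\vee) \to \cO_W\bigr], $$
with $\pi_W^*\Lambda^i((W/V)^\vee)$ in cohomological degree $-i$ and differentials given by contraction with $s$. Pulling back along $j$ kills $s$, so all differentials vanish and
$$ j^*j_*\cO_V \cong \bigoplus_{i=0}^d \pi_V^*\Lambda^i((W/V)^\vee)[i]. $$
For general $\cF \in \Coh^\H(V)$, tensoring $j_*\cF$ on $W$ with this resolution and applying the projection formula termwise gives $j^*j_*\cF \cong \bigoplus_i \cF \otimes_{\cO_V} \pi_V^*\Lambda^i((W/V)^\vee)[i]$, and applying $\sigma_V^*$ yields
$$ \sigma_W^*(j_*\cF) \cong \bigoplus_{i=0}^d \sigma_V^*(\cF) \otimes \Lambda^i((W/V)^\vee)[i]. $$
Each $\Lambda^i((W/V)^\vee)$ is locally free of $\eta$-weight $-i$, so $- \otimes \Lambda^i((W/V)^\vee)[i]$ is t-exact on $\Coh^\H(X)$ by the Koszul condition on the base. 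Hence if $\cF$ lies in $\Coh^\H(V)^{\leq 0}_\Kzl$ (resp.\ $\Coh^\H(V)^{\geq 0}_\Kzl$), each summand of $\sigma_W^*(j_*\cF)$ lies in $\Coh^\H(X)^{\leq 0}_\Kzl$ (resp.\ $\Coh^\H(X)^{\geq 0}_\Kzl$), and conservativity of $\sigma_W^*$ gives the same for $j_*\cF$ itself.

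For $j^!$, I will invoke the standard formula $j^!(-) \cong j^*(-) \otimes \pi_V^*\det(W/V)[-d]$ for regular closed immersions of codimension $d$ (the relative dualizing complex). Since $\det(W/V)$ is locally free of $\eta$-weight $d$, the tensor-shift $- \otimes \pi_V^*\det(W/V)[-d]$ is t-exact on $\Coh^\H(V)$ directly by the Koszul condition on $V$; composed with the already-proven t-exact $j^*$, this makes $j^!$ t-exact. The main delicacy in the entire argument is the self-intersection computation $j^*j_*\cO_V \cong \bigoplus_i \pi_V^*\Lambda^i((W/V)^\vee)[i]$; this is precisely the step which uses that $V \subset W$ is a regular embedding arising from a subbundle (so that the Koszul complex is available, its differentials are controlled by a single tautological section, and that section restricts to zero on $V$).
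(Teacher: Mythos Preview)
Your proof is correct. Both your argument and the paper's rely on the same two ingredients---the Koszul resolution of $j_*\cO_V$ and the identity $j^! \cong j^*(-)\otimes\det(W/V)[-d]$---but deploy them differently. You use the defining characterization of the Koszul t-structure (that $\sigma^*$ is t-exact and conservative) as a single uniform test: for each of $j^*$, $j_*$ you compose with the appropriate $\sigma^*$ and reduce to a statement on $X$, with the key computation being $\sigma_W^* j_*\cF \cong \bigoplus_i \sigma_V^*(\cF)\otimes\Lambda^i((W/V)^\vee)[i]$. The paper instead argues at the level of the \emph{generated} t-structure on $\QCoh$: right t-exactness of $j^*$ is immediate from $j^*\pi_W^* \cong \pi_V^*$, while left t-exactness is checked by showing $\pi_{V*} j^* \cong \pi_{W*}(-\otimes j_*\cO_V)$ is left t-exact via the Koszul resolution; then $j_*$ comes for free as the adjoint sandwiched between the t-exact $j^*$ and $j^!$. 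Your route is more self-contained and uses Proposition~\ref{prop:koszul-t-bundles} as a black box; the paper's route exposes the role of $\pi^*$ as generator and gets $j_*$ without any separate computation.
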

\begin{proof}
First we show that $j^*$ is t-exact. Denote by $\pi_V,\pi_W$ the projections from $V,W$ to $X$ respectively. Since $\pi_W^* \cong j^* \pi_V^*$ it is clear that $j^*$ is right t-exact. 

To see left t-exactness we need to show that $ \Map_{\Coh^\H(V)}(\pi_V^* (\cF), j^* (\cF'))$ is contractible for any $\cF \in \Coh^\H(X)^{\le 0}_K$ and $\cF' \in \Coh^\H(W)^{> 0}_K$. By adjunction this is equivalent to checking that $\pi_{V*} j^*$ is left t-exact for the Koszul t-structure on $\QCoh^\H(X)$. Since  $\pi_{V*} j^* \cong \pi_{W*} j_* j^*$ and since $\pi_{W*}$ is left t-exact, it suffices to show that $j_* j^*$ is left t-exact. But $j_* j^* (-) \cong (-) \otimes j_* \O_V$ which can be seen to be t-exact using the Koszul resolution of $j_* \O_V$ (as in Lemma \ref{lem:koszultexists}).

Next note that $j^!$ is also t-exact since it differs from $j^*$ by tensoring with $\det(\cW/\cV[-1])$, which is t-exact. Lastly, $j_*$ is t-exact as it is right adjoint to $j^*$ and left adjoint to $j^!$.  
\end{proof}

\subsection{Koszul t-structures on intersections}\label{sec:koszul-ints} 

We now consider the general case of Theorem~\ref{thm:koszul}. We notate the maps used to define $Y$  as in the following Cartesian diagram. 
\begin{equation}\label{eq:bundles}
	\begin{tikzpicture}
		[baseline=(current  bounding  box.center),thick,>=\arrtip]
		\node (a) at (0,0) {$Y$};
		\node (b) at (3,0) {$V_1$};
		\node (c) at (0,-1.5) {$V_2$};
		\node (d) at (3,-1.5) {$W.$};
		\draw[->] (a) to node[above] {$i_1$} (b);
		\draw[->] (b) to node[right] {$j_1$} (d);
		\draw[->] (a) to node[left] {$i_2$}(c);
		\draw[->] (c) to node[above] {$j_2$} (d);
	\end{tikzpicture}
\end{equation}

\begin{proof}[Proof of Theorem \ref{thm:koszul}] 
We first claim $\QCoh^\H(Y)^{\leq 0}_\Kzl := i_{1*}^{-1} \QCoh^\H(V_1)^{\leq 0}_\Kzl$ defines a t-structure on $\QCoh^\H(Y)$ for which $i_{1*}$ is t-exact. By Proposition \ref{prop:induce} it suffices to show $i_* i^!: \QCoh^\H(Y) \to \QCoh^\H(Y)$ is left t-exact. It further suffices to show $j_{1*} i_{1*} i_1^!$ is left t-exact since $j_{1*}$ is conservative ($j_1$ is a closed immersion) and is t-exact by Lemma \ref{lem:koszul1}. We have 
$$j_{1*} i_{1*} i_1^! \cong j_{2*} i_{2*} i_1^! \cong j_{2*} j_2^! j_{1*},$$
which is t-exact since $j_{1*}$, $j_2^!$, and $j_{2*}$ are all t-exact by Lemma \ref{lem:koszul1}. 

The resulting t-structure restricts to $\Coh^\H(Y)$ by Proposition \ref{prop:restrict}, since $i_{1*}$ is t-exact and $i_{1*}^{-1}(\Coh^\H(V_1)) = \Coh^\H(Y)$ \cite[Prop. 5.6.1.1]{LurSAG}. It is Koszul over $X$ since $i_{1*}$ is t-exact and conservative. Moreover, it inherits the remaining properties (bounded, Noetherian/Artinian) from the t-structure on $\Coh^\H(X)$ by Proposition \ref{prop:conservative}.

Finally, to see that $\sigma_2^* i_{2*}$ is t-exact it suffices to check that $\sigma_2^* j_2^* j_{2*} i_{2*}$ is t-exact since $j_{2*}$ and $j_2^*$ are both t-exact (Lemma \ref{lem:koszul1}) and conservative. But we can rewrite this as $\sigma_2^* j_2^* j_{1*} i_{1*}$, which is t-exact since $i_{1*}$ is by construction and the rest are by Lemma \ref{lem:koszul1}. 
\end{proof}

Next we consider the local nature of Theorem \ref{thm:koszul}. Consider a $\H$-invariant closed subscheme $i: Z \to X$ and open subscheme $j: U := X \smallsetminus Z \to X$ as in Section \ref{sec:koszulity}. We again use the same notation for the base changes $i: Y_Z \to Y$ and $j: Y_U \to Y$. Recall from Proposition \ref{prop:Ztstructure} that if $X/\H$ has the resolution property, the Koszul t-structures on $\Coh^\H(X)$ and $\Coh^\H(Y)$ induce unique t-structures on $\Coh^\H(Z)$ and $\Coh^\H(Y_Z)$ such that $i_*$ is t-exact. By Proposition \ref{prop:Utstructure} the same is true for $\Coh^\H(U)$, $\Coh^\H(Y_U)$, and $j^*$. By the following result these t-structures are compatible with Theorem \ref{thm:koszul}, in the sense that the t-structure on $\Coh^\H(Y_Z)$ is determined by the one on $\Coh^\H(Z)$ by the same pattern as in Theorem \ref{thm:koszul} (likewise for $\Coh^\H(Y_U)$ and $\Coh^\H(U)$). 

\begin{Proposition}\label{prop:compatible}
Suppose $X/\H$ has the resolution property. 
The functors $\sigma^*_1 i_{1*}, \sigma^*_2 i_{2*}: \Coh^\H(Y_Z) \to \Coh^\H(Z)$ are t-exact with respect to the t-structures induced by Proposition \ref{prop:Ztstructure}. The functors $\sigma^*_1 i_{1*}, \sigma^*_2 i_{2*}: \Coh^\H(Y_U) \to \Coh^\H(U)$ are t-exact with respect to the t-structures induced by Proposition \ref{prop:Utstructure}. 
\end{Proposition}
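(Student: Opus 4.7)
The plan is to reduce both cases to Theorem \ref{thm:koszul} by base change along the closed (resp.\ open) immersion. For the closed case, the key input is the base change identity
\[
\sigma_k^* \iota_{V_k,*} \cong \iota_{X,*} \sigma_{k,Z}^*
\]
arising from the Cartesian square with vertices $Z, X, V_{k,Z}, V_k$ and maps $\iota_X, \iota_{V_k}, \sigma_k, \sigma_{k,Z}$ (where I write $\iota$ for the various base changes of $Z \to X$, and $\sigma_{k,Z}, i_{k,Z}$ for the base changes of $\sigma_k, i_k$). To justify this I would verify the square is derivedly Cartesian: $\sigma_k$ is a regular immersion locally cut out by the fiber coordinates of $V_k$, and these restrict to a regular sequence defining $\sigma_{k,Z}$, so the Koszul complex resolving $\cO_X$ over $\cO_{V_k}$ base-changes to a resolution of $\cO_Z$ over $\cO_{V_{k,Z}}$, ensuring the higher Tors vanish. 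The analogous identity for the square with $j$ replacing $\iota$ is automatic since open immersions are flat.

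For the closed case, given $\cF \in \Coh^\H(Y_Z)$, the calculation
\[
\iota_{X,*} \sigma_{k,Z}^* i_{k,Z,*}(\cF) \cong \sigma_k^* \iota_{V_k,*} i_{k,Z,*}(\cF) \cong \sigma_k^* i_{k,*} \iota_{Y,*}(\cF)
\]
(using the base change above together with the tautological identity $\iota_{V_k} \circ i_{k,Z} = i_k \circ \iota_Y$) reduces t-exactness of $\sigma_{k,Z}^* i_{k,Z,*}$ to that of $\sigma_k^* i_{k,*} \circ \iota_{Y,*}$. The latter holds by Theorem \ref{thm:koszul} and Proposition \ref{prop:Ztstructure}. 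Since $\iota_{X,*}$ is itself t-exact by Proposition \ref{prop:Ztstructure} and conservative (as $\iota_X$ is a closed immersion of schemes), t-exactness of the composition descends to t-exactness of $\sigma_{k,Z}^* i_{k,Z,*}$ itself.

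The open case is structurally similar but uses the retract argument from the proof of Proposition \ref{prop:Utstructure} in place of conservativity. Flat base change gives $\sigma_{k,U}^* i_{k,U,*} \circ j^* \cong j^* \circ \sigma_k^* i_{k,*}$ as functors $\Coh^\H(Y) \to \Coh^\H(U)$, and this composition is t-exact by Theorem \ref{thm:koszul} and Proposition \ref{prop:Utstructure}. To promote this to t-exactness of $\sigma_{k,U}^* i_{k,U,*}$ on all of $\Coh^\H(Y_U)$: given $\cF \in \Coh^\H(Y_U)^{\le 0}_K$ (resp.\ $\Coh^\H(Y_U)^{\ge 0}_K$), express $\cF$ as a retract of $j^*(\tcF)$ for some $\tcF \in \Coh^\H(Y)$ as in the proof of Proposition \ref{prop:Utstructure}, then replace $\tcF$ by its Koszul truncation $\tau^{\le 0}_K \tcF$ (resp.\ $\tau^{\ge 0}_K \tcF$), which remains coherent since the Koszul t-structure on $\Coh^\H(Y)$ is bounded. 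Because $j^*$ is t-exact, $\cF$ remains a retract of $j^*$ of this truncation; the claim then follows by applying $\sigma_{k,U}^* i_{k,U,*}$ and using that $\Coh^\H(U)^{\le 0}_K$ and $\Coh^\H(U)^{\ge 0}_K$ are closed under retracts.

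The main obstacle is verifying derived Cartesianness of the closed-case square; once that base change identity is in hand, the rest is formal bookkeeping. A minor care point is ensuring the retract lift in the open case can be taken in the appropriate t-interval, which is handled by the t-exactness of $j^*$ from Proposition \ref{prop:Utstructure} and the boundedness from Theorem \ref{thm:koszul}.
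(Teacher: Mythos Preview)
Your proof is correct and follows essentially the same approach as the paper: both the closed and open cases are handled via the same base-change identities and, respectively, conservativity of $i_*$ and the retract argument. You are a bit more explicit than the paper---spelling out Tor-independence for the closed square and the truncation step in the open case---while the paper leaves these implicit; one small quibble is that coherence of $\tau^{\le 0}_K\tcF$ follows simply from the t-structure restricting to $\Coh^\H(Y)$ (Theorem~\ref{thm:koszul}), not from boundedness per se.
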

\begin{proof}
Consider the Cartesian squares 
\begin{equation*}
	\begin{tikzpicture}[baseline=(current bounding box.center),thick,>=\arrtip]
		\newcommand*{\ha}{3}; \newcommand*{\vb}{-1.5};
		
		\node (a) at (0,0) {$Y_Z$};
		\node (a') at (\ha,0) {$V_1|_Z$};
		\node (a'') at (2*\ha,0) {$Z$};
		\node (b) at (0,\vb) {$Y$};
		\node (b') at (\ha,\vb) {$V_1$};
		\node (b'') at (2*\ha,\vb) {$X$,};
		
		\draw[->] (a) to node[above] {$i_1$} (a');
		\draw[<-] (a') to node[above] {$\sigma_1$} (a'');
		\draw[->] (b) to node[above] {$i_1$} (b');
		\draw[<-] (b') to node[above] {$\sigma_1$} (b'');
		
		\draw[->] (a) to node[left] {$i$} (b);
		\draw[->] (a') to node[right] {$i$} (b');
		\draw[->] (a'') to node[right] {$i$} (b'');
	\end{tikzpicture}
\end{equation*}
where for simplicity we overload the names of the maps. For $\cF \in \Coh^\H(Y_Z)$ we have $i_* \sigma^*_1 i_{1*} (\cF) \cong \sigma^*_1 i_* i_{1*} (\cF) \cong \sigma^*_1 i_{1*} i_* (\cF)$, where the first isomorphism is by base change. The claim follows since $i_*$ is conservative. Likewise we have the Cartesian squares
\begin{equation*}
	\begin{tikzpicture}[baseline=(current bounding box.center),thick,>=\arrtip]
		\newcommand*{\ha}{3}; \newcommand*{\hb}{3}; \newcommand*{\va}{-1.5};
		
		\node (a) at (0,0) {$Y_U$};
		\node (a') at (\ha,0) {$V_1|_U$};
		\node (a'') at (\ha+\hb,0) {$U$};
		\node (b) at (0,\va) {$Y$};
		\node (b') at (\ha,\va) {$V_1$};
		\node (b'') at (\ha+\hb,\va) {$X$.};
		
		\draw[->] (a) to node[above] {$i_1$} (a');
		\draw[<-] (a') to node[above] {$\sigma_1$} (a'');
		\draw[->] (b) to node[above] {$i_1$} (b');
		\draw[<-] (b') to node[above] {$\sigma_1$} (b'');
		
		\draw[->] (a) to node[left] {$j$} (b);
		\draw[->] (a') to node[right] {$j$} (b');
		\draw[->] (a'') to node[right] {$j$} (b'');
	\end{tikzpicture}
\end{equation*}
As in the proof of Proposition \ref{prop:Utstructure}, given $\cF \in \Coh^\H(Y_U)$ we can choose $\tcF \in \Coh^\H(Y)$ so that $\cF$ is a retract of $j^*(\tcF)$. The claim follows
$\sigma^*_1 i_{1*} j^*(\tcF) \cong \sigma^*_1 j^* i_{1*} (\tcF) \cong j^* \sigma^*_1 i_{1*}(\tcF)$, and since $\Coh^\H(U)^{\leq 0}_\Kzl$ and $\Coh^\H(U)^{\geq 0}_\Kzl$ are closed under retracts. 
\end{proof}

\subsection{Simple objects}\label{sec:simples}

We now discuss simple objects in the heart of the t-structure constructed in Theorem~\ref{thm:koszul}. As before we fix a bounded Koszul t-structure on $\Coh^\H(X)$, but now also assume it is finite-length. We study special cases in Propositions \ref{prop:koszul2} and \ref{prop:koszul3} before stating our most general result as Theorem \ref{thm:koszulsimples}. We begin with a $\H$-equivariant vector bundle $\pi: V \to X$ on which the scaling $\C^\times$ acts with weight one, as in Proposition \ref{prop:koszul-t-bundles}

\begin{Proposition}\label{prop:koszul2} 
	The functors 
	$$\pi^*: \Coh^\H(X)^\heartsuit_{\Kzl} \leftrightarrows  \Coh^\H(V)^\heartsuit_{\Kzl}: \sigma^*$$ 
	induce inverse bijections between simple objects on either side. 
\end{Proposition}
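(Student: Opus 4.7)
The plan is to prove the bijection in two steps: first, that $\pi^*$ carries simples to simples; second, that every simple in $\Coh^\H(V)^\heartsuit_\Kzl$ is of the form $\pi^*(\cF)$. Combined with the identity $\sigma^*\pi^* \cong \id_{\Coh^\H(X)}$ coming from $\pi \sigma = \id$, this suffices to give the claimed inverse bijections on isomorphism classes of simple objects.

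For the first step I would use t-exactness of $\pi^*$ and $\sigma^*$ (Lemma~\ref{lem:koszultexists}) together with conservativity of $\sigma^*$ on the heart $\Coh^\H(V)^\heartsuit_\Kzl$. The latter follows from Lemma~\ref{lem:koszulconserv}: for any nonzero $\cG$ in the heart, the identity $\id_\cG$ witnesses that $\cG \notin \Coh^\H(V)^\perp$, so the Lemma forces $\sigma^*(\cG) \ncong 0$. Now, given a simple $\cF \in \Coh^\H(X)^\heartsuit_\Kzl$ and any subobject $\cG \hookrightarrow \pi^*(\cF)$ in the heart, applying the t-exact functor $\sigma^*$ yields $\sigma^*(\cG) \hookrightarrow \sigma^*\pi^*(\cF) \cong \cF$. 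Simplicity of $\cF$ forces $\sigma^*(\cG)$ to be $0$ or $\cF$, whereupon conservativity of $\sigma^*$ on the heart (applied either to $\cG$ or to $\pi^*(\cF)/\cG$) forces $\cG$ to be $0$ or $\pi^*(\cF)$ respectively.

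For the second step I would appeal to the key subobject construction carried out inside the proof of Proposition~\ref{prop:koszul-t-bundles}: for any nonzero $\cG \in \Coh^\H(V)^\heartsuit_\Kzl$, the counit $\pi^*\pi_*(\cG) \to \cG$ induces a nonzero coherent monomorphism
\[
\pi^*\bigl(\cH^0_\Kzl(\pi_*(\cG))\bigr) \;\cong\; \cH^0_\Kzl\bigl(\pi^*\pi_*(\cG)\bigr) \;\hookrightarrow\; \cG
\]
in the heart (the first isomorphism by t-exactness of $\pi^*$). When $\cG$ is simple this inclusion must be an isomorphism, exhibiting $\cG$ as $\pi^*(\cF)$ for $\cF := \cH^0_\Kzl(\pi_*(\cG))$, and then $\sigma^*(\cG) \cong \cF$ by $\sigma^*\pi^* \cong \id$. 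That $\cF$ is itself simple follows formally: a proper nonzero subobject $\cF_1 \subsetneq \cF$ would pull back under the t-exact $\pi^*$ to a subobject $\pi^*(\cF_1) \subset \pi^*(\cF) \cong \cG$, which is again proper and nonzero since applying $\sigma^*$ to an equality with $0$ or with $\pi^*(\cF)$ would contradict $\cF_1 \ncong 0$ or $\cF_1 \ncong \cF$. I expect no serious obstacle, as all the substantive content is already provided by Lemma~\ref{lem:koszulconserv} and the subobject construction in the proof of Proposition~\ref{prop:koszul-t-bundles}; the remaining work is just to repackage these cleanly.
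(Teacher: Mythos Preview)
Your proof is correct. The first step matches the paper's argument essentially verbatim. The second step, however, takes a genuinely different route: the paper uses the \emph{unit} $\cF \to \sigma_*\sigma^*(\cF)$, invokes the Koszul filtration of $\sigma_*\sigma^*(\cF) \cong \pi^*\sigma^*(\cF) \otimes \sigma_*(\cO_X)$ by the objects $\pi^*\sigma^*(\cF) \otimes K_\ell$, and deduces that any simple $\cF$ embeds into some $\pi^*(\sigma^*(\cF) \otimes \Lambda^\ell(\cV^\vee)[\ell])$; it then closes the argument by observing that the essential image of $\pi^*$ is closed under simple subobjects (since $\pi^*$ takes composition series to composition series). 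Your approach instead uses the \emph{counit} $\pi^*\pi_*(\cG) \to \cG$ and directly recycles the monomorphism $\cH^0_\Kzl(\pi^*\pi_*(\cG)) \hookrightarrow \cG$ established inside the proof of Proposition~\ref{prop:koszul-t-bundles}, which collapses to an isomorphism when $\cG$ is simple. Your route is shorter and avoids the Koszul filtration, at the cost of depending on an intermediate step of another proof rather than being self-contained; the paper's route is more explicit about the structure of objects in the heart but takes an extra detour through subobjects of pullbacks.
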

\begin{proof} 
Recall that $\pi^*$ is t-exact and that $\sigma^*$ is conservative on $\Coh^\H(V)$. It follows that $\cF \in \Coh^\H(V)^\heartsuit_{\Kzl}$ is simple if $\sigma^*(\cF)$ is simple. Since $\sigma^* \pi^*$ is the identity this implies that if $\cG \in \Coh^\H(X)^\heartsuit_{\Kzl}$ is simple, so is $\pi^*(\cG)$. The claim now follows if we show that any simple $\cF \in \Coh^\H(V)^\heartsuit_{\Kzl}$ is in the essential image of~$\pi^*$, since if $\cF \cong \pi^* (\cG)$ then  $\sigma^*(\cF) \cong \sigma^* \pi^* (\cG) \cong \cG$. 

By adjunction we have a nonzero map $\cF \to \sigma_* \sigma^* (\cF)$, whose target belongs to $\Coh^\H(V)^\heartsuit_{\Kzl}$ by Lemma \ref{lem:koszultexists}. It follows from the proof of that lemma that $\sigma_* \sigma^* (\cF) \cong \pi^* \sigma^*(\cF) \otimes \sigma_*(\O_X)$ has a filtration by the objects $\pi^* \sigma^*(\cF) \ot K_\ell$, that these also belong to $\Coh^\H(V)^\heartsuit_{\Kzl}$, and that the quotient of $\pi^* \sigma^*(\cF) \ot K_\ell$ by $\pi^* \sigma^*(\cF) \ot K_{\ell-1}$ is $\pi^* (\sigma^*(\cF) \otimes \Lambda^\ell(\cV^\vee)[\ell])$. It follows that $\cF$ is a subobject of $\pi^* (\sigma^*(\cF) \otimes \Lambda^\ell(\cV^\vee)[\ell])$ for some $\ell$. But the claim follows since the essential image of $\pi^*$ is closed under simple subobjects: by the first paragraph $\pi^*$ takes a composition series of any $\cG \in \Coh^\H(X)^\heartsuit_{\Kzl}$ to a composition series of $\pi^*(\cG)$. 
\end{proof}

Next we generalize to the case when the classical intersection $Y^\cl$ of $V_1$ and $V_2$ is a vector bundle. Equivalently, the fiberwise span of $V_1$ and $V_2$ is a sub-bundle $V_{12} \subset W$. 
In this case we will use the following special feature of $Y$.

\begin{Lemma}\label{lem:newsplits}
Suppose the classical intersection $Y^\cl$ of $V_1$ and $V_2$ is a sub-bundle of $W$. Then there is a canonical $\H$-equivariant splitting $p: Y \to Y^{\cl}$ of the natural map $\iota: Y^{\cl} \to Y$. Moreover,  $p_*(\O_{Y})  \cong \bigoplus_k \Lambda^k(\cW/\cV_{12})^\vee [k]$. 
\end{Lemma}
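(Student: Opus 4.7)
The plan is to identify $Y$ with a product $Y^\cl \times_X Z$ over $X$, where $Z := V_{12} \times^L_W X$ is an auxiliary derived scheme with underlying classical locus $X$. The retraction $p$ will then be the projection to the first factor, and the formula for $p_*\O_Y$ will follow from a Koszul calculation of $\pi_{Z*}\O_Z$ combined with base change along $Y^\cl \to X$.

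The first step is to observe that the change-of-coordinates automorphism $\phi: W \times_X W \to W \times_X W$, $(a,b) \mapsto (a, a-b)$, carries the diagonal $\Delta: W \hookrightarrow W \times_X W$ to the inclusion $W \times_X \{0\} \hookrightarrow W \times_X W$. Applying $\phi$ to the pullback square defining $Y = V_1 \times^L_W V_2$ yields a canonical $\H$-equivariant identification
\[ Y \;\cong\; (V_1 \oplus V_2) \times^L_W X, \]
where $V_1 \oplus V_2 \to W$ is the difference map $(v_1,v_2) \mapsto v_1-v_2$ and $X \to W$ is the zero section.

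The difference map factors as $V_1 \oplus V_2 \twoheadrightarrow V_{12} \hookrightarrow W$: the first arrow is a surjection of $\H$-equivariant vector bundles with kernel $Y^\cl$ (hence flat), and the second is the sub-bundle inclusion. Combining this with the definition of $Z$ gives
\[ Y \;\cong\; (V_1 \oplus V_2) \times^L_{V_{12}} Z \;\cong\; (V_1 \oplus V_2) \times_{V_{12}} Z, \]
where the second identification uses flatness of the bundle surjection. Since $Z \to V_{12}$ factors classically through the zero section $X \hookrightarrow V_{12}$ (as $Z^\cl = X$), this reduces to $Y \cong Y^\cl \times_X Z$ via the classical identity $(V_1 \oplus V_2) \times_{V_{12}} X = Y^\cl$. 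The projection to the first factor provides the desired $\H$-equivariant retraction $p$, and $p \circ \iota = \id$ follows because $\iota$ corresponds under the identification to the inclusion $Y^\cl \times_X \{0\} \hookrightarrow Y^\cl \times_X Z$.

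For the formula, one computes $\pi_{Z*}\O_Z$ by pulling back the Koszul resolution $\Lambda^\bullet \pi_W^* \cW^\vee$ of $\O_X$ over $\O_W$ along $V_{12} \hookrightarrow W$. The resulting Koszul complex on $V_{12}$ has differential given by contraction with the sub-bundle inclusion; in view of the exact sequence $0 \to (\cW/\cV_{12})^\vee \to \cW^\vee \to \cV_{12}^\vee \to 0$, this differential vanishes on the $(\cW/\cV_{12})^\vee$ part and reduces to the standard Koszul differential on the $\cV_{12}^\vee$ part. Choosing any splitting of the exact sequence decomposes the complex as $\Lambda^\bullet(\cW/\cV_{12})^\vee \otimes (\Lambda^\bullet \cV_{12}^\vee \otimes \O_{V_{12}})$ with differential acting only on the second tensor factor, which is itself a Koszul resolution of $\O_X$; hence $\pi_{Z*}\O_Z \cong \bigoplus_k \Lambda^k(\cW/\cV_{12})^\vee[k]$, and base change along $Y^\cl \to X$ yields the stated formula for $p_*\O_Y$. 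The main obstacle is establishing the identification $Y \cong Y^\cl \times_X Z$; once this is in place, everything else is either functorial or a routine Koszul calculation, with the essential ingredient being the flatness of the surjective bundle map $V_1 \oplus V_2 \twoheadrightarrow V_{12}$.
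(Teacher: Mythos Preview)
Your argument is correct and reaches the same conclusion as the paper, but by a somewhat different route. The paper exploits the identity $Y^\cl \cong V_1 \times_{V_{12}} V_2$ and realizes $Y$ as the base change of the derived self-intersection $V_{12} \times_W V_{12}$ along the flat map $Y^\cl \to V_{12}$; the splitting $p$ then arises from the diagonal/projection factorization $V_{12} \to V_{12} \times_W V_{12} \to V_{12}$, and the formula for $p_*\O_Y$ from base change together with the Koszul resolution of $\O_{V_{12}}$ on $W$. Your approach instead uses the shearing trick to rewrite $Y$ as the derived zero locus of the difference map $V_1 \oplus V_2 \to W$, factors this through $V_{12}$, and obtains the product decomposition $Y \cong Y^\cl \times_X Z$ with $Z = V_{12} \times^L_W X$. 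This is arguably more transparent, since it makes the splitting visible as projection from a product; the two are closely related (the paper's $V_{12} \times_W V_{12}$ becomes your $V_{12} \times_X Z$ under the same shearing). One small point: in your final Koszul computation you resolve $\O_X$ over $\O_W$ and then appeal to a splitting of $0 \to (\cW/\cV_{12})^\vee \to \cW^\vee \to \cV_{12}^\vee \to 0$, which need not exist $\H$-equivariantly. The fix is immediate and matches the paper's choice: resolve $\O_{V_{12}}$ over $\O_W$ instead (the Koszul complex on $(\cW/\cV_{12})^\vee$) and restrict to the zero section $X \hookrightarrow W$; the differential vanishes identically there, so the $\H$-equivariant identification $\pi_{Z*}\O_Z \cong \bigoplus_k \Lambda^k(\cW/\cV_{12})^\vee[k]$ is automatic without any splitting.
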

\begin{proof}
	The key point is that $Y^{\cl} \cong V_1 \times_{V_{12}} V_2$. We then have Cartesian squares 
	\begin{equation*}
	\begin{tikzpicture}[baseline=(current  bounding  box.center),thick,>=\arrtip]
		\newcommand*{\ha}{3}; \newcommand*{\hb}{3}; \newcommand*{\hc}{3};
		\newcommand*{\va}{-1.5}; \newcommand*{\vb}{-1.5};
		\node (aa) at (0,0) {$V_1 \times_{V_{12}} V_2$};
		\node (ab) at (\ha,0) {$Z$};
		\node (ac) at (\ha+\hb,0) {$V_1 \times_{V_{12}} V_2$};
		\node (ba) at (0,\va) {$V_{12}$};
		\node (bb) at (\ha,\va) {$V_{12} \times_W V_{12}$};
		\node (bc) at (\ha+\hb,\va) {$V_{12}$,};
		\draw[->] (aa) to node[left] {$ $} (ba); 
		\draw[->] (ab) to node[right] {$ $} (bb); 
		\draw[->] (ac) to node[right] {$ $} (bc); 
		\draw[->] (ba) to node[above] {$ $} (bb); 
		\draw[->] (bb) to node[above] {$ $} (bc); 
		\draw[->] (ab) to node[above] {$p$} (ac); 
		\draw[->] (aa) to node[above] {$\iota $} (ab);
	\end{tikzpicture}
\end{equation*} 
where $Z = (V_{12} \times_W V_{12}) \times_{V_{12}} V_1 \times_{V_{12}} V_2 \cong (V_{12} \times_W V_1) \times_{V_{12}} V_2 \cong V_1 \times_W V_2$. The composition along the top is the identity and defines the splitting $p$ since $V_1 \times_{V_{12}} V_2 \cong Y^{\cl}$. 

The last claim follows by base change and the fact that the pushforward of the structure sheaf of $V_{12} \times_W V_{12}$ to $V_{12}$ is $\bigoplus_k \Lambda^k(\cW/\cV_{12})^\vee [k]$ (where one can see this using the Koszul resolution of $\O_{V_{12}}$ as a sheaf on $W$).  
\end{proof}

\begin{Proposition}\label{prop:koszul3}
Suppose the classical intersection $Y^\cl$ of $V_1$ and $V_2$ is a sub-bundle of $W$ and consider the maps 
	$X \xleftarrow{\pi} Y^\cl \xrightarrow{\iota} Y \xrightarrow{p} Y^{\cl} \xleftarrow{\sigma} X$. 
Then the functors
	$$\iota_*\pi^*: \Coh^\H(X)^\heartsuit_{\Kzl} \leftrightarrows  \Coh^\H(Y)^\heartsuit_{\Kzl}: \sigma^*p_*$$ 
induce inverse bijections between simple objects on either side.
	\end{Proposition}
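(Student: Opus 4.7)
The plan is to use Proposition \ref{prop:koszul2} to reduce the claim to an analogous statement for $\iota_*$ and $p_*$. Since the scaling $\C^\times \subset \H$ acts with weight one on $Y^\cl$ as a sub-bundle of $W$, Proposition \ref{prop:koszul2} applies to the bundle $Y^\cl \to X$ and shows that $\pi^*$ and $\sigma^*$ induce inverse bijections on simples between $\Coh^\H(X)^\heartsuit_\Kzl$ and $\Coh^\H(Y^\cl)^\heartsuit_\Kzl$. It therefore suffices to show that $\iota_*$ and $p_*$ induce inverse bijections on simples between $\Coh^\H(Y^\cl)^\heartsuit_\Kzl$ and $\Coh^\H(Y)^\heartsuit_\Kzl$; the bijections of the proposition are then the compositions.

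The first step is to establish that $\iota_*$ and $p_*$ are t-exact and conservative. For $\iota_*$, the composite $i_1 \iota$ realizes $Y^\cl = V_1 \cap V_2$ as a sub-bundle of $V_1$, so by Lemma \ref{lem:koszul1} the pushforward $(i_1\iota)_* = i_{1*} \iota_*$ is t-exact; since $i_{1*}$ is t-exact and conservative by Theorem \ref{thm:koszul}, $\iota_*$ is t-exact, and it is conservative as a closed embedding. For $p_*$, the formula $p_*\O_Y \cong \bigoplus_k \Lambda^k(\cW/\cV_{12})^\vee[k]$ of Lemma \ref{lem:newsplits} exhibits $p$ as derived affine, from which conservativity is immediate. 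For t-exactness, since $\sigma^*$ on $Y^\cl$ is t-exact and conservative, it suffices to show $\sigma^* p_*$ is t-exact; this follows from the Koszul condition combined with $\cW/\cV_{12}$ having weight one, so that each summand $\Lambda^k(\cW/\cV_{12})^\vee[k]$ lies in the Koszul heart on $X$.

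Using $p_* \iota_* \cong \id$ (from $p \iota = \id_{Y^\cl}$), the forward direction follows formally. Given simple $\cG \in \Coh^\H(Y^\cl)^\heartsuit_\Kzl$, any nonzero proper subobject $\cF' \subsetneq \iota_*(\cG)$ yields a short exact sequence $0 \to p_*(\cF') \to \cG \to p_*(\iota_*(\cG)/\cF') \to 0$ under the t-exact $p_*$; simplicity of $\cG$ forces one of the outer terms to vanish, while conservativity of $p_*$ rules out both $\cF' = 0$ and $\iota_*(\cG)/\cF' = 0$, a contradiction. Thus $\iota_*(\cG)$ is simple. Combined with t-exactness of $\iota_*$, this shows that the composition factors of any $\iota_*(\cH)$ are $\iota_*$ applied to the composition factors of $\cH$.

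For essential surjectivity on simples, let $\cF \in \Coh^\H(Y)^\heartsuit_\Kzl$ be simple. The triangle identity for $p^* \dashv p_*$ together with conservativity of $p_*$ forces the counit $p^* p_*(\cF) \to \cF$ to be nonzero, hence surjective by simplicity. The grading on the dg-algebra $p_*\O_Y$ equips $p^* p_*(\cF)$ with a decreasing filtration $G_\bullet$ by $p_*\O_Y$-submodules whose successive quotients $G_\ell / G_{\ell+1} \cong \iota_*\bigl(p_*(\cF) \otimes \Lambda^\ell(\cW/\cV_{12})^\vee[\ell]\bigr)$ lie both in the image of $\iota_*$ and in the Koszul heart. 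Taking the largest $\ell_0$ for which $G_{\ell_0} \to \cF$ remains surjective, the restriction to $G_{\ell_0+1}$ has proper image and is therefore zero by simplicity, so the map factors through $G_{\ell_0}/G_{\ell_0+1}$, realizing $\cF$ as a simple quotient of $\iota_*(p_*(\cF) \otimes \Lambda^{\ell_0}(\cW/\cV_{12})^\vee[\ell_0])$. By the closure of simple composition factors under $\iota_*$ established above, $\cF \cong \iota_*(\cG_0)$ for some simple $\cG_0 \in \Coh^\H(Y^\cl)^\heartsuit_\Kzl$, and Proposition \ref{prop:koszul2} gives $\cG_0 \cong \pi^*(\cH_0)$, whence $\cF \cong \iota_*\pi^*(\cH_0)$. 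The main technical obstacle I expect is the t-exactness of $p_*$, since the Koszul t-structure on $Y$ is defined through $\sigma_1^* i_{1*}$ rather than directly through $p_*$, and aligning these characterizations requires careful bookkeeping with the weight-one assumption on $\cW/\cV_{12}$.
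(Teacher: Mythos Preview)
Your overall strategy matches the paper's: reduce via Proposition~\ref{prop:koszul2} to the pair $(\iota_*, p_*)$, use $p_*\iota_* \cong \id$ and conservativity of $p_*$ to show $\iota_*$ preserves simples, and then filter $p^*p_*(\cF)$ with graded pieces in the image of $\iota_*$ to get essential surjectivity. The filtration argument and the use of the counit are essentially identical to the paper's.

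The genuine gap is exactly the one you flag at the end: your argument for t-exactness of $p_*$ does not work as written. Knowing that $p_*(\O_Y) \cong \bigoplus_k \Lambda^k(\cW/\cV_{12})^\vee[k]$ lies in the Koszul heart only tells you $p_*$ is t-exact on the structure sheaf; it says nothing about $p_*(\cF)$ for a general $\cF \in \Coh^\H(Y)^\heartsuit_\Kzl$, and there is no projection formula available to reduce to that case. Since your entire filtration argument requires $p_*(\cF) \in \Coh^\H(Y^{\cl})^\heartsuit_\Kzl$ (otherwise the graded pieces $\iota_*(p_*(\cF) \otimes \Lambda^\ell(\cW/\cV_{12})^\vee[\ell])$ need not be in the heart), this is a real hole, not just bookkeeping.

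The paper closes it with a one-line geometric observation you are missing: the composite $Y \xrightarrow{p} Y^{\cl} \hookrightarrow W$ coincides with $Y \xrightarrow{i_1} V_1 \xrightarrow{j_1} W$ (both are simply the canonical map $Y \to W$). Now $i_{1*}$ is t-exact by the construction of the t-structure in Theorem~\ref{thm:koszul}, and $j_{1*}$ is t-exact by Lemma~\ref{lem:koszul1}; since the sub-bundle inclusion $Y^{\cl} \hookrightarrow W$ also has t-exact and conservative pushforward by Lemma~\ref{lem:koszul1}, t-exactness of $p_*$ follows immediately. This is the ``alignment'' you were worried about, and it is much simpler than trying to compare $\sigma^* p_*$ with $\sigma_1^* i_{1*}$ directly.

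Two minor remarks. First, your separate argument for t-exactness of $\iota_*$ via the sub-bundle inclusion $Y^{\cl} \hookrightarrow V_1$ is correct and slightly more direct than the paper's route (which deduces it implicitly from $p_*$ being t-exact, conservative, and $p_*\iota_* \cong \id$). Second, the paper also proves $p^*$ is t-exact (via the same $p_*(\O_Y)$ computation you invoke), which makes the filtration argument cleaner since $p^*p_*(\cF)$ is then known to be in the heart from the start; your workaround of deducing this inductively from the graded pieces is fine, but once you have the correct proof of $p_*$ t-exact the paper's formulation is tidier.
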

\begin{proof}
	Since $Y^\cl$ is an $\H$-equivariant bundle over $X$, $\Coh^\H(Y^\cl)$ inherits a Koszul t-structure via Proposition \ref{prop:koszul-t-bundles}. We begin by showing that $p_*$ and $p^*$ are t-exact with respect to this t-structure.  
	
	To show that $p_*$ is t-exact it suffices to check that pushforward by the composition $Y \xrightarrow{p} Y^{\cl} \xrightarrow{i} W$ is t-exact. But this composition can be factored as $Y \xrightarrow{i_1} V_1 \to W$ where both maps have t-exact pushforward. Likewise, to show $p^*$ is t-exact it suffices to show that the composition $i_* p^* p_*$ is t-exact. Since $p^* p_*(-)$ is tensoring with $p_*(\O_Y)$ this follows from Lemma \ref{lem:newsplits} and the fact that all our t-structures are Koszul. 
	
		In light of Proposition \ref{prop:koszul2} it now suffices to show that $\iota_*$ and $p_*$ induce inverse bijections between isomorphisms classes of simple objects in $\Coh^\H(Y^\cl)^\hs_\Kzl$ and $\Coh^\H(Y)^\hs_\Kzl$. We can prove this in essentially the same way as Proposition \ref{prop:koszul2}. 

Since $p_*$ is conservative it follows that $\cG \in \Coh^\H(Y)^\hs_{\Kzl}$ is simple if $p_*(\cG)$ is simple. Since $p_* \iota_*$ is the identity this implies that if $\cF \in \Coh^\H(Y^\cl)^\hs_{\Kzl}$ is simple, so is $\iota_*(\cF)$. The claim now follows if we show that any simple $\cG \in \Coh^\H(Y)^\hs_{\Kzl}$ is in the essential image of~$\iota_*$, since if $\cG \cong \iota_* (\cF)$ then  $p_*(\cG) \cong p_* \iota_* (\cF) \cong \cF$. By adjunction we have a nonzero map $p^* p_* (\cG) \to \cG$ in $\Coh^\H(Y)^\hs_{\Kzl}$, so it suffices to show that every composition factor of $p^*p_*(\cG)$ is in the essential image of $\iota_*$. This is true of $p^*(\cF)$ for any $\cF \in \Coh^\H(Y^\cl)^\hs_\Kzl$.
	
	Using the computation of $p_*(\O_Y)$ from Lemma \ref{lem:newsplits} we find that $\cH^{-\ell}(\O_Y) \cong \iota_*\pi^* \Lambda^\ell(\cW/\cV_{12})^\vee$.  It follows that for any $\ell$ we have an exact triangle
	$$p^*(\cF) \ot \tau^{\leq \ell-1} \cO_Y \to p^*(\cF) \ot \tau^{\leq \ell} \cO_Y \to p^*(\cF) \ot \iota_*\pi^*(\Lambda^\ell(\cV_{12}^\perp))[\ell].$$
	On the other hand, we have
	$$p^* (\cF) \ot \iota_*\pi^*(\Lambda^\ell(\cV_{12}^\perp))[\ell] \cong \iota_*(\iota^* p^*(\cF) \ot \pi^*(\Lambda^\ell(\cV_{12}^\perp))[\ell]) 
	\cong \iota_*(\cF \otimes \pi^*(\Lambda^\ell(\cV_{12}^\perp))[\ell]).$$
	As the last expression is manifestly in $\Coh^\H(Y)^\hs_\Kzl$, it follows by induction on $\ell$ that the sheaves $p^*(\cF) \ot \tau^{\leq \ell} \cO_Y$ are as well and define a filtration of $p^*(\cF)$ in $\Coh^\H(Y)^\hs_\Kzl$. The summands of the associated graded sheaf are then $\iota_*(\cF \otimes \pi^*(\Lambda^\ell(\cV_{12}^\perp))[\ell])$. But the argument of the preceding paragraph also shows that the essential image of $\iota_*$ contains any composition factor of such a pushforward, hence any composition factor of $p^*(\cF)$.   
\end{proof}

Finally, to understand simples when $V_1$ and $V_2$ do not necessarily intersect in a bundle we will use the following two results. These hold in the general context of a morphism $\pi: Y \to X$ as in Section \ref{sec:koszulity}, and assume $X/\H$ has the resolution property so that the t-structure on $Y$ restricts to open and closed $\H$-invariant subschemes. Below we interpret sheaves such as $j_*(\cF)$ as objects in $\IndCoh^\H(Y)$, similarly for $i^!(\cF)$, etc. 

\begin{Proposition}\label{prop:extend}
	Let $j: U \to X$ be a $\H$-invariant open subscheme. 
	If $\cF \in \Coh^\H(Y_U)_K^{\heartsuit}$ is simple, then there is a unique simple $\tcF \in \Coh^\H(Y)_K^{\heartsuit}$ such that $j^*(\tcF) \cong \cF$. Explicitly, $\tcF$ is the socle of $\cH_K^0(j_* \cF)$. Conversely, if $\tcF \in \Coh^\H(Y)_K^{\heartsuit}$ is simple then $j^*(\tcF)$ is either simple or zero. 
\end{Proposition}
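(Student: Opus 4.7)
The plan is to realize the unique simple extension as a coherent subobject of $\cH^0_K(j_*\cF)$, specifically as the (unique) minimum element of the collection of coherent subobjects restricting to $\cF$, and then identify it with the socle; the converse will follow by pulling obstructing subobjects back through $\cH^0_K\circ j_*$. By Proposition \ref{prop:Utstructure} the functor $j^*$ is t-exact, so its right adjoint $j_*: \IndCoh^\H(Y_U) \to \IndCoh^\H(Y)$ is left t-exact, $j_*\cF \in \IndCoh^\H(Y)_K^{\geq 0}$, and $j^*$ commutes with $\cH^0_K$; combined with $j^*j_* \cong \id$ this gives $j^*\cH^0_K(j_*\cG) \cong \cG$ for every $\cG \in \Coh^\H(Y_U)_K^{\heartsuit}$. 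By (\ref{eq:ICKoszul}) together with Lemma \ref{lem:noetherian-extend}, $\Coh^\H(Y)_K^{\heartsuit}$ is closed under subobjects inside the Grothendieck abelian category $\IndCoh^\H(Y)_K^{\heartsuit}$.

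Writing $\cH^0_K(j_*\cF)$ as a filtered union of its coherent subobjects $\cF_\alpha$ gives $\cF \cong \colim j^*\cF_\alpha$; since $\cF$ is simple (hence compact in the Grothendieck abelian heart over $Y_U$), some $j^*\cF_\alpha$ surjects onto $\cF$, and the image of the corresponding $\cF_\alpha$ in $\cH^0_K(j_*\cF)$ is a coherent subobject with $j^*$-restriction $\cF$. The set $S$ of all such subobjects is therefore nonempty, closed under intersection (since $j^*$ is exact and $\cF \cap \cF = \cF$), and each of its elements has finite length by our standing hypothesis and Proposition \ref{prop:conservative}. Fixing one element $\tcF^* \in S$, any other element of $S$ intersects $\tcF^*$ to land in $S$ inside $\tcF^*$, and the Artinian condition on subobjects of $\tcF^*$ produces a minimum element $\tcF_0 \in S$.

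The crux is the vanishing claim that no nonzero coherent subobject $\cG \subset \cH^0_K(j_*\cF)$ has $j^*\cG = 0$: the composition $\cG \hookrightarrow \cH^0_K(j_*\cF) \to j_*\cF$ corresponds by adjunction to $j^*\cG \to \cF$, which vanishes, so $\cG$ factors through the fiber of $\cH^0_K(j_*\cF) \to j_*\cF$; that fiber is $\tau^{\geq 1}_K(j_*\cF)[-1] \in \IndCoh^\H(Y)_K^{\geq 2}$, and since $\cG$ lies in the heart the factoring map is null, forcing $\cG = 0$. Any proper $\cG \subsetneq \tcF_0$ then has $j^*\cG \subsetneq \cF$ by minimality, hence $j^*\cG = 0$ by simplicity of $\cF$, hence $\cG = 0$, so $\tcF_0$ is simple. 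The same vanishing forces every simple subobject of $\cH^0_K(j_*\cF)$ to lie in $S$ and hence to equal $\tcF_0$, so $\tcF_0$ is the socle. Finally, for any simple $\tcF \in \Coh^\H(Y)_K^{\heartsuit}$ with $j^*\tcF \cong \cF$, the adjunction unit $\tcF \to j_*j^*\tcF$ factors through $\cH^0_K(j_*\cF)$; applying $j^*$ recovers the identity on $j^*\tcF$, so its kernel has zero $j^*$-image and therefore vanishes by simplicity of $\tcF$ together with $j^*\tcF \neq 0$. Thus $\tcF$ embeds into the socle and $\tcF \cong \tcF_0$.

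For the converse, suppose $\tcF$ is simple with $j^*\tcF \neq 0$ and let $\cG \subsetneq j^*\tcF$ be a proper nonzero subobject; left t-exactness of $j_*$ together with exactness of $\cH^0_K$ on $\IndCoh^{\geq 0}_K$ yields an inclusion $\cH^0_K(j_*\cG) \hookrightarrow \cH^0_K(j_*j^*\tcF)$, and combining with the adjunction embedding $\tcF \hookrightarrow \cH^0_K(j_*j^*\tcF)$ (injective by the argument of the previous paragraph) the intersection $\tcF \cap \cH^0_K(j_*\cG)$ has $j^*$-image $j^*\tcF \cap \cG = \cG$, producing a proper nonzero subobject of $\tcF$ and contradicting simplicity. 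The principal difficulty throughout is the vanishing statement of the third paragraph; it depends on locating the fiber of $\cH^0_K(j_*\cF) \to j_*\cF$ in $\IndCoh^\H(Y)_K^{\geq 2}$, which in turn rests on the t-exactness of $j^*$ and on the identification (\ref{eq:ICKoszul}) guaranteeing that the Koszul t-structure on $\IndCoh^\H(Y)$ really is the ind-extension of the one on $\Coh^\H(Y)$.
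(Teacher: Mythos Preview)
Your argument is correct, but it follows a genuinely different route from the paper's.

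The paper constructs $\tcF$ by first choosing a coherent lift $\cG \in \Coh^\H(Y)_K^\heartsuit$ such that $\cF$ is a retract of $j^*(\cG)$, taking $\cG$ minimal with this property, and letting $\tcF$ be the unique simple quotient of $\cG$. It then checks $j^*(\tcF) \cong \cF$ by an adjunction argument, and only afterwards identifies $\tcF$ with the socle of $\cH^0_K(j_*\cF)$ by a separate computation. For the converse the paper chooses a simple quotient $\cF$ of $j^*(\tcF)$ and applies the forward direction to conclude $j^*(\tcF) \cong \cF$.

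You instead work inside $\cH^0_K(j_*\cF)$ from the outset, isolating as the key lemma the vanishing claim that no nonzero coherent subobject of $\cH^0_K(j_*\cF)$ restricts to zero under $j^*$. This yields the socle description immediately and makes the uniqueness transparent. Your converse argument via the intersection $\tcF \cap \cH^0_K(j_*\cG)$ is also more direct than the paper's. The cost is that you rely on Lemma~\ref{lem:noetherian-extend} and (\ref{eq:ICKoszul}) to know that coherent subobjects behave well inside $\IndCoh^\H(Y)_K^\heartsuit$, and on compactness of $\cF$ in the Ind-heart to produce an element of $S$; the paper's argument stays entirely within $\Coh^\H(Y)$ and needs none of this. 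Both approaches use the finite-length hypothesis in an essential way.
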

\begin{proof}
	Choose some $\cG \in \Coh^\H(Y)$ so that $\cF$ is a retract of $j^* (\cG)$. Since $j^*$ is t-exact we can assume $\cG \in \Coh^\H(Y)_K^\heartsuit$. Since $\Coh^\H(Y)_K^\heartsuit$ is finite-length we can assume $\cG$ is the minimal object with this property. We now take $\tcF$ to be the quotient of $\cG$ by all strictly smaller sub-objects. Then $\tcF$ is by construction simple with $j^*(\tcF) \cong \cF \oplus \cF'$ for some $\cF' \in \Coh^\H(Y)_K^\heartsuit$. 
	
	Consider the map $\tcF \to j_*(\cF')$ adjoint to the surjection $j^*(\tcF) \to \cF'$. If $\cF' \ne 0$ then this map must be injective since $\tcF$ is simple. But then applying $j^*$ we get an injection 
	$$\cF \oplus \cF' \cong j^*(\tcF) \to j^* j_*(\cF') \cong \cF'$$
which is not possible. Thus $j^*(\tcF) \cong \cF$. 
	
	Next we show $\tcF$ is the socle of $\cH^0_K(j_* \cF)$. Suppose $\tcF' \in \Coh^\H(Y)^\heartsuit$ is another simple such that $j^*(\tcF') \cong \cF$. Then $\Map(\tcF',  \cH^0_K(j_*(\cF))) \cong \Map(j^*(\tcF'), \cF)$ is nonzero, hence $\tcF' \hookrightarrow \cH^0_K(j_*(\cF))$ since the former is simple. Take the exact sequence
$$0 \to \tcF \to \cH^0_K(j_* \cF) \to \cQ \to 0$$
and consider the composition $\tcF' \to \cQ$. If $\tcF' \to \cQ$ is nonzero it is a monomorphism since $\tcF'$ is simple. Applying $j^*$ we get a monomorphism $j^*(\tcF') \to j^*(\cQ) \cong 0$, hence $j^*(\tcF') \cong 0$. This is a contradiction since $\Map(\tcF',  \cH^0_K(j_*(\cF))) \cong \Map(j^*(\tcF'), \cF)$ is then contractible. Thus $\tcF' \to \cQ$ must be zero so that $\tcF' \into \cH^0_K(j_* \cF)$ factors through $\tcF$ and hence $\tcF' \cong \tcF$ since both are simple.
	
Finally, suppose $\tcF \in \Coh^\H(Y)_K^{\heartsuit}$ is any simple such that $j^*(\tcF)$ is nonzero. Since $\Coh^\H(Y_U)_K^{\heartsuit}$ is Noetherian we may choose a simple quotient $j^*(\tcF) \to \cF$. By adjunction this gives a nonzero map $\tcF \to \cH^0_K(j_*(\cF))$. But it follows from the rest of the proof that $\tcF$ is then the socle of $\cH^0_K(j_*(\cF))$, and that $j^*(\tcF) \cong \cF$. 
\end{proof}

\begin{Proposition}\label{prop:simples}
	Let $i: Z \to X$ be a $\H$-invariant closed subscheme and $j: U \to X$ its complement. Then $i_*$ takes simples to simples. Moreover, for every simple $\cF \in \Coh^\H(Y)_K^{\heartsuit}$, either $\cF$ is the socle of $\cH^0_K(j_* j^* \cF))$ or $\cF \cong i_* \cH^0_K(i^! \cF)$ where $\cH^0_K(i^! \cF)$ is simple. 
\end{Proposition}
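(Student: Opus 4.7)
The plan is to deduce both claims in tandem from three structural inputs: $i_*$ is t-exact and fully faithful, so its restriction $i_*\colon \Coh^\H(Y_Z)^\heartsuit_K \to \Coh^\H(Y)^\heartsuit_K$ is a fully faithful exact functor of abelian categories (Proposition \ref{prop:Ztstructure}); $j^*$ is t-exact with $j^* i_* \cong 0$ (Proposition \ref{prop:Utstructure}); and the recollement fiber sequence $i_* i^!(\cG) \to \cG \to j_* j^*(\cG)$, available in the form of a colimit over $\H$-invariant thickenings of $Z$ exactly as used in the proof of Proposition \ref{prop:Utstructure}. I will also need left t-exactness of $i^!$ (automatic, since it is the right adjoint of the t-exact $i_*$) and the coherence comparison $i_*^{-1}(\Coh^\H(Y)) = \Coh^\H(Y_Z)$ from the same proposition.

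For the first claim, I would take a simple $\cF \in \Coh^\H(Y_Z)^\heartsuit_K$ and a nonzero subobject $\cG \hookrightarrow i_*(\cF)$ in the heart of $Y$. Applying the t-exact $j^*$ forces $j^*(\cG) \cong 0$, so the recollement sequence reads $\cG \cong i_* i^!(\cG)$ in $\IndCoh$; the thickening colimit degenerates to the fixed $Z$ because $\cG$ embeds into the genuinely $Z$-supported sheaf $i_*(\cF)$. Since $i^!(\cG) \in \IndCoh^\H(Y_Z)^{\ge 0}_K$, pushing forward the truncation $i^!(\cG) \to \cH^0_K(i^!\cG)$ by $i_*$ yields the canonical map $\cG \to \cH^0_K\cG \cong \cG$, giving $\cG \cong i_* \cH^0_K(i^!\cG)$. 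The target is coherent by the preimage identification, and full faithfulness of $i_*$ on hearts recognizes the inclusion $\cG \hookrightarrow i_*(\cF)$ as $i_*$ applied to a monomorphism $\cH^0_K(i^!\cG) \hookrightarrow \cF$. Simplicity of $\cF$ then forces this to be an isomorphism, so $\cG \cong i_*(\cF)$.

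For the dichotomy, I would split on whether $j^*(\cF)$ vanishes. If $j^*(\cF) \not\cong 0$, the converse direction of Proposition \ref{prop:extend} gives that $j^*(\cF)$ is simple, and its direct form exhibits $\cF$ as the socle of $\cH^0_K(j_* j^*(\cF))$. If $j^*(\cF) \cong 0$, the argument of the previous paragraph, now applied with $\cG = \cF$, produces $\cF \cong i_* \cH^0_K(i^!\cF)$; simplicity of $\cH^0_K(i^!\cF)$ follows because $i_*$ is fully faithful, t-exact, and conservative on hearts, so any nonzero proper subobject of $\cH^0_K(i^!\cF)$ would push forward to a nonzero proper subobject of the simple $\cF$, a contradiction.

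The main technical obstacle I anticipate is justifying the collapse of the thickening colimit in the recollement sequence to our fixed $Z$ once $\cG$ embeds in $i_*(\cF)$. This is the derived analogue of the classical fact that a coherent subsheaf of an $\cO_Z$-module on $X$ is itself annihilated by the ideal of $Z$, and I expect to handle it by combining a scheme-theoretic support argument with the identification $i_*^{-1}(\Coh^\H(Y)) = \Coh^\H(Y_Z)$, the compatibility of $\IndCoh^\H(Y)^{\ge 0}_K$ with filtered colimits, and the left t-exactness of $i^!$ — all of which are in place by Propositions \ref{prop:Ztstructure} and \ref{prop:Utstructure}.
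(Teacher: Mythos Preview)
There is a genuine gap in both halves of your argument, and they share a common cause: you are trying to use the fiber sequence $i_* i^!(\cG) \to \cG \to j_* j^*(\cG)$ for the fixed closed subscheme $Z$, but what is actually available (equation (\ref{eq:ijtriangle})) is only the version with $\colim_{Z'} i_{Z'*} i_{Z'}^!$ over all thickenings $Z'$. Your proposed ``collapse'' of this colimit to the fixed $Z$ does not follow from the embedding $\cG \hookrightarrow i_*(\cF)$: a monomorphism in the Koszul heart is not a submodule inclusion in the standard sense, so the classical argument that subsheaves of $\cO_Z$-modules are annihilated by $\cI_Z$ does not transfer. In fact the statement that the essential image of $i_*$ on Koszul hearts is closed under subobjects is a \emph{consequence} of the full Proposition (once both parts are known, every simple subquotient of $i_*\cF$ has vanishing $j^*$ and hence lies in the image by the dichotomy), so invoking it as an input is circular. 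The second gap is sharper: in the dichotomy, when $j^*(\cF)=0$, you invoke ``the previous paragraph'', but there is no embedding of $\cF$ into any $i_*(\text{anything})$ to exploit, so even the heuristic collapse argument is unavailable.

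The paper bypasses both problems by using Lemma \ref{lem:i!i*v2}, which you do not mention: for \emph{any} $\cF$ in the heart, with no support hypothesis, the map $\cH^0_K(i_* i^! \cF) \to \cH^0_K(\cF)$ is a monomorphism. For the first claim the paper starts from the other end: pick a simple \emph{quotient} $i_*(\cF') \twoheadrightarrow \cF$, take its adjoint $\cF' \to i^!(\cF)$, observe this is a mono into $\cH^0_K(i^!\cF)$ since $\cF'$ is simple, and then compose $i_*$ of this mono with the Lemma's mono $i_*\cH^0_K(i^!\cF) \hookrightarrow \cF$ to see that the original quotient map is itself a mono, hence an iso. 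For the dichotomy, simplicity of $\cF$ plus the Lemma's mono immediately gives $\cF \cong i_*\cH^0_K(i^!\cF)$ or $\cH^0_K(i^!\cF)=0$; a separate filtration argument on $\cO_{Z'}$ (for a thickening on which $\cF$ is supported) then shows the latter forces $j^*\cF \neq 0$.
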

\begin{proof}
	We first show $i_*$ preserves simples. If $\cF' \in \Coh^\H(Y_Z)^\heartsuit_K$ is simple, we may choose a simple quotient $i_*(\cF') \to \cF$ since $\Coh^\H(Y)^\heartsuit$ is Noetherian and $i_*(\cF')$ is nonzero. By adjunction we have a nonzero map $\cF' \to i^!(\cF)$. This factors through a nonzero map $\cF' \to \cH^0_K(i^! (\cF))$ since $i^!$ is left t-exact. Since $\cF'$ is simple this is a monomorphism, hence we have a monomorphism $i_*(\cF') \into i_*\cH^0_K( i^!(\cF))$. But $i_*\cH^0_K( i^!(\cF)) \hookrightarrow \cF$ is a monomorphism by Lemma \ref{lem:i!i*v2}, hence $i_*(\cF') \cong \cF$. 
	
	Conversely, suppose $\cF \in \Coh^\H(Y)^\heartsuit_K$ is simple. Then since $i_*\cH^0_K( i^!(\cF)) \hookrightarrow \cF$ is a monomorphism and $i_*$ is conservative, either $i_*\cH^0_K( i^!(\cF)) \cong \cF$ or $\cH^0_K( i^!(\cF)) \cong 0$. In light of Proposition~\ref{prop:extend}, the claim follows if we show that $\cH^0_K(i^! \cG) = 0$ implies $j^*(\cF)$ is nonzero. 
	
	We can assume that $Z$ is reduced. If $j^* \cG = 0$ then there exists some possibly non-reduced $Z'$ with $Z'_{red} = Z$ such that $\cH^0_K(i'^! \cG) \ne 0$ where $i': Y_{Z'} \to Y$. But $\O_{Z'}$ has a finite filtration by objects pushed forward from $Z$ and, since $\cH^0_K(\cHom(\O_{Y_{Z'}}, i'^! \cG)) \cong \cH^0_K(i'^! \cG) \ne 0$, this means that there exists some nonzero $\cF \in \Coh_0^\H(Z)^{\le 0}$ such that $\cH^0_K(\cHom(i_{red*} \pi^* \cF,  i'^! \cG)) \ne 0$ where $i_{red}: Y_Z \to Y_{Z'}$ is the natural map. But this means that
	$$\cH^0_K(\cHom(\pi^* \cF, i^! \cG)) \cong \cH^0_K(\cHom(\pi^* \cF, i_{red}^! i'^! \cG)) \ne 0$$
	which is a contradiction since, by assumption, $\cH^k_K(i^! \cG) = 0$ for $k \le 0$ and $\cHom(\pi^* \cF, -)$ is left t-exact. 
\end{proof}

\begin{Proposition}
	Under the above hypotheses, the t-structures on $\Coh^\H(Y_Z)$ and $\Coh^\H(Y_U)$ are bounded and finite-length.
\end{Proposition}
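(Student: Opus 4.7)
The plan is to treat the two induced t-structures separately, using their characterizations in Propositions \ref{prop:Ztstructure} and \ref{prop:Utstructure}. The standing assumption that the t-structure on $\Coh^\H(X)$ is bounded and finite-length gives, via Theorem \ref{thm:koszul}, a bounded finite-length t-structure on $\Coh^\H(Y)$ from which everything will be transferred.

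First I would handle $\Coh^\H(Y_Z)$: its t-structure is characterized by t-exactness of $i_*$, and $i_*$ is conservative since $i$ is a closed immersion. A direct application of Proposition \ref{prop:conservative} to $\Psi = i_*$ transfers the bounded and finite-length properties from $\Coh^\H(Y)$ to $\Coh^\H(Y_Z)$. This step is essentially formal and should not encounter any difficulty.

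The case of $\Coh^\H(Y_U)$ is the substantive one, since the t-exact functor $j^* : \Coh^\H(Y) \to \Coh^\H(Y_U)$ characterizing the t-structure is not in general conservative, so Proposition \ref{prop:conservative} does not apply directly. The key leverage point is the observation recalled inside the proof of Proposition \ref{prop:Utstructure}: any $\cF \in \Coh^\H(Y_U)$ is a retract of $j^*(\tcF)$ for some $\tcF \in \Coh^\H(Y)$. Boundedness is then immediate, since $\tcF$ is bounded in $\Coh^\H(Y)$, $j^*$ is t-exact, and a retract of an object in $\Coh^\H(Y_U)^{[a,b]}_K$ lies in $\Coh^\H(Y_U)^{[a,b]}_K$. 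For finite-length, given $\cF \in \Coh^\H(Y_U)^\heartsuit_K$ I would realize it as a retract of $j^*(\tcF)$ with $\tcF \in \Coh^\H(Y)^\heartsuit_K$; by t-exactness of $j^*$ this reduction costs nothing (replace $\tcF$ by $\cH^0_K(\tcF)$). I would then choose a composition series of $\tcF$ in the finite-length heart $\Coh^\H(Y)^\heartsuit_K$, apply the exact functor $j^*$, and invoke Proposition \ref{prop:extend} to conclude that each successive quotient is either simple in $\Coh^\H(Y_U)^\heartsuit_K$ or zero. Discarding the zero steps yields a finite composition series of $j^*(\tcF)$, so $j^*(\tcF)$ is of finite length, and its direct summand $\cF$ inherits this.

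The main point to verify carefully is the reduction step that allows us to take $\tcF$ in the heart: one needs that applying $\cH^0_K$ to the retract diagram $\cF \hookrightarrow j^*(\tcF) \twoheadrightarrow \cF$ preserves the retract structure, which follows from t-exactness of $j^*$ together with additivity of $\cH^0_K$ on exact triangles where the third term is already concentrated in the correct cohomological degree. Beyond this small bookkeeping point the argument is formal, assembling tools already developed in Sections \ref{sec:tlifting}--\ref{sec:koszul-ints}.
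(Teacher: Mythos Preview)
Your proposal is correct and follows essentially the same approach as the paper: handling $Y_Z$ via Proposition~\ref{prop:conservative} applied to the conservative t-exact $i_*$, and handling $Y_U$ via the retract trick together with Proposition~\ref{prop:extend} to show that $j^*$ of a composition series yields simple-or-zero subquotients. Your explicit mention of replacing $\tcF$ by $\cH^0_K(\tcF)$ to land in the heart is in fact slightly more careful than the paper, which takes this reduction for granted.
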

\begin{proof}
	The category $\Coh^\H(Y_Z)^\heartsuit$ is bounded and finite-length because $i_*: \Coh^\H(Y_Z) \to \Coh^\H(Y)$ is t-exact and conservative and $\Coh^\H(Y)^\heartsuit$ is bounded and finite-length. 
	
	To see $\Coh^\H(Y_U)^\heartsuit$ is bounded consider $\cF \in \Coh^\H(Y_U)^\heartsuit$ and an extension $\tcF \in \Coh^\H(Y)^\heartsuit$ such that $\cF$ is a retract of $j^* \tcF$. Since $\tcF$ has bounded cohomology and $j^*$ is t-exact it follows that $j^*(\tcF)$ and thus also $\cF$ have bounded cohomology. Moreover, since $\Coh^\H(Y)^\heartsuit$ has finite-length we can consider a decomposition series $\tcF_1 \subset \dots \subset \tcF_m = \tcF$ where each quotient $\tcF_{i+1}/\tcF_i$ is simple. By Proposition \ref{prop:extend} each $j^*(\tcF_{i+1}/\tcF_i)$ is either zero or simple which implies that $\cF$ also has a finite composition series. 
\end{proof}

We can now state our main result about simples. 

\begin{Theorem}\label{thm:koszulsimples}
	Consider $\pi: Y = V_1 \times_W V_2 \to X$ where $\Coh^\H(X)$ carries a finite-length Koszul t-structure. Then there exists a bijection between simples in $\Coh^\H(X)_K^{\heartsuit}$ and simples in $\Coh^\H(Y)_K^{\heartsuit}$ compatible with the bijection from Proposition \ref{prop:koszul3}.
\end{Theorem}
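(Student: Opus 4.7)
The plan is to reduce to Proposition \ref{prop:koszul3} via a stratification of $X$, then glue the resulting stratum-wise bijections using Propositions \ref{prop:extend} and \ref{prop:simples}. First, since the fiberwise rank of $V_1 \cap V_2$ is upper semi-continuous on $X$ and $X$ is Noetherian, there is a finite $\H$-invariant stratification $X = \bigsqcup_{k=1}^{n} X_k$ by locally closed subschemes on which this rank is constant; on each $X_k$ the classical intersection $V_1|_{X_k} \cap V_2|_{X_k}$ is thus a sub-bundle of $W|_{X_k}$, so Proposition \ref{prop:koszul3} produces a bijection $\Phi_k$ between isomorphism classes of simples in $\Coh^\H(X_k)^\heartsuit_K$ and those in $\Coh^\H(Y_{X_k})^\heartsuit_K$. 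Order the strata so that each $Z_k := X_1 \sqcup \cdots \sqcup X_k$ is closed in $X$, with $X_{k+1}$ open in $Z_{k+1}$, and write $i: Z_k \hookrightarrow X$ and $j: X_{k+1} \hookrightarrow Z_{k+1}$ for the inclusions, using the same letters for their base changes to $Y$.

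Second, I would classify simples of $\Coh^\H(Y)^\heartsuit_K$ by their stratum of support. Applying Propositions \ref{prop:simples} and \ref{prop:extend} inductively along the chain $Z_1 \subset \cdots \subset Z_n = X$---where Proposition \ref{prop:compatible} is invoked to ensure that the restricted t-structures on the $Y_{Z_k}$ and $Y_{X_k}$ have the form assumed by those propositions---every simple $\cG \in \Coh^\H(Y)^\heartsuit_K$ arises uniquely as an iterated pushforward from the socle of $\cH^0_K(j_* \cG_k)$ for a unique index $k$ and a unique simple $\cG_k \in \Coh^\H(Y_{X_k})^\heartsuit_K$. Exactly the same induction, with $Y$ replaced by $X$, yields the analogous classification of simples of $\Coh^\H(X)^\heartsuit_K$ by pairs $(k, \cF_k)$ with $\cF_k \in \Coh^\H(X_k)^\heartsuit_K$ simple; this is legitimate because the proofs of Propositions \ref{prop:extend} and \ref{prop:simples} rest only on the general machinery of Propositions \ref{prop:Ztstructure} and \ref{prop:Utstructure}, which applies equally to $X$ with its Koszul t-structure.

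Finally, I would assemble the global bijection by sending the pair $(k, \cF_k)$ on the $X$ side to the pair $(k, \Phi_k(\cF_k))$ on the $Y$ side, reading off from the two inverse classifications above that this is indeed a bijection on isomorphism classes. Compatibility with Proposition \ref{prop:koszul3} is automatic: if $V_1 \cap V_2$ already has constant fiberwise rank on $X$ then the stratification is trivial ($n = 1$), so the construction reduces to $\Phi_1$, which is literally the bijection of Proposition \ref{prop:koszul3}. The main obstacle is verifying that the ``extend from the open stratum'' and ``restrict to the open stratum'' operations are genuinely mutually inverse on isomorphism classes of simples, and this relies on the uniqueness clause in Proposition \ref{prop:extend} together with the dichotomy in Proposition \ref{prop:simples} between simples pushed forward from $Z_k$ and those that restrict nontrivially to its open complement.
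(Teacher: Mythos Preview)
Your proposal is correct and uses exactly the same toolkit as the paper: reduce to Proposition~\ref{prop:koszul3} on a locus where $V_1\cap V_2$ is a sub-bundle, then glue via Propositions~\ref{prop:extend}, \ref{prop:simples}, and~\ref{prop:compatible}. The only difference is organizational. You fix a global rank stratification $X=\bigsqcup X_k$ up front and induct along the closed chain $Z_1\subset\cdots\subset Z_n$, whereas the paper works one simple at a time: given a simple it passes to its irreducible reduced support $Z\subset X$, chooses a dense open $U\subset Z$ on which $V_1|_U\cap V_2|_U$ is a bundle, applies Proposition~\ref{prop:koszul3} there, and extends back. Since the generic point of such a $Z$ lies in a single rank stratum, the two constructions produce the same bijection; your version is a bit more systematic, the paper's a bit more direct in that it never needs the full stratification.
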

\begin{proof}
	Consider a simple in $\Coh^\H(Y)_K^{\heartsuit}$. By Proposition \ref{prop:simples} it is of the form $i_* \cF$ for some simple $\cF \in \Coh^\H(Y_Z)_K^{\heartsuit}$ where $Z$ is its reduced, irreducible support in $X$. Note that the t-structure here on $\Coh^\H(Y_Z)$ is, by Proposition \ref{prop:compatible}, equivalently the one induced from $\Coh^\H(Z)$ by applying Theorem \ref{thm:koszul} or the one induced from $\Coh^\H(Y)$ by applying Proposition~\ref{prop:Ztstructure}. 
	
	Inside $Z$ consider a dense open $U \subset Z$ over which $V_1|_U$ and $V_2|_U$ intersect in a bundle. By Proposition \ref{prop:simples} $\cF|_{Y_U}$ is simple and $\cF$ is its unique extension to $Y_Z$.  By Proposition \ref{prop:koszul3} the simple $\cF|_{Y_U}$ is in bijection with a simple in $\Coh^\H(U)$ which, by Proposition \ref{prop:simples}, has a unique extension to a simple $\cF' \in \Coh^\H(Z)^{\heartsuit}_K$. Thus we get a map on simples 
	$$ \Coh^\H(Y)^\heartsuit_K \ni i_* \cF \mapsto i_* \cF' \in \Coh^\H(X)^\heartsuit_K.$$ 
	This map is clearly invertible and provides the claimed bijection between simples. 
\end{proof}

\subsection{On the choice of $\eta$}\label{sec:eta}

The data we started with at the beginning of this section includes the central subgroup $\eta: \C^\times \to \H$. We now explain that the Koszul t-structure on $\Coh^\H(Y)$ does not depend on this choice. 

Consider the category $\Coh^{\H \times \C^\times}(Y)$ where the extra $\C^\times$ acts trivially on $X$ and with weight one on $W$. The subgroup $\C^\times \subset \H \times \C^\times$ given by $t \mapsto (\eta(t^{-1}), t)$ acts trivially on $X$ and $W$ and hence also on $Y$. This induces a decomposition 
$$\Coh^{\H \times \C^\times}(Y) \cong \bigoplus_{n \in \Z} \Coh^{\H \times \C^\times}_n(Y).$$
Notice that $\la 1 \ra$, the equivariant shift with respect to the extra $\C^\times$, induces an isomorphism between $\Coh^{\H}_n(Y)$ and $\Coh^{\H}_{n+1}(Y)$. In particular, all the summands on the right hand side are equivalent. 

Now consider the composition 
$$Y/\H \xrightarrow{f} Y/(\H \times \C^\times) \xrightarrow{g} Y/\H,$$
where $f$ and $g$ are induced by the maps $\H \to \H \times \C^\times \mapsto \H$ given by $h \mapsto (h,1)$ and $(h,t) \mapsto h \eta(t)$. Then $g^*: \Coh^\H(Y) \to \Coh^{\H \times \C^\times}(Y)$ identifies $\Coh^\H(Y)$ with $\Coh^{\H \times \C^\times}_0(Y)$, with $f^*$ serving as the inverse. The following is an immediate consequence of the fact that under the map $(h,t) \mapsto h \eta(t)$ we have $(1,t) \mapsto \eta(t)$. 

\begin{Lemma}\label{lem:eta}
The equivalence $g^*: \Coh^\H(Y) \to \Coh^{\H \times \C^\times}_0(Y)$ is t-exact for the Koszul t-structures constructed with respect to $\eta$ on the left and the extra $\C^\times$ on the right. 
\end{Lemma}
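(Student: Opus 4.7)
The plan is to invoke the uniqueness assertion in Theorem \ref{thm:koszul} to reduce the claim on $Y$ to its analogue on $X$, and then verify the latter by direct bookkeeping using Construction \ref{construct}.

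For the reduction, I would note that the diagram (\ref{eq:bundles}) consists of $(\H\times\C^\times)$-equivariant maps, so $i_{1*}$ and $\sigma_1^*$ commute with $g^*$. By Theorem \ref{thm:koszul}, the Koszul t-structure on $\Coh^{\H\times\C^\times}(Y)$ is uniquely characterized by t-exactness of $\sigma_1^* i_{1*}$ into the Koszul t-structure on $\Coh^{\H\times\C^\times}(X)$; since the direct sum decomposition $\Coh^{\H\times\C^\times}(Y) \cong \bigoplus_n \Coh^{\H\times\C^\times}_n(Y)$ is preserved by truncations, its restriction to $\Coh^{\H\times\C^\times}_0(Y)$ is the unique t-structure making $\sigma_1^* i_{1*}$ t-exact into $\Coh^{\H\times\C^\times}_0(X)$. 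Granting t-exactness of $g^*$ on $X$, for any $\cF$ in the Koszul heart of $\Coh^\H(Y)$ the isomorphism $\sigma_1^* i_{1*} g^*(\cF) \cong g^* \sigma_1^* i_{1*}(\cF)$ then places $g^*(\cF)$ in the Koszul heart of $\Coh^{\H\times\C^\times}_0(Y)$.

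To verify the reduced claim on $X$, I would check directly that $g^*$ matches the two weight gradings and intertwines the underlying local t-structures. For $\cV \in \Coh^\H_n(X)$, the element $(h,t) \in \H \times \C^\times$ acts on $g^*(\cV)$ by $h\eta(t) = h \cdot t^n$, so $g^*(\cV) \in \Coh^{\H\times\C^\times}_n(X)$; in particular $g^*$ sends $\eta$-weight $n$ to extra-$\C^\times$-weight $n$, which inside $\Coh^{\H\times\C^\times}_0(X)$ coincides with $\eta$-weight $n$. Moreover, since $g^*$ is a section of the t-exact conservative functor $f^*$, it identifies the underlying local t-structures on both sides. Since Construction \ref{construct} applies the same shift $[-n]$ to the weight-$n$ component on each side, the Koszul t-structures then match under $g^*$.

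The main obstacle will be the careful tracking of the interaction between $g^*$, the two weight gradings, and the regrading of Construction \ref{construct}; once these compatibilities are settled, the argument is essentially formal.
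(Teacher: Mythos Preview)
Your proposal is correct and follows essentially the same approach as the paper, just with more detail spelled out. The paper's proof is a one-liner: it simply records that under $g:(h,t)\mapsto h\eta(t)$ one has $(1,t)\mapsto\eta(t)$, i.e.\ the extra $\C^\times$ pulls back to $\eta$, and declares the result immediate. Your argument unpacks why this observation suffices: you reduce from $Y$ to $X$ via the characterization in Theorem~\ref{thm:koszul} (using that $g^*$ commutes with $\sigma_1^* i_{1*}$), and on $X$ you verify that $g^*$ intertwines the two weight gradings and the underlying local t-structures, so that the regrading of Construction~\ref{construct} matches on both sides. This is exactly the content the paper is leaving to the reader.
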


Thus we find that $\Coh^\H(Y)_K^{\heartsuit} \cong \Coh_0^{\H \times \C^\times}(Y)_K^{\heartsuit}$. In particular, up to isomorphism, the category $\Coh^\H(Y)_K^{\heartsuit}$ does not depend on the choice of $\eta$. 

\section{Koszul-perverse sheaves}\label{sec:R}

We now use the results of Section \ref{sec:Koszul} to define the category of Koszul-perverse coherent sheaves on the space of triples $\hR_{G,\N}$. The main results are summarized as follows, where the Koszul-perverse t-structure on $\Coh^\hGO(\Gr_G)$ is the image of the perverse t-structure under regrading (as in Section \ref{sec:koszulity}).

\begin{Theorem}\label{thm:koszul-perverse}
There exists a unique t-structure on $\Coh^{\hGO}(\hR_{G,N})$ such that 
$$\sigma^*_1 i_{1*}: \Coh^\hGO(\hR_{G,N}) \to \Coh^\hGO(\Gr_G)$$
is t-exact with respect to the Koszul-perverse t-structure on $\Coh^\hGO(\Gr_G)$. Symmetrically, it is the unique t-structure such that $\sigma^*_2 i_{2*}$ is t-exact. This t-structure is bounded and finite length. Up to loop and Koszul shifts, the simple objects $\cP_{\l^\vee,\mu}$ in its heart are indexed by dominant pairs $(\l^\vee,\mu)$ of $G$, with $\cP_{\l^\vee,\mu}$ determined by the fact that its restriction to $\hR_{\l^\vee}$ is the pushforward from $\hR_{\l^\vee}^{\cl}$ of the (shifted) bundle corresponding to $\mu$. 	
\end{Theorem}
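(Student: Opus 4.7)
My plan is to apply Theorem~\ref{thm:koszul} to finite-dimensional approximations of $\hR_{G,N}$ and then pass to the colimit. Writing $\Gr_G \cong \colim_\mu \overline{\Gr}^\mu$ as a filtered colimit of closed $\hGO$-invariant Schubert subschemes of finite type, and fixing $k \gg 0$, the restriction $\hR_{G,N}|_X$ to $X := \overline{\Gr}^\mu$ can be presented as a derived intersection $V_1 \times_W V_2$, where $W$ is the finite-rank $\hGO$-equivariant vector bundle over $X$ with fiber $t^{-k} N_\cO / t^k N_\cO$, $V_1 \subset W$ is the sub-bundle coming from the trivial lattice $N_\cO$, and $V_2 \subset W$ is the sub-bundle whose fiber at a point of $X$ is the modified lattice. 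The scaling $\C^\times \subset \hGO$ acts trivially on $X$ and with weight one on $W$, so the hypotheses of Theorem~\ref{thm:koszul} are met with $\eta$ taken to be this scaling.

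Applying Theorem~\ref{thm:koszul} to the Koszul-perverse t-structure on $\Coh^{\hGO}(X)$ (which is bounded, Noetherian, and Artinian by \cite{AB10, BFM, CW1}, these properties being preserved by the regrading of Construction~\ref{construct}) yields on $\Coh^{\hGO}(\hR_{G,N}|_X)$ a bounded, finite-length t-structure characterized uniquely by t-exactness of $\sigma_1^* i_{1*}$, and symmetrically by t-exactness of $\sigma_2^* i_{2*}$. The symmetric characterization is built into Theorem~\ref{thm:koszul} and needs no further argument.

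Next I verify compatibility as $X$ and $k$ vary, then glue. For $X \hookrightarrow X'$, Proposition~\ref{prop:compatible} combined with Proposition~\ref{prop:Ztstructure} identifies the t-structure over $X$ with the restriction of that over $X'$ (the resolution property needed here is standard for quotients of Schubert varieties by $\hGO$). For enlargements $k \leq k'$, the old $W$ sits inside the new one as a sub-bundle, and Lemma~\ref{lem:koszul1} together with a direct comparison of the two derived intersections shows the resulting t-structures agree. Since $\Coh^{\hGO}(\hR_{G,N}) \cong \colim \Coh^{\hGO}(\hR_{G,N}|_X)$ along t-exact functors, the t-structures glue to the desired one on $\Coh^{\hGO}(\hR_{G,N})$. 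Global uniqueness follows because $\sigma_1^* i_{1*}$ is conservative (as in the proof of Proposition~\ref{prop:koszul-t-bundles}, the scaling $\C^\times \subset \hGO$ forces the closure of every $\hGO$-orbit on $\hR_{G,N}$ to meet the zero section $\Gr_G$).

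For the classification of simples, I apply Theorem~\ref{thm:koszulsimples} on each approximation to reduce to classifying simples in $\Coh^{\hGO}(\Gr_G)^\heartsuit_K$. These are, by \cite{AB10, BFM}, indexed up to loop and Koszul shifts by dominant pairs $(\l^\vee, \mu)$, with the simple $\IC(\l^\vee,\mu)$ restricting on $\Gr^{\l^\vee}$ to the shifted $\hGO$-equivariant bundle associated to the irreducible $L_{\l^\vee}$-representation of highest weight $\mu$. Tracing the bijection of Theorem~\ref{thm:koszulsimples} back through the chain $\Gr^{\l^\vee} \leftarrow \hR_{\l^\vee}^\cl \to \hR_{\l^\vee}$ (where Proposition~\ref{prop:koszul3} applies because the classical intersection is a homogeneous bundle over the orbit) yields the claimed description: $\cP_{\l^\vee,\mu}$ restricts on $\hR_{\l^\vee}$ to the pushforward from $\hR_{\l^\vee}^\cl$ of the shifted bundle corresponding to $\mu$. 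The main obstacle I anticipate is the bookkeeping for the two independent parameters $X$ and $k$ in the limit, particularly confirming that the resolution property and Koszul condition persist coherently under both enlargements so that Propositions~\ref{prop:Ztstructure}, \ref{prop:Utstructure}, and~\ref{prop:compatible} apply uniformly; handling the inner $\C^\times$ of loop rotation (which preserves each $t^k N_\cO$, hence each $W$) as part of $\hGO$ is a second point where care is needed.
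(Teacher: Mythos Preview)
Your overall strategy---apply Theorem~\ref{thm:koszul} on finite-type approximations and glue---is the paper's as well, and your treatment of the $\al$-direction (via Proposition~\ref{prop:compatible}) and of simples (via Theorem~\ref{thm:koszulsimples}) is essentially correct. The gap is in the approximation scheme itself. For fixed $k$ the intersection inside the bundle $W$ with fiber $t^{-k}N_\cO/t^kN_\cO$ is not $\hR_{G,N}|_X$ but only a finite-type approximation (what the paper calls $\hR_\al^{k,k}$); the genuine restriction $\hR_\al$ is an ind-scheme, and even each $\hR_\al^k$ is of infinite type. More seriously, your compatibility claim for $k \le k'$ is wrong: $t^{-k}N_\cO/t^kN_\cO$ is neither a sub-bundle nor a quotient of $t^{-k'}N_\cO/t^{k'}N_\cO$, since enlarging $k$ simultaneously raises the numerator and lowers the denominator. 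Lemma~\ref{lem:koszul1} therefore does not apply, and the diagonal system $\{\hR_\al^{k,k}\}_k$ has no transition map of the kind you invoke.

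The paper resolves this by decoupling the two roles of $k$ into separate parameters. With $W$ having fiber $t^{-k}N_\cO/t^\ell N_\cO$, increasing $\ell$ yields a faithfully flat affine projection $\hR_\al^{k,\ell+1} \to \hR_\al^{k,\ell}$; one compares t-structures via pullback and passes to the inverse limit using Lemma~\ref{lem:Cohcoliminvlim} to obtain the t-structure on $\Coh^{\hGO}(\hR_\al^k)$. Increasing $k$ then yields a closed immersion $\hR_\al^k \to \hR_\al^{k+1}$, and one compares via pushforward, which commutes with $i_{2*}$ so that t-exactness is immediate from the characterization of the t-structure as the preimage under $\sigma_2^* i_{2*}$. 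Only after both passages does one take the colimit over $\al$. The point is that the two directions are qualitatively different---one uses flat pullback, the other proper pushforward---and collapsing them into a single diagonal parameter leads precisely to the incorrect sub-bundle claim.
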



\subsection{The space of triples}\label{sec:Rdefs}
We begin by recalling the construction of \cite{BFN}. We fix a connected reductive complex algebraic group $G$ and a finite-dimensional representation $N$. There are two natural bundles over $\Gr_G$ with fibers $\N_\cO$, namely the twisted product 
$$\T_{G,N} := G_\K \times_{G_\O} \N_\O$$ 
and the untwisted product $\Gr_G \times \N_\O$. We also use the notation $\Gr_G \ttimes \N_\O$ for $\cT_{G,\N}$ and similar constructions. We let $j_1: \Gr_G \times \N_\O \to \Gr_G \times \N_\cK$ denote the natural embedding and $j_2$ the map
\begin{align*}
	j_2: \T_{G,N} \rightarrow \Gr_G \times \N_\K, \quad [g, s] \mapsto ([g], gs).
\end{align*}
These maps realize $\T_{G,N}$ and $\Gr_G \times \N_\O$ as closed ind-subschemes of $\Gr_G \times \N_\K$. 

\begin{Definition}\label{def:Rspace}
	The space of triples $\hR_{G,N}$ is the fiber product 
	\begin{equation}\label{eq:R}
		\begin{tikzpicture}[baseline=(current  bounding  box.center),thick,>=\arrtip]
			\node (a) at (0,0) {$\hR_{G,N}$};
			\node (b) at (3,0) {$\Gr_G \times N_\O$};
			\node (c) at (0,-1.5) {$\T_{G,N}$};
			\node (d) at (3,-1.5) {$\Gr_G \times N_\K$};
			\draw[->] (a) to node[above] {$i_1$} (b);
			\draw[->] (b) to node[right] {$j_1$} (d);
			\draw[->] (a) to node[left] {$i_2$}(c);
			\draw[->] (c) to node[above] {$j_2$} (d);
		\end{tikzpicture}
	\end{equation}
	in the category of (dg) ind-schemes.
\end{Definition}

As $G$ and $\N$ are fixed, we generally write $\hR, \T, \Gr$ instead of $\hR_{G,N}, \T_{G,N}, \Gr_G$. Recall the group $\hGO := (G_\cO \rtimes \C^\times) \times \C^\times$. It has a natural action on the spaces appearing in (\ref{eq:R}): the inner $\C^\times$ acts by multiplication on the loop variable $t$ in $\cO := \C[[t]]$, while the outer $\C^\times$ acts by scalar multiplication on $\N$  (and hence on $\N_\O$, $\N_\K$). The maps $j_1$ and $j_2$ are $\hGO$-equivariant, hence we obtain an induced $\hGO$-action on $\hR$ for which $i_1$ and $i_2$ are $\hGO$-equivariant. 

We will use the following notation for various approximations of $\hR$. We fix a presentation $\Gr \cong \colim \Gr_\al$ of $\Gr$ as an ind-scheme. We can assume the $\Gr_\al$ are $G_\O$-invariant, otherwise replace each $\Gr_\al$ with the closure of the image of $G_\O \times \Gr_\al \to \Gr$.  If $G$ is semisimple we can take the $\Gr_\al$ to be the closures $\Gr_{\le \l^\vee}$ of the $G_\O$-orbits $\Gr_{\l^\vee}$ (where $\l^\vee \in P^\vee$), but in general these will only present the reduced locus of $\Gr$. Nonetheless, the $\Gr_{\le \l^\vee}$ still play an important role in describing simple objects in $\KPcohGN$. 

Writing $\T_\al := \T \times_\Gr \Gr_\al$ and $\hR_\al := \T_\al \times_{\N_\K} \N_\O$, we then have $\T \cong \colim \T_\al$ and $\hR \cong \colim \hR_\al$. These colimits are taken in the category of ind-schemes, which admits left exact filtered colimits along ind-closed immersions. Each $\T_\al$ is a classical scheme since $\T \to \Gr$ is flat. It follows that $\T_\al \to \N_\K \cong \colim_{k>0} t^{-k} \N_\O$ factors through~$t^{-k'} \N_\O$ for some $k'$, hence $\hR_\al \cong \colim_{k \geq k'} \hR_\al^k$, where $\hR_\al^k$ is defined by the square below.
\begin{equation}\label{eq:R^k}
	\begin{tikzpicture}[baseline=(current  bounding  box.center),thick,>=\arrtip]
		\node (a) at (0,0) {$\hR_{\al}^k$};
		\node (b) at (3,0) {$\Gr_{\al} \times N_\O$};
		\node (c) at (0,-1.5) {$\T_{\al}$};
		\node (d) at (3,-1.5) {$\Gr_{\al} \times t^{-k} N_\O$};
		\draw[->] (a) to node[above] {$i_1$} (b);
		\draw[->] (b) to node[right] {$j_1$} (d);
		\draw[->] (a) to node[left] {$i_2$}(c);
		\draw[->] (c) to node[above] {$j_2$} (d);
	\end{tikzpicture}
\end{equation}
Note that the classical locus of $\hR_\al^k$ is independent of $k$, hence $\hR^{\cl}_\al$ is a scheme even though $\hR_\al$ itself is only an ind-scheme. 

We can write each $\hR_\al^k$ as an inverse limit of finite type schemes. To do this we consider the quotient bundle $\T^\ell_{\al} \to \Gr_{\al}$ of $\T_\al$ whose fiber over $[g] \in \Gr_\al$ is $N_\O/(g^{-1}t^\ell N_\O)$. To see that this is a bundle consider the $G_\O$-bundle $G_\K \to \Gr$ and define $G_\K^{\al} \to \Gr_\al$ by base change. Fixing $g_0 \in G_\K^\al$ choose local sections $\{s_i\}$ of $N_\O$ so that their restriction to $N_\O/g_0^{-1}t^\ell N_\O$ form a basis. Now consider the images of these sections under the map $G_\K^\al \times N_\O \to G_\K^\al \times t^{-k}N_\O/t^\ell N_\O$ given by $(g,s) \mapsto (g,[gs])$. Over $g_0$ they still form a basis of $g_0N_\O/t^\ell N_\O \subset t^{-k}N_\O/t^\ell N_\O$. Thus they must remain linearly independent in a neighborhood of $g_0$. It follows that the sections $\{s_i\}$ must also be linearly independent when restricted to $N_\O/g^{-1} t^\ell N_\O$ for $g$ in a neighborhood of $g_0$.  This shows that the rank of $N_\O/g^{-1} t^\ell N_\O$ is locally constant and that the sections above provide a local trivialization. 
 
We now define $\hR_{\al}^{k,\ell}$ by the Cartesian diagram 
\begin{equation}\label{eq:R^kl}
	\begin{tikzpicture}[baseline=(current  bounding  box.center),thick,>=\arrtip]
		\node (a) at (0,0) {$\hR_{\al}^{k,\ell}$};
		\node (b) at (3.2,0) {$\Gr_{\al} \times N_\O/t^\ell N_\O$};
		\node (c) at (0,-1.5) {$\T^\ell_{\al}$};
		\node (d) at (3.2,-1.5) {$\Gr_{\al} \times (t^{-k} N_\O/t^\ell N_\O)$.};
		\draw[->] (a) to node[above] {$i_1$} (b);
		\draw[->] (b) to node[right] {$j_1$} (d);
		\draw[->] (a) to node[left] {$i_2$}(c);
		\draw[->] (c) to node[above] {$j_2$} (d);
	\end{tikzpicture}
\end{equation}
If we take the base change of (\ref{eq:R^kl}) with respect to the map 
\begin{equation}\label{eq:local2}
	\Gr_\al \times (t^{-k}N_\O/t^{\ell+1}N_\O) \to \Gr_\al \times (t^{-k}N_\O/t^{\ell}N_\O)
\end{equation}
we recover (\ref{eq:R^kl}), but with $\ell$ replaced by $\ell+1$. Taking the inverse limit recovers (\ref{eq:R^k}). In particular, since (\ref{eq:local2}) is faithfully flat and affine, we find that $\hR^k_\al$ is the inverse limit of the $\hR^{k,\ell}_\al$ along faithfully flat, affine maps.

The ind-scheme $\hR$ is not of ind-finite type when $\N$ is nonzero, but instead satisfies the following weaker finiteness conditions. Recall from Section \ref{sec:convnot} that tamely presented schemes are a class of well-behaved coherent schemes, a basic example being the affine space $\N_\O$. An ind-scheme is ind-tamely presented if it is a filtered limit of schemes which are tamely presented and truncated (i.e. their structure sheaves are cohomologically bounded) under almost finitely presented closed immersions. 

\begin{Proposition}\label{prop:R}
The ind-scheme $\hR$ is ind-tamely presented, and all maps in  (\ref{eq:R}) are almost ind-finitely presented ind-closed immersions.  
\end{Proposition}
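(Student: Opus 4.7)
The plan is to verify both claims by combining the iterated presentations $\hR \cong \colim_\al \hR_\al$, $\hR_\al \cong \colim_{k \geq k'} \hR_\al^k$, and $\hR_\al^k \cong \lim_\ell \hR_\al^{k,\ell}$ established in the text. At the innermost level, $\hR_\al^{k,\ell}$ is the derived fiber product of two finite-rank sub-bundles inside the finite-rank ambient bundle $\Gr_\al \times (t^{-k}N_\cO/t^\ell N_\cO)$ over the finite-type scheme $\Gr_\al$. Hence it is a truncated scheme that is almost finitely presented over $\C$.

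Next I would show each $\hR_\al^k$ is a tamely presented truncated scheme. Truncation follows because the Tor-amplitude of the derived intersection is bounded by the rank $k \cdot \dim N$ of the ambient sub-bundle, uniformly in $\ell$. For tame presentation, the faithfully flat, affine limit description $\hR_\al^k \cong \lim_\ell \hR_\al^{k,\ell}$ exhibits the coordinate ring, locally on $\Gr_\al$, as a filtered colimit of almost finitely presented $\C$-algebras along flat transition maps --- precisely the strictly tamely presented condition of \tmdefreasalg. Combined with the fact that $\Gr_\al$ is of finite type, this yields tame presentation of $\hR_\al^k$ via the geometric machinery of \tmsubsecgeomtame.

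Assembling into an ind-tamely presented structure requires that the transition maps $\hR_\al^k \to \hR_\al^{k+1}$ and $\hR_\al^k \to \hR_\be^k$ (for $\Gr_\al \hookrightarrow \Gr_\be$) are almost finitely presented closed immersions. The latter are base changes of the almost finitely presented closed immersions $\Gr_\al \hookrightarrow \Gr_\be$. The former are isomorphisms on classical loci (since $\hR_\al^{k,\cl}$ is independent of $k$) and, after local trivialization, appear as derived thickenings whose defining ideal is generated by a finite Koszul complex of $\dim N$ additional degree $-1$ generators coming from enlarging the ambient from $t^{-k}N_\cO$ to $t^{-(k+1)}N_\cO$; these are finitely presented derived closed immersions. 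Taking the colimit over $(\al,k)$ exhibits $\hR$ as ind-tamely presented. For the maps in (\ref{eq:R}), note that $j_1$ and $j_2$ are ind-closed immersions, built from $N_\cO \hookrightarrow N_\K$ and the twisted product structure, so their base changes $i_1, i_2$ are ind-closed immersions as well; almost finite presentation is then checked level-by-level on the approximations $\hR_\al^{k,\ell}$, where the maps become finitely presented closed immersions of finite-type schemes.

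The main obstacle is the second step: carefully verifying that $\hR_\al^k$ is tamely presented as a \emph{derived} scheme rather than merely at the classical level. The flatness and almost-finite-presentation conditions in \tmdefreasalg are sensitive to derived structure, so the argument relies on checking that the Koszul-style resolutions underlying the derived intersections interact correctly with the faithfully flat affine maps in the $\ell$ limit. This is where the base-change and coherence machinery of \cite{CWtm} for tamely presented morphisms becomes essential, rather than the naive classical statement.
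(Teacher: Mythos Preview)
Your proposal is correct in outline and would work, but it is more circuitous than the paper's argument, and the ``main obstacle'' you identify is an artifact of your route rather than a genuine difficulty. The paper never descends to the inverse limit $\hR_\al^k \cong \lim_\ell \hR_\al^{k,\ell}$ to establish tame presentation. Instead it observes that $N_\cO$ is a basic example of a tamely presented scheme (it appears in the conventions section as such), hence so is $\T_\al$, which is Zariski-locally $\Gr_\al \times N_\cO$. Then $\hR_\al^k \to \T_\al$ is obtained by base change from the finitely presented map $N_\cO \to t^{-k}N_\cO$, so $\hR_\al^k$ is tamely presented directly --- no need to pass through the $\ell$-tower or worry about flatness interacting with the derived structure. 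Truncatedness is likewise immediate since $N_\cO \to t^{-k}N_\cO$ has finite Tor-dimension and $\T_\al$ is classical.

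For the transition maps $\hR_\al^k \to \hR_\al^{k+1}$, the paper gives a one-line argument: this map is the base change of the diagonal of $t^{-k}N_\cO \to t^{-(k+1)}N_\cO$, which is an almost finitely presented closed immersion. This is equivalent to your Koszul-thickening description but packaged more cleanly and makes the almost-finite-presentation claim immediate without local trivializations. For the maps in the square, the paper identifies one of $j_1,j_2$ as a base change of $j: N_\cO \hookrightarrow N_\cK$ and rewrites the other via the isomorphism $\Gr \ttimes N_\cK \cong \Gr \times N_\cK$ as a twisted base change $\id \ttimes j$ followed by that isomorphism; your ``built from the twisted product structure'' is pointing at the same thing but leaves the actual verification implicit.
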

\begin{proof}
Consider approximations $\hR_\al^k$ as above. Since $N_\O$ is tamely presented it follows that so is $\T_\al$. Then $\hR_\al^k$ is as well since $\hR_\al^k \to \T_\al$ is a base change of the finitely presented map $\N_\O \to t^{- k} \N_\O$. Moreover $\hR_\al^k$ is truncated since $\T_\al$ is classical and $\N_\O \to t^{- k} N_\O$ is of finite Tor-dimension. The maps $\hR_\al^k \to \hR_\al^{k + 1}$ are almost finitely presented closed immersions since they are base changes of the diagonal of $t^{- k} N_\O \to t^{- (k + 1)} N_\O$, which has these properties. Thus $\hR_\al \cong \colim_{k \geq k'} \hR_\al^k$ is ind-tamely presented by definition. But the maps $\hR_\al \to \hR_\be$ are almost ind-finitely presented ind-closed immersions since they are base changes of $\Gr_\al \to \Gr_\be$, hence $\hR \cong \colim \hR_\al$ is ind-tamely presented \cite[\tmproptamefiltcolims]{CWtm}. 
	
Since $j: N_\O \to N_\K$ is an almost ind-finitely presented ind-closed immersion so are its base changes $j_2$ and $i_2$. We can rewrite $j_1$ as
the composition 
\begin{equation*}
	\begin{tikzpicture}[baseline=(current  bounding  box.center),thick,>=\arrtip]
		\newcommand*{\hp}{3}; \newcommand*{\hpp}{3}; \newcommand*{\hppp}{11};
		\newcommand*{\vb}{-.8}; \newcommand*{\vc}{-4};
		\node (a) at (0,0) {$\Gr \ttimes \N_\O$};
		\node (a') at (\hp,0) {$\Gr \ttimes \N_\K$};
		\node (a'') at (\hp+\hpp,0) {$\Gr \times \N_\K$};
		\node (b) at (0,\vb) {$[g,s]$};
		\node (b') at (\hp,\vb) {$[g,s]$};
		\node (b'') at (\hp+\hpp,\vb) {$([g],gs).$};
		
		\draw[->] (a) to node[above] {$\id \ttimes j $} (a');
		\draw[->] (a') to node[above] {$\sim $} (a'');
		\draw[|->] (b) to node[above] {$ $} (b');
		\draw[|->] (b') to node[above] {$ $} (b'');
	\end{tikzpicture}
\end{equation*}
The first map is an almost ind-finitely presented closed immersion since it is locally a base change of $j$. But then $j_1$ and its base change $i_1$ are as well, since the second map is an isomorphism (with inverse $([g],s') \mapsto [g, g^{-1}s']$). 
\end{proof}

Recall that the ind-scheme $\hR$ has a moduli description as the space of sections of the associated $\N$-bundle of a partially trivialized $G$-bundle on the formal disk with doubled origin. Omitting the trivialization this becomes a moduli description of the quotient stack $\hR/G_\cO$. We may formalize this as the following result, which we think of as affirming that Definition~\ref{def:Rspace} defines the correct derived structure on $\hR$ (as (\ref{eq:Rsymmdiagram}) is ultimately more fundamental than~(\ref{eq:R})). By convention we use the same notation for an equivariant map and the induced map of quotient stacks when no confusion arises. We also note that $\T \cong \hGK \times_\hGO \N_\O$ has a canonical action of $\hGK$, and that the quotient $\T/\hGK$ is canonically isomorphic to $\N_\O/\hGO$. 

\begin{Proposition}\label{prop:symmetricRdef}
There is a Cartesian diagram 
	\begin{equation}\label{eq:Rsymmdiagram}
		\begin{tikzpicture}
			[baseline=(current  bounding  box.center),thick,>=\arrtip]
			\node (a) at (0,0) {$\hR/\hGO$};
			\node (b) at (3,0) {$\N_\cO/\hGO$};
			\node (c) at (0,-1.5) {$\N_\cO/\hGO$};
			\node (d) at (3,-1.5) {$\N_\cK/\hGK,$};
			\draw[->] (a) to node[above] {$\pi_1$} (b);
			\draw[->] (b) to node[right] {$ $} (d);
			\draw[->] (a) to node[left] {$\pi_2$}(c);
			\draw[->] (c) to node[above] {$ $} (d);
		\end{tikzpicture}
	\end{equation}
where $\pi_1$ is the composition of $i_1$ and the projection $\wt{\pi}_1: \Gr \times \N_\O \to \N_O$, and $\pi_2$ is the composition of $i_2$ with the quotient map $\wt{\pi}_2: \T/\hGO \to \T/\hGK \cong \N_\O/\hGO$. 
\end{Proposition}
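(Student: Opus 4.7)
The plan is to deduce (\ref{eq:Rsymmdiagram}) from (\ref{eq:R}) by passing to quotient stacks and matching the resulting moduli descriptions.

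First, all four corners of (\ref{eq:R}) carry the natural diagonal $\hGO$-action and the structure maps $j_1, j_2, i_1, i_2$ are $\hGO$-equivariant. Since fiber products of stacks commute with quotients by a compatibly acting group, this yields
$$\hR/\hGO \cong ((\Gr \times \N_\O)/\hGO) \times_{(\Gr \times \N_\K)/\hGO} (\T/\hGO),$$
and I will describe $T$-points of each of the three remaining terms using $\Gr \cong \hGK/\hGO$ and the fact that $\T = \hGK \times^\hGO \N_\O$ carries a natural $\hGK$-action with $\T/\hGK \cong \N_\O/\hGO$. The key elementary observation is that for a $\hGO$-torsor $Q$ on $T$, a $\hGO$-equivariant map $Q \to \Gr$ is the same data as a $\hGO$-reduction of the associated $\hGK$-bundle $P := Q \times^\hGO \hGK$, while a $\hGO$-equivariant map $Q \to \T$ is the same data as a $T$-point $(Q_1, s_1 : Q_1 \to \N_\O)$ of $\N_\O/\hGO$ together with an isomorphism of $\hGK$-bundles $Q_1 \times^\hGO \hGK \cong P$.

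Combining these and imposing the agreement condition over $(\Gr \times \N_\K)/\hGO$, I expect a $T$-point of $\hR/\hGO$ to consist of: a $\hGK$-bundle $P$ on $T$, two $\hGO$-reductions $Q$ and $Q_1$ of $P$, and a single $\hGK$-equivariant map $s : P \to \N_\K$ whose restrictions to both $Q$ and $Q_1$ factor through $\N_\O$. This is precisely the moduli description of $(\N_\O/\hGO) \times_{\N_\K/\hGK} (\N_\O/\hGO)$: the pairs $(Q, s|_Q)$ and $(Q_1, s|_{Q_1})$ are two $T$-points of $\N_\O/\hGO$, and their common image $(P, s)$ in $\N_\K/\hGK$ supplies the required identification. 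Under this bijection $\pi_1$ recovers $(Q, s|_Q)$, matching its definition as $\wt\pi_1 \circ i_1$, while $\pi_2$ recovers $(Q_1, s|_{Q_1})$, matching its definition via $\wt\pi_2 : \T/\hGO \to \T/\hGK \cong \N_\O/\hGO$.

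The main subtlety lies in correctly tracking how the two $\hGO$-reductions appear on each side. For the $\T/\hGO$ summand, one reduction (namely $Q_1$) is implicit in the $\hGK$-equivariant lift $P \to \T$ rather than directly visible as the ``main'' bundle $Q$ for the quotient; likewise, the $\N_\K$-component of $j_2$ sends $Q$ into $\N_\K$ via $s_1^{\hGK}|_Q$, the restriction of the $\hGK$-extension of $s_1$ along $Q \subset P$. Carefully unwinding this is what forces the two reductions produced by the $\Gr$-components of $i_1$ and $i_2$ to coincide (to the common reduction $Q_1$) and collapses the compatibility condition into the single equality $\sigma_1^{\hGK} = s_1^{\hGK}$ of $\hGK$-equivariant maps $P \to \N_\K$.
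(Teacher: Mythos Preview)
Your argument is correct. You apply the principle that quotients preserve fiber products once, to the defining square (\ref{eq:R}) with $\cG = \hGO$, and then identify the resulting fiber product $((\Gr \times \N_\O)/\hGO) \times_{(\Gr \times \N_\K)/\hGO} (\T/\hGO)$ with $(\N_\O/\hGO) \times_{\N_\K/\hGK} (\N_\O/\hGO)$ by unwinding torsor data. Your identification is accurate: once one notes that $\hGO$-equivariant maps $Q \to \T$ are the same as $\hGK$-equivariant maps $P \to \T$, the unwinding goes through exactly as you describe, and the matching condition over $(\Gr \times \N_\K)/\hGO$ does collapse to $\sigma_1^{\hGK} = s_1^{\hGK}$.

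The paper takes a different and slicker route. Rather than unwind moduli, it applies the same quotient-preserves-fiber-products principle (isolated there as Lemma~\ref{lem:Cartesianstacks}) \emph{twice}: once with $\cG = \hGO$ to the square $\hR \cong \T \times_{\N_\K} \N_\O$, and once with $\cG = \hGK$ to a tautological square of $\hGK$-spaces whose quotient is
\[
\begin{tikzpicture}[baseline=(current bounding box.center),thick]
\node (a) at (0,0) {$\T/\hGO$}; \node (b) at (3,0) {$\N_\K/\hGO$};
\node (c) at (0,-1.3) {$\N_\O/\hGO$}; \node (d) at (3,-1.3) {$\N_\K/\hGK$.};
\draw[->] (a) -- (b); \draw[->] (a) -- (c); \draw[->] (b) -- (d); \draw[->] (c) -- (d);
\end{tikzpicture}
\]
Stacking the two resulting Cartesian squares gives (\ref{eq:Rsymmdiagram}) immediately, with no torsor bookkeeping at all. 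What the paper's approach buys is brevity and a clean separation of the two group actions; what yours buys is that it makes the moduli interpretation of $\hR/\hGO$ (two $\hGO$-reductions of a common $\hGK$-bundle with a compatible section) fully explicit, which the paper only alludes to informally before stating the proposition.
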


\begin{Lemma}\label{lem:Cartesianstacks}
Let $\cG$ be a group ind-scheme. Given a Cartesian square of $\cG$-ind-schemes and $\cG$-equivariant maps, the induced diagram of quotient stacks is also Cartesian. 
\end{Lemma}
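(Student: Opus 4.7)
The plan is to verify that the natural map $W/\cG \to X/\cG \times_{Z/\cG} Y/\cG$ is an equivalence of stacks by unwinding its effect on $T$-points for an arbitrary test affine scheme $T$. Here $W = X \times_Z Y$ denotes the given Cartesian fiber product of $\cG$-ind-schemes, and the natural map is induced by the projections $W \to X$, $W \to Y$, which are $\cG$-equivariant and agree after composition to $Z$, hence descend to give a map of quotient stacks into the fiber product $X/\cG \times_{Z/\cG} Y/\cG$.

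The key step is to unpack both sides using the moduli description of quotient stacks: a $T$-point of $V/\cG$, for any $\cG$-ind-scheme $V$, is a pair $(P, \phi)$ consisting of a $\cG$-torsor $P \to T$ together with a $\cG$-equivariant map $\phi: P \to V$. Applied to the fiber product, a $T$-point of $X/\cG \times_{Z/\cG} Y/\cG$ consists of two such pairs $(P_X, \phi_X)$ and $(P_Y, \phi_Y)$, together with a $\cG$-equivariant isomorphism $\alpha: P_X \congto P_Y$ of torsors over $T$ which identifies the induced compositions to $Z$. Transporting along $\alpha$ to a common torsor $P$, this is equivalent to a single $\cG$-torsor $P \to T$ equipped with two $\cG$-equivariant maps $P \to X$ and $P \to Y$ which agree after composition to $Z$. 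By the universal property of $W = X \times_Z Y$, this is in turn equivalent to a $\cG$-equivariant map $P \to W$, i.e.\ to a $T$-point of $W/\cG$; and by inspection these identifications are realized by the natural map above.

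The main point requiring verification is that the universal property of $W$ as a fiber product of $\cG$-ind-schemes applies to the total space of an arbitrary $\cG$-torsor $P$ over $T$. This is essentially formal: fpqc-locally on $T$ the torsor $P$ splits and is thus $\cG$-equivariantly isomorphic to $\cG \times T$, so it is itself a $\cG$-ind-scheme to which the universal property applies directly. Since the formation of $W$, of quotient stacks, and of fiber products of stacks are all compatible with fpqc base change on $T$, the identifications above glue by descent to yield the desired equivalence on $T$-points, and hence an equivalence of stacks. No serious obstacle is expected beyond making sure this descent step is done correctly in the derived ind-scheme setting of the paper, which is automatic from the functorial definition of $\cG$-torsors.
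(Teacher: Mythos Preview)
Your argument is correct but takes a different route from the paper. You work through the moduli interpretation of quotient stacks via torsors: unwinding both sides on $T$-points, transporting along the torsor isomorphism encoded in the fiber product, and invoking the universal property of $W = X \times_Z Y$ after an fpqc-local splitting. The paper instead argues entirely inside the $\infty$-topos of stacks, never invoking the torsor description. It sets $\wt{X}' := X/\cG \times_{Y/\cG} Y'/\cG$, uses \cite[Thm.~6.1.3.9(4)]{LurHTT} to identify $X \cong X/\cG \times_{Y/\cG} Y$ (and similarly for $Y'$), and deduces $X' \cong \wt{X}' \times_{X/\cG} X$. Since $X \to X/\cG$ is an effective epimorphism, its base change $X' \to \wt{X}'$ is as well, so $\wt{X}'$ is the geometric realization of the \v{C}ech nerve of this map; that nerve is identified with the action groupoid of $\cG$ on $X'$, yielding $\wt{X}' \cong X'/\cG$.

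What each approach buys: the paper's argument is shorter and avoids any explicit description of $T$-points, descent, or local trivializations of torsors---it works uniformly because effective epimorphisms and colimits in an $\infty$-topos behave well under base change. Your approach is more concrete and closer to how one thinks classically, but in the derived setting it carries the burden of justifying that the torsor description of $(V/\cG)(T)$ holds as an equivalence of spaces (not just a bijection on $\pi_0$) and that the descent step goes through homotopy-coherently. You gesture at this in your final sentence, and it does work, but making it precise is essentially re-deriving the $\infty$-topos facts the paper invokes directly.
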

\begin{proof}
Let $X \to Y$ and $Y' \to Y$ be $\cG$-equivariant maps of $\cG$-ind-schemes, and let $\wt{X}' \cong X/\cG \times_{Y/\cG} Y'/\cG$. It follows from \cite[Thm. 6.1.3.9(4)]{LurHTT} that $X \cong X/\cG \times_{Y/\cG} Y$ and $Y' \cong Y'/\cG \times_{Y/\cG} Y$. It then follows that $X' \cong \wt{X}' \times_{X/\cG} X$, and that since $X \to X/\cG$ is an effective epimorphism so is its base change $X' \to \wt{X}'$ \cite[Prop. 6.2.3.15]{LurHTT}. Thus $\wt{X}'$ is the geometric realization of its Cech nerve, which is equivalently the base change of the Cech nerve of $X \to X/\cG$. By definition this is the action groupoid of the $\cG$-action on $X'$, hence $\wt{X}' \cong X'/\cG$. 
\end{proof}

\begin{proof}
Note that by construction $\hR \cong \T \times_{\N_\K} \N_\O$, and that tautologically $(\hGO \backslash \hGK) \times \T \cong ((\hGO \backslash \hGK) \times \N_\K) \times_{\N_\K} \N_\O$. Applying Lemma \ref{lem:Cartesianstacks} to the associated Cartesian squares with $\cG = \hGO$ and $\cG = \hGK$, respectively, we obtain Cartesian squares
\begin{equation*}
	\begin{tikzpicture}[baseline=(current  bounding  box.center),thick,>=\arrtip]
		\newcommand*{\ha}{3}; \newcommand*{\hb}{3}; \newcommand*{\va}{1.5};
		\node[matrix] at (0,0) {
			\node (aa) at (0,0) {$\hR/\hGO$};
			\node (ab) at (\ha,0) {$\N_\O/\hGO$};
			\node (ba) at (0,-\va) {$\T/\hGO$};
			\node (bb) at (\ha,-\va) {$\N_\K/\hGO$};
			\draw[->] (aa) to node[above] {$\pi_1 $} (ab); 
			\draw[->] (aa) to node[left] {$i_2 $} (ba);
			\draw[->] (ba) to node[above] {$ $} (bb);
			\draw[->] (ab) to node[above] {$ $} (bb); \\
		};
		\node[matrix] at (5.9,-.1) {
			\node (aa) at (0,0) {$\T/\hGO$};
			\node (ab) at (\hb,0) {$\N_\K/\hGO$};
			\node (ba) at (0,-\va) {$\N_\O/\hGO$};
			\node (bb) at (\hb,-\va) {$\N_\K/\hGK.$};
			\draw[->] (aa) to node[above] {$ $} (ab); 
			\draw[->] (aa) to node[left] {$ $} (ba);
			\draw[->] (ba) to node[above] {$ $} (bb);
			\draw[->] (ab) to node[above] {$ $} (bb); \\
		};
	\end{tikzpicture}
\end{equation*}
The desired Cartesian square now arises by stacking these. 
\end{proof}

We have the following counterpart of Proposition \ref{prop:R} for the diagram (\ref{eq:Rsymmdiagram}). Recall that a geometric stack is admissible if it has an affine morphism to a locally Noetherian geometric stack, and ind-geometric stacks is admissible it it has a presentation by these. 

\begin{Proposition}\label{prop:Rquotient}
The quotient $\hR/\hGO$ is an admissible, ind-tamely presented ind-geometric stack, and the maps $\pi_1$ and $\pi_2$ in (\ref{eq:Rsymmdiagram}) are ind-proper and almost ind-finitely presented. 
\end{Proposition}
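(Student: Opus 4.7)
The plan is to leverage Proposition \ref{prop:R} together with the general ind-geometric stack machinery of \cite{CWig, CWtm}. The proof of Proposition \ref{prop:R} presents $\hR$ as $\colim_{\al, k} \hR_\al^k$, where each $\hR_\al^k$ is a truncated tamely presented scheme and all transition maps are almost finitely presented closed immersions. Since $\hGO$ is an affine classical group scheme, the fact recorded in Section \ref{sec:convnot} ensures each quotient $\hR_\al^k/\hGO$ is a truncated tamely presented geometric stack; the transition maps remain almost finitely presented closed immersions after descent to quotients, by base change along the $\hGO$-torsors. Assembling these exhibits $\hR/\hGO \cong \colim_{\al,k} \hR_\al^k/\hGO$ as an ind-tamely presented ind-geometric stack.

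For admissibility, I would use the inverse limit presentation $\hR_\al^k \cong \lim_\ell \hR_\al^{k,\ell}$ along faithfully flat affine maps from the proof of Proposition \ref{prop:R}. For a fixed sufficiently large $\ell_0$, the map $\hR_\al^k \to \hR_\al^{k,\ell_0}$ is a cofiltered limit of affine maps and thus affine, descending to an affine morphism $\hR_\al^k/\hGO \to \hR_\al^{k,\ell_0}/\hGO$. The target is a locally Noetherian geometric stack: $\hR_\al^{k,\ell_0}$ is a finite-type (hence Noetherian) scheme, and the $G_\cO$-action on it factors through a finite-type jet quotient $G_{\cO,m}$, so the quotient by $\hGO$ is locally Noetherian. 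This exhibits admissibility at each level and hence of $\hR/\hGO$.

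For the statements about $\pi_1$ and $\pi_2$, the plan is to use their factorizations $\pi_1 = \tilde\pi_1 \circ i_1$ and $\pi_2 = \tilde\pi_2 \circ i_2$ given in Proposition \ref{prop:symmetricRdef}. The maps $i_1$ and $i_2$ are almost ind-finitely presented ind-closed immersions by Proposition \ref{prop:R}, hence in particular ind-proper. The projection $\tilde\pi_1: (\Gr \times \N_\cO)/\hGO \to \N_\cO/\hGO$ arises as the base change along $\N_\cO/\hGO \to \pt/\hGO$ of the structure morphism $\Gr/\hGO \to \pt/\hGO$; since $\Gr$ is an ind-proper ind-scheme of ind-finite type with $\hGO$-action, this inherits both properties. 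Similarly, $\tilde\pi_2: \T/\hGO \to \T/\hGK \cong \N_\cO/\hGO$ is the quotient by the larger loop group, with fibers $\hGK/\hGO \cong \Gr$, so it is likewise an ind-proper and almost ind-finitely presented $\Gr$-fibration. As both properties are stable under composition, the claim for $\pi_1$ and $\pi_2$ follows.

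The main obstacle will be verifying that the formation of quotients by the infinite-type group scheme $\hGO$ interacts cleanly with the colimit and base-change formalism for ind-geometric stacks developed in \cite{CWig, CWtm}---in particular, confirming that the transition maps in $\colim_{\al,k} \hR_\al^k/\hGO$ genuinely retain almost finite presentation, that $\Gr/\hGO \to \pt/\hGO$ qualifies as ind-proper and almost ind-finitely presented in the sense of loc. cit., and that the $\Gr$-fibration description of $\tilde\pi_2$ yields the same properties. Once these foundational compatibilities are secured, the remainder reduces to assembling base-change squares.
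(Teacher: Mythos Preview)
Your proposal is correct and follows essentially the same approach as the paper. The only notable differences are: for admissibility, the paper uses the simpler affine map $\hR_\al^k/\hGO \to \Gr_\al/\hGO$ rather than your $\hR_\al^k/\hGO \to \hR_\al^{k,\ell_0}/\hGO$; and the paper resolves the ``main obstacle'' you flag by observing that the $\hGO$-action on each $\Gr_\al$ factors through a finite-type quotient group $\hGO^\al$, so that $\Gr_\al/\hGO \to \pt/\hGO$ is pulled back from $\Gr_\al/\hGO^\al \to \pt/\hGO^\al$, where properness and finite presentation can be checked via the flat cover $\pt \to \pt/\hGO^\al$ (invoking \cite{LMB00} to see it is a relative algebraic space).
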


\begin{proof}
Fix subschemes $\hR_\al^k$ of $\hR$ as before, noting that these are $\hGO$-invariant since the $\Gr_\al$ are $\hGO$-invariant in $\Gr$. By \cite[\tmproptamequotients]{CWtm} each $\hR_\al^k/\hGO$ is a tamely presented geometric stack since $\hR_\al^k$ is a tamely presented scheme and $\hGO$ is a classical affine group scheme. Each $\hR_\al^k/\hGO$ is admissible since we have the affine map $\hR_\al^k/\hGO \to \Gr_\al/\hGO$. The first claim now follows as in the proof of Proposition \ref{prop:R}. 	

Next we show $\pi_2$ is ind-proper and almost ind-finitely presented, the case of $\pi_1$ following by symmetry.  Writing
$$\hR/\hGO \xrightarrow{i_2} \T/\hGO \xrightarrow{\wt{\pi}_2} N_\O/\hGO$$
for the defining factorization of $\pi_2$, it suffices to show the claim for each factor. The quotient map $\wt{\pi}_2$ is a base change of $\Gr/\hGO \to \pt/\hGO$, so it suffices to show $\Gr_\al/\hGO \to \pt/\hGO$ is proper and finitely presented for any $\al$. Since $\Gr_\al$ is finite type the action of $\hGO$ factors through a finite-type quotient group $\hGO^\al$. The map $\Gr_\al/\hGO \to \pt/\hGO$ is then obtained by base change from $\Gr_\al/\hGO^\al \to \pt/\hGO^\al$. Since $\hGO^\al$ is an algebraic group these quotients coincide with the associated fppf quotients, and the map between them is a relative algebraic space (e.g. as can be shown using \cite[Cor. 8.1.1]{LMB00}). That this map is proper and finitely presented now follows since its pullback along the flat cover $\pt \to \pt/\hGO^\al$ is so. 

Similarly, $i_2$ is obtained by base change from $N_\O/\hGO \to N_\K/\hGO$, hence is ind-proper and almost ind-finitely presented if each $N_\O/\hGO \to t^{- k}N_\O/\hGO$ is proper and finitely presented. But this map is affine since its pullback along the flat cover $t^{- k}N_\O \to t^{- k}N_\O/\hGO$ is so, and is then proper and finitely presented for the same reason. 
\end{proof}

\subsection{Koszul-perversity}\label{sec:Koszul-perverse}
Before defining the Koszul-perverse t-structure on $\Coh^\hGO(\hR)$, we recall the perverse t-structure on $\Coh^\hGO(\Gr)$.  The category $\Coh^\hGO(\Gr)$ can be written as the colimit of the categories $\Coh^\hGO(S)$ over all closed $\hGO$-invariant subschemes $S \subset \Gr$. If such a subscheme contains a $G_\cO$-orbit $\Gr^{\l^\vee} \subset \Gr$, we use the same symbol $i_{\l^\vee}$ for the locally closed embeddings of $\Gr^{\l^\vee}$ into both $S$ and $\Gr$. 

\begin{Definition} \cite{AB10}
	Let $S \subset \Gr$ be a closed $\hGO$-invariant subscheme of $\Gr$. The perverse t-structure $(\Coh^{\hGO}(S)^{\leq 0}_p, \Coh^{\hGO}(S)^{\geq 0}_p)$ on $\Coh^{\hGO}(S)$ is defined as follows. Given $\cF \in \Coh^{\hGO}(S)$ we have
	\begin{enumerate}
		\item $\cF \in \Coh^{\hGO}(S)^{\leq 0}_p$ if and only if $i_{\l^\vee}^*(\cF) \in \QCoh^{\hGO}(\Gr^{\l^\vee})^{\leq - \frac12 \dim \Gr^{\l^\vee}}$ for all $\Gr^{\l^\vee} \subset S$, 
		\item $\cF \in \Coh^{\hGO}(S)^{\geq 0}_p$ if and only if $i_{\l^\vee}^!(\cF) \in \QCoh^{\hGO}(\Gr^{\l^\vee})^{\geq - \frac12 \dim \Gr^{\l^\vee}}$ for all $\Gr^{\l^\vee} \subset S$.
	\end{enumerate}
The perverse t-structure on $\Coh^\hGO(\Gr)$ is the unique t-structure such that for any closed $\hGO$-invariant subscheme $S$, the pushforward $i_{S*}: \Coh^\hGO(S) \to \Coh^\hGO(\Gr)$ is t-exact for the perverse t-structure on $\Coh^\hGO(S)$. 
\end{Definition}

Here we implicitly retain the convention of \cite{BFM,CW1} that the block of $\Coh^\hGO(\Gr)$ consisting of sheaves with odd-dimensional support is cohomologically graded by $\Z + \frac12$, so that expressions like $\frac12 \dim \Gr^{\l^\vee}$ make sense. 

The perverse t-structure on each $\Coh^\hGO(S)$ is local. Regrading it as in Construction \ref{construct} gives us a Koszul t-structure, which we denote by $(\Coh^\hGO(S)^{\leq 0}_\Kzl, \Coh^\hGO(S)^{\geq 0}_\Kzl)$ and refer to as the {\it Koszul-perverse t-structure}. We use the same terminology and notation for the induced t-structure on $\Coh^\hGO(\Gr)$. 

We will reduce Theorem \ref{thm:koszul-perverse} to Theorem \ref{thm:koszul} using the following standard Lemma, which is already implicit in the definition of the perverse t-structure on $\Coh^{\hGO}(\Gr)$.

\begin{Lemma}\label{lem:colimittstructure}
	Let $\catC \cong \colim \catC_\al$ be a filtered colimit in $\Catinfty$ such that each $\catC_\al$ is stable and each $F_{\al\be}: \catC_\al \to \catC_\be$ is exact. Suppose each $\catC_\al$ is equipped with a t-structure such that $F_{\al\be}$ is t-exact for all $\be \geq \al$. 
\begin{enumerate}
	\item There exists a unique t-structure on $\catC$ such that every $F_\al: \catC_\al \to \catC$ is t-exact.
	\item If every $F_{\al\be}: \catC_\al^{\heartsuit} \to \catC_\be^{\heartsuit}$ sends simples to simples then so does every $F_\al: \catC_\al^{\heartsuit} \to \catC^{\heartsuit}$.  	\item If every $F_{\al\beta}$ is conservative then every simple in $\catC^{\heartsuit}$ is the image of a simple in some~$\catC_\al^\heartsuit$.
\end{enumerate}
\end{Lemma}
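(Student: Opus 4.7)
\textbf{Proof plan for Lemma \ref{lem:colimittstructure}.}

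For (1), the plan is to set $\catC^{\leq 0}$ (resp.\ $\catC^{\geq 0}$) to consist of those $Y \in \catC$ admitting a presentation $Y \cong F_\al(X)$ with $X \in \catC_\al^{\leq 0}$ (resp.\ $X \in \catC_\al^{\geq 0}$), and then verify the t-structure axioms using two standard properties of filtered colimits in $\Catinfty$: every object and every finite diagram in $\catC$ lifts to some $\catC_\be$, and $\Map_\catC(F_\al X, F_{\al'} W) \cong \colim_{\be \geq \al,\al'} \Map_{\catC_\be}(F_{\al\be} X, F_{\al'\be} W)$. For orthogonality each term in this filtered colimit is contractible by t-exactness of the transition functors, so the colimit is. Shift closure is immediate, and the truncation triangle for $Y \cong F_\al(X)$ is obtained by applying $F_\al$ to the truncation triangle of $X$. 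Uniqueness will follow because any alternative t-structure for which every $F_\al$ is t-exact must contain each $F_\al(\catC_\al^{\leq 0})$ in its $\leq 0$-part, so our constructed t-structure is contained in it on both sides, and two t-structures related by such inclusions necessarily coincide.

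For (2), assume $X \in \catC_\al^\heartsuit$ is simple; since $F_\al$ is t-exact, $F_\al(X) \in \catC^\heartsuit$. Given a short exact sequence $0 \to Z \to F_\al(X) \to Q \to 0$ in $\catC^\heartsuit$, the idea is to lift it to some $\catC_\be$. Writing $Z \cong F_{\be_0}(\wt Z)$ and replacing $\wt Z$ by $H^0(\wt Z) \in \catC_{\be_0}^\heartsuit$ using t-exactness of $F_{\be_0}$, I may assume $\wt Z$ lies in the heart. The map $Z \to F_\al(X)$ then lifts to a morphism $Z_\be \to F_{\al\be}(X)$ between objects of $\catC_\be^\heartsuit$ for some $\be \geq \al, \be_0$ by the mapping-space formula. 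The kernel $K$ of this map in $\catC_\be^\heartsuit$ satisfies $F_\be(K) \cong 0$ since $Z \to F_\al(X)$ is a monomorphism, so applying the colimit-of-maps principle to $\mathrm{id}_K$ yields $\gamma \geq \be$ with $F_{\be\gamma}(K) \cong 0$; after replacing $\be$ with $\gamma$, the lifted map is a monomorphism in $\catC_\be^\heartsuit$ with cokernel $Q_\be$ satisfying $F_\be(Q_\be) \cong Q$. Since $F_{\al\be}(X)$ is simple in $\catC_\be^\heartsuit$ by hypothesis, either $Z_\be \cong 0$ or $Q_\be \cong 0$, hence $Z \cong 0$ or $Q \cong 0$.

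For (3), given a simple $Y \in \catC^\heartsuit$, the same heart-lifting argument produces $\al$ and $X \in \catC_\al^\heartsuit$ with $F_\al(X) \cong Y$; it then remains to show $X$ is simple. For any subobject $Z \hookrightarrow X$ in $\catC_\al^\heartsuit$, t-exactness of $F_\al$ gives a monomorphism $F_\al(Z) \hookrightarrow Y$, so simplicity of $Y$ forces $F_\al(Z) \cong 0$ or $F_\al(X/Z) \cong 0$. In either case, applying the colimit-of-maps principle to the relevant identity morphism yields some $\be \geq \al$ with $F_{\al\be}(Z) \cong 0$ or $F_{\al\be}(X/Z) \cong 0$, whereupon conservativity of $F_{\al\be}$ forces $Z \cong 0$ or $Z \cong X$. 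The main technical hurdle across all three parts is the careful lifting of morphisms, short exact sequences, and vanishings from $\catC$ back to some $\catC_\be$; this is uniformly handled by the colimit formula for mapping spaces together with the principle that $F_\al(W) \cong 0$ implies $F_{\al\be}(W) \cong 0$ for some $\be$, which itself reflects the fact that a class in a filtered colimit of spaces becoming null must already be null at a finite stage.
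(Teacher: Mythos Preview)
Your proposal is correct and follows essentially the same approach as the paper: define $\catC^{\leq 0}$ and $\catC^{\geq 0}$ as the essential images of the $\catC_\al^{\leq 0}$ and $\catC_\al^{\geq 0}$, verify the axioms via the colimit formula for mapping spaces, and handle (2) and (3) by lifting subobjects back to some $\catC_\be$. Your argument for (2) is in fact more careful than the paper's (you explicitly kill the kernel at a finite stage to ensure the lifted map is a monomorphism, whereas the paper asserts the lifted map $i_\al$ is an isomorphism when it need only be an epimorphism); the one small point you omit is checking $F_\al(X) \ncong 0$, which follows since each $F_{\al\be}(X)$ is simple hence nonzero, so $\id_X$ cannot become zero at any finite stage.
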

\begin{proof}
If $(\catC_\al^{\leq 0}, \catC_\al^{\geq 0})$ denotes the t-structures on $\catC_\al$ then the t-structure $(\catC^{\leq 0}, \catC^{\geq 0})$ on $\catC$ is given by the essential images of the $\catC^{\leq 0}_\al$ and $\catC^{\geq 0}_\al$. To see this defines a t-structure, suppose $X \in \catC^{\leq 0}$ and $Y \in \catC^{> 0}$. Since the indexing diagram is filtered, we have $X \cong F_\al(X_\al)$ and $Y \cong F_\al(Y_\al)$ for some $\al$ and some $X_\al \in \catC^{\leq 0}$, $Y_\al \in \catC^{> 0}$. We then have $\Map_\catC(X,Y) \cong \colim_{\be \geq \al} \Map_{\catC_\be}(F_{\al\be}(X_\al), F_{\al\be}(Y_\al)$ \cite{Roz}, which is contractible since each term on the right is contracitble by t-exactness of the $F_{\al\be}$. Exactness of the $F_\al$ implies $\catC^{\leq 0}$ and $\catC^{\geq 0}$ are preserved by $[1]$ and $[-1]$, respectively. Finally, we can write any $X \in \catC$ as $X \cong F_\al(X_\al)$ for $X_\al \in \catC_\al$, hence $X$ fits into a triangle $F_\al(\tau^{\leq 0}(X_\al)) \to X \to F_\al(\tau^{> 0}(X_\al))$ with outer terms in $\catC^{\leq 0}$ and $\catC^{> 0}$. Now let $(\catC^{\leq 0}_{t}, \catC^{\geq 0}_{t})$ be a possibly different t-structure such that each $F_\al$ is t-exact. By definition we have $\catC^{\leq 0} \subset \catC^{\leq 0}_{t}$ and $\catC^{> 0} \subset \catC^{> 0}_{t}$. But then 
$ \catC^{\leq 0}_{t} = {}^\perp(\catC^{> 0}_{t}) \subset {}^\perp(\catC^{> 0}) = \catC^{\leq 0},$
hence $\catC^{\leq 0} = \catC^{\leq 0}_{t}$.

For (2), consider a simple $X_\al \in \catC_\al^{\heartsuit}$. Each $F_{\al\be}(X_\al)$ is nonzero since it is simple, hence $F_\al(X_\al)$ is nonzero. Consider a nonzero sub-object $i: Y \hookrightarrow F_\al(X_\al)$. Arguing as above we can assume, after possibly replacing $\al$ with a larger $\al$, that $i = F_\al(i_\al)$ for some $i_\al: Y_\al \to X_\al$ in $\catC_{\al}^\heartsuit$. Since $X_\al$ is simple $i_\al$ is an isomorphism, hence so is $i$. For (3), note that since the $F_{\al\beta}$ are conservative so is $F_\al$. It follows that if $X = F_\al(X_\al)$ is simple then so is $X_\al$. 
\end{proof}

\begin{Lemma}\label{lem:Cohcoliminvlim}
Let $\cG$ be a classical affine group scheme and $X \cong \lim X_\al$ a filtered limit of Noetherian $\cG$-schemes along flat $\cG$-equivariant affine morphisms. Then the natural functor $\colim \Coh^\cG(X_\al) \to \Coh^\cG(X)$ is an equivalence. 
\end{Lemma}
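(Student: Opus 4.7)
The plan is to verify that the natural functor $\Phi: \colim \Coh^\cG(X_\al) \to \Coh^\cG(X)$ is both essentially surjective and fully faithful. Since each transition map $f_{\al\be}: X_\be \to X_\al$ and each limit map $f_\al: X \to X_\al$ is $\cG$-equivariant, flat, and affine with $X_\al$ Noetherian, pullback along these maps is t-exact and preserves coherent sheaves, so $\Phi$ is well-defined.

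First I would establish the non-equivariant analogue $\colim \Coh(X_\al) \xrightarrow{\sim} \Coh(X)$, which is essentially a standard consequence of the finite-presentation principles for filtered colimits of Noetherian rings along flat maps (cf.\ EGA IV.8 in the non-derived case, and the derived refinements discussed in \cite[Ch. 6]{LurSAG}). Working locally on a finite $\cG$-invariant affine open cover $\{U^i_{\al_0} = \Spec A^i_{\al_0}\}$ of some $X_{\al_0}$ (possible since $X_{\al_0}$ is Noetherian and $\cG$-acted upon with $\cG$ affine, assuming semi-separatedness, which holds in our applications), this reduces to the statement that a bounded complex of modules over $A^i = \colim_\be A^i_\be$ with finitely presented cohomology descends to $\Coh(\Spec A^i_\be)$ for $\be$ large enough, and that mapping complexes between coherent sheaves commute with the colimit by flat base change.

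To upgrade to the equivariant case, I would use the bar construction: $\cG$-equivariant coherent sheaves on $X$ are equivalent to compatible families of non-equivariant coherent sheaves on the simplicial scheme $X \times \cG^\bullet$, and mapping spaces in $\Coh^\cG(X)$ are computed as totalizations of the corresponding cosimplicial non-equivariant mapping spaces. Each simplicial level $X \times \cG^n \cong \lim (X_\al \times \cG^n)$ is again a filtered limit along flat affine maps, so the non-equivariant result applies level-wise. Full faithfulness of $\Phi$ then follows by interchanging the (uniformly bounded) totalization with the filtered colimit; essential surjectivity uses this full faithfulness to descend the equivariant structure on a non-equivariant lift, since a lifted coaction morphism on $X_\al$ satisfying the required cocycle condition exists if and only if it exists at some finite stage, by applying full faithfulness on $X_\al \times \cG$ and $X_\al \times \cG^2$.

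The main obstacle is that $\cG$ may be non-Noetherian (for example $\cG = G_\cO$), so $X_\al \times \cG^n$ need not be Noetherian and the bar construction is more delicate. Here I would appeal to the tame-presentation framework of \cite[\tmproptamequotients]{CWtm}: the quotient stacks $X_\al/\cG$ are tamely presented geometric stacks, $X/\cG \cong \lim X_\al/\cG$ along flat affine morphisms, and coherent sheaves on such stacks admit an intrinsic description that makes the descent argument above go through without requiring $\cG$ itself to be Noetherian.
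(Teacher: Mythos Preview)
Your bar-construction approach differs from the paper's and has a genuine gap at essential surjectivity. You write that ``a lifted coaction morphism on $X_\al$ satisfying the required cocycle condition exists if and only if it exists at some finite stage,'' but in the derived setting a $\cG$-equivariant structure is an object of the cosimplicial totalization $\Tot\, \Coh(X \times \cG^\bullet)$, i.e.\ an infinite tower of coherences, not a single morphism plus an identity. Showing that such a tower descends to some $X_\al$ cannot be done by invoking full faithfulness at levels $\cG$ and $\cG^2$ alone; it amounts to commuting $\Tot$ with the filtered colimit, which is exactly the content of the lemma. Your handling of non-Noetherian $\cG$ is also not a proof: appealing to ``the tame-presentation framework'' to run the non-equivariant descent argument at the levels $X_\al \times \cG^n$ presupposes a comparison of coherent sheaves along filtered limits over non-Noetherian bases that is essentially the statement you are trying to establish.

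The paper sidesteps both issues by never unpacking the bar construction. Fixing an index $\al$, it observes that each projection $X_\be/\cG \to X_\al/\cG$ and $X/\cG \to X_\al/\cG$ is affine, so by Barr--Beck $\QCoh^\cG(X_\be)$ and $\QCoh^\cG(X)$ are the categories of $A_\be$- and $A$-modules in $\QCoh^\cG(X_\al)$, where $A_\be$, $A$ are the pushforwards of the structure sheaves and $A \cong \colim A_\be$. The general fact that the functor from algebras to their module categories preserves filtered colimits in $\PrL$ then gives $\QCoh^\cG(X) \cong \colim \QCoh^\cG(X_\be)$ at once. Coherent sheaves are extracted as the bounded compact objects, using that $\QCoh^\cG(X_\be)^{[a,b]}$ is compactly generated by $\Coh^\cG(X_\be)^{[a,b]}$ --- this is where Noetherianity of $X_\be$ and classical affineness of $\cG$ enter. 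The non-Noetherian group is thus absorbed into the single ambient category $\QCoh^\cG(X_\al)$ and never confronted directly.
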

\begin{proof}
Fixing some $\al$, we have $X \cong \lim_{\be \geq \al} X_\be$ since the limit is filtered. Let $A, A_\be \in \Alg(\QCoh^\cG(X_\al))$ denote the images of $\cO_X$ and $\cO_{X_\be}$ under the projections $X/\cG \to X_\al/\cG$ and $X_\be/\cG \to X_\al/\cG$, respectively. These projections are affine since their base changes to $X_\al$ are \cite[Lem. 9.3.1.1]{LurSAG}, hence by Barr-Beck $\QCoh^\cG(X)$ and $\QCoh^\cG(X_\be)$ are the categories of $A$- and $A_\be$-modules in $\QCoh^\cG(X_\al)$. The natural map $\colim A_\be \to A$ is an isomorphism since by hypothesis it is after forgetting equivariance, which is conservative. The functor $\Alg(\QCoh^\cG(X_\al)) \to \PrL$ taking an algebra to its category of modules in $\QCoh^\cG(X_\al)$ preserves filtered colimits \cite[Cor. 4.2.3.5, Cor. 4.8.5.13]{LurHA}, hence $\QCoh^\cG(X) \cong \colim \QCoh^\cG(X_\be)$. Since each $X_\be$ is Noetherian and $\cG$ is classical and affine, $\QCoh^\cG(X_\be)^{[a,b]}$ is compactly generated by $\Coh^\cG(X_\be)^{[a,b]}$ for all $a \leq b$  \cite[Prop. 9.5.2.3, Prop. C.6.5.4]{LurSAG}. Since the projections are flat, it then follows that $\QCoh^\cG(X)^{[a,b]}$ is compactly generated by $\Coh^\cG(X)^{[a,b]}$ and that $\Coh^\cG(X)^{[a,b]} \cong \colim \Coh^\cG(X_\be)^{[a,b]}$ in $\Catinfty$ \cite[Lem. 7.5.3.11]{LurHA}. The claim then follows since by boundedness $\Coh^\cG(X) \cong \colim_{a < b} \Coh^\cG(X)^{[a,b]}$, similarly for each $\Coh^\cG(X_\be) $. 	
\end{proof}

\begin{proof}[Proof of Theorem \ref{thm:koszul-perverse}]
We fix a presentation $\hR \cong \colim \hR_\al$ as in Section \ref{sec:Rdefs}. For any $\al$ we fix a $k'$ such that $\hR_\al \cong \colim_{k \geq k'} \hR_\al^k$, and have $\hR_{\al}^k \cong \lim \hR_\al^{k,\ell}$. By Theorem \ref{thm:koszul} there exists a t-structure on $\Coh^\hGO(\hR_\al^{k,\ell})$ 
defined by taking  $(\Coh^\hGO(\hR_\al^{k,\ell})^{\leq 0}_{\Kzl}, \Coh^\hGO(\hR_\al^{k,\ell})^{\geq 0}_{\Kzl})$ to be the preimage of $(\Coh^\hGO(\Gr_\al)^{\leq 0}_{\Kzl},\Coh^\hGO(\Gr_\al)^{\geq 0}_{\Kzl})$ under $\sigma_2^* i_{2*}$. A simple base change argument implies that pullback along $\hR^{k,\ell+1}_{\al} \to \hR^{k,\ell}_{\al}$ is t-exact, hence by Lemmas \ref{lem:colimittstructure} and \ref{lem:Cohcoliminvlim} we obtain an induced t-structure on $\Coh^\hGO(\hR_\al^{k})$. 

Another base change argument shows that $(\Coh^\hGO(\hR_\al^{k})^{\leq 0}_{\Kzl},\Coh^\hGO(\hR_\al^{k})^{\geq 0}_{\Kzl})$ is again the preimage of $(\Coh^\hGO(\Gr_\al)^{\leq 0}_{\Kzl}, \Coh^\hGO(\Gr_\al)^{\geq 0}_{\Kzl})$ under $\sigma_2^* i_{2*}$. From this description it follows that pushforward along the natural map $\hR_\al^{k} \to \hR_\al^{k+1}$ is t-exact, as this commutes with $i_{2*}$. Applying Lemma \ref{lem:colimittstructure} again we obtain a t-structure on $\hR_\al$, with $(\Coh^\hGO(\hR_\al)^{\leq 0}_{\Kzl}, \Coh^\hGO(\hR_\al)^{\geq 0}_{\Kzl})$ still characterized as the preimages under $\sigma_2^* i_{2*}$. It follows as in the finite-rank case of Proposition \ref{prop:compatible} that pushforward along the maps $\hR_\al \to \hR_\be$ is t-exact. Applying Lemma \ref{lem:colimittstructure} again we obtain the indicated t-structure on $\Coh^\hGO(\hR)$, which is still the preimage under $\sigma_2^* i_{2*}$ of $\Coh^\hGO(\Gr)^{\leq 0}_{\Kzl},\Coh^\hGO(\Gr)^{\geq 0}_{\Kzl})$. Since this latter t-structure is bounded and finite-length so is the former since $\sigma_{2*} i_{2*}$ is conservative on coherent sheaves. 

It follows from \cite[Prop. 4.11]{AB10} (or Proposition \ref{prop:simples}) that every simple object in $\Coh^\hGO(\Gr)_K^\heartsuit$ restricts to a (shifted) $\hGO$-equivariant vector bundle on the open $G_\O$-orbit $\Gr_\l$ in its support, and conversely every such (appropriately shifted) vector bundle can be extended to a unique simple object. Up to loop and Koszul shifts (i.e. up to forgetting from $\hGO$- to $G_\O$-equivariance) such simples are thus labeled by $(P^\vee \times P)/W$, or equivalently by \emph{dominant pairs}:  pairs $(\l^\vee, \mu) \in P^\vee \times P$ such that $\l^\vee$ is dominant and $\mu$ is dominant for the Levi factor of $P_{\lambda^\vee}$. 

The classification of simples in $\Coh^\hGO(\hR)_K^{\heartsuit}$ is now a consequence of Theorem \ref{thm:koszulsimples}, Proposition \ref{prop:koszul3}, and Lemma \ref{lem:colimittstructure}.  More precisely, Theorem \ref{thm:koszulsimples} states that simples in $\Coh^\hGO(\hR_{\l^\vee}^{k,\ell})_K^\heartsuit$ are the unique extensions of simples of the form $\iota_* \pi^*(\cF)$, where $\cF$ is a simple in some $\Coh^\hGO(\Gr_{\l^\vee})_K^\heartsuit$ and $\iota_* \pi^*: \Coh^\hGO(\Gr_{\l^\vee}) \to \Coh^\hGO(\hR_{\l^{\vee}}^{k,\ell})$ is as in Proposition~\ref{prop:koszul3}. It follows from this description that the pullbacks via $\hR_{\l^\vee}^{k,\ell+1} \to \hR_{\l^\vee}^{k,\ell}$ and pushforwards via $\hR_{\l^\vee}^k \to \hR_{\l^\vee}^{k+1}$ preserve simples. Moreover, by Proposition \ref{prop:compatible}, pushforwards via $\hR_{\l^\vee} \to \hR_{\mu^\vee}$ also preserve simples. This gives the description of simples $\cP_{\l^\vee,\mu}$ in $\Coh^\hGO(\hR)_K^\heartsuit$ as the unique extensions of simples $\iota_* \pi^*(\cF)$ where $\cF$ is a simple in $\Coh^\hGO(\Gr_{\l^\vee})_K^{\heartsuit}$, and $\iota_* \pi^*: \Coh^\hGO(\Gr_{\l^\vee}) \to \Coh^{\hGO}(\hR_{\l^\vee})$. 
\end{proof}

\subsection{The category $\KPcohGNeta$}\label{sec:eta2}

There are interesting examples, such as $(G,N) = (GL_n,\C^n)$ considered in Section \ref{sec:GLn}, where $G$ carries a natural cocharacter $\eta: \C^\times \to G$ which acts with weight one on $N$. One can use this cocharacter to define a Koszul-perverse t-structure on $\Coh^{G_\O \rtimes \C^\times}(\hR_{G,N})$, and we denote the resulting heart $\KPcohGNeta$. As a corollary of the discussion in Section \ref{sec:eta} we find that 
$$\KPcohGN \cong \bigoplus_{\Z} \KPcohGNeta,$$
where the Koszul shift $[1]\la -1 \ra$ on  $\KPcohGN$ permutes the summands on the right. 

\section{Convolution}\label{sec:convolution}

Next we discuss convolution in $\Coh^\hGO(\hR_{G,N})$ and its compatibility with the Koszul-perverse t-structure. The following is the main result.

\begin{Theorem}\label{thm:convsecmainthm}
If $\cF \in \Coh^{\hGO}(\hR_{G,N})$ is Koszul-perverse, then left and right convolution with $\cF$ are t-exact with respect to the Koszul-perverse t-structure.
The monoidal unit is Koszul-perverse, hence $\KPcohGN$ inherits a monoidal structure from $\Coh^{\hGO}(\hR_{G,N})$. 
\end{Theorem}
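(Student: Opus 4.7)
The strategy is to reduce to the t-exactness of convolution on $\Coh^\hGO(\Gr_G)$ with respect to the Koszul-perverse t-structure, which was established in \cite{CW1}. The bridge is the t-exact, conservative functor $\sigma_1^* i_{1*} : \Coh^\hGO(\hR_{G,N}) \to \Coh^\hGO(\Gr_G)$ provided by Theorem~\ref{thm:koszul-perverse} (and symmetrically the one built from $\sigma_2^* i_{2*}$).

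First I would set up the convolution on $\Coh^\hGO(\hR)$ following the BFN template, via the correspondence $\hR \times \hR \xleftarrow{p} \hGK \times \hR \xrightarrow{q} \hGK \times_{\hGO} \hR \xrightarrow{m} \hR$ with $\cF \conv \cG := m_*(\cF \tbox \cG)$, where $\cF \tbox \cG$ is the descent of $p^*(\cF \boxtimes \cG)$ along the $\hGO$-torsor $q$. Because $\hR$ is only ind-tamely presented, I would realize this through the finite-dimensional approximations $\hR^{k,\ell}_\al$ of Section~\ref{sec:Rdefs} and check that the definition is compatible with the colimit/limit structures, using ind-properness of $m$ (which reduces to ind-properness of $\pi_1$ and $\pi_2$ from Proposition~\ref{prop:Rquotient}) to keep the result coherent.

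The main step is to establish a compatibility between $\sigma_1^* i_{1*}$ and convolution that lets us transfer t-exactness questions from $\hR$ to $\Gr_G$. Concretely, I would analyze the effect of $\sigma_1^* i_{1*}$ on the convolution diagram by base-changing the defining Cartesian square (\ref{eq:R}) along the maps $p, q, m$. Both $\hR$ and $\hGK \times_{\hGO} \hR$ fiber over the usual convolution space $\Gr \ttimes \Gr$ via $i_1$ and its twisted analogue, and the zero section $\sigma_1$ interacts cleanly with the twisted product. The expected outcome is a natural isomorphism (or at worst a t-exact twist thereof) of the form
$$ \sigma_1^* i_{1*}(\cF \conv_{\hR} \cG) \cong \sigma_1^* i_{1*}(\cF) \conv_{\Gr} \sigma_1^* i_{1*}(\cG), $$
possibly after identifying the right-hand convolution with a suitable variant on $\Gr_G$ that is still t-exact in the Koszul-perverse t-structure by \cite{CW1}. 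Given such a compatibility, t-exactness of convolution by $\cF$ on $\hR$ follows immediately: if $\cG$ lies in a given t-degree then $\sigma_1^* i_{1*}(\cG)$ does on $\Gr_G$; the Grassmannian convolution preserves this by \cite{CW1}; and conservativity plus t-exactness of $\sigma_1^* i_{1*}$ force $\cF \conv \cG$ to have the same t-degree on $\hR$. The same argument applied to $\sigma_2^* i_{2*}$ gives right t-exactness; together they yield full t-exactness. For the second claim, the monoidal unit in $\Coh^\hGO(\hR)$ is the pushforward of the structure sheaf of $\hR^\cl_0$ (the classical identity component, as in the abelian example of Section~\ref{sec:abex}), which is sent by $\sigma_1^* i_{1*}$ to the skyscraper $\cO_\pt \in \Coh^\hGO(\Gr_G)$ at the basepoint. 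Since $\cO_\pt$ is Koszul-perverse (it is the monoidal unit of the coherent Satake category after regrading), Theorem~\ref{thm:koszul-perverse} shows that the unit on $\hR$ is Koszul-perverse, so the monoidal structure restricts to $\KPcohGN$.

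The main obstacle will be step two: $\sigma_1^* i_{1*}$ is not itself a monoidal functor, and the convolution diagram on $\hR$ does not simply pull back from that on $\Gr$ because the $N_\O$-directions that $\sigma_1$ collapses participate nontrivially in the multiplication map $m$. Care is needed to identify the correct intermediate object on $\Gr_G$, to handle derived intersections properly, and to verify the base change at the level of the approximations $\hR^{k,\ell}_\al$ so that one can pass to the colimit.
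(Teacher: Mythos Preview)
Your strategy---reduce to t-exactness of convolution on $\Gr_G$ via the conservative t-exact functor $\sigma_2^* i_{2*}$---is exactly what the paper does, but you have correctly identified and not resolved the main obstacle. The hoped-for compatibility $\sigma_1^* i_{1*}(\cF \conv \cG) \cong \sigma_1^* i_{1*}(\cF) \conv \sigma_1^* i_{1*}(\cG)$ is false in general (the paper remarks after the proof that $\sigma_1^* i_{1*}$ and $\sigma_2^* i_{2*}$ are not monoidal), and no uniform ``t-exact twist'' repairs it.

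The missing idea is an \emph{asymmetric} treatment of the two factors. Rather than pushing both $\cF_1$ and $\cF_2$ to $\Gr$, the paper first applies base change in the convolution diagram to obtain
\[
\sigma_2^* i_{2*}(\cF_1 \conv \cF_2) \cong \sigma_2^* \tm_* \rd^*\bigl(i_{2*}(\cF_1) \tbox \cF_2\bigr),
\]
so that only $\cF_1$ is pushed forward, and only as far as $\T$. Now the key step: since the Koszul-perverse heart $\Coh^{\hGO}(\T)^\heartsuit_\Kzl$ is finite-length, one may by induction assume $i_{2*}(\cF_1)$ is simple, and Proposition~\ref{prop:koszul2} identifies simples on the bundle $\T \to \Gr$ as pullbacks $\pi^*(\cF')$ of simples on $\Gr$. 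Because $(\pi \ttimes \id) \circ \rd = \id$, the expression collapses to $\sigma_2^* \tm_*(\cF' \tbox \cF_2)$, and a further base change yields $\cF' \conv_{\Gr} \sigma_2^* i_{2*}(\cF_2)$. Note the asymmetry: the left factor is $\cF'$, a composition factor of $i_{2*}(\cF_1)$ on $\T$ viewed on $\Gr$, not $\sigma_2^* i_{2*}(\cF_1)$ itself. This is what circumvents the non-monoidality. Your final paragraph shows you sensed something like this was needed; the concrete mechanism is the finite-length reduction together with the classification of simples on the vector bundle $\T$.
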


We begin with a general discussion of infinite-type convolution products in Section~\ref{sec:convgen}. In Section~\ref{sec:alternative} we study the specific geometry that gives rise to convolution in $\Coh^{\hGO}(\hR_{G,N})$. We then consider the interaction between convolution and Koszul-perversity in Section \ref{sec:KPconv}, proving the above result. 

\subsection{Generalities}\label{sec:convgen} The convolution product on $\Coh^\hGO(\hR_{G,N})$ is an instance of a standard general construction. The only nonstandard aspect here is that $\hR_{G,N}$ lacks most of the finiteness hypotheses present in more traditional situations. In this section we briefly review convolution products in this generality, both to make explicit what remaining hypotheses we have and make use of, and to fix notation for later use in Section \ref{sec:rigidity1}. 

Consider the following setup. We let $Y \to Z$ be a morphism of stacks and set $X := Y \times_Z Y$. Associated to this data are the following morphisms:
\begin{itemize}
	\item the projections $\pi_1,\pi_2: X \to Y$,
	\item the involution $\iota: X \to X$ which exchanges these projections,
	\item the diagonal map $e: Y \to X$ whose compositions with $\pi_1$ and $\pi_2$ are the identity, 
	\item the lifted diagonal map $\delta: X \times_Y X \to X \times X$, and 
	\item the multiplication map $m: X \times_Y X \to X$.
\end{itemize}
By convention in $X \times_Y X$ the left and right copies of $X$ map to $Y$ by $\pi_2$ and $\pi_1$, respectively. Explicitly, $\delta$ and $m$ are defined by the following Cartesian square and cube.  
\begin{equation}\label{eq:mdeltasquare}
	\begin{tikzpicture}
		[baseline=(current  bounding  box.center),thick,>=\arrtip]
		
		\node[matrix] at (0,0) { 
		\node (b) at (3.1,0) {$X \times X$};
		\node (a) at (0,0) {$X \times_Y X$};
		\node (d) at (3.1,-2.1) {$Y \times Y$};
		\node (c) at (0,-2.1) {$Y$};
		\draw[->] (a) to node[above] {$\delta $} (b);
		\draw[->] (b) to node[right] {$\pi_2 \times \pi_1 $} (d);
		\draw[->] (a) to node[right] {$\hspace{2cm} $}(c);
		\draw[->] (c) to node[above] {$\Delta_Y $} (d);\\
	};
\node[matrix] at (6.5,0) { 
		\newcommand*{\ha}{1.5}; \newcommand*{\hb}{1.5}; \newcommand*{\hc}{1.5};
		\newcommand*{\va}{-.9}; \newcommand*{\vb}{-.9}; \newcommand*{\vc}{-.9}; 
		\node (ab) at (\ha,0) {$X$};
		\node (ad) at (\ha+\hb+\hc,0) {$X \times_Y X$};
		\node (ba) at (0,\va) {$Y$};
		\node (bc) at (\ha+\hb,\va) {$X$};
		\node (cb) at (\ha,\va+\vb) {$Y$};
		\node (cd) at (\ha+\hb+\hc,\va+\vb) {$X$};
		\node (da) at (0,\va+\vb+\vc) {$Z$};
		\node (dc) at (\ha+\hb,\va+\vb+\vc) {$Y$};
		
		\draw[<-] (ab) to node[above,pos=.6] {$ $} (ad);
		\draw[->] (ab) to node[above left, pos=.25] {$\pi_2 $} (ba);
		\draw[->] (ab) to node[right,pos=.2] {$ $} (cb);
		\draw[->] (ad) to node[below right] {$ $} (bc);
		\draw[->] (ad) to node[right] {$m $} (cd);
		\draw[->] (ba) to node[left] {$ $} (da);
		\draw[<-] (cb) to node[above,pos=.25] {$\pi_1 $} (cd);
		\draw[->] (cb) to node[above left, pos=.25] {$ $} (da);
		\draw[->] (cd) to node[below right] {$\pi_2 $} (dc);
		\draw[<-] (da) to node[above,pos=.75] {$ $} (dc);
		
		\draw[-,line width=6pt,draw=white] (ba) to  (bc);
		\draw[<-] (ba) to node[above,pos=.75] {$\pi_1 $} (bc);
		\draw[-,line width=6pt,draw=white] (bc) to  (dc);
		\draw[->] (bc) to node[right,pos=.2] {$ $} (dc);\\
	};
	\end{tikzpicture}
\end{equation}

To define convolution of coherent sheaves we impose the following conditions on this data: 
\begin{equation}\label{eq:convhypos}
	\begin{aligned}
		(1) \quad & \text{$X$ is an admissible, ind-tamely presented ind-geometric stack, } \\
		(2) \quad & \text{$Y$ is a weakly smooth geometric stack, and} \\
		(3) \quad & \text{$\pi_1$ and $\pi_2$ are ind-proper and almost ind-finitely presented.}
	\end{aligned}
\end{equation}
That $Y$ is weakly smooth means in particular that coherent sheaves admit pullback along the diagonal $\Delta_Y$, and that this property is stable under tamely presented base change (i.e. $\Delta_Y$ has \stable coherent pullback). We refer to Section \ref{sec:convnot} for further details on our terminology, but these conditions will in particular apply to $X = \hR/\hGO$, $Y = \N_\O/\hGO$, and $Z = \N_\K/\hGK$. They of course also apply when $Y \to Z$ is a proper map of smooth varieties (or if such a map is equivariant for an algebraic group, the induced map of quotient stacks). 

By (1) the category $\Coh(X)$ of coherent sheaves on $X$ is well-defined. Since $Y$ is weakly smooth and $\pi_2 \times \pi_1$ is ind-proper and almost ind-finitely presented, it follows from (\ref{eq:mdeltasquare}) that $X \times_Y X$ is also admissible and ind-tamely presented, and that $\delta$ has \stable coherent pullback \cite[\tmpropindPbaseprops]{CWtm}. Likewise, it follows that $m$ is ind-proper and almost ind-finitely presented since $\pi_1$ and $\pi_2$ are. In particular, coherent sheaves admit pullback along $\delta$ and pushforward along $m$, and the convolution product of $\cF, \cG \in \Coh(X)$ is then defined as
\begin{equation}\label{eq:convprod2}
	\cF * \cG := m_* \delta^* (\cF \boxtimes \cG) \in \Coh(X).
\end{equation}

That convolution is associative up to isomorphism follows from the compatibility of \stable coherent pullback and ind-proper, almost ind-finitely presented pushforward under base change \cite[\tmdefcohonindgstks]{CWtm}. That is, it follows from this that $\cF \conv (\cF' \conv \cF'')$ and $(\cF \conv \cF') \conv \cF''$ are both isomorphic to $m_{123*} \delta_{123}^*(\cF \boxtimes \cF' \boxtimes \cF'')$, 
where  $m_{123}$ and $\delta_{123}$ are the natural maps from $X \times_Y X \times_Y X$ to $X$ and $X \times X \times X$, respectively. Note further that $e$ is ind-proper and almost ind-finitely presented since $\pi_1$ is and since $\pi_1 \circ e$ is the identity \cite[\igpropindPtwoofthreeprops]{CWig}. Thus coherent sheaves admit pushforward along $e$, and one checks that $e_*(\cO_Y) \in \Coh(X)$ is a unit object up to isomorphism. These associativity and unit isomorphisms can be extended to a tower of higher coherence data, as summarized by the following assertion. 

\begin{Proposition}\label{prop:monoidalconv}
Under the hypotheses (\ref{eq:convhypos}) the convolution product on $\Coh(X)$ extends to a monoidal structure whose unit object is $e_*(\cO_Y)$. 
\end{Proposition}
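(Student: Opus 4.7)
The plan is to promote the convolution product of (\ref{eq:convprod2}) to a full monoidal structure by realizing $X$ as an associative algebra object in a symmetric monoidal $(\infty,2)$-category of correspondences, with the algebra structure encoded by the \v{C}ech nerve of $Y \to Z$.

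First, I would set $X_{[n]} := Y \times_Z \cdots \times_Z Y$ ($n+1$ factors), so that $X_{[0]} = Y$, $X_{[1]} = X$, $X_{[2]} = X \times_Y X$, and more generally the simplicial object $X_\bullet$ is a Segal groupoid object in ind-geometric stacks. Direct inspection identifies $d_1 \colon X_{[2]} \to X_{[1]}$ with $m$ and $(d_2, d_0) \colon X_{[2]} \to X_{[1]} \times X_{[1]}$ with $\delta$, while the degeneracy $s_0 \colon X_{[0]} \to X_{[1]}$ is $e$. A straightforward induction using the closure properties in \cite[\tmpropindPbaseprops]{CWtm} and \cite[\igpropindPtwoofthreeprops]{CWig} shows that each $X_{[n]}$ is admissible and ind-tamely presented and that every face map is ind-proper almost ind-finitely presented and has \stable coherent pullback, since all such face maps are iterated base changes of $\pi_1$ and $\pi_2$, which by (\ref{eq:convhypos}) already have these properties.

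Second, I would invoke the symmetric monoidal functor $\Coh \colon \Corr \to \CatSt$ on the $(\infty,2)$-category of correspondences whose objects are admissible, ind-tamely presented ind-geometric stacks and whose horizontal (resp. vertical) legs are ind-proper almost ind-finitely presented (resp. have \stable coherent pullback). Being a groupoid object, $X_\bullet$ exhibits $X$ as an associative algebra in $\Corr$ with multiplication span $X \times X \xleftarrow{\delta} X_{[2]} \xrightarrow{m} X$ and unit span $\pt \leftarrow X_{[0]} \xrightarrow{e} X$. Its image under $\Coh$ is a monoidal structure on $\Coh(X)$ whose binary product is $\cF \conv \cG \cong m_* \delta^*(\cF \boxtimes \cG)$ and whose unit is $e_*(\cO_Y)$, as desired.

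The main obstacle is ensuring that the formalism from the companion papers supplies a genuine symmetric monoidal $(\infty,2)$-functor out of $\Corr$ at this level of generality (ind-tamely presented ind-geometric stacks rather than finite-type classical stacks), so that the single datum of the groupoid object $X_\bullet$ automatically packages the associativity and unit constraints together with all higher coherences. Granting this, the remaining steps, namely verification of the Segal conditions, base-change compatibilities for iterated products, and identification of the unit, are formal consequences of the base change and composition properties established in \cite{CWig, CWtm}.
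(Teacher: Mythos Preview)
Your proposal is correct and follows essentially the same approach as the paper: realize the \v{C}ech nerve of $Y \to Z$ as a groupoid object, hence an algebra in a category of correspondences, and transport this structure to $\Coh(X)$ via a symmetric monoidal functor. The paper makes precise the two ingredients you flag as the main obstacle: the passage from groupoid object to algebra in $\Corr(\indGStkk)$ is \cite[Cor.~9.4.4.5]{GR17}, and the required functor $\Coh$ out of correspondences (lax symmetric monoidal into $\Catinfty$, which suffices) is supplied by \cite[\tmsecindextcohcase]{CWtm} on the $1$-full subcategory $\Corr(\indGStkkadtm)_{prop,coh}$ where the backward leg has \stable coherent pullback and the forward leg is ind-proper and almost ind-finitely presented.
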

\begin{proof}
Taking the Cech nerve of $Y \to Z$ yields a groupoid object in $\indGStkk$. Recall from \cite[Cor. 9.4.4.5]{GR17} that this gives rise to an algebra in $\Corr(\indGStkk)$ whose underlying object is $X$, and whose multiplication and identity maps are the correspondences 
\begin{equation}\label{eq:convcorrs}
	\begin{tikzpicture}[baseline=(current bounding box.center),thick,>=\arrtip]
		\newcommand*{\ha}{2.7}; \newcommand*{\hb}{2.2}; \newcommand*{\hc}{1.5}; \newcommand*{\gap}{3};
		
		\node (a) at (0,0) {$X \times X$};
		\node (a') at (\ha,0) {$X \times_Y X$};
		\node (a'') at (\ha+\hb,0) {$X$};
		
		\node (b) at (\ha+\hb+\gap,0) {$\pt$};
		\node (b') at (\ha+\hb+\gap+\hc,0) {$Y$};
		\node (b'') at (\ha+\hb+\gap+\hc+\hc,0) {$X$};
		
		\draw[<-] (a) to node[pos=.6,above] {$\delta$} (a');
		\draw[->] (a') to node[above] {$m$} (a'');
		
		\draw[<-] (b) to node[above] {} (b');
		\draw[->] (b') to node[pos=.45,above] {$e$} (b'');
		
		\node at (\ha+\hb+\gap/2,0) {$\text{and}$};
	\end{tikzpicture}
\end{equation}

Recall next that the assignment $X \mapsto \Coh(X)$ extends to a lax symmetric monoidal functor $\Corr(\indGStkkadtm)_{prop,coh} \to \Catinfty$ \cite[\tmsecindextcohcase]{CWtm}. Here $\Corr(\indGStkkadtm)_{prop,coh}$ denotes the 1-full subcategory of $\Corr(\indGStkk)$ which only includes correspondences $W \xleftarrow{h} Y \xrightarrow{f} Z$ such that $W$ and $Z$ are admissible and ind-tamely presented, $h$ has \stable coherent pullback, and $f$ is ind-proper and almost ind-finitely presented. To construct the desired monoidal structure on $\Coh(X)$, it thus suffices to show that the algebra defined by the Cech nerve of $Y \to Z$ is actually an algebra in $\Corr(\indGStkkadtm)_{prop,coh}.$ The structure maps of this algebra are generated under compositions and products by the correspondences (\ref{eq:convcorrs}). The claim then follows since, as discussed earlier, the hypotheses (\ref{eq:convhypos}) imply that $\delta$ has \stable coherent pullback and that $m$ and $e$ are ind-proper and almost ind-finitely presented. 
\end{proof}

Note that for Proposition \ref{prop:monoidalconv} we only need that $X$ is reasonable. However, to show $\Coh(X)$ is rigid we will need the stronger conditions in (1), hence we impose these now. Similarly, for Proposition \ref{prop:monoidalconv} we only need that $\pi_1$ and $\pi_2$ are cohomologically ind-proper, i.e. they admit a well-behaved pushforward of coherent sheaves. But in proving rigidity we implicitly use properties of $\pi_1^!$ and $\pi_2^!$ that are awkward to impose without genuine ind-properness. 

\subsection{Convolution on the space of triples}\label{sec:alternative} 
We now specialize the discussion of Section~\ref{sec:convgen} to the case where $Y \to Z$ is the natural map $N_\O/\hGO \to N_\K/\hGK$. By Proposition \ref{prop:symmetricRdef}, this results in $X$ being identified with the quotient $\hR/\hGO$. 

\begin{Proposition}\label{prop:convolution}
The hypotheses (\ref{eq:convhypos})	are satisfied when $Y \to Z$ is the map $N_\O/\hGO \to N_\K/\hGK$. In particular, $\Coh^\hGO(\hR)$ is a monoidal category with respect to the convolution product $\cF \conv \cG := m_* \delta^*(\cF \boxtimes \cG)$, where 
\begin{equation}\label{eq:Rconvcorr}
	\begin{tikzpicture}[baseline=(current  bounding  box.center),thick,>=\arrtip]
		\newcommand*{\ha}{4.5}; \newcommand*{\hb}{3.8};
		
		\node (a) at (0,0) {$\hR/\hGO \times \hR/\hGO$};
		\node (a') at (\ha,0) {$\hR/\hGO \times_{\N_\O/\hGO} \hR/\hGO$};
		\node (a'') at (\ha + \hb,0) {$\hR/\hGO$};
		
		\draw[<-] (a) to node[midway,above] {$\delta$} (a');
		\draw[->] (a') to node[midway,above] {$m$} (a'');
	\end{tikzpicture}
\end{equation}
is the natural correspondence. 
\end{Proposition}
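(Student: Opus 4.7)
The plan is to verify the three hypotheses of (\ref{eq:convhypos}) for the setup $Y = \N_\O/\hGO \to \N_\K/\hGK = Z$ and then invoke Proposition \ref{prop:monoidalconv}. By Proposition \ref{prop:symmetricRdef}, the fiber product $X = Y \times_Z Y$ is canonically identified with $\hR/\hGO$, and the projections $\pi_1, \pi_2$ coincide with those of (\ref{eq:Rsymmdiagram}). Under this identification, conditions (1) and (3) of (\ref{eq:convhypos}) are precisely the content of Proposition \ref{prop:Rquotient}.

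The remaining task, and the main obstacle, is condition (2): $\N_\O/\hGO$ must be a weakly smooth geometric stack. Unwinding the definition, one must check flatness over $\Spec \C$, tame presentation, and that the diagonal has \stable coherent pullback. Flatness is immediate from the flat cover $\N_\O \to \N_\O/\hGO$ together with $\N_\O$ being flat over $\Spec \C$, and tame presentation follows from \cite[\tmproptamequotients]{CWtm} applied to the action of the classical affine group scheme $\hGO$ on the tamely presented scheme $\N_\O$ (cf. the proof of Proposition \ref{prop:R}).

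For the diagonal condition, the plan is to base change $\Delta_{\N_\O/\hGO}$ along the flat cover $\N_\O \times \N_\O \to \N_\O/\hGO \times \N_\O/\hGO$, which produces the action--graph map $\alpha: \hGO \times \N_\O \to \N_\O \times \N_\O$ sending $(g,s)$ to $(s,gs)$. I would then factor $\alpha$ as
$$ \hGO \times \N_\O \xrightarrow{\id \times \Delta_{\N_\O}} \hGO \times \N_\O \times \N_\O \xrightarrow{\sim} \hGO \times \N_\O \times \N_\O \xrightarrow{\mathrm{pr}_{23}} \N_\O \times \N_\O, $$
where the middle arrow is the isomorphism $(g,s,s') \mapsto (g,s,gs')$. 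The first factor is a base change of the diagonal of $\N_\O$, which has \stable coherent pullback since $\N_\O \cong \A^\infty$ is weakly smooth; the third is a flat, tamely presented projection off the affine group scheme $\hGO$. Stability of \stable coherent pullback under composition, combined with flat descent back to $\Delta_{\N_\O/\hGO}$, then delivers the required property.

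Once (1)--(3) are verified, Proposition \ref{prop:monoidalconv} directly produces the monoidal structure on $\Coh(\hR/\hGO) \cong \Coh^\hGO(\hR)$, with convolution given by the correspondence (\ref{eq:Rconvcorr}) and unit $e_*(\cO_{\N_\O/\hGO})$.
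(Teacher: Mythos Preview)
Your proof is correct and follows essentially the same route as the paper: conditions (1) and (3) come directly from Proposition~\ref{prop:Rquotient}, and condition (2) is checked by pulling $\Delta_{\N_\O/\hGO}$ back along the flat cover $\N_\O \times \N_\O$ to the action--graph map, then using weak smoothness of $\N_\O$. The only cosmetic difference is that you verify the action--graph map has \stable coherent pullback by factoring through $\id \times \Delta_{\N_\O}$ and a flat projection, whereas the paper invokes \cite[\tmpropwprosmoothcohdiagb]{CWtm} and the presentation $\N_\O \times \N_\O \cong \lim \N_\O/t^\ell \N_\O \times \N_\O/t^\ell \N_\O$ directly.
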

\begin{proof}
Conditions (1) and (3) were established in Proposition \ref{prop:Rquotient}. Condition (2) follows from \cite[\tmproptamequotients, \tmpropwprosmoothcohdiagb]{CWtm}. That is, one first notes that  $N_\O$ is weakly smooth since it is a filtered limit $\N_\O \cong \lim \N_\O/t^\ell \N_\O$ of smooth affine schemes along flat maps. One then shows $\N_\O/\hGO$ is tamely presented using the fact that $\hGO$ can be written as a filtered limit of algebraic groups. Finally, one shows $\Delta_{\N_\O/\hGO}$ has \stable coherent pullback by observing that its base change along the flat cover by $\N_\O \times \N_\O$ is the twisted diagonal $\N_\O \times G_\O \to \N_\O \times \N_\O$, $(s, g) \mapsto (s, gs)$, which has \stable coherent pullback since as above we can write $\N_\O \times \N_\O \cong \lim \N_\O/t^\ell \N_\O \times \N_\O/t^\ell \N_\O$. 
\end{proof} 

Our next goal is to describe the convolution correspondence (\ref{eq:Rconvcorr}) as the quotient of a correspondence of $\hGO$-equivariant maps of ind-schemes. This will also connect our description of convolution more directly to that of \cite{BFN}. 

Write $\Gr \ttimes \hR$ for the quotient $\hGK \times_{\hGO} \hR$. It is a Zariski-locally trivial $\hR$-bundle over $\Gr$, and in particular has a projection $p: \Gr \ttimes \hR \to \Gr$. We define $\T \ttimes \hR$ as its base change along the projection $ \T \to \Gr$, and $\hR \ttimes \hR$ as the further base change along $i_2: \hR \to \T$. Note that these ind-schemes are all ind-tamely presented since they are all locally products of the reasonable ind-schemes $\hR$, $\T$, and $\Gr$. 

Let us write $\rd$ for the map $\id \ttimes (\pi_1 \times \id): \Gr \ttimes \hR \to \Gr \ttimes (\N_\O \times \hR) \cong \T \ttimes \hR$. That is, it is the map obtained by passing to $\hGO$-quotients from
$$\id \times (\pi_1 \times \id):  G_\K \times \hR \to G_\K \times (\N_\O \times \hR).$$ 
We also write $\tm: \Gr \ttimes \hR \to \T$ for the composition
$$\Gr \ttimes \hR \xrightarrow{\id \ttimes i_2} \Gr \ttimes \T \cong \Gr \ttimes \Gr \ttimes N_\O \xrightarrow{m_{\Gr} \ttimes \id} \Gr \ttimes \N_\O \cong \T, $$ 
where $m_\Gr$ is the convolution map of $\Gr$. We now consider the following ind-scheme $\convspace$, whose classical locus is denoted by  $q(p^{-1}(\hR \times \hR))$ in \cite[Sec. 3(i)]{BFN}. 

\begin{Proposition}\label{prop:convspacedef}
There exists an ind-tamely presented ind-scheme $\convspace$ fitting into a diagram
\begin{equation}\label{eq:conv}
	\begin{tikzpicture}[baseline=(current  bounding  box.center),thick,>=\arrtip]
		\newcommand*{\hp}{3}; \newcommand*{\hpp}{6}; \newcommand*{\hppp}{11};
		\newcommand*{\vb}{-1.5}; \newcommand*{\vc}{-4};
		\node (a) at (0,0) {$\hR \ttimes \hR$};
		\node (a') at (\hp,0) {$\convspace$};
		\node (a'') at (\hpp,0) {$\hR$};
		\node (b) at (0,\vb) {$\T \ttimes \hR$};
		\node (b') at (\hp,\vb) {$\Gr \ttimes \hR$};
		\node (b'') at (\hpp,\vb) {$\T$};
		
		\draw[<-] (a) to node[above] {$\td $} (a');
		\draw[->] (a') to node[above] {$m $} (a'');
		\draw[<-] (b) to node[above] {$\rd $} (b');
		\draw[->] (b') to node[above] {$\tm $} (b'');
		
		\draw[->] (a) to node[left] {$i_2 \ttimes \id $} (b);
		\draw[->] (a') to node[right] {$i $} (b');
		\draw[->] (a'') to node[right] {$i_2 $} (b'');
	\end{tikzpicture}
\end{equation}
in which both squares are Cartesian and all maps are $\hGO$-equivariant. The maps $\rd$ and $\tm$ respectively have \stable coherent pullback and are ind-proper and almost ind-finitely presented, hence $\td$ and $\tm$ have the same properties. 
\end{Proposition}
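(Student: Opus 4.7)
The plan is to define $\convspace := (\Gr \ttimes \hR) \times_{\T} \hR$ using $\tm$ and $i_2$, which makes the right square in (\ref{eq:conv}) Cartesian by construction with $m$ and $i$ as the induced projections. $\hGO$-equivariance is automatic throughout since every map in sight is built from $\hGO$-equivariant data. To see that the left square is Cartesian as well, I would use the pasting lemma together with the identification $\hR \ttimes \hR \cong \hR \times_{\T} (\T \ttimes \hR)$, which is the defining property of $\hR \ttimes \hR$ given in Section~\ref{sec:alternative}.

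The first main verification is that $\tm$ is ind-proper and almost ind-finitely presented. Using the factorization $\tm = (m_{\Gr} \ttimes \id_{\N_{\O}}) \circ (\id \ttimes i_2)$, the component $\id \ttimes i_2$ is Zariski-locally (via trivialization of the twisted product) a base change of the almost ind-finitely presented ind-closed immersion $i_2: \hR \to \T$ (Proposition~\ref{prop:R}), while $m_{\Gr} \ttimes \id_{\N_{\O}}$ is pulled back from the affine-Grassmannian convolution $m_{\Gr}: \Gr \ttimes \Gr \to \Gr$, which is ind-proper and ind-finitely presented by standard theory; both classes of maps are stable under composition. The second verification is that $\rd$ has \stable coherent pullback. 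Zariski-locally on $\Gr$, $\rd$ is identified with the graph map $\gamma_{\pi_{1}}: \hR \to \N_{\O} \times \hR$ of $\pi_{1}: \hR \to \N_{\O}$, and this graph is the base change of the diagonal $\Delta_{\N_{\O}}: \N_{\O} \to \N_{\O} \times \N_{\O}$ along $\pi_{1} \times \id_{\N_{\O}}$. Since $\N_{\O}$ is weakly smooth, $\Delta_{\N_{\O}}$ has \stable coherent pullback, and therefore so does the graph.

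Once $\tm$ and $\rd$ are handled, the properties of $m$ and $\td$ follow immediately by base change along the two Cartesian squares \cite[\tmpropindPbaseprops]{CWtm}, and the ind-tame presentation of $\convspace$ follows since it is the base change of the ind-tamely presented $\Gr \ttimes \hR$ along the ind-proper, almost ind-finitely presented map $\tm$. The main technical obstacle is correctly identifying the geometric nature of $\rd$: once the graph factorization above is recognized, the desired \stable coherent pullback reduces to the weak smoothness of $\N_{\O}$, but choosing the right factorization in this infinite-type, singular setting requires some care. All other verifications are essentially formal consequences of the compatibility results in \cite{CWtm,CWig}.
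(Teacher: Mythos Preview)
Your arguments for $\tm$ being ind-proper and almost ind-finitely presented, and for $\rd$ having \stable coherent pullback, are essentially correct and parallel the paper's (the paper carries out the $\rd$ argument globally rather than Zariski-locally, exhibiting $\rd$ as the base change of $\id\ttimes\Delta_{\N_\O}$ along $\id\ttimes\pi_1$, but the idea is the same).

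The genuine gap is in your pasting argument for the left square. Pasting the left square of (\ref{eq:conv}) with the defining Cartesian square $\hR\ttimes\hR \cong \hR\times_\T(\T\ttimes\hR)$ produces a rectangle whose bottom edge is the composite $p_1\circ\rd:\Gr\ttimes\hR\to\T\ttimes\hR\to\T$, where $p_1$ is projection to the first factor. But this is \emph{not} $\tm$: on representatives, $p_1\circ\rd$ sends $[g,r]\mapsto[g,\pi_1(r)]$, whereas $\tm$ sends $[g,r]\mapsto[gh,s]$ for $i_2(r)=[h,s]$. The paper explicitly flags this: ``$\tm$ does not coincide with $p\circ\rd$ even though their compositions with $\T\to\N_\K$ do.'' So your pasted rectangle is not the right square of (\ref{eq:conv}), and with your definition of $\convspace$ via the right square you have not even produced the map $\td$, let alone shown the left square is Cartesian.

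What is missing is precisely the observation that $p_1\circ\rd$ and $\tm$ agree after composing with $\T\to\N_\K$ (both give $[g,r]\mapsto ghs$). The paper exploits this by extending the left square to a chain of Cartesian squares ending at $\N_\O\to\N_\K$, then refactoring the bottom composite through $\tm$ to exhibit the right square as Cartesian. The same refactoring through $\N_\K$ would be needed to make your reversed approach work; without it, the two squares are genuinely unrelated.
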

\begin{proof}
We define $\convspace$ and the maps $\td$ and $i$ so that the left square in (\ref{eq:conv}) is Cartesian. Let us show that $i$ is also the pullback of $i_2$ along $\tm$. We begin by embedding the left square of (\ref{eq:conv}) into the following sequence of Cartesian squares. 
\begin{equation}\label{eq:3} 
	\begin{tikzpicture}[baseline=(current  bounding  box.center),thick,>=\arrtip]
		\newcommand*{\hp}{-3}; \newcommand*{\hpp}{-6}; \newcommand*{\hppp}{-9};
		\newcommand*{\vb}{-1.5}; \newcommand*{\vc}{-4};
		\node (a) at (0,0) {$\N_\O$};
		\node (a') at (\hp,0) {$\hR$};
		\node (a'') at (\hpp,0) {$\hR \ttimes \hR$};
		\node (a''') at (\hppp,0) {$\convspace$};
		\node (b) at (0,\vb) {$\N_\K$};
		\node (b') at (\hp,\vb) {$\T$};
		\node (b'') at (\hpp,\vb) {$\T \ttimes \hR$};
		\node (b''') at (\hppp,\vb) {$\Gr \ttimes \hR$};
		
		\draw[<-] (a) to node[above] {$\pi_1 $} (a');
		\draw[<-] (a') to node[above] {$p $} (a'');
		\draw[<-] (a'') to node[above] {$\td $} (a''');
		\draw[<-] (b) to node[above] {$ $} (b');
		\draw[<-] (b') to node[above] {$p $} (b'');
		\draw[<-] (b'') to node[above] {$\rd $} (b''');
		
		\draw[->] (a) to node[right] {$j $} (b);
		\draw[->] (a') to node[right] {$i_2 $} (b');
		\draw[->] (a'') to node[right] {$i_2 \ttimes \id $} (b'');
		\draw[->] (a''') to node[left] {$i $} (b''');
	\end{tikzpicture}
\end{equation}
Unwinding the definitions, one checks that the bottom composition coincides with 
$$\Gr \ttimes \hR \xrightarrow{\,\tm\,} \T \xrightarrow{\, \,} \N_\cK.$$ 
Explicitly, both are the composition of the evident map $\Gr \ttimes \hR \to \Gr \ttimes (\Gr \times \N_\K)$ and the map $\Gr \ttimes (\Gr \times \N_\K) \to \N_\K$ obtained by passing to quotients from $G_\K \times G_\K \times \N_\K \to \N_K$,  $(g_1, g_2, s) \mapsto g_1 s$. 
Note that $\tm$ does not coincide with $p \circ \rd$ even though their compositions with~$\T \to \N_\cK$ do. 
We can now refactor the boundary of (\ref{eq:3}) as the boundary of the following diagram.
\begin{equation}\label{eq:4}
	\begin{tikzpicture}[baseline=(current  bounding  box.center),thick,>=\arrtip]
		\newcommand*{\hp}{3}; \newcommand*{\hpp}{6}; \newcommand*{\hppp}{11};
		\newcommand*{\vb}{-1.5}; \newcommand*{\vc}{-4};
		\node (a) at (0,0) {$\convspace$};
		\node (a') at (\hp,0) {$\hR$};
		\node (a'') at (\hpp,0) {$\N_\cO$};
		\node (b) at (0,\vb) {$\Gr \ttimes \hR$};
		\node (b') at (\hp,\vb) {$\T$};
		\node (b'') at (\hpp,\vb) {$\N_\cK$};
		
		\draw[->] (a) to node[above] {$m $} (a');
		\draw[->] (a') to node[above] {$\pi_1 $} (a'');
		\draw[->] (b) to node[above] {$\tm $} (b');
		\draw[->] (b') to node[above] {$ $} (b'');
		
		\draw[->] (a) to node[left] {$i $} (b);
		\draw[->] (a') to node[right] {$i_2 $} (b');
		\draw[->] (a'') to node[right] {$j $} (b'');
	\end{tikzpicture}
\end{equation}
The left square is then Cartesian since the outer and right squares are. 
Since $\tm$ is ind-proper and almost of ind-finite presentation it follows by base change that so is $m$.

Next we show $\rd$ has \stable coherent pullback. Since $i_2$ is ind-proper and almost ind-finitely presented so is its base change $i_2 \ttimes \id$. It will then follow that $\td$ has \stable coherent pullback and that $\convspace$ is ind-tamely presented \cite[\tmpropindPbaseprops]{CWtm}. Consider first the twisted fiberwise diagonal map $\id \ttimes \Delta:  \Gr \ttimes \N_\O \to \Gr \ttimes (\N_\O \times \N_\O)$. Note that its source and target can equivalently be written as $\T$ and $\T \ttimes \N_\O$. Zariski-locally $\id \ttimes \Delta$ is a base change of $\Delta: \N_\O \to \N_\O \times \N_\O$ along a proper, almost finitely presented projection, hence it has \stable coherent pullback since $\N_\O$ is weakly smooth.

Now consider the diagram
\begin{equation*}
	\begin{tikzpicture}[baseline=(current  bounding  box.center),thick,>=\arrtip]
		\newcommand*{\ha}{3.2}; \newcommand*{\hb}{3.2}; \newcommand*{\hc}{3.2};
		\newcommand*{\vb}{-1.7}; 
		\node (a) at (0,0) {$\Gr \ttimes \hR$};
		\node (a') at (\ha,0) {$\T \ttimes \hR$};
		\node (a'') at (\ha+\hb,0) {$\T \ttimes \T$};
		\node (a''') at (\ha+\hb+\hc,0) {$\Gr \ttimes \T$};
		\node (b) at (0,\vb) {$\Gr \ttimes \N_\cO$};
		\node (b') at (\ha,\vb) {$\T \ttimes \N_\cO$};
		\node (b'') at (\ha+\hb,\vb) {$\T \ttimes \N_\cK$};
		\node (b''') at (\ha+\hb+\hc,\vb) {$\Gr \ttimes \N_\cK.$};
		
		\draw[->] (a) to node[above] {$\rd $} (a');
		\draw[->] (a') to node[above] {$\id \ttimes i_2 $} (a'');
		\draw[->] (a'') to node[above] {$\pi_{\Gr} \ttimes \id $} (a''');
		\draw[->] (b) to node[above] {$\id \ttimes \Delta $} (b');
		\draw[->] (b') to node[above] {$\id \ttimes j $} (b'');
		\draw[->] (b'') to node[above] {$\pi_{\Gr} \ttimes \id $} (b''');
		
		\draw[->] (a) to node[left, pos=.4] {$id \ttimes \pi_1 $} (b);
		\draw[->] (a') to node[right, pos=.4] {$\id \ttimes \pi_1 $} (b');
		\draw[->] (a'') to node[right, pos=.4] {$\id \ttimes j_1 $} (b'');
		\draw[->] (a''') to node[right, pos=.4] {$\id \ttimes j_1 $} (b''');
	\end{tikzpicture}
\end{equation*}
Here we continue the convention that e.g. $\id \ttimes j$ acts by the identity on the base of the projection $\T \ttimes \N_\O \to \T$ and by $j$ on its fibers. Since the outermost and right two squares are evidently Cartesian, it follows that the left is as well. 
But $\id \ttimes \pi_1$ is ind-proper and almost ind-finitely presented since $\pi_1$ is, hence $\rd$ has \stable coherent pullback since $\id \ttimes \Delta$ does. 
\end{proof}

The relation between the diagrams (\ref{eq:Rconvcorr}) and (\ref{eq:conv}) involves twisted products, defined as follows. First note that $(\T \ttimes \hR)/\hGO$ and $\T/\hGO \times \hR/\hGO$ are both quotients of $\hGK \times \N_\O \times \hR$. The latter is its quotient by the evident $\hGO^{\,3}$ action, while the former is its quotient by the subgroup $\hGO^{\,2} \subset \hGO^{\,3}$ embedded via $\id \times \Delta$. In particular, there is a natural map $(\T \ttimes \hR)/\hGO \to \T/\hGO \times \hR/\hGO$, and we define a map $u$ by the following Cartesian square. 
\begin{equation*}
	\begin{tikzpicture}[baseline=(current  bounding  box.center),thick,>=\arrtip]
		\node (ab) at (4,0) {$\hR/\hGO \times \hR/\hGO$};
		\node (ac) at (8,0) {$(\hR \ttimes \hR)/\hGO$};
		\node (bb) at (4,-1.5) {$\T/\hGO \times \hR/\hGO$};
		\node (bc) at (8,-1.5) {$(\T \ttimes \hR)/\hGO$};
		\draw[->] (ac) to node[above] {$\tpr$} (ab);
		\draw[->] (bc) to node[above] {$ $} (bb);
		\draw[->] (ab) to node[left] {$i_2 \times \id$} (bb);
		\draw[->] (ac) to node[right] {$i_2 \ttimes \id$} (bc);
	\end{tikzpicture}
\end{equation*}
By construction the bottom map is flat, hence so is $u$. We then define the twisted external product of $\cF, \cG \in \Coh^{\hGO}(\hR)$ as
$$ \cF \,\tbox\, \cG := u^*(\cF \boxtimes \cG) \in \Coh^{\hGO}(\hR \ttimes \hR). $$

We now return to the notation of Proposition \ref{prop:convolution}, setting $Y = \N_\O/\hGO$ and $X = \hR/\hGO$. 

\begin{Proposition}\label{prop:oneconvtorulethemall}
	There exists an isomorphism $X \times_Y X \cong \convspace/\hGO$ fitting into a diagram
	\begin{equation*}
		\begin{tikzpicture}[baseline=(current  bounding  box.center),thick,>=\arrtip]
			\newcommand*{\hp}{3.5}; \newcommand*{\hpp}{7};
			\newcommand*{\vb}{-1.5};
			\node (a) at (0,0) {$X \times X$};
			\node (a') at (\hp,0) {$X \times_Y X$};
			\node (a'') at (\hpp,0) {$X$};
			\node (b) at (0,\vb) {$\hR/\hGO \times \hR/\hGO$};
			\node (b') at (\hp,\vb) {$\convspace/\hGO$};
			\node (b'') at (\hpp,\vb) {$\hR/\hGO$.};
			
			\draw[<-] (a) to node[above] {$\delta $} (a');
			\draw[->] (a') to node[above] {$m$} (a'');
			\draw[<-] (b) to node[above] {$u \circ \td $} (b');
			\draw[->] (b') to node[above] {$m $} (b'');
			
			\draw[->] (a) to node[below,rotate=90] {$\sim $} (b);
			\draw[->] (a') to node[below,rotate=90] {$\sim $} (b');
			\draw[->] (a'') to node[below,rotate=90] {$\sim $}  (b'');
		\end{tikzpicture}
	\end{equation*}
	In particular, for any $\cF, \cG \in \Coh^{\hGO}(\hR)$ we have $\cF \conv \cG \cong m_*\td^*(\cF \tbox \cG)$. 
\end{Proposition}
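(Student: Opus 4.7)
The identification $X \times_Y X \cong \convspace/\hGO$ is the main content; once it is established, the formula $\cF \conv \cG \cong m_* \td^*(\cF \tbox \cG)$ follows immediately, since by definition $\cF \tbox \cG = u^*(\cF \boxtimes \cG)$ gives $\delta^*(\cF \boxtimes \cG) \cong (u \circ \td)^*(\cF \boxtimes \cG) \cong \td^*(\cF \tbox \cG)$, and applying $m_*$ yields the claim.

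My plan for the key identification is to assemble two Cartesian squares of quotient stacks via Lemma \ref{lem:Cartesianstacks}. The defining Cartesian square for $\convspace$, namely the left square of (\ref{eq:conv}), consists of $\hGO$-equivariant morphisms of $\hGO$-ind-schemes, so by the Lemma it descends to a Cartesian square of quotient stacks with $\convspace/\hGO$ in the upper-left corner, the remaining vertices being $(\hR \ttimes \hR)/\hGO$, $(\Gr \ttimes \hR)/\hGO$, and $(\T \ttimes \hR)/\hGO$. Composing this square with the Cartesian square defining the flat map $u: (\hR \ttimes \hR)/\hGO \to X \times X$ from Section \ref{sec:alternative} yields the outer Cartesian identification
$$\convspace/\hGO \cong (X \times X) \times_{\T/\hGO \times \hR/\hGO} (\Gr \ttimes \hR)/\hGO,$$
in which the map $X \times X \to \T/\hGO \times \hR/\hGO$ is $i_2 \times \id$.

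The remaining step is to identify $(\Gr \ttimes \hR)/\hGO$ with the fiber product $\T/\hGO \times_{\N_\O/\hGO,\, \wt\pi_2, \pi_1} \hR/\hGO$. Unwinding $\Gr \ttimes \hR = \hGK \times^{\hGO} \hR$ and tracking both the twisted-product $\hGO$-action and the residual left-multiplication $\hGO$-action on $\hGK \times \hR$, a class $[g, x]$ corresponds under the natural map to $([g, \pi_1(x)], x) \in \T/\hGO \times \hR/\hGO$, whose two components both map to the class of $\pi_1(x)$ in $\N_\O/\hGO$. That the resulting square is Cartesian is verified by again applying Lemma \ref{lem:Cartesianstacks} to lift the problem to $\hGO$-ind-schemes, where it reduces to a manifest fiber-product identity for the trivializations. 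Stacking these two Cartesian squares and using the relation $\pi_2 = \wt\pi_2 \circ i_2$ from Proposition \ref{prop:symmetricRdef} then gives
$$\convspace/\hGO \cong X \times_{\N_\O/\hGO,\, \pi_2,\, \pi_1} X \cong X \times_Y X.$$

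The hardest step is likely the identification of $(\Gr \ttimes \hR)/\hGO$ as a fiber product of quotient stacks, which requires careful bookkeeping of the two commuting $\hGO$-actions on $\hGK \times \hR$ (the diagonal twisted-product action and the residual left-multiplication action). This verification could alternatively be phrased entirely via moduli interpretations: both sides classify the same data of $G_\K$-bundles with $\hGO$-reductions on the formal disk, equipped with compatible sections into the associated $N$-bundles.
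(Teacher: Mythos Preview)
Your strategy for the identification $X \times_Y X \cong \convspace/\hGO$ and its compatibility with $\delta$ is correct and essentially matches the paper's. The paper organizes the same ingredients slightly differently: it decomposes the defining square of $X \times_Y X$ into the diagram (\ref{eq:tripleproductdiagram}), whose bottom rectangle is precisely your step 3 (the identification of $(\Gr \ttimes \hR)/\hGO$ with $\T/\hGO \times_{\N_\O/\hGO} \hR/\hGO$), and whose top-right square is then Cartesian and matched via Lemma \ref{lem:Cartesianstacks} with the defining square of $\convspace$. Your stacking argument and the paper's decomposition are two ways of reading the same commutative diagram.

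There is, however, a genuine gap: you never verify that the two maps called $m$ agree under your isomorphism. The map $m: X \times_Y X \to X$ in the top row is defined abstractly by the cube (\ref{eq:mdeltasquare}) as the projection $\hR^{13}/\hGO^1$ forgetting the middle factor $\N_\O^2/\hGO^2$, whereas the map $m: \convspace/\hGO \to \hR/\hGO$ in the bottom row is the quotient of the specific map constructed in (\ref{eq:conv}) via $\tm$ and the right-hand Cartesian square there. Nothing in your stacking of the $\delta$-side squares forces these to coincide. The paper handles this with a second, independent diagram (\ref{eq:tripleproductdiagram2}): it rewrites $X \times_Y X$ as the fiber product of $\hR^{13}/\hGO^1$ and $\hR^{23}/\hGO^2$ over $\N_\O^3/\hGO^3$, decomposes that square so that the top rectangle has bottom and right edges matching (the quotients of) those in the right square of (\ref{eq:conv}), and concludes the $m$-compatibility again via Lemma \ref{lem:Cartesianstacks}. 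You need an analogous argument; without it the commutativity of the right square in the statement, and hence the final formula $\cF \conv \cG \cong m_* \td^*(\cF \tbox \cG)$, is unjustified.
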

\begin{proof}
	Let us distinguish the factors of $N_\O/\hGO$ appearing in the definition of $X \times_Y X$ with superscripts, so that
	$$ X \times_Y X \cong \NO^\li/\hGO^\li \times_{\NK/\hGK} \NO^\mi/\hGO^\mi  \times_{\NK/\hGK} \NO^\ri/\hGO^\ri. $$
	We write $\hR^{\li\mi} := \NO^\li/\hGO^\li \times_{\NK/\hGK} \NO^\mi/\hGO^\mi$, similarly for $\T^{\li\mi}$, $\Gr^{\li\mi}$, $\hR^{\mi\ri}$, etc. We decompose the defining square of $X \times_Y X$ as follows. 
	\begin{equation}\label{eq:tripleproductdiagram}
		\begin{tikzpicture}[baseline=(current  bounding  box.center),thick,>=\arrtip]
			\newcommand*{\ha}{4}; \newcommand*{\hb}{4};
			\newcommand*{\va}{-1.5}; \newcommand*{\vb}{-1.5};
			\node (aa) at (0,0) {$\hR^{\li\mi}/\hGO^\li$};
			\node (ab) at (\ha,0) {$(\hR^{\li\mi} \ttimes \hR^{\mi\ri})/\hGO^{\li}$};
			\node (ac) at (\ha+\hb,0) {$X \times_Y X$};
			\node (ba) at (0,\va) {$\T^{\li\mi}/\hGO^\li$};
			\node (bb) at (\ha,\va) {$(\T^{\li\mi} \ttimes \hR^{\mi\ri})/\hGO^\li$};
			\node (bc) at (\ha+\hb,\va) {$(\Gr^{\li\mi} \ttimes \hR^{\mi\ri})/\hGO^\li$};
			\node (ca) at (0,\va+\vb) {$\NO^\mi/\hGO^\mi$};
			\node (cc) at (\ha+\hb,\va+\vb) {$\hR^{\mi\ri}/\hGO^\mi$};
			\foreach \s/\t in {ac/bc, bc/cc} {
				\draw[->] (\s) to (\t); 
			};
			\draw[->] (aa) to node[left] {$i_2$} (ba); 
			\draw[->] (ab) to node[right] {$i_2 \ttimes \id$} (bb); 
			\draw[->] (ba) to node[left] {$\wt{\pi}_2$} (ca); 
			\draw[->] (bb) to node[above] {$p $} (ba); 
			\draw[->] (bc) to node[above] {$\rd$} (bb); 
			\draw[->] (ac) to node[above] {$ $} (ab); 
			\draw[->] (ab) to node[above] {$p $} (aa);
			\draw[->] (cc) to node[above] {$\pi_1$} (ca);
		\end{tikzpicture}
	\end{equation} 
	Here the top left and bottom  rectangles are Cartesian, hence the maps from $X \times_Y X$ to $\hR^{\li\mi}/\hGO^\li$ and $\hR^{\mi\ri}/\hGO^\mi$ factor as shown. It follows that the top right square is also Cartesian. Since the bottom and left maps in this square are the $\hGO^\li$-quotients of those appearing in the Cartesian square used to define $\convspace$, we have $X \times_Y X \xrightarrow{\sim} \convspace/\hGO^\li$ by Lemma \ref{lem:Cartesianstacks}. 
	
	The map $\delta: X \times_Y X \to X \times X$ is the product of the top and right compositions in (\ref{eq:tripleproductdiagram}). On the other hand, it is immediate from the definition of $u$ that under $X \times_Y X \cong \convspace/\hGO^\li$ the right composition is identified with $\pi_2 \circ u \circ \td$ and the top composition with $\pi_1 \circ u \circ \td$. Thus $u \circ \td$ and $\delta$ are also identified under this isomorphism. 
	
	Finally, observe that $X \times_Y X$ is also the fiber product of $\hR^{13}/\hGO^1$ and $\hR^{23}/\hGO^2$ over $\N^3_\O/\hGO^3$. The corresponding Cartesian diagram decomposes as follows. 
	\begin{equation}\label{eq:tripleproductdiagram2}
		\begin{tikzpicture}[baseline=(current  bounding  box.center),thick,>=\arrtip]
			\newcommand*{\ha}{0}; \newcommand*{\hb}{4.3}; \newcommand*{\hc}{4};
			\newcommand*{\va}{-1.5}; \newcommand*{\vb}{-1.5};
			\node (ab) at (\ha,0) {$X \times_Y X$};
			\node (ad) at (\ha+\hb+\hc,0) {$\hR^{\li\ri}/\hGO^\li$};
			\node (bb) at (\ha,\va) {$(\Gr^{\li\mi} \ttimes \hR^{\mi\ri})/\hGO^\li$};
			\node (bc) at (\ha+\hb,\va) {$(\Gr^{\li\mi} \ttimes \T^{\mi\ri})/\hGO^\li$};
			\node (bd) at (\ha+\hb+\hc,\va) {$\T^{\li\ri}/\hGO^\li$};
			\node (cb) at (\ha,\va+\vb) {$\hR^{\mi\ri}/\hGO^\mi$};
			\node (cc) at (\ha+\hb,\va+\vb) {$\T^{\mi\ri}/\hGO^\mi$};
			\node (cd) at (\ha+\hb+\hc,\va+\vb) {$\NO^\ri/\hGO^\ri$};
			\foreach \s/\t in {ab/bb, bb/cb, bc/cc} {
				\draw[->] (\s) to (\t); 
			};
			\draw[->] (ab) to node[above] {$m$} (ad);
			\draw[->] (bb) to node[above] {$\id \ttimes i_2$} (bc);
			\draw[->] (bc) to node[above] {$\tm$} (bd);
			\draw[->] (cb) to node[above] {$i_2$} (cc);
			\draw[->] (cc) to node[above] {$\wt{\pi}_2$} (cd);
			\draw[->] (ad) to node[right] {$i_2$} (bd);
			\draw[->] (bd) to node[right] {$\wt{\pi}_2$} (cd);
		\end{tikzpicture}
	\end{equation}
	The bottom two squares are Cartesian by inspection. The projection $X \times_Y X \to \hR^{\mi\ri}/\hGO^\mi$ thus factors as indicated, and the resulting top square is then Cartesian as well. But comparing with (\ref{eq:conv}) we see that the bottom and right maps in this square are the $\hGO^\li$-quotients of those appearing in the Cartesian square used to define the map $m: \convspace \to \hR$. Thus the isomorphism $X \times_Y X \xrightarrow{\sim} \convspace/\hGO^\li$ also identifies the two maps denoted by $m$. 
\end{proof}

\subsection{Convolution and Koszul-perversity}\label{sec:KPconv}
We now prove the main result of the section. 

\begin{proof}[Proof of Theorem \ref{thm:convsecmainthm}]
	We show that $\cF_1 \conv -$ is right t-exact for any $\cF_1 \in \KPcohGN$, a parallel argument showing left t-exactness and the same claims for $- \conv \cF_1$. Given $\cF_2 \in \Coh^{\hGO}(\hR)^{\leq 0}_\Kzl$, we must show that $\sigma_2^* i_{2*}(\cF_1 \conv \cF_2)$ belongs to $\Coh^{\hGO}(\Gr)_{\Kzl}^{\leq 0}$. In the notation of (\ref{eq:conv}) we have
	\begin{align*}
		\sigma_2^* i_{2*}(\cF_1 * \cF_2) 
		&\cong \sigma_2^* i_{2*} \tm_* \td^* (\cF_1 \tbox \cF_2) \\
		&\cong \sigma_2^* \tm_* \rd^* (i_{2*}(\cF_1) \tbox \cF_2).
	\end{align*}
	By construction $i_{2*}(\cF_1)$ belongs to the Koszul-perverse heart $\Coh^{\hGO}(\T)^\heartsuit_\Kzl$. Generalizing Proposition \ref{prop:koszul-t-bundles} to the infinite-rank case as in the proof of Theorem \ref{thm:koszul-perverse}, we see that $\Coh^{\hGO}(\T)^\heartsuit_\Kzl$ is of finite length. By induction on the length of $i_{2*}(\cF_1)$ we may assume it is simple. By the corresponding generalization of Proposition \ref{prop:koszul2}, we then have $i_{2*}(\cF_1) \cong \pi^* (\cF')$ for some simple $\cF' \in \Coh^{\hGO}(\Gr)^\heartsuit_\Kzl$, where $\pi: \T \to \Gr$ is the projection. But $(\pi \ttimes \id) \circ \rd$ is the identity, so we then have
	\begin{align*}
		\sigma_2^* \tm_* \rd^* (i_{2*}(\cF_1) \tbox \cF_2) 
		&\cong \sigma_2^* \tm_* \rd^* (\pi^* (\cF') \tbox \cF_2) \\
		&\cong \sigma_2^* \tm_* (\cF' \tbox \cF_2). 
	\end{align*}
	
	Now observe that we can factor $\tm$ as the bottom composition in
	\begin{equation*}
		\begin{tikzpicture}[baseline=(current bounding box.center),thick,>=\arrtip]
			\newcommand*{\ha}{3.3}; \newcommand*{\hb}{3.3}; \newcommand*{\vb}{-1.5};
			
			\node (a) at (\ha,0) {$\Gr \ttimes \Gr$};
			\node (a') at (\ha+\hb,0) {$\Gr$};
			\node (b) at (0,\vb) {$\Gr \ttimes \hR$};
			\node (b') at (\ha,\vb) {$\Gr \ttimes \T$};
			\node (b'') at (\ha+\hb,\vb) {$\T$,};
			
			\draw[->] (a) to node[above] {$m_\Gr$} (a');
			\draw[->] (b) to node[above] {$\id \ttimes i_2$} (b');
			\draw[->] (b') to node[above] {$m_\Gr \ttimes \id_{\N_\O}$} (b'');
			\draw[->] (a) to node[pos=.4, left] {$\id \ttimes \sigma_2$} (b');
			\draw[->] (a') to node[right] {$\sigma_2$} (b'');
		\end{tikzpicture}
	\end{equation*}
	where $m_\Gr$ is the convolution map of $\Gr$. The right square is Cartesian, hence 
	\begin{align*}
		\sigma_2^* \tm_* (\cF' \tbox \cF_2) 
		&\cong \sigma_2^* (m_\Gr \ttimes \id_{\N_\O})_* (id \ttimes i_2)_* (\cF' \tbox \cF_2) \\
		&\cong m_{\Gr*} (\id \ttimes \sigma_2)^* (id \ttimes i_2)_* (\cF' \tbox \cF_2) \\
		&\cong m_{\Gr*} (\cF' \tbox \sigma_2^*i_{2*}(\cF_2)) \\
		&\cong \cF' \conv \sigma_2^*i_{2*}(\cF_2).
	\end{align*}
	But convolution in $\Coh^{\hGO}(\Gr)$ is t-exact for the regraded perverse t-structure since it is so for the ordinary perverse t-structure, and since it is additive with respect to weights for the scaling $\C^\times$. The claim follows.
	
	Finally, if $\Gr_e \subset \Gr$ is the identity point, it follows from the definitions that the restriction of $\pi_1: \hR \to \N_\O$ to $\hR_e^{\cl}$ is an isomorphism, which in turn identifies the (quotient of the) ind-closed immersion $\hR_e^{\cl} \to \hR$ as the unit map $e: Y \to X$. The monoidal unit is thus $e_*(\cO_{\hR_e^{\cl}})$, which is clearly Koszul-perverse. It follows that $\KPcohGN$ inherits a monoidal structure. 
\end{proof}

In general the functors $\sigma_1^*i_{1*}$ and $\sigma_2^*i_{2*}$ are not monoidal. Nevertheless, the proof of Theorem~\ref{thm:convsecmainthm} (which can be adapted for $\sigma_1^*i_{1*}$) shows that they do induce ring homomorphisms at the level of K-theory. 

\section{Rigidity}\label{sec:rigid}

In this section we discuss the rigidity of $\Coh^\hGO(\hR_{G,N})$. The following is the main result. 

\begin{Theorem}\label{thm:adjoints}
The monoidal categories $\Coh^\hGO(\hR_{G,N})$ and $\KPcohGN$ are rigid. The left and right duals of $\cF$ are given by $\cF^L \cong \D_1(\cF^*) \cong \D_2(\cF)^* $ and $\cF^R \cong \D_1(\cF)^* \cong \D_2(\cF^*)$, respectively. 
\end{Theorem}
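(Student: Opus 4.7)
The plan is to reduce rigidity to the interplay of two natural dualities: the involution swapping the two projections, which we write $\cF \mapsto \cF^*$, and relative Serre--Grothendieck duality along $\pi_1$ (resp. $\pi_2$), giving $\D_1$ (resp. $\D_2$). By Proposition \ref{prop:Rquotient}, both $\pi_1$ and $\pi_2$ are ind-proper and almost ind-finitely presented, so their shriek pullbacks exist and give relative dualizing complexes $\omega_{\pi_i} := \pi_i^!(\cO_{\N_\O/\hGO})$ in $\IndCoh(\hR/\hGO)$. The involution $\iota: \hR \to \hR$ exchanging $i_1 \leftrightarrow i_2$ descends to $\hR/\hGO$ and satisfies $\pi_1 \circ \iota = \pi_2$, from which the identity $\D_1 \circ (-)^* \cong (-)^* \circ \D_2$ follows at once, accounting for the two stated descriptions of $\cF^L$ and $\cF^R$.

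First I would construct the candidate evaluation and coevaluation maps. Writing convolution as $\cF \conv \cG \cong m_* \td^*(\cF \tbox \cG)$ (Proposition \ref{prop:oneconvtorulethemall}) and using that $m$ is ind-proper and almost ind-finitely presented (so $m^!$ is right adjoint to $m_*$), the evaluation $\cF^L \conv \cF \to e_*(\cO_Y)$ should arise as a composition
\[ m_* \td^*(\D_1(\cF^*) \tbox \cF) \longrightarrow m_* m^!(e_*\cO_Y) \longrightarrow e_*\cO_Y, \]
where the first arrow is obtained by base change along the inclusion of the diagonal $Y \hookrightarrow X \times_Y X$, identifying $\td^*(\D_1(\cF^*) \tbox \cF)$ with a pairing that factors through $m^!(e_*\cO_Y)$, and the second is the counit of $(m_*, m^!)$. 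The coevaluation $e_*\cO_Y \to \cF \conv \cF^L$ is constructed dually, and the analogous maps for $\cF^R$ are obtained by exchanging the roles of $\pi_1$ and $\pi_2$ via $\iota$.

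The verification of the triangle identities reduces to a diagram-chase in the triple fiber product $\hR/\hGO \times_{\N_\O/\hGO} \hR/\hGO \times_{\N_\O/\hGO} \hR/\hGO$, using the groupoid presentation of convolution from Proposition \ref{prop:monoidalconv}, base change between ind-proper pushforward and coherent pullback \cite[\tmdefcohonindgstks]{CWtm}, and naturality of $\D_i$ under proper base change. This is essentially the Beck--Chevalley argument for groupoid algebras in the $(\infty,2)$-category of correspondences, specialized to the rigid duality coming from relative Serre duality along $\pi_1$ and $\pi_2$. Once rigidity of $\Coh^\hGO(\hR)$ is established, rigidity of $\KPcohGN$ follows once we check that $\D_i$ and $(-)^*$ preserve the Koszul-perverse heart. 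For $(-)^*$ this is transparent since $\iota$ interchanges the two equivalent characterizations of the t-structure via $\sigma_1^* i_{1*}$ and $\sigma_2^* i_{2*}$ in Theorem~\ref{thm:koszul-perverse}; for $\D_i$, applying $\sigma_i^* i_{i*}$ reduces the claim to t-exactness of Grothendieck--Serre duality on the coherent Satake category for the Koszul-perverse t-structure on $\Coh^\hGO(\Gr)$, which follows by regrading from its known behavior under the perverse t-structure.

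The hard part will be making Serre duality precise in the infinite-type, ind-geometric setting. In particular, $\omega_{\pi_i}$ lives a priori in $\IndCoh$ rather than $\Coh$, and one must argue that the constituents $\D_i(\cF)$ of the duals, and the compositions appearing in the evaluation and coevaluation maps, actually remain in $\Coh$. This is ultimately controlled by $\hR$ being ind-tamely presented and the $\pi_i$ being almost ind-finitely presented (Proposition~\ref{prop:Rquotient}), which allows one to approximate by finite-type pieces where classical Serre duality applies and then pass to the limit via the sheaf-theoretic formalism of \cite{CWig, CWtm}.
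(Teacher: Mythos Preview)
Your approach is in the right spirit but differs from the paper's in a structurally important way, and the difference matters for the technical difficulties you flag at the end.

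The paper separates the two halves of the theorem. It first proves rigidity of $\Coh^\hGO(\hR)$ abstractly, without ever writing down candidate (co)evaluation maps: Lemma~\ref{lem:dualityviaBCmaps} says $\cF$ is right dualizable iff the Beck--Chevalley transformation $\lambda_\cF^R(-)\conv\cG\to\lambda_\cF^R(-\conv\cG)$ is invertible, and Proposition~\ref{prop:cohrigidity} verifies this by decomposing it into seven base-change maps, each of which is an isomorphism by a specific result from \cite{CWig,CWtm}. Only afterwards does Proposition~\ref{prop:cohdualformula} identify $\lambda_\cF^R(1)$ with $\D_1(\cF)^*$ via the antidiagonal $a_1:X\to X\times_Y X$. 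This ordering buys something real: since rigidity is already known, $\cF^R$ is automatically compact (because $\Map(\cF^R,-)\cong\Map(1,-\conv\cF)$ is continuous), so coherence of $\D_1(\cF)$ comes for free rather than needing a separate argument. Your approach, building explicit evaluation and coevaluation maps and checking the zigzags, would force you to prove coherence of $\D_i(\cF)$ before you know rigidity, which is exactly the ``hard part'' you worry about.

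For rigidity of $\KPcohGN$ the paper's argument is also simpler than yours. You want to show $\D_i$ and $(-)^*$ are t-exact; your reduction for $\D_i$ via $\sigma_i^*i_{i*}$ would require that functor to intertwine $\D_i$ with Serre duality on $\Gr$, which is not obvious. The paper instead uses the elementary observation that if $1$ lies in the heart and convolution with any heart object is t-exact (already known from Theorem~\ref{thm:convsecmainthm}), then the heart is closed under duals: $\cF^R\conv-$ is left t-exact and $-\conv\cF^R$ right t-exact, so $\cF^R\cong 1\conv\cF^R\cong\cF^R\conv 1$ lies in the heart. The t-exactness of $\D_i$ is then a corollary rather than an input.
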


Here $\D_1$ and $\D_2$ are two relative duality functors on $\Coh^\hGO(\hR_{G,N})$, and $(-)^*$ is a certain involution which exchanges them. These ingredients and the proof of Theorem \ref{thm:adjoints} are described first in a general context in Section~\ref{sec:rigidity1}. We then describe them more explicitly in terms of the geometry of $\hR_{G,N}$ in Section \ref{sec:duality}.

\subsection{Duals in convolution categories}\label{sec:rigidity1} 

We begin with some generalities on rigidity. Let $\catC$ be a monoidal category with product $\conv: \catC \times \catC \to \catC$ and unit $1 \in \catC$. Recall that $\cF \in \catC$ is right dualizable if there exists $\cF^R \in \catC$ and maps $u: 1 \to \cF^R \conv \cF$, $c: \cF \conv \cF^R \to 1$ such that the compositions
$$ \cF \xrightarrow{\id \conv u} \cF \conv \cF^R \conv \cF \xrightarrow{\id \conv c} \cF \quad\quad \cF^R \xrightarrow{\id \conv u} \cF^R \conv \cF \conv \cF^R \xrightarrow{\id \conv c} \cF^R $$
are homotopic to the identity maps. The right dual $\cF^R$ is unique up to isomorphism if it exists. Likewise, $\cF$ is left dualizable if it is the right dual of another object $\cF^L$, and $\catC$ is rigid if every object is both left and right dualizable. 

The duality axioms imply that $\cF^R \conv - $ is right adjoint to $\lambda_\cF := \cF \conv -$, while $- \conv \cF^L$ is right adjoint to $\rho_\cF := - \conv \cF$. Conversely, suppose we know that $\lambda_\cF$ has a right adjoint $\lambda_\cF^R$, for example because $\catC$ is presentable and $\cF \conv -$ preserves colimits. 
If $\cF$ is right dualizable, its right dual is necessarily given by $\lambda_\cF^R(1)$. Moreover, the following variant of \cite[Lem. I.1.9.1.5]{GR17} lets us reformulate right dualizability of $\cF$ as a condition on $\lambda_\cF^R$. Its statement refers to the Beck-Chevalley transformation $\lambda_\cF^R(-) \conv \cG \to \lambda_\cF^R(- \conv \cG)$ induced by the associativity isomorphism $( \cF \conv -) \conv \cG \cong  \cF \conv (- \conv \cG)$, as well as its counterpart involving $\rho_\cF^R$. 

\begin{Lemma}\label{lem:dualityviaBCmaps}
The object $\cF \in \catC$ is right (resp. left) dualizable if and only if the transformation $\lambda_\cF^R(-) \conv \cG \to \lambda_\cF^R(- \conv \cG)$ (resp. $\cG \conv \rho_\cF^R(-) \to \rho_\cF^R(\cG \conv -)$) is an isomorphism for every $\cG \in \catC$. 
\end{Lemma}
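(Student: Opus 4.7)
The plan is to treat the right-dualizable case; the left case is handled symmetrically by exchanging the roles of $\lambda_\cF$ and $\rho_\cF$.

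For the forward implication, suppose $\cF$ admits a right dual $\cF^R$ with evaluation $c$ and coevaluation $u$. By uniqueness of adjoints, the right adjoint to $\lambda_\cF$ is canonically identified with $\lambda_{\cF^R} = \cF^R \conv (-)$. Under this identification, the Beck-Chevalley transformation $\lambda_\cF^R(-) \conv \cG \to \lambda_\cF^R(- \conv \cG)$ becomes the associativity isomorphism $(\cF^R \conv -) \conv \cG \cong \cF^R \conv (- \conv \cG)$, hence is invertible.

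For the reverse implication, set $\cF^R := \lambda_\cF^R(1)$. Define the counit
$$c \,:\, \cF \conv \cF^R \,=\, \lambda_\cF \lambda_\cF^R(1) \xrightarrow{\,\varepsilon_1\,} 1$$
as the adjunction counit at $1$. To produce a unit, instantiate the Beck-Chevalley hypothesis at $\cG = \cF$: the resulting isomorphism identifies $\cF^R \conv \cF$ with $\lambda_\cF^R(1 \conv \cF) \cong \lambda_\cF^R \lambda_\cF(1)$. We then define
$$u \,:\, 1 \xrightarrow{\,\eta_1\,} \lambda_\cF^R \lambda_\cF(1) \xrightarrow{\,\sim\,} \cF^R \conv \cF$$
by composing with the adjunction unit $\eta_1$. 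It then remains to verify the two zigzag identities.

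The zigzag identity for $\cF^R$ reduces, by unfolding definitions and applying the naturality of the Beck-Chevalley transformation in the first variable, to the triangle identity $\lambda_\cF^R(\varepsilon) \circ \eta_{\lambda_\cF^R(-)} = \id$ for the adjunction $\lambda_\cF \dashv \lambda_\cF^R$, evaluated at $1$. The zigzag identity for $\cF$ similarly reduces, after passing through the Beck-Chevalley isomorphism with $\cG = \cF$ and using naturality of the counit, to the other triangle identity $\varepsilon_{\lambda_\cF(-)} \circ \lambda_\cF(\eta) = \id$. The main obstacle is not the structure of the argument, which is the standard one formalized in \cite[Lem. I.1.9.1.5]{GR17}, but the bookkeeping of the naturality data so that the zigzag identities hold as coherent homotopies in the $\infty$-categorical sense rather than merely up to homotopy; this is where we invoke the formalism of adjointable squares in $\Catinfty$ developed in loc. cit., which packages exactly the coherences we need into the statement that the Beck-Chevalley transformation is an isomorphism of functors.
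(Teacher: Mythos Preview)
Your proof is correct and follows the same overall strategy as the paper (both being variants of \cite[Lem.~I.1.9.1.5]{GR17}), but with a different tactical choice for the reverse implication. You construct the unit $u$ explicitly via the Beck--Chevalley isomorphism at $\cG = \cF$ and then reduce the two zigzag identities to the triangle identities for the adjunction $\lambda_\cF \dashv \lambda_\cF^R$. The paper instead invokes \cite[Lem.~4.6.1.6]{LurHA}, which characterizes right dualizability of $\cF$ with a given counit $c$ by the condition that the induced map $\Maps_\catC(\cG, \lambda_\cF^R(1) \conv \cH) \to \Maps_\catC(\cF \conv \cG, \cH)$ be an isomorphism for all $\cG, \cH$; this is then compared directly to the adjunction isomorphism $\Maps_\catC(\cG, \lambda_\cF^R(\cH)) \cong \Maps_\catC(\cF \conv \cG, \cH)$ via the Beck--Chevalley hypothesis. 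Both arguments ultimately rest on the same key identity---that the composite of $\cF \conv \lambda_\cF^R(1) \conv \cH \congto \cF \conv \lambda_\cF^R(\cH)$ with the counit $\varepsilon_\cH$ equals $c \conv \id_\cH$---which the paper spells out and you absorb into the ``unfolding'' step. The paper's route is slightly cleaner in that it avoids producing $u$ and collapses the two zigzag checks into a single comparison of mapping spaces; your route is more hands-on but stays entirely in the language of units and counits.
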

\begin{proof}
We consider right dualizability explicitly, left dualizability being similar. The only if direction follows since, unwinding the definitions, the given transformation is the associativity isomorphism $(\cF^R \conv -) \conv \cG \cong \cF^R \conv (- \conv \cG)$. For the if direction, we let $\wt{c}: \lambda_\cF \lambda_\cF^R \to \id_\catC$ denote the counit of the adjunction, and set $c := \wt{c}(1): \cF \conv \lambda_\cF^R(1) \to 1$. 
To show $c$ realizes $\lambda_\cF^R(1)$ as a right dual to $\cF$, it suffices to show that for all $\cG, \cH \in \catC$ the induced map
\begin{equation}\label{eq:adjlem1} 
	\Maps_\catC(\cG, \lambda_\cF^R(1) \conv \cH) \xrightarrow{\lambda_\cF} \Maps_\catC(\cF \conv \cG, \cF \conv \lambda_\cF^R(1) \conv \cH) \xrightarrow{(c \conv \id_{\cH}) \circ -} \Maps_\catC(\cF \conv \cG, \cH) 
\end{equation}
is an isomorphism \cite[Lem. 4.6.1.6]{LurHA}. On the other hand, by adjunction the composition
\begin{equation}\label{eq:adjlem2}
	\Maps_\catC(\cG, \lambda_\cF^R(\cH)) \xrightarrow{\lambda_\cF} \Maps_\catC(\cF \conv \cG, \cF \conv \lambda_\cF^R(\cH)) \xrightarrow{\wt{c}(\cH) \circ -} \Maps_\catC(\cF \conv \cG, \cH) 
\end{equation}
is an isomorphism, and by hypothesis the left and middle terms in (\ref{eq:adjlem1}) and(\ref{eq:adjlem2}) are isomorphic. Unwinding the unit/counit axioms and the definition of the Beck-Chevalley map now yields that the composition of $\cF \conv \lambda_\cF^R(1) \conv \cH \congto \cF \conv \lambda_\cF^R(\cH)$ with $\wt{c}(\cH)$ is homotopic to $c \conv \id_{\cH}$, hence that (\ref{eq:adjlem1}) is an isomorphism since (\ref{eq:adjlem2}) is. 
\end{proof}

We now return to the setting of Section \ref{sec:convgen}, fixing a map $Y \to Z$ and writing $X \cong Y \times_Z Y$ as before. Recall from Proposition \ref{prop:monoidalconv} that $\Coh(X)$ has a monoidal structure given by convolution provided certain hypotheses are satisfied. 

\begin{Proposition}\label{prop:cohrigidity}
Under the hypotheses (\ref{eq:convhypos}), the monoidal category $\Coh(X)$ is rigid. 
\end{Proposition}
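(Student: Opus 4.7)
The plan is to reduce rigidity to an application of Lemma~\ref{lem:dualityviaBCmaps}, by first writing down explicit candidate right and left adjoints for the functors $\lambda_\cF$ and $\rho_\cF$, and then verifying the relevant Beck--Chevalley transformations are isomorphisms via base change in the triple-fiber-product $X \times_Y X \times_Y X$.

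First I would identify the right adjoint of $\lambda_\cF = \cF \conv (-)$. Denote the two projections $X \times_Y X \to X$ by $p_1, p_2$, so that $\delta = (p_1, p_2)$ and $\lambda_\cF(\cG) \cong m_{*}(p_1^{*}\cF \otimes p_2^{*}\cG)$. Since $m$ is ind-proper and almost ind-finitely presented, $m_*$ admits a right adjoint $m^!$ on (ind-)coherent sheaves; since $p_2$ is the base change of $\pi_2$ and thus is also ind-proper and almost ind-finitely presented, so does $p_{2*}$. This gives a formula of the form $\lambda_\cF^R(\cH) \cong (p_2)_{*}\, \cHom(p_1^{*}\cF,\, m^{!}\cH)$, and symmetrically for $\rho_\cF^R$. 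The categorical content of Section~\igsecsheafHom\ and its companion results make sense of $\cHom$ and $m^!$ in our admissible, ind-tamely presented setting so that this formula lands in $\Coh(X)$; I would invoke these as a black box.

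Next I would verify the Beck--Chevalley isomorphism of Lemma~\ref{lem:dualityviaBCmaps}. The associativity datum for convolution is encoded by the triple-fiber-product $X \times_Y X \times_Y X$, together with its various projection and multiplication maps to $X \times_Y X$ and $X$. Writing $\lambda_\cF^R(\cG')\conv \cG$ and $\lambda_\cF^R(\cG' \conv \cG)$ in terms of $p_{2*}$, $\cHom$, $m_*$, $m^!$ and pullbacks along the faces of this triple product, the comparison map becomes a composition of base-change transformations across the associativity cube. Each of these base-change transformations is an isomorphism because (i)~$m$ and the various projections are ind-proper and almost ind-finitely presented, and these classes are stable under base change along ind-tamely presented, coherent-pullback morphisms by the results of \cite{CWtm, CWig} (see e.g. \tmpropindPbaseprops\ and \igcoruppershriekftdbasechangeIndCoh), and (ii)~$Y$ is weakly smooth, so $\Delta_Y$ and hence $\delta$ have \stable\ coherent pullback, which is what makes the projection formulas and the interaction of $\cHom$ with proper pushforward well-behaved (as in \igcorcHomftdbasechangeind).

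The main obstacle is technical rather than conceptual: it is ensuring that all six functors $m_*, m^!, p_{1}^{*}, p_{2*}, \cHom, \otimes$ interact correctly in the ind-geometric, non-Noetherian context to yield a genuinely coherent (as opposed to merely ind-coherent) right dual, and that the Beck--Chevalley map composed from the various base-change transformations is in fact the natural one implied by associativity. Once the formalism of upper-shriek, sheaf-Hom, and base change on admissible ind-tamely presented ind-geometric stacks is in hand from the companion papers, the proof becomes a diagram chase through the associativity cube, entirely parallel to the rigidity argument for the coherent Satake category. Left dualizability follows by the symmetric argument using $\rho_\cF^R$; the monoidal subcategory $\KPcohGN \subset \Coh^{\hGO}(\hR_{G,N})$ then automatically inherits rigidity, as the explicit formulas for $\cF^L$ and $\cF^R$ stated in Theorem~\ref{thm:adjoints} preserve Koszul-perversity.
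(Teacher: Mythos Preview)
Your overall strategy---reduce to Lemma~\ref{lem:dualityviaBCmaps} and verify the Beck--Chevalley transformation via base change across the associativity diagram---is exactly the paper's approach. However, there are two technical points where your execution diverges from the paper in ways that matter.

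First, you propose to work with the explicit formula $\lambda_\cF^R(\cH) \cong p_{2*}\cHom(p_1^*\cF, m^!\cH)$ and assert it lands in $\Coh(X)$. The paper instead passes to $\IndCoh(X)$, where $\lambda_\cF^R$ exists abstractly by the adjoint functor theorem, and then argues separately that $\cF^R = \lambda_\cF^R(1)$ is coherent: since $\Map_{\IndCoh(X)}(\cF^R, -) \cong \Map_{\IndCoh(X)}(1, -\conv \cF)$ and the right-hand side is continuous (convolution is continuous, $1 = e_*(\cO_Y)$ is compact), $\cF^R$ is compact, hence coherent. Your black-box invocation of the sheaf-Hom formalism does not by itself supply this argument; one needs to know that $\lambda_\cF^R$ preserves compact objects, which is what the paper extracts from continuity.

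Second, your formula $\lambda_\cF(\cG) = m_*(p_1^*\cF \otimes p_2^*\cG)$ presupposes an internal tensor product on $X\times_Y X$, but the paper explicitly notes that $\IndCoh$ does not carry one. The paper instead works entirely with external products and the functor $(\cF \boxtimes -)^R$, so that $\lambda_\cF^R \cong (\cF\boxtimes -)^R \delta_* m^!$. This is what makes the seven-step decomposition of the Beck--Chevalley map in the paper tractable: each step compares $(\cF \boxtimes -)^R$ with one of $m_*$, $m^!$, $\delta^*$, $\delta_*$, or $(-\boxtimes\cG)$ across a Cartesian square, and each is an isomorphism by a specific cited result. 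Your $\cHom$-based formulation would require reproving analogues of several of these compatibilities in a setting where they are not already packaged, and the projection-formula step (your item~(ii)) is precisely where the absence of an internal tensor makes things delicate.
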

\begin{proof}
Consider the category $\IndCoh(X)$, which is compactly generated by $\Coh(X)$ since $X$ is admissible and ind-tamely presented \cite[\tmsubsecgeomtame]{CWtm}. The monoidal structure on $\Coh(X)$ extends uniquely to a monoidal structure on $\IndCoh(X)$ which preserves colimits in each variable \cite[Lem. 5.3.2.11]{LurHA}. Note that $\cF \in \Coh(X)$ is right dualizable in $\Coh(X)$ if and only if it is in $\IndCoh(X)$. The only if direction is trivial. The if direction follows from convolution in $\IndCoh(X)$ being continuous in each variable and $1 \cong e_*(\cO_Y)$ being coherent, which implies that $\Map_{\IndCoh(X)}(\cF^R,-) \cong \Map_{\IndCoh(X)}(1,  - \conv \cF)$ is continuous and thus that $\cF^R$ is coherent. The same holds for left dualizability. 

Since $\lambda_\cF: \IndCoh(X) \to \IndCoh(X)$ preserves colimits it has a right adjoint $\lambda_\cF^R$. By Lemma \ref{lem:dualityviaBCmaps}, to show $\cF$ is right dualizable we may  show $\lambda_\cF^R(-) \conv \cG \to \lambda_\cF^R(- \conv \cG)$ is an isomorphism for any $\cG \in \IndCoh(X)$. We may further assume $\cG \in \Coh(X)$ since $\lambda_\cF$ preserves compact objects, hence $\lambda_\cF^R$ is continuous \cite[Prop. 5.5.7.2]{LurHTT}. 

Observe next that the square diagram witnessing the associativity isomorphism $( \cF \conv -) \conv \cG \cong  \cF \conv (- \conv \cG)$ can be decomposed as follows. 
\begin{equation}\label{eq:lotsofsquares}
\begin{tikzpicture}
	[baseline=(current  bounding  box.center),thick,>=\arrtip]
	\newcommand*{\ha}{4.0}; \newcommand*{\hb}{4.2}; \newcommand*{\hc}{4.0};
	\newcommand*{\va}{-1.5}; \newcommand*{\vb}{-1.5}; \newcommand*{\vc}{-1.5};
	
	\node (aa) at (0,0) {$\IC(X)$};
	\node (ab) at (\ha,0) {$\IC(X \times X)$};
	\node (ac) at (\ha+\hb,0) {$\IC(X  \times_Y X)$};
	\node (ad) at (\ha+\hb+\hc,0) {$\IC(X)$};
	
	\node (ba) at (0,\va) {$\IC(X \times X)$};
	\node (bb) at (\ha,\va) {$\IC(X \times X \times X)$};
	\node (bc) at (\ha+\hb,\va) {$\IC(X \times X \times_Y X)$};
	\node (bd) at (\ha+\hb+\hc,\va) {$\IC(X \times X)$};
	
	\node (ca) at (0,\va+\vb) {$\IC(X \times_Y X)$};
	\node (cb) at (\ha,\va+\vb) {$\IC(X \times_Y X \times X)$};
	\node (cc) at (\ha+\hb,\va+\vb) {$\IC(X \times_Y X \times_Y X) $};
	\node (cd) at (\ha+\hb+\hc,\va+\vb) {$\IC(X \times_Y X) $};
	
	\node (da) at (0,\va+\vb+\vc) {$\IC(X) $};
	\node (db) at (\ha,\va+\vb+\vc) {$\IC(X \times X) $};
	\node (dc) at (\ha+\hb,\va+\vb+\vc) {$\IC(X \times_Y X) $};
	\node (dd) at (\ha+\hb+\hc,\va+\vb+\vc) {$\IC(X) $};
	
	\draw[->] (aa) to node[above] {$- \boxtimes \cG$} (ab);
	\draw[->] (ab) to node[above] {$\delta^* $} (ac);
	\draw[->] (ac) to node[above] {$m_* $} (ad);
	
	\draw[->] (ba) to node[above] {$- \boxtimes \cG$} (bb);
	\draw[->] (bb) to node[above] {$\delta^*_{23} $} (bc);
	\draw[->] (bc) to node[above] {$m_{23*} $} (bd);
	
	\draw[->] (ca) to node[above] {$- \boxtimes \cG$} (cb);
	\draw[->] (cb) to node[above] {$\delta^*_{23} $} (cc);
	\draw[->] (cc) to node[above] {$m_{23*} $} (cd);
	
	\draw[->] (da) to node[above] {$- \boxtimes \cG $} (db);
	\draw[->] (db) to node[above] {$\delta^* $} (dc);
	\draw[->] (dc) to node[above] {$m_* $} (dd);
	
	\draw[->] (aa) to node[left] {$\cF \boxtimes - $} (ba);
	\draw[->] (ab) to node[right] {$\cF \boxtimes - $} (bb);
	\draw[->] (ac) to node[right] {$\cF \boxtimes - $} (bc);
	\draw[->] (ad) to node[right] {$\cF \boxtimes - $} (bd);
	
	\draw[->] (ba) to node[left] {$\delta^* $} (ca);
	\draw[->] (bb) to node[right] {$\delta^*_{12} $} (cb);
	\draw[->] (bc) to node[right] {$\delta^*_{12} $} (cc);
	\draw[->] (bd) to node[right] {$\delta^* $} (cd);
	
	\draw[->] (ca) to node[left] {$m_* $} (da);
	\draw[->] (cb) to node[right] {$m_{12*} $} (db);
	\draw[->] (cc) to node[right] {$m_{12*} $} (dc);
	\draw[->] (cd) to node[right] {$m_{*} $} (dd);
\end{tikzpicture} 
\end{equation} 
Here we abbreviate $\IndCoh$ to $\IC$, and write e.g. $m_{12}$ for $m \times \id_X$. Associated to (\ref{eq:lotsofsquares}) we then have the following factorization of $\lambda_\cF^R(-) \conv \cG \to \lambda_\cF^R(- \conv \cG)$ into Beck-Chevalley maps. 
\begin{equation}\label{eq:longeqblock}
\begin{aligned}
	m_* \delta^*  (- \boxtimes \cG) (\cF \boxtimes -)^R \delta_* m^! &\to m_* \delta^*   (\cF \boxtimes -)^R (- \boxtimes \cG) \delta_* m^!  \\  
	&\to m_* \delta^*   (\cF \boxtimes -)^R \delta_{12*} (- \boxtimes \cG)  m^!  \\  
	&\to m_* \delta^*   (\cF \boxtimes -)^R \delta_{12*} m^!_{12} (- \boxtimes \cG)   \\  
	&\to m_* (\cF \boxtimes -)^R \delta_{12*} \delta^*_{23}    m^!_{12} (- \boxtimes \cG)  \\  
	&\to m_* (\cF \boxtimes -)^R \delta_{12*} m^!_{12} \delta^*    (- \boxtimes \cG)   \\  
	&\to (\cF \boxtimes -)^R  m_{23*}  \delta_{12*} m^!_{12} \delta^*     (- \boxtimes \cG) \\  
	&\to (\cF \boxtimes -)^R  \delta_{*} m^! m_{*} \delta^* (- \boxtimes \cG)
\end{aligned}
\end{equation}

Since $X$ is admissible and ind-tamely presented so are all of its Cartesian powers \cite[\tmsecindtamemorphisms]{CWtm}. The fiber product $X \times_Y X$ is then ind-tamely presented since $X$ is and since $m$ is ind-proper and almost ind-finitely presented, and it is admissible since $X \times X$ is and since $\delta$ is affine. Similar reasoning applies to the other stacks in (\ref{eq:lotsofsquares}). Moreover, unwinding the definitions one sees that the four bottom right squares are associated to Cartesian diagrams of stacks. It now follows that the first, second, third, fifth, sixth, and seventh maps in (\ref{eq:longeqblock}) are isomorphisms by 
\cite[\igpropeFXReFXindgeomcoh]{CWig}, 
\cite[\igpropeFXfunctoriality]{CWig}
\cite[\igpropeFXuppershriekcompatindgeom]{CWig}, 
\cite[\tmpropupshriekupstargenICcaseIG]{CWtm}, 
\cite[\igpropeFXRlowerstar]{CWig}, and 
\cite[\igpropupshrieklowstargeom]{CWig}, respectively. 

Finally, note that the composition $p_{23} \circ \delta_{12}: X \times_Y X \times X \to X \times X$ is ind-proper and almost ind-finitely presented since it is the base change of $\pi_2: X \to Y$ along $\pi_1 \circ p_1: X \times X \to Y$ (where $p_{23}$, $p_1$ are projections indicated by subscripts). Conversely $p_1: X \times_Y X \times X \to X$ is the base change of $\pi_1 \circ p_1$ along $\pi_2$, and since $\pi_1 \circ p_1$ is ind-tamely presented with weakly smooth target the  fourth map above is an isomorphism by \cite[\tmpropeFXRupperstarcohpullindcase]{CWtm}. It follows that $\cF$ is right dualizable, and a similar argument shows it is left dualizable. 
\end{proof}

We now derive the formulas for $\cF^L$ and $\cF^R$ given in Theorem \ref{thm:adjoints}. We define $\omega_1, \omega_2 \in \IndCoh(X)$ by $\omega_1 := \pi^!_1(\cO_Y)$ and $\omega_2 := \pi^!_2(\cO_Y)$, and set
\begin{equation*}
\D_1 := \cHom(-, \omega_1) \quad \D_2 := \cHom(-, \omega_1). 
\end{equation*}
We remind the reader that in the context of $\IndCoh(X)$, which does not have an internal tensor product, $\cHom(\cF, \omega_1)$ is defined as $(\cF \boxtimes -)^R \Delta_{X*}(\omega_1)$ for any $\cF \in \Coh(X)$. Recall the involution $\iota: X \to X$ induced by exchanging the factors in $X = Y \times_Z Y$. We let $(-)^*$ denote the induced involution $\iota_* \cong \iota^*$ of $\IndCoh(X)$. 

\begin{Proposition}\label{prop:cohdualformula}
For any $\cF \in \Coh(X)$ we have $\lambda_\cF^R(1) \cong \D_1(\cF)^* \cong \D_2(\cF^*)$ and $\rho_\cF^R(1) \cong \D_1(\cF^*) \cong \D_2(\cF)^*$.
\end{Proposition}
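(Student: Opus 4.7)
The plan is to directly compute $\lambda_\cF^R(1) = (\cF \boxtimes -)^R \delta_* m^!(e_*(\cO_Y))$ by first identifying the sheaf $\delta_* m^!(e_*(\cO_Y)) \in \IndCoh(X \times X)$ and then applying the right adjoint to $\cF \boxtimes -$. The formulas for $\rho_\cF^R(1)$ will follow by the analogous argument applied to $(- \boxtimes \cF)^R$.

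For the first step, I would apply base change to the Cartesian square built from $m$ and $e$. Viewing $X \times_Y X$ as the space of triples $(y_1,y_2,y_3)$ with a common image in $Z$, the fiber of $m$ over the diagonal $e: Y \to X$ is the locus $y_1=y_3$, which is isomorphic to $X$ via a section $e': X \to X \times_Y X$, $(y,y_2) \mapsto (y,y_2,y)$, the other leg of the square being $\pi_1: X \to Y$. Base change for $m^!$ and $e_*$ (valid since $m$ is ind-proper and almost ind-finitely presented) yields $m^!e_*(\cO_Y) \cong e'_*\pi_1^!(\cO_Y) = e'_*(\omega_1)$. The composition $\delta \circ e'$ is the map $x \mapsto (x,\iota(x))$, which admits the two factorizations $(\id \times \iota) \circ \Delta_X$ and $(\iota \times \id) \circ \Delta_X \circ \iota$; combined with $\iota_*\omega_1 \cong \omega_2$ (since $\pi_2 = \pi_1 \circ \iota$), these give
\begin{equation*}
	\delta_* m^!\, e_*(\cO_Y) \;\cong\; (\id \times \iota)_* \Delta_{X*}(\omega_1) \;\cong\; (\iota \times \id)_* \Delta_{X*}(\omega_2).
\end{equation*}

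For the second step, apply $(\cF \boxtimes -)^R$ using the adjunctions and the identities $(\id \times \iota)^*(\cF \boxtimes \cG) \cong \cF \boxtimes \cG^*$ and $(\iota \times \id)^*(\cF \boxtimes \cG) \cong \cF^* \boxtimes \cG$, where $(-)^* = \iota^* = \iota_*$ is an involutive autoequivalence and hence self-adjoint. The first factorization gives, for any $\cG \in \IndCoh(X)$,
\begin{align*}
	\Map(\cG,\lambda_\cF^R(1))
	&\cong \Map(\cF \boxtimes \cG,\,(\id \times \iota)_*\Delta_{X*}(\omega_1)) \\
	&\cong \Map(\cF \boxtimes \cG^*,\,\Delta_{X*}(\omega_1)) \\
	&\cong \Map(\cG^*,\,(\cF \boxtimes -)^R\Delta_{X*}(\omega_1)) \;\cong\; \Map(\cG,\,\D_1(\cF)^*),
\end{align*}
while the second factorization yields $\lambda_\cF^R(1) \cong \D_2(\cF^*)$ analogously. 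The formulas for $\rho_\cF^R(1)$ follow by the same argument applied to $(- \boxtimes \cF)^R$, using that $\Delta_X$ is preserved by the swap on $X \times X$ to identify $(- \boxtimes \cG)^R \Delta_{X*}(\omega)$ with $\cHom(\cG,\omega)$ computed via the left variable.

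The main technical obstacle is justifying base change for $m^!$ against $e_*$ and the various external-product adjunctions in the ind-geometric setting, where $\omega_1,\omega_2$ are not compact. These are supplied by the compatibility results for $(-)^!$ and $(-)_*$ under Cartesian diagrams already invoked in the proof of Proposition~\ref{prop:cohrigidity} (in particular \cite[\tmpropupshriekupstargenICcaseIG]{CWtm} and \cite[\igpropupshrieklowstargeom]{CWig}), together with the fact that $m$ is ind-proper and almost ind-finitely presented.
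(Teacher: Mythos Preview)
Your proof is correct and follows essentially the same approach as the paper's: both identify $\delta_* m^! e_*(\cO_Y)$ via base change along the Cartesian square expressing $X$ as the fiber of $m$ over $e$ (your $e'$ is the paper's $a_1$), then use the factorization $\delta \circ e' \cong (\id \times \iota)\circ \Delta_X$ to reduce to the definition of $\D_1$. The only cosmetic difference is that you obtain the second form $\D_2(\cF^*)$ from an alternate factorization $(\iota \times \id)\circ \Delta_X \circ \iota$ of the same map together with $\iota_*\omega_1 \cong \omega_2$, whereas the paper introduces a second section $a_2$ (built from $\pi_2$ rather than $\pi_1$); these amount to the same computation.
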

\begin{proof}
By definition $\lambda_\cF^R(1) \cong (\cF \boxtimes -)^R \delta_* m^! e_*(\cO_Y)$. We define a map $a_1$ so that the diagram 
\begin{equation}\label{eq:antidiagcube}
	\begin{tikzpicture}[baseline=(current  bounding  box.center),thick,>=\arrtip]
					\newcommand*{\ha}{1.5}; \newcommand*{\hb}{1.5}; \newcommand*{\hc}{1.5}; \newcommand*{\hd}{1.5}; \newcommand*{\he}{1.5};
		\newcommand*{\va}{-.9}; \newcommand*{\vb}{-.9}; \newcommand*{\vc}{-.9}; 
		\node (ab) at (\ha,0) {$X$};
		\node (ad) at (\ha+\hb+\hc,0) {$X \times_Y X$};
		\node (af) at (\ha+\hb+\hc+\hd+\he,0) {$X$};
		\node (ba) at (0,\va) {$Y$};
		\node (bc) at (\ha+\hb,\va) {$X$};
		\node (cb) at (\ha,\va+\vb) {$Y$};
		\node (cd) at (\ha+\hb+\hc,\va+\vb) {$X$};
		\node (cf) at (\ha+\hb+\hc+\hd+\he,\va+\vb) {$Y$};
		\node (da) at (0,\va+\vb+\vc) {$Z$};
		\node (dc) at (\ha+\hb,\va+\vb+\vc) {$Y$};
		
		\draw[<-] (ab) to node[above,pos=.6] {$\pi'_1 $} (ad);
		\draw[->] (ab) to node[above left, pos=.25] {$\pi_2 $} (ba);
		\draw[->] (ab) to node[right,pos=.2] {$\pi_1 $} (cb);
		\draw[->] (ad) to node[below right] {$\pi'_2 $} (bc);
		\draw[->] (ad) to node[right] {$m $} (cd);
		\draw[->] (ba) to node[left] {$ $} (da);
		\draw[<-] (cb) to node[above,pos=.25] {$ $} (cd);
		\draw[->] (cb) to node[above left, pos=.25] {$ $} (da);
		\draw[->] (cd) to node[below right] {$ $} (dc);
		\draw[<-] (da) to node[above,pos=.75] {$ $} (dc);
		
		\draw[->] (af) to node[right] {$\pi_1 $} (cf);
		
		\draw[-,line width=6pt,draw=white] (ba) to  (bc);
		\draw[<-] (ba) to node[below,pos=.75] {$\pi_1 $} (bc);
		\draw[-,line width=6pt,draw=white] (bc) to  (dc);
		\draw[->] (bc) to node[right,pos=.2] {$\pi_2 $} (dc);
		
		\draw[<-] (ad) to node[above,pos=.5] {$a_1 $} (af);
		\draw[<-] (cd) to node[above,pos=.5] {$e $} (cf);
	\end{tikzpicture}
\end{equation}
consists of Cartesian squares. As $\pi'_1, \pi'_2: X \times_Y X \to X$ are the projections onto the first and second factors, $a_1$ has the property that $\pi'_1 \circ a_1 \cong \id_X$ and $\pi'_2 \circ a_1 \cong \iota$. The composition $\delta \circ a_1$ is thus an antidiagonal with respect to $\iota$, satisfying $\delta \circ a_1 \cong (\id_X \times \iota) \circ \Delta_X$. We then have 
\begin{align}\label{eq:dualformula}
\begin{split}
(\cF \boxtimes -)^R \delta_* m^! e_*(\cO_Y) &\cong (\cF \boxtimes -)^R \delta_* a_{1*} \pi^!_1(\cO_Y)\\ 
&\cong (\cF \boxtimes -)^R (\id_X \times \iota)_* \Delta_{X*} \pi^!_1(\cO_Y)\\ 
&\cong \iota_* (\cF \boxtimes -)^R \Delta_{X*} \pi^!_1(\cO_Y),
\end{split}
\end{align}
the first isomorphism following from \cite[\igpropupshrieklowstargeom]{CWig} and the third since $\cF \boxtimes \iota^*(-) \cong (\id_X \times \iota)^*(\cF \boxtimes -)$. But the last term is $\D_1(\cF)^*$ by definition. Replacing the rightmost map in (\ref{eq:antidiagcube}) with~$\pi_2$ defines a map $a_2:X \to X \times_Y X$ such that $\delta \circ a_2 \cong (\iota \times \id_X) \circ \Delta_X$. Varying (\ref{eq:dualformula}) with~$a_2$ in place of $a_1$ now leads to $\rho_\cF^R(1) \cong \D_2(\cF^*)$. The case of left duals is similar. 
\end{proof}

Unwinding the definitions, it follows that for any $\cF \in \Coh(X)$ the overall isomorphism
$$ \cF \cong (\cF^R)^L \cong \D_1(\D_1(\cF)^{**}) \cong \D_1(\D_1(\cF))$$
is the canonical map from the leftmost term to the rightmost. In particular, the object $\omega_1 \in \IndCoh(X)$ is dualizing, likewise for $\omega_2$. 

It follows from Proposition \ref{prop:cohrigidity} and \ref{prop:cohdualformula} that $\D_1(\cF \conv \cG)^* \cong \D_1(\cG)^* \conv \D_1(\cF)^*$ functorially in $\cF, \cG \in \Coh(X)$, similarly for $\D_2$. Since $(\cF \conv \cG)^* \cong \cG^* \conv \cF^*$ this implies the following. 

\begin{Corollary}\label{cor:Ds}
	There exists an isomorphism $\D_1(- \conv -) \cong \D_1(-) \conv \D_1(-)$ of functors $\Coh(X) \times \Coh(X) \to \Coh(X)$, likewise for $\D_2$. 
\end{Corollary}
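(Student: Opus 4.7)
The plan is to deduce both isomorphisms as formal consequences of three ingredients: rigidity of $\Coh(X)$ (Proposition~\ref{prop:cohrigidity}), the explicit dual formulas $\cF^L \cong \D_1(\cF)^*$ and $\cF^R \cong \D_2(\cF)^*$ (Theorem~\ref{thm:adjoints}), and the fact that $(-)^* = \iota_*$ is a monoidal anti-involution of $\Coh(X)$. Only the last ingredient requires a geometric verification; the rest is a formal manipulation essentially already set up in the paragraph preceding the statement.

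First I would establish that $(\cF \conv \cG)^* \cong \cG^* \conv \cF^*$ naturally in $\cF, \cG$. Identifying $X \times_Y X$ with the triple fiber product $Y \times_Z Y \times_Z Y$ in the evident way, consider the order-reversal isomorphism $\phi: X \times_Y X \to X \times_Y X$ given at the level of points by $(y_1, y_2, y_3) \mapsto (y_3, y_2, y_1)$. A direct check from the definitions of $m$, $\delta$, and $\iota$ shows that $\phi$ fits into a commutative diagram intertwining $m$ with $\iota \circ m$ and $\delta$ with $\mathrm{swap} \circ (\iota \times \iota) \circ \delta$, where $\mathrm{swap}: X \times X \to X \times X$ exchanges factors. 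Applying the functor $m_* \delta^*$ and tracing through the identifications $(\iota \times \iota)^*(\cF \boxtimes \cG) \cong \iota^*\cF \boxtimes \iota^*\cG$ and $\mathrm{swap}^*(\cF \boxtimes \cG) \cong \cG \boxtimes \cF$ then yields the desired natural isomorphism.

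With this in hand, rigidity (Proposition~\ref{prop:cohrigidity}) supplies the standard natural isomorphism $(\cF \conv \cG)^L \cong \cG^L \conv \cF^L$ available in any rigid monoidal category, which via Theorem~\ref{thm:adjoints} translates to $\D_1(\cF \conv \cG)^* \cong \D_1(\cG)^* \conv \D_1(\cF)^*$. Applying $(-)^*$ to both sides and using the anti-monoidal property together with the involutivity $(-)^{**} \cong \id$ gives
$$ \D_1(\cF \conv \cG) \cong (\D_1(\cG)^* \conv \D_1(\cF)^*)^* \cong \D_1(\cF) \conv \D_1(\cG). $$
The argument for $\D_2$ is entirely parallel, replacing $(-)^L$ by $(-)^R$, using the corresponding natural isomorphism $(\cF \conv \cG)^R \cong \cG^R \conv \cF^R$, and substituting $\cF^R \cong \D_2(\cF)^*$.

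I do not expect a real obstacle; the only substantive point is the geometric verification of the anti-monoidal property of $(-)^*$, and even that amounts to writing down $\phi$ and the two compatibilities above. Naturality in $\cF, \cG$ propagates automatically through the argument, since every isomorphism invoked (the rigidity coherence, the involution $\iota^* \cong \iota_*$, base-change along $\phi$, and the duality formulas) is already natural at the source.
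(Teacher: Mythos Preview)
Your proposal is correct and follows essentially the same route as the paper: the paper's argument is precisely that rigidity plus the dual formula $\cF^L \cong \D_1(\cF)^*$ gives $\D_1(\cF \conv \cG)^* \cong \D_1(\cG)^* \conv \D_1(\cF)^*$, and then the anti-monoidal identity $(\cF \conv \cG)^* \cong \cG^* \conv \cF^*$ (which the paper simply states, and which you take the extra step of justifying via the order-reversal $\phi$) yields the result. The only cosmetic point is that you cite Theorem~\ref{thm:adjoints} where the paper cites Proposition~\ref{prop:cohdualformula}, but the content is identical.
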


\subsection{Duals on the space of triples}\label{sec:duality}

We now discuss some aspects of rigidity particular to $\Coh^\hGO(\hR)$. We begin by recalling the following standard fact, which together with Propositions~\ref{prop:cohrigidity} and~\ref{prop:cohdualformula} completes the proof of Theorem \ref{thm:adjoints}.

\begin{Proposition}
Let $\catC$ be a monoidal stable $\infty$-category equipped with a t-structure such that $1 \in \catC^\heartsuit$, and such that $X \conv - $ and $- \conv X$ are t-exact for any $X \in \catC^\heartsuit$. Then $\catC^\heartsuit$ is closed under left and right duals.
\end{Proposition}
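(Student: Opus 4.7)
The plan is to observe that the existence of a right dual $X^R$ for $X$ yields \emph{two} separate adjunctions,
$(X \conv -) \dashv (X^R \conv -)$ and $(- \conv X^R) \dashv (- \conv X)$,
and that the two-sided t-exactness hypothesis---applied once to $X \conv -$ and once to $- \conv X$---produces the two inclusions required to place $X^R$ in $\catC^\heartsuit$. No ambidexterity of $X^R \conv -$ (or anything similar) is needed; the point is to use both the left and the right convolution hypotheses separately.

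Concretely, suppose $X \in \catC^\heartsuit$ and let $X^R$ be its right dual. Right t-exactness of $X \conv -$ forces its right adjoint $X^R \conv -$ to be left t-exact, so evaluating at the monoidal unit $1 \in \catC^\heartsuit$ gives $X^R \cong X^R \conv 1 \in \catC^{\geq 0}$. Left t-exactness of $- \conv X$ forces its left adjoint $- \conv X^R$ to be right t-exact, so evaluating at $1$ gives $X^R \cong 1 \conv X^R \in \catC^{\leq 0}$. Combining, $X^R \in \catC^\heartsuit$.

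The case of a left dual $X^L$ is strictly symmetric: using the adjunctions $(X^L \conv -) \dashv (X \conv -)$ and $(- \conv X) \dashv (- \conv X^L)$ together with t-exactness of $X \conv -$ and $- \conv X$, the same two-step argument places $X^L$ in $\catC^{\leq 0} \cap \catC^{\geq 0} = \catC^\heartsuit$. The proof is essentially formal and I foresee no real obstacle; the only point requiring care is that one must invoke \emph{both} the left- and the right-multiplication t-exactness hypotheses, since either one in isolation only produces one of the two required bounds on the dual.
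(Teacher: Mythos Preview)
Your proof is correct and follows essentially the same approach as the paper's: both use the adjunctions $(X \conv -) \dashv (X^R \conv -)$ and $(- \conv X^R) \dashv (- \conv X)$ together with the two-sided t-exactness hypothesis to obtain $X^R \conv 1 \in \catC^{\geq 0}$ and $1 \conv X^R \in \catC^{\leq 0}$, then evaluate at the unit.
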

\begin{proof}
Suppose $X \in \catC^\heartsuit$. Then $X^R \conv -$ is left t-exact since $X \conv -$ is right t-exact, and $- \conv X^R$ is right t-exact since $- \conv X$ is left t-exact. Then $X^R \cong 1 \conv X^R \cong X^R \conv 1$ belongs to $\catC^\heartsuit$ since $1$ does. The same argument shows $X^L \in \catC^\heartsuit$. 
\end{proof}

Note that $(-)^*$ is manifestly t-exact with respect to the Koszul-perverse t-structure on $\Coh^\hGO(\hR)$, hence it follows that $\D_1$ and $\D_2$ are as well. 

Next we consider how to compute duals in $\Coh^\hGO(\hR)$. We will see that the involution $(-)^*$ and the sheaves $\omega_1$ and $\omega_2$ are determined in an explicit way by their counterparts in case where $\N \cong 0$ and $\hR \cong \Gr$. Note however that the presentation of $\hR/\hGO$ as a quotient of $\hR$ breaks the symmetry between $\pi_1$ and $\pi_2$, hence necessarily obscures the involution $(-)^*$ and the symmetry between $\omega_1$ and $\omega_2$. 

Write $\pi_{1,\Gr}$ and $\pi_{2,\Gr}$  for the projections $\Gr/\hGO \to \pt/\hGO$ in the $\N \cong 0$ case. Unwinding our conventions, $\pi_{1,\Gr}$ is obtained from $\Gr \to \pt$ by taking $\hGO$-quotients. Next write $\omega_{1,\Gr} := \pi_{1,\Gr}^!(\cO_\pt)$ and $\omega_{2,\Gr} := \pi_{2,\Gr}^!(\cO_\pt)$, so that $\omega_{1,\Gr}$ is the usual equivariant dualizing sheaf on $\Gr$ and $\omega_{2,\Gr}$ is its image under $(-)^*$. Recall from Proposition \ref{prop:symmetricRdef} the factorizations $\pi_1 \cong \wt{\pi}_1 \circ i_1$ and $\pi_2 \cong \wt{\pi}_2 \circ i_2$, and write $p_{1}: \Gr \times \N_\O \to \Gr$ and $p_2: \T \to \Gr$ for the projections. 

\begin{Proposition}
We have $\omega_1 \cong i_1^! p_1^*(\omega_{1,\Gr})$ and $\omega_2 \cong i_2^! p_2^*(\omega_{2,\Gr})$. 
\end{Proposition}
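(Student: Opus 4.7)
The plan is to reduce both identities to base change for $!$-pullback along flat morphisms, by exhibiting $\wt\pi_k$ as a base change of $\pi_{k,\Gr}$ along the structure map $q : \N_\O/\hGO \to \pt/\hGO$. Since $\pi_k = \wt\pi_k \circ i_k$, we have $\omega_k \cong i_k^! \wt\pi_k^!(\cO_{\N_\O/\hGO})$, so it suffices to show
\[
\wt\pi_1^!(\cO_{\N_\O/\hGO}) \cong p_1^*(\omega_{1,\Gr}) \quad \text{and} \quad \wt\pi_2^!(\cO_{\N_\O/\hGO}) \cong p_2^*(\omega_{2,\Gr}).
\]

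For the first identity, I would use the Cartesian square
\[
\begin{tikzpicture}[baseline=(current bounding box.center),thick,>=\arrtip]
\node (a) at (0,0) {$(\Gr \times \N_\O)/\hGO$};
\node (b) at (5,0) {$\Gr/\hGO$};
\node (c) at (0,-1.5) {$\N_\O/\hGO$};
\node (d) at (5,-1.5) {$\pt/\hGO,$};
\draw[->] (a) to node[above] {$p_1$} (b);
\draw[->] (b) to node[right] {$\pi_{1,\Gr}$} (d);
\draw[->] (a) to node[left] {$\wt\pi_1$} (c);
\draw[->] (c) to node[above] {$q$} (d);
\end{tikzpicture}
\]
which is Cartesian because the diagonal $\hGO$-quotient of $\Gr \times \N_\O$ is the fiber product $\Gr/\hGO \times_{\pt/\hGO} \N_\O/\hGO$. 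The map $q$ is flat (a base change of $\N_\O \to \pt$), so $q^*(\cO_{\pt/\hGO}) \cong \cO_{\N_\O/\hGO}$, and $\pi_{1,\Gr}$ is ind-proper and almost ind-finitely presented by (the proof of) Proposition \ref{prop:Rquotient}. The base change isomorphism $p_1^*\pi_{1,\Gr}^! \cong \wt\pi_1^! q^*$ from \cite[\tmpropupshriekupstargenICcaseIG]{CWtm}, applied to $\cO_{\pt/\hGO}$, then yields the first identity.

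For $\omega_2$, I would use the analogous square
\[
\begin{tikzpicture}[baseline=(current bounding box.center),thick,>=\arrtip]
\node (a) at (0,0) {$\T/\hGO$};
\node (b) at (5,0) {$\Gr/\hGO$};
\node (c) at (0,-1.5) {$\N_\O/\hGO$};
\node (d) at (5,-1.5) {$\pt/\hGO.$};
\draw[->] (a) to node[above] {$p_2$} (b);
\draw[->] (b) to node[right] {$\pi_{2,\Gr}$} (d);
\draw[->] (a) to node[left] {$\wt\pi_2$} (c);
\draw[->] (c) to node[above] {$q$} (d);
\end{tikzpicture}
\]
In the $\N = 0$ case, $\pi_{2,\Gr}$ unwinds (using the factorization $\pi_2 = \wt\pi_2 \circ i_2$ and the identification $\T/\hGK \cong \N_\O/\hGO$) to the composition $\Gr/\hGO \to \Gr/\hGK \cong \pt/\hGO$. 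Under this identification, the square coincides with the one obtained by taking $\hGO$- and $\hGK$-quotients of the $\hGK$-equivariant projection $p_2 : \T \to \Gr$, which is Cartesian by the general identity $X/H \cong X/G \times_{\pt/G} \pt/H$ for $H \subset G$ and a $G$-space $X$ (applied with $G = \hGK$, $H = \hGO$, $X = \T$). The same base change argument then completes the proof.

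The main obstacle will be correctly identifying the second Cartesian square, which requires carefully unwinding $\pi_{2,\Gr}$ via the identification $\Gr/\hGK \cong \pt/\hGO$ rather than treating it as the naive forgetful map $\Gr/\hGO \to \pt/\hGO$. The first square is transparent (a manifest product over $\pt/\hGO$), but the second reflects the subtle asymmetry between the two factorizations of $\pi_1$ and $\pi_2$ through middle stacks of genuinely different geometric natures; this is precisely the asymmetry that allows $\omega_{1,\Gr}$ and $\omega_{2,\Gr}$ to differ by $(-)^*$ in the $\N = 0$ case.
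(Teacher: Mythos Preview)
Your proposal is correct and follows essentially the same approach as the paper: factor $\pi_k = \wt\pi_k \circ i_k$, exhibit $\wt\pi_k$ as a base change of $\pi_{k,\Gr}$ along $q: \N_\O/\hGO \to \pt/\hGO$, and apply the base change isomorphism from \cite[\tmpropupshriekupstargenICcaseIG]{CWtm}. The paper's proof is more terse---it handles the first case in one line and dismisses the second with ``similarly''---whereas you spell out the second Cartesian square explicitly and justify it via the general identity $X/H \cong X/G \times_{\pt/G} \pt/H$ applied to the $\hGK$-equivariant projection $\T \to \Gr$; this is exactly the content hidden behind the paper's ``similarly''.
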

\begin{proof}
Note that $\pi_1'$ is the base change of $\pi_{1,\Gr}$ along $p: \N_\O/\hGO \to \pt/\hGO$, while $p_1$ the base change of $p$ along $\pi_{1,\Gr}$. The first claim now follows since $i_1^!\pi'^!_{1}p^*(\cO_\pt) \cong i_1^!p^*_1 \pi_{1,\Gr}^!(\cO_\pt)$ \cite[\tmpropupshriekupstargenICcaseIG]{CWtm}, and the second follows similarly. 
\end{proof}

To describe $(-)^*$ explicitly, note that $\iota \cong \pi'_2 \circ a_1$, where $\pi'_2: \convspace/\hGO \to \hR/\hGO$ is the base change of $\pi_2$ along $\pi_1$ and, as in the proof of Proposition \ref{prop:cohdualformula}, the map $a_1$ is defined by the following Cartesian square. 
\begin{equation*}
	\begin{tikzpicture}[baseline=(current  bounding  box.center),thick,>=\arrtip]
		\node (ab) at (0,0) {$\hR/\hGO$};
		\node (ac) at (3,0) {$\convspace/\hGO$};
		\node (bb) at (0,-1.5) {$\N_\O/\hGO$};
		\node (bc) at (3,-1.5) {$\hR/\hGO$};
		\draw[->] (ab) to node[above] {$a_1$} (ac);
		\draw[->] (bb) to node[above] {$e $} (bc);
		\draw[->] (ab) to node[left] {$\pi_1$} (bb);
		\draw[->] (ac) to node[right] {$m$} (bc);
	\end{tikzpicture}
\end{equation*}
Note that $a_1$ lifts to a $\hGO$-equivariant map $\hR \to \convspace$ (which is not true of its counterpart~$a_2$). Recall from the proof of Proposition \ref{prop:oneconvtorulethemall} that $\pi'_2$ factors through (the quotient of) the map $i: \convspace \to \Gr \ttimes \hR$. It follows that we can recover $\iota$ as $\pi_2 \circ  s$ where $s := i \circ a_1$. 

\begin{Proposition}
The following square is Cartesian 
\begin{equation*}
	\begin{tikzpicture}[baseline=(current  bounding  box.center),thick,>=\arrtip]
		\node (ab) at (0,0) {$\hR$};
		\node (ac) at (3,0) {$\Gr \ttimes \hR$};
		\node (bb) at (0,-1.5) {$\Gr$};
		\node (bc) at (3,-1.5) {$\Gr \ttimes \Gr$};
		\draw[->] (ab) to node[above] {$s$} (ac);
		\draw[->] (bb) to node[above] {$s_{\Gr}$} (bc);
		\draw[->] (ab) to node[left] {$p$} (bb);
		\draw[->] (ac) to node[right] {$\id \ttimes p$} (bc);
	\end{tikzpicture}
\end{equation*}
\end{Proposition}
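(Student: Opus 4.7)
The plan is to reduce the stated Cartesian claim to a simpler derived fiber computation by stacking an auxiliary Cartesian square at the Grassmannian level.

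First, I would observe that there is a derived Cartesian square with top row $\Gr \xrightarrow{s_\Gr} \Gr \ttimes \Gr$, bottom row $\pt \xrightarrow{[e]} \Gr$, and right vertical arrow the convolution map $m_\Gr\colon \Gr \ttimes \Gr \to \Gr$. Indeed, the combined map $(p_1, m_\Gr)\colon \Gr \ttimes \Gr \to \Gr \times \Gr$ is an isomorphism (with inverse $([g_1], [g_3]) \mapsto [g_1, g_1^{-1} g_3]$), under which $m_\Gr$ becomes the second projection; so $s_\Gr$ is the section of $m_\Gr$ over $[e]$ and derived Cartesianness is immediate. Stacking this square below the one in the statement, the claim reduces to verifying that the outer composite rectangle, whose top row is $\hR \xrightarrow{s} \Gr \ttimes \hR$, bottom row is $\pt \xrightarrow{[e]} \Gr$, and right vertical arrow is $\mu := m_\Gr \circ (\id \ttimes p)$, is itself Cartesian. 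Equivalently, I must identify $\hR$ with the derived fiber $\mu^{-1}([e])$ via $s$.

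To compute this derived fiber, I would factor $\mu$ as $\Gr \ttimes \hR \xrightarrow{\id \ttimes i_2} \Gr \ttimes \T \xrightarrow{\nu} \T \xrightarrow{\pi^\T} \Gr$, where $\nu$ is the natural multiplication $[g_1, [g_2, s]] \mapsto [g_1 g_2, s]$ and $\pi^\T$ is the bundle projection. The twisting trivialization used above extends to an isomorphism $\psi\colon \Gr \ttimes \T \xrightarrow{\sim} \Gr \times \T$, $[g_1, [g_2, s]] \mapsto ([g_1], [g_1 g_2, s])$, under which $\nu$ corresponds to the second projection. Hence the derived fiber of $\pi^\T \circ \nu$ over $[e]$ is $\Gr \ttimes \N_\O$, using that $\N_\O \hookrightarrow \T$ is $(\pi^\T)^{-1}([e])$, and $\mu^{-1}([e])$ is computed as the derived intersection of $\Gr \ttimes \hR$ with $\Gr \ttimes \N_\O$ inside $\Gr \ttimes \T$.

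Finally, I would identify this derived intersection with $\hR$. Classically, a point of the intersection corresponds to a pair $([g_1], s')$ with $g_1^{-1} s' \in \N_\O$, which matches the $\hR$-point $([g_1], g_1^{-1} s', s')$ after the label swap induced by the inversion $[g] \mapsto [g^{-1}]$ implicit in $s_\Gr$; unwinding, this is exactly the classical formula $s\colon ([g], s_1, s_2) \mapsto [g, ([g^{-1}], s_2, s_1)]$. The hard part will be confirming the derived identification: both $\hR$, via its defining Cartesian square (\ref{eq:R}), and the derived intersection above can be written as derived fibers involving $\T$ and $\N_\O$, and one must check via $\psi$ that these presentations match up to the involution $\iota$. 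This should reduce to comparing the $\psi$-image of the Cartesian square (\ref{eq:R}) with the derived intersection computed in the previous step, with the inversion on $\Gr$ and the swap on $\hR$ accounting for the relabeling.
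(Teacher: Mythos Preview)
Your reduction via stacking the $s_\Gr$/$m_\Gr$ square is valid, and your factorization $\mu = \pi^\T \circ \nu \circ (\id \ttimes i_2)$ is correct; in fact $\nu \circ (\id \ttimes i_2)$ is exactly the map $\tm$ in the paper. The gap is in your final step. You propose to identify the derived intersection with $\hR$ by comparing against the defining square~(\ref{eq:R}) of $\hR$ and invoking the inversion/swap implicit in $\iota$. But the involution $\iota$ is precisely what this proposition is meant to describe via $s$, so appealing to its effect on $\hR$ here is circular, and matching two a priori different derived presentations of $\hR$ ``up to~$\iota$'' is not an argument. What you actually need, and do not use, is the \emph{definition of~$s$}: it is introduced just before the proposition as $s = i \circ a_1$, where $a_1$ and $i$ are each specified by Cartesian squares (over $m$ and over $\tm$ respectively). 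Concatenating these exhibits $\hR \xrightarrow{s} \Gr \ttimes \hR$ as the base change of $i_2 \circ e: \N_\O \to \T$ along $\tm$, which is precisely the fiber product $\mu^{-1}([e])$ you have computed. Once you invoke this, your argument closes immediately.

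The paper's proof uses this input from the outset and is therefore shorter: it starts from the Cartesian rectangle expressing $s$ as the pullback of $i_2 \circ e$ along $\tm$, factors $\tm$ as $(m_\Gr \ttimes \id_{\N_\O}) \circ (\id \ttimes i_2)$, and refactors. The pullback of $i_2 \circ e$ along $m_\Gr \ttimes \id_{\N_\O}$ is a map $\T \to \Gr \ttimes \T$ which is itself the pullback of $s_\Gr$ along the projection $\Gr \ttimes \T \to \Gr \ttimes \Gr$; pulling back once more along $\id \ttimes i_2$ yields the claimed square directly. Your route through the fiber over $[e] \in \Gr$ amounts to composing these two refactorings in the opposite order, but is only complete once you supply the Cartesian description of $s$ that you omitted.
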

\begin{proof}
Here $s_{\Gr}: \Gr \to \Gr\ttimes \Gr$ is the counterpart of $s$ when $\N \cong 0$ (this was denoted just by $s$ in \cite{CW1}). By definition we have the following pair of Cartesian squares. 
\begin{equation*}
	\begin{tikzpicture}[baseline=(current  bounding  box.center),thick,>=\arrtip]
		\newcommand*{\hp}{-3}; \newcommand*{\hpp}{-6}; 
		\newcommand*{\vb}{-1.5}; 
		\node (a) at (0,0) {$\Gr \ttimes \hR$};
		\node (a') at (\hp,0) {$\convspace$};
		\node (a'') at (\hpp,0) {$\hR$};
		\node (b) at (0,\vb) {$\T$};
		\node (b') at (\hp,\vb) {$\hR$};
		\node (b'') at (\hpp,\vb) {$\N_\O$.};
		
		\draw[<-] (a) to node[above] {$i$} (a');
		\draw[<-] (a') to node[above] {$a_1$} (a'');
		\draw[<-] (b) to node[above] {$i_2 $} (b');
		\draw[<-] (b') to node[above] {$e $} (b''); 
		
		\draw[->] (a) to node[right] {$\tm $} (b);
		\draw[->] (a') to node[right] {$m $} (b');
		\draw[->] (a'') to node[left] {$\pi_1 $}  (b'');
	\end{tikzpicture}
\end{equation*}
Recall that $\tm$ factors as $\Gr \ttimes \hR \xrightarrow{\id \times i_2} \Gr \ttimes \T \xrightarrow{m_\Gr \ttimes \id_{\N_\O}} \T$. The base change of $i_2 \circ e$ along $m_\Gr \ttimes \id_{\N_\O}$ is the map $s_{\Gr} \ttimes \id_{\N_\O}: \T \to \Gr \ttimes \T$. But this map is the base change of $s_{\Gr}$ along the projection $\Gr \ttimes \T \to \Gr \ttimes \Gr$, and the claim follows. 
\end{proof}

Consider now the following Cartesian squares. 
\begin{equation*}
	\begin{tikzpicture}[baseline=(current  bounding  box.center),thick,>=\arrtip]
		\node (ab) at (0,0) {$\hGO \backslash \hR$};
		\node (ac) at (3,0) {$\hGO \backslash \Gr \ttimes \hR$};
		\node (bb) at (0,-1.5) {$\hGO \backslash \Gr$};
		\node (bc) at (3,-1.5) {$\hGO \backslash \Gr \ttimes \Gr$};
		\node (ad) at (6,0) {$\hGO \backslash \hR$};
		\node (bd) at (6,-1.5) {$\hGO \backslash \Gr$};
		\draw[->] (ab) to node[above] {$s$} (ac);
		\draw[->] (bb) to node[above] {$s_{\Gr}$} (bc);
		\draw[->] (ab) to node[left] {$p$} (bb);
		\draw[->] (ac) to node[right] {$\id \ttimes p$} (bc);
		\draw[->] (ac) to node[above] {$\pi_2$} (ad);
		\draw[->] (bc) to node[above] {$\pi_2$} (bd);
		\draw[->] (ad) to node[right] {$p$} (bd);
	\end{tikzpicture}
\end{equation*}
If we base change along $\hGO \backslash \Gr_{\le \l^\vee} \to \hGO \backslash \Gr$ then the bottom row becomes
$$\hGO \backslash \Gr_{\le {\l^\vee}^*} \xrightarrow{s_{Gr}} \hGO \backslash \Gr \ttimes \Gr_{\le \l^\vee} \xrightarrow{\pi_2} \hGO \backslash \Gr_{\le {\l^\vee}},$$
where the composition is the isomorphism $\iota$. Subsequently the top row becomes 
$$\hGO \backslash \hR_{\le {\l^\vee}^*} \xrightarrow{s} \hGO \backslash \Gr \ttimes \hR_{\le \l^\vee} \xrightarrow{\pi_2} \hGO \backslash \hR_{\le {\l^\vee}}$$
where the composition is again $\iota$. The left map factors through $\hGO \backslash \Gr_{\le {\l^\vee}^*} \ttimes \hR_{\le \l^\vee}$ so we can rewrite this composition as
$$\hGO \backslash \hR_{\le {\l^\vee}^*} \xrightarrow{s} \hGO \backslash \Gr_{\le {\l^\vee}^*} \ttimes \hR_{\le \l^\vee} \xrightarrow{\pi_2} \hGO \backslash \hR_{\le {\l^\vee}}.$$
Noting that $\iota^*(-) \cong s^* \pi_2^* (-) \cong s^*(\O_{\Gr_{\le {\l^\vee}^*}} \tbox (-))$ this gives us the following.

\begin{Proposition}\label{prop:*-atlas}
	If $\cF \in \Coh^\hGO(\hR_{\le \l^\vee})$ then $\cF^* \cong s^*(\O_{\Gr_{\le {\l^\vee}^*}} \tbox \cF)$ where $s$ is characterized by the following Cartesian square.
\begin{equation*}
	\begin{tikzpicture}[baseline=(current bounding box.center),thick,>=\arrtip]
		\newcommand*{\ha}{3.2}; \newcommand*{\va}{-1.5};
		
		\node (a) at (0,0) {$\hR_{\le {\l^\vee}^*}$};
		\node (a') at (\ha,0) {$\Gr_{\le {\l^\vee}^*} \ttimes \hR_{\le \l^\vee}$};
		\node (b) at (0,\va) {$\Gr_{\le {\l^\vee}^*}$};
		\node (b') at (\ha,\va) {$\Gr_{\le {\l^\vee}^*} \ttimes \Gr_{\le \l^\vee}$};
		
		\draw[->] (a) to node[above] {$s$} (a');
		\draw[->] (b) to node[above] {$s_{\Gr}$} (b');
		\draw[->] (a) to node[left] {$p$} (b);
		\draw[->] (a') to node[right] {$\id \ttimes p$} (b');
	\end{tikzpicture}
\end{equation*}
\end{Proposition}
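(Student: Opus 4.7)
The statement is essentially packaged from the preceding discussion, so the plan is to make that repackaging precise. First I would recall that $\cF^* := \iota^*(\cF) \cong \iota_*(\cF)$ by definition, where $\iota$ is the involution of $\hR/\hGO$ exchanging $\pi_1$ and $\pi_2$. Since $\iota$ is the restriction of the Weyl-type involution on $\Gr$ under the projection $p:\hR\to\Gr$, it sends sheaves supported on $\hR_{\le\l^\vee}$ to sheaves supported on $\hR_{\le{\l^\vee}^*}$. In particular, we may compute $\cF^*$ by working with the restriction of $\iota$ to these bounded pieces.

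Next I would use the factorization $\iota \cong \pi_2\circ s$ already established immediately above the statement, but now applied after base change to $\hR_{\le\l^\vee}$. The factorization shown in the preceding paragraph gives
\[
\hR_{\le{\l^\vee}^*}\xrightarrow{s}\Gr_{\le{\l^\vee}^*}\ttimes\hR_{\le\l^\vee}\xrightarrow{\pi_2}\hR_{\le\l^\vee},
\]
with $s$ defined by the Cartesian square asserted in the statement (this square is the base change of the general defining square along $\hGO\backslash\Gr_{\le\l^\vee}\to\hGO\backslash\Gr$, followed by further base change along $\hGO\backslash\Gr_{\le{\l^\vee}^*}\to\hGO\backslash\Gr$). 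Hence $\iota^*\cong s^*\circ\pi_2^*$ on sheaves supported on $\hR_{\le\l^\vee}$.

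Finally I would identify $\pi_2^*(\cF)$ as a twisted external product. Since $\pi_2$ is the projection of a (locally trivial, flat) twisted product $\Gr_{\le{\l^\vee}^*}\ttimes\hR_{\le\l^\vee}\to\hR_{\le\l^\vee}$ onto the second factor, its pullback is canonically identified with $\cF\mapsto\O_{\Gr_{\le{\l^\vee}^*}}\tbox\cF$ (this is the defining property of $\tbox$ recalled in Section~\ref{sec:alternative}, together with the fact that the structure sheaf pulls back to the structure sheaf). Chaining these identifications yields $\cF^*\cong s^*(\O_{\Gr_{\le{\l^\vee}^*}}\tbox\cF)$, which is the desired formula.

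There is essentially no hard step: the only thing to verify carefully is that the base change of the defining Cartesian square for $s$ from the preceding paragraph genuinely produces the Cartesian square written in the proposition, and that $s$ is well-defined on the bounded loci (i.e., that $s$ lands in $\Gr_{\le{\l^\vee}^*}\ttimes\hR_{\le\l^\vee}$ rather than merely $\Gr\ttimes\hR_{\le\l^\vee}$). This is immediate from the corresponding statement for $s_{\Gr}:\Gr_{\le{\l^\vee}^*}\to\Gr_{\le{\l^\vee}^*}\ttimes\Gr_{\le\l^\vee}$ in the $\N\cong 0$ case and the Cartesian compatibility of $s$ with $s_{\Gr}$ along $p$ established just before the statement.
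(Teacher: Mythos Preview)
Your proposal is correct and takes essentially the same approach as the paper: the proposition is indeed just a repackaging of the discussion immediately preceding it, which establishes $\iota \cong \pi_2 \circ s$ after base change to the bounded pieces and then identifies $\pi_2^*(\cF)$ with $\cO_{\Gr_{\le{\l^\vee}^*}} \tbox \cF$. The paper's argument is exactly the chain $\iota^*(-) \cong s^*\pi_2^*(-) \cong s^*(\cO_{\Gr_{\le{\l^\vee}^*}} \tbox (-))$ that you spell out.
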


\section{Factorization}\label{sec:factorization}

In this section we use the factorization structure on $\hR_{G,N}$, introduced in \cite{BFN19}, to produce renormalized $r$-matrices in $\Coh^\hGO(\hR_{G,\N})$. The following is the main result. 


\begin{Theorem}\label{thm:rmatrix}
	The category $\Coh^\hGO(\hR_{G,\N})$ admits a canonical system of renormalized $r$-matrices, which restricts to one on $\KPcohGN$. 
\end{Theorem}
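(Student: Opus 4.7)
The plan is to deduce the renormalized $r$-matrix from the factorization structure on $\hR_{G,N}$ by a spectral-parameter argument analogous to Kashiwara's \cite{Kas02}. Let $X$ denote a smooth curve and let $\hGOfac{X^I}$ act on a factorization version $\hR_{G,N,X^I}$ of the space of triples, extending the construction of \cite{BFN19} into the derived setting. Restriction to the diagonal $X \hookrightarrow X^I$ recovers $\hR_{G,N}$ up to a factor, while restriction to the open locus $(X^I)^\circ$ of configurations with pairwise distinct coordinates gives an \'etale cover by an external product $\prod_i \hR_{G,N,X}$ (the factorization isomorphism). The first step is to promote any $\cF \in \Coh^{\hGO}(\hR_{G,N})$ to a factorization sheaf $\cF_{X^I}$ on $\hR_{G,N,X^I}$, equivariantly over $X^I$, essentially by taking its exterior tensor powers on the disjoint locus and extending across the diagonals using the fact (Theorem \ref{thm:convsecmainthm}) that convolution is t-exact, hence behaves well under specialization.

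Given a pair $\cF_1, \cF_2$, consider the corresponding sheaf $(\cF_1 \tbox \cF_2)_{X^2}$ on $\hR_{G,N,X^2}$ and its pushforward $\cG_{12}$ to $X^2$. Over the disjoint locus $X^2 \smallsetminus \Delta$ the factorization isomorphism identifies $\cG_{12}$ with $\cG_{21}$, giving a canonical isomorphism on $X^2 \smallsetminus \Delta$. Restricting to the formal neighborhood of the diagonal, we obtain an isomorphism between $\cG_{12}$ and $\cG_{21}$ after inverting a local equation $z$ of $\Delta$. The restrictions $\cG_{12}|_\Delta$ and $\cG_{21}|_\Delta$ compute the two convolution products $\cF_1 \conv \cF_2$ and $\cF_2 \conv \cF_1$. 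The renormalized $r$-matrix $r_{\cF_1,\cF_2}: \cF_1 \conv \cF_2 \to \cF_2 \conv \cF_1$ is then defined by the minimal $n \in \Z$ such that $z^n$ times the isomorphism on $X^2 \smallsetminus \Delta$ extends to a morphism across $\Delta$, followed by restriction to $\Delta$; equivalently, it is obtained by taking the leading term of the meromorphic isomorphism in a Laurent expansion in $z$. The compatibilities (hexagon/Yang--Baxter, unit, naturality) follow from the corresponding properties of the factorization isomorphism on higher powers $X^I$, applied over the appropriate partially-disjoint loci.

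The main obstacle will be constructing $\cF_{X^I}$ as a genuine object in the derived category of coherent sheaves on $\hR_{G,N,X^I}$ with the correct factorization property, and verifying that the meromorphic isomorphism really does extend after finite renormalization. In the classical Grassmannian case this is straightforward because $\Gr_{G,X^I}$ is an ind-scheme of ind-finite type; here one must work inside the ind-tamely presented setting of \cite{CWtm} and use the base change and pushforward machinery cited in the proof of Proposition \ref{prop:cohrigidity} to justify that $\cG_{12}$ is a coherent sheaf on $X^2$ and that its restriction to $\Delta$ computes convolution. The finite-order pole statement should follow from the fact that each $\cF_i$ is supported on some $\hR_{\le\l_i^\vee}$, so after passing to a finite-dimensional approximation the meromorphic isomorphism has poles of order bounded by a Cartan-theoretic quantity depending on $\l_1^\vee, \l_2^\vee$.

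Finally, to deduce the statement for $\KPcohGN$ one observes that when $\cF_1, \cF_2 \in \KPcohGN$ the target and source of $r_{\cF_1,\cF_2}$ again lie in $\KPcohGN$ by Theorem \ref{thm:convsecmainthm}, so the renormalized $r$-matrix is automatically a morphism in the heart. Naturality in $\KPcohGN$ and the compatibilities of the $r$-matrices with the monoidal structure descend from their counterparts on $\Coh^\hGO(\hR_{G,N})$, completing the proof.
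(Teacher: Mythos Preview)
Your overall strategy --- deform to a factorization space over $X^2$, use the factorization isomorphism on the disjoint locus, renormalize by the vanishing order along the diagonal, and specialize --- is exactly the idea the paper uses, and indeed the paper packages this as a citation to \cite[Thm.~5.10]{CW1}. The actual content of the paper's proof is the verification of the hypotheses of that theorem: a global convolution structure on $\Coh^{\hGOfac{X^I}}(\hR_{X^I})$, rigidity of the global category, the identification $\hR_X \cong \hR \times X$, and a \emph{unital structure}. Your proposal, however, misidentifies how the last of these is built, and this is where the real work lies.

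You propose to construct $\cF_{X^I}$ by ``taking exterior tensor powers on the disjoint locus and extending across the diagonals using t-exactness of convolution.'' This does not work: there is no canonical extension procedure, and t-exactness of convolution plays no role here. What the paper does instead (Section~\ref{sec:globalunital}) is construct, for each partition $I \sqcup J$, an explicit geometric correspondence
\[
\hR_{X^I} \times X^J \xleftarrow{\,q\,} \hR_{X^{I/J}} \xrightarrow{\,i\,} \hR_{X^{I \sqcup J}}
\]
and show that the resulting functor $\eta_I^{I\sqcup J} := i_* q^*$ is monoidal and preserves coherence. This is how a sheaf on $\hR$ is spread to $\hR_{X^2}$ --- the extension across the diagonal is built into the correspondence, not obtained after the fact. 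Establishing that $q^*$ and $i_*$ have the required properties (via the Cartesian cube \eqref{eq:cube} and Lemma~\ref{lem:cartesian}) is the substantive step you are missing.

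Two smaller points. First, you should not push forward to $X^2$: the sheaves $\widetilde{\cF_1 \conv \cF_2}$ live on $\hR_{X^2}$ itself, and the renormalization by powers of $z_1 - z_2$ (pulled back from $X^2$) is performed there. Second, to retain the full $\hGO$-equivariance including loop rotation one must take $X = \A^1$, not an arbitrary curve.
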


Explicitly, this means that for any $\cF, \cG \in \Coh^\hGO(\hR)$ we have a nonzero map $\rmat{\cF,\cG}: \cF \conv \cG \to \cG \conv \cF \{\La(\cF,\cG)\}$ for some $\Lambda(\cF,\cG) \in \Z$, and that collectively these satisfy various properties familiar from the theory of quantum groups \cite[Def. 4.1]{CW1}. 

Theorem \ref{thm:rmatrix} is an application of the general construction of renormalized $r$-matrices from factorization structures explained in \cite[Section 5]{CW1}. The idea is the following. First, we deform $\cF \conv \cG$ and $\cG \conv \cF$ to sheaves $\widetilde{\cF \conv \cG}$ and $\widetilde{\cG \conv \cF}$ over $\hR_{\A^2} \to \A^2$ so that the general fibers of each are $\cF \boxtimes \cG$, while the fibers along the diagonal are $\cF \conv \cG$ and $\cG \conv \cF$ respectively. If $z_1$ and $z_2$ are the coordinates on $\A^2$, then after multiplying by a suitable power of $z_1-z_2$ the evident generic isomorphism has a nonzero extension across the diagonal, and specializing it to $(0,0)\in \A^2$ we obtain $\rmat{\cF,\cG}$ (the integer ${\Lambda(\cF,\cG)}$ being the exponent of $z_1-z_2$ appearing in this procedure, hence the resulting loop shift). 

The different sections below explain the features of the factorizable space of triples needed to apply \cite[Theorem 5.10]{CW1}: we consider the global categories $\Coh^{\hGOX}(\hR_{X^I})$ in Section~\ref{sec:fact-spaces}, their convolution structures in Section~\ref{sec:globalconv}, and their unital structures in Section~\ref{sec:globalunital}. Most of the constructions are natural generalizations of results from Sections~\ref{sec:convolution} and~\ref{sec:rigid}.

\subsection{Factorization spaces}\label{sec:fact-spaces}

We first recall the factorization version $\hR_{\curv^\bullet}$ of the space of triples, where $X$ is a smooth curve \cite{BFN19}. We begin with a discussion of global jet and loop spaces along the lines of \cite{KV04}. Let $S$ be a classical affine scheme and $f_I = (f_i)_{i \in I}$ a tuple of maps  $S \to \curv^I$. We write $z_I \subset S \times \curv$ for the union of the graphs of the $f_i$. The formal neighborhood of $z_I$ is the colimit of all closed subschemes of $S \times \curv$ with reduced locus $z_I$ --- letting $\cO_{\hz_I}$ denote the inverse limit of coordinate rings of such subschemes, we also have the scheme-theoretic formal neighborhood $\widehat{z}_I := \Spec \cO_{\hz_I}$. Since this contains $z_I$ as a closed subscheme we also have the punctured formal neighborhood $\hz_I \smallsetminus z_I$.

Now let $Y$ be a smooth affine scheme of finite type. The $I$-jet space $\jets{Y}{I}$ and $I$-loop space $\loops{Y}{I}$ of $Y$ are defined by having $S$-points
\begin{align*}
\jets{Y}{I}(S) &:= \{f_I: S \to \curv^I, \rho: \hz_I \to Y \},\\
\loops{Y}{I}(S) &:= \{f_I: S \to \curv^I, \rho: \hz_I \smallsetminus z_I \to Y \}.
\end{align*}
More generally, one can define
$$
Y_{\K,X^{I/J}}(S) := \{f_{I \sqcup J}: S \to \curv^{I \sqcup J}, \rho: \hz_{I \sqcup J} \smallsetminus z_{I} \to Y \}.
$$
Notice that this recovers $Y_{\K,X^I}$ if $J = \varnothing$ and $Y_{\O,X^J}$ if $I = \varnothing$. It is shown in \cite{KV04} that $\jets{Y}{I}$ is a (classical) scheme over $X^I$ and $\loops{Y}{I}$ an ind-scheme over $\curv^I$, and that collectively they give rise to factorization spaces $\jets{Y}{\bullet}$ and $\loops{Y}{\bullet}$ over $\curv$.

\begin{Lemma}\label{lem:flat}
Suppose $Y=N$. Then $N_{\K,X^{I/J}}$ is an ind-scheme which can be presented as $\colim_k N^k_{\K,X^{I/J}}$  where each $N^k_{\K,X^{I/J}}$ is an inverse limit of $N^{k,\ell_I/\ell_J}_{\K,X^{I/J}}$ via flat maps. Moreover, each $N^{k,\ell_I/\ell_J}_{\K,X^{I/J}}$ is a smooth scheme, flat over $X^{I \sqcup J}$ and all the faces in (\ref{eq:Ncart}) are Cartesian.
\begin{equation}\label{eq:Ncart}
	\begin{tikzpicture}[baseline=(current  bounding  box.center),thick,>=\arrtip]
		\newcommand*{\ha}{1.8}; \newcommand*{\hb}{1.8}; \newcommand*{\hc}{1.8};
		\newcommand*{\va}{-1.1}; \newcommand*{\vb}{-1.1}; \newcommand*{\vc}{-1.1}; 
		\node (ab) at (\ha,0) {$N^{k+1,\ell_I/\ell_J+1}_{\K,X^{I/J}}$};
		\node (ad) at (\ha+\hb+\hc,0) {$N^{k+1,\ell_I+1/\ell_J+1}_{\K,X^{I/J}}$};
		\node (ba) at (0,\va) {$N^{k,\ell_I/\ell_J+1}_{\K,X^{I/J}}$};
		\node (bc) at (\ha+\hb,\va) {$N^{k,\ell_I+1/\ell_J+1}_{\K,X^{I/J}}$};
		\node (cb) at (\ha,\va+\vb) {$N^{k+1,\ell_I/\ell_J}_{\K,X^{I/J}}$};
		\node (cd) at (\ha+\hb+\hc,\va+\vb) {$N^{k+1,\ell_I+1/\ell_J}_{\K,X^{I/J}}$};
		\node (da) at (0,\va+\vb+\vc) {$N^{k,\ell_I/\ell_J}_{\K,X^{I/J}}$};
		\node (dc) at (\ha+\hb,\va+\vb+\vc) {$N^{k,\ell_I+1/\ell_J}_{\K,X^{I/J}}$};
		
		\draw[->] (ab) to node[above] {} (ad);
		\draw[->] (ab) to node[above] {} (ba);
		\draw[->] (ab) to node[left] {} (cb);
		\draw[->] (ad) to node[above] {} (bc);
		\draw[->] (ba) to node[above] {} (da);
		\draw[->] (cb) to node[above] {} (cd);
		\draw[->] (cb) to node[above] {} (da);
		\draw[->] (cd) to node[above] {} (dc);
		\draw[->] (da) to node[above] {} (dc);
		
		\draw[-,line width=6pt,draw=white] (ba) to  (bc);
		\draw[->] (ba) to node[above,pos=.75] {} (bc);
		\draw[-,line width=6pt,draw=white] (bc) to  (dc);
		\draw[->] (bc) to node[right,pos=.2] {} (dc);
		\draw[->] (ad) to node[right] {} (cd);
	\end{tikzpicture}
\end{equation}
\end{Lemma}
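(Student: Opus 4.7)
The plan is to realize each $N^{k,\ell_I/\ell_J}_{\K,X^{I/J}}$ as the total space of an explicit locally free sheaf of finite rank over $X^{I \sqcup J}$; smoothness, flatness, and the Cartesian assertion then reduce to linear-algebraic statements about vector bundles, while the limit and colimit presentations follow from the universal property of the formal neighborhood $\hz_{I\sqcup J}$.

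First I would make the approximations explicit. Let $p: X^{I\sqcup J} \times X \to X^{I\sqcup J}$ be the projection and $\mathcal{Z}_I, \mathcal{Z}_J \subset X^{I\sqcup J} \times X$ the universal divisors cut out by the $I$- and $J$-coordinates. For suitable integer parameters, set
$$
\cF^{k,\ell_I/\ell_J} := N \otimes \bigl( \cO(k\mathcal{Z}_I) \big/ \cO(-\ell_I \mathcal{Z}_I - \ell_J \mathcal{Z}_J) \bigr)
$$
on $X^{I\sqcup J} \times X$, and define $N^{k,\ell_I/\ell_J}_{\K,X^{I/J}}$ as the total space of $p_*\cF^{k,\ell_I/\ell_J}$. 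Since $X$ is a smooth curve and $\mathcal{Z}_I \sqcup \mathcal{Z}_J \to X^{I\sqcup J}$ is a relative Cartier divisor which is finite and flat, the pushforward is locally free of finite rank, so its total space is a smooth scheme flat over $X^{I\sqcup J}$. The moduli interpretation unwinds to: $S$-points are pairs $(f_{I\sqcup J}, \rho)$ with $\rho$ a section of the pullback of this truncation sheaf, i.e.\ a map from the corresponding thickening of $z_I \sqcup z_J$ into $N$ having pole order at most $k$ along $z_I$.

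Second, the transition maps and the Cartesian condition. The transition maps refining $\ell_I$ or $\ell_J$ are induced by surjections between the sheaves $\cF^{k,\ell_I/\ell_J}$ obtained by coarsening one truncation parameter, yielding surjections of the associated vector bundles and hence flat projections of total spaces (Zariski-locally trivial affine bundles). The faces of (\ref{eq:Ncart}) are Cartesian because the three operations --- changing the pole-order cutoff $k$ at $\mathcal{Z}_I$, the truncation depth $\ell_I$ at $\mathcal{Z}_I$, and the truncation depth $\ell_J$ at $\mathcal{Z}_J$ --- act on mutually disjoint features of the universal filtered sheaf and therefore commute on the nose; this reduces to the elementary fact that a morphism of short exact sequences of locally free sheaves in which two of the vertical maps are quotient projections automatically yields a Cartesian square on total spaces.

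Third, the limit and colimit assertions. By the universal property of the formal neighborhood, a section of $N \otimes \cO(k\mathcal{Z}_I)$ on $\hz_{I\sqcup J}$ is equivalent to a compatible family of its truncations, which identifies $N^k_{\K,X^{I/J}} \cong \lim_{\ell_I,\ell_J} N^{k,\ell_I/\ell_J}_{\K,X^{I/J}}$ along the flat transition maps of the previous step. Since every section of $N$ on $\hz_{I\sqcup J} \smallsetminus z_I$ has finite pole order along $z_I$, the natural inclusions $N^k_{\K,X^{I/J}} \hookrightarrow N^{k+1}_{\K,X^{I/J}}$ give a filtered diagram of closed immersions whose colimit represents $N_{\K,X^{I/J}}$ as an ind-scheme. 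The main bookkeeping challenge is verifying that $p_* \cF^{k,\ell_I/\ell_J}$ is locally free uniformly over all of $X^{I\sqcup J}$ --- including over diagonal strata where marked points collide --- which follows from the finite flatness of $\mathcal{Z}_I \sqcup \mathcal{Z}_J \to X^{I\sqcup J}$ combined with cohomology and base change, and exploits the one-dimensionality and smoothness of $X$.
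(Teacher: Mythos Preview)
Your approach is essentially the same as the paper's: both realize $N^{k,\ell_I/\ell_J}_{\K,X^{I/J}}$ as the total space of the pushforward of $\cO(k\mathcal{Z}_I)/\cO(-\ell_I\mathcal{Z}_I - \ell_J\mathcal{Z}_J)$ (the paper first reduces to $\dim N = 1$, while you keep $N$ as a tensor factor, which is harmless), deduce smoothness and flatness from local freeness of this pushforward, and obtain the ind/pro presentation from the universal property of $\hz_{I\sqcup J}$.

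One small point: your justification that the cube faces are Cartesian because the three operations ``act on mutually disjoint features'' is not literally true for the face involving $k$ and $\ell_I$, since both concern the divisor $\mathcal{Z}_I$; your follow-up reduction to a short-exact-sequence statement is the right idea but should be stated more carefully. The paper handles this directly by observing that the relevant square of locally free sheaves is Cartesian because the top inclusion is injective and the top-left term has the expected rank, which is a cleaner way to pin down the same fact.
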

\begin{proof}
We denote by $D_I, D_J \subset X \times X^{I \sqcup J} \xrightarrow{\varpi} X^{I \sqcup J}$ the natural divisors whose fiber over $\ux \in X^{I \sqcup J}$ is $\sum_{i \in I} x_i$ and $\sum_{j \in J} x_j$ respectively. Since $(Y \times Y')_{\K,X^{I/J}} \cong Y_{\K,X^{I/J}} \times_{X^{I \sqcup J}} Y'_{\K,X^{I/J}}$ we can assume that $N$ is one-dimensional. Then, as in the argument of \cite[Section 2.5]{KV04}, $N_{\K,X^{I/J}}$ is the represented by the inductive-projective limit of the total space of the vector bundles 
$$\varpi_*(\O_{X'}(kD_I) / \O_{X'}(-\ell_I D_I - \ell_J D_J)))$$ 
over $X^{I \sqcup J}$ where $X' = X \times X^{I \sqcup J}$ .  The fact that $N^{k,\ell_I+1/\ell_J}_{\K,X^{I/J}} \to N^{k,\ell_I/\ell_J}_{\K,X^{I/J}}$ is flat follows since the kernel of 
$$\varpi_*(\O_{X'}(kD_I) / \O_{X'}(-(\ell_I+1)D_I - \ell_J D_J)) \to \varpi_*(\O_{X'}(kD_I) / \O_{X'}(-\ell_I D_I - \ell_J D_J))$$
is the vector bundle $\varpi_*(\O_{X'} / \O_{X'}(-D_I))$. A similar argument shows that $N^{k,\ell_I/\ell_J+1}_{\K,X^{I/J}} \to N^{k,\ell_I/\ell_J}_{\K,X^{I/J}}$ is flat. 

Finally, the left face of (\ref{eq:Ncart}) is Cartesian because the following square is Cartesian
\begin{equation}\label{eq:Ncart2}
\xymatrix{
\varpi_*(\O_{X'}(kD_I) / \O_{X'}(-\ell_I D_I - (\ell_J+1)D_J )) \ar[r] \ar[d] & \varpi_*(\O_{X'}((k+1)D_I) / \O_{X'}(-\ell_I D_I - (\ell_J+1)D_J )) \ar[d] \\
\varpi_*(\O_{X'}(kD_I) / \O_{X'}(-\ell_I D_I - \ell_J D_J)) \ar[r] & \varpi_*(\O_{X'}((k+1)D_I) / \O_{X'}(-\ell_I D_I - \ell_J D_J )) }
\end{equation}
which is a consequence of the commutativity of the diagram, the top horizontal map being injective and the top left term having the expected dimension.  The other faces of (\ref{eq:Ncart}) can be shown to be Cartesian in the same way.
\end{proof}

\begin{Remark}
Proposition 2.4.4 in \cite{KV04} implies that $G_{\O,X^I}$ is also an inverse limit of flat maps between smooth schemes which are flat over $X^I$.  
\end{Remark}

The global version of $\Gr_G$ is the Beilinson-Drinfeld (BD) Grassmannian $\Gr_{G,X^I}$ (or $\Gr_{X^I}$ for short). It can be obtained as the quotient $G_{\K,X^I}/G_{\O,X^I}$. Like $\Gr_G$ it has the structure of an ind-scheme which is ind-finitely presented and ind-proper over $X^I$. As in Section \ref{sec:Rdefs} we fix a presentation $\Gr_{X^I} \cong \colim \Gr_{X^I}^\al$. 

The global analogue of $\T_{G,N}$ is $\T_{X^I} := G_{\K,X^I} \times_{G_{\O,X^I}} N_{\O,X^I}$. The natural action of $G_{\K,X^I}$ on $N_{\K,X^I}$ gives us a map $\T_{X^I} \to N_{\K,X^I}$. 

    \begin{Definition}
        The factorization space of triples $\hR_{G,N,X^I}$ (or $\hR_{X^I}$ for short) is the fiber product
        \begin{equation}\label{eq:factorizationR}
        \begin{tikzpicture}[baseline=(current  bounding  box.center),thick,>=\arrtip]
        \newcommand*{\ha}{3}; \newcommand*{\va}{1.5};
        \node (aa) at (0,0) {$\hR_{X^I}$}; \node (ab) at (\ha,0) {$\Gr_{X^I} \times_{X^I} N_{\O,X^I}$};
        \node (ba) at (0,-\va) {$\T_{X^I}$}; \node (bb) at (\ha,-\va) {$\Gr_{X^I} \times_{X^I} N_{\K,X^I}$};
        \draw[->] (aa) to node[above] {$i_1$} (ab); \draw[->] (ba) to node[above] {$ j_2 $} (bb);
        \draw[->] (aa) to node[left] {$i_2 $} (ba); \draw[->] (ab) to node[right] {$ j_1$} (bb);
        \end{tikzpicture}
        \end{equation}
\end{Definition}

The map $\T_{X^I}^{\le \l^\vee} \to N_{\K,X^I}$ factors through some $N^{k}_{\K,X^I}$. This gives an explicit presentation of $\hR_{X^I}$ as a colimit of schemes. 

There is a natural action of $G_{\O,X^I}$ on $\hR_{X^I}$. If $X = \A^1$ then the natural $\C^\times$ action on $X$ induces a $\C^\times$ action on $\hR_{X^I}$ which generalizes the $\C^\times$ action by loop rotation on $\hR$. Moreover, the natural $\C^\times$ action on $N$ of weight one induces an action on $N_{\O,X^I}$ and subsequently one on $\hR_{X^I}$ as well. Putting this together we get an action of 
 $$\hGOX := (G_{\O,X^I} \rtimes \C^\times) \times \C^\times$$
 on $\hR_{X^I}$ where the inner  $\C^\times$ is the loop scaling. 
 
The description of $\hR/\hGO$ from (\ref{eq:Rsymmdiagram}) has the following natural generalization to $\hR_{X^I}$
\begin{equation}\label{eq:Rsymmdiagram-fact}
      \begin{tikzpicture}
        [baseline=(current  bounding  box.center),thick,>=\arrtip]
                \node (a) at (0,0) {$\hR_{X^I}/\hGOX$};
\node (b) at (3.5,0) {$N_{\O,X^{I \sqcup J}}/{\widehat{G}_{\O,X^{I}}}$};
\node (c) at (0,-1.5) {$N_{\O,X^{I}}/{\widehat{G}_{\O,X^{I}}}$};
\node (d) at (3.5,-1.5) {$N_{\K,X^{I}}/{\widehat{G}_{\K,X^{I}}}$};
\draw[->] (a) to node[above] {$\pi_1$} (b);
\draw[->] (b) to node[right] {$ $} (d);
\draw[->] (a) to node[left] {$\pi_2$}(c);
\draw[->] (c) to node[above] {$ $} (d);
\end{tikzpicture}
\end{equation}
This follows by the same argument used in the proof of Proposition \ref{prop:symmetricRdef}.

\subsection{Convolution structure}\label{sec:globalconv}

The convolution of coherent sheaves on $\hR_{X^I}$ uses the following global analogue of (\ref{eq:conv})
\begin{equation}\label{eq:globalconv}
	\begin{tikzpicture}[baseline=(current  bounding  box.center),thick,>=\arrtip]
		\newcommand*{\hp}{3}; \newcommand*{\hpp}{6}; 
		\newcommand*{\vb}{-1.5}; 
		\node (a) at (0,0) {$\hR_{X^I} \ttimes \hR_{X^I}$};
		\node (a') at (\hp,0) {$\convspace_{X^I}$};
		\node (a'') at (\hpp,0) {$\hR_{X^I}$};
		\node (b) at (0,\vb) {$\T_{X^I} \ttimes \hR_{X^I}$};
		\node (b') at (\hp,\vb) {$\Gr_{X^I} \ttimes \hR_{X^I}$};
		\node (b'') at (\hpp,\vb) {$\T_{X^I}$};
		
		\draw[<-] (a) to node[above] {$d $} (a');
		\draw[->] (a') to node[above] {$m $} (a'');
		\draw[<-] (b) to node[above] {$\rd $} (b');
		\draw[->] (b') to node[above] {$\tm $} (b'');
		
		\draw[->] (a) to node[left] {$i_2 \ttimes \id $} (b);
		\draw[->] (a') to node[right] {$i $} (b');
		\draw[->] (a'') to node[right] {$i_2 $} (b'');
	\end{tikzpicture}
\end{equation}
Here $\T_{X^I} \ttimes \hR_{X^I} = (G_{\K,X^I} \times_{X^I} N_{\O,X^I}) \times_{G_{\O,X^I}} \hR_{X^I}$ and similarly for $\Gr_{X^I} \ttimes \hR_{X^I}$. As in Proposition \ref{prop:oneconvtorulethemall} one may define the convolution product of $\cF, \cG \in \Coh^{\hGOX}(\hR_{X^I})$ as
$$\cF \conv \cG := m_* d^* (\cF \tbox \cG) \in \Coh^{\hGOX}(\hR_{X^I}).$$
The proof that this product preserves coherent sheaves is entirely parallel to that from Section~\ref{sec:convolution} for $\hR$. Instead of rewriting out all the details we highlight the main technical properties used. 

First, the map $\tm$ is defined as the composition 
$$\Gr_{X^I} \ttimes \hR_{X^I} \xrightarrow{id \ttimes i_2} \Gr_{X^I} \ttimes \T_{X^I} \to \T_{X^I}$$
where the second map is multiplication. It is ind-proper and almost of ind-finite presentation because both $i_2$ and the multiplication map $\Gr_{X^I} \ttimes \Gr_{X^I} \to \Gr_{X^I}$ have these properties. By base change $m$ also has these properties and hence $m_*$ preserves coherent sheaves. 

Second, the map $\pi_1: \hR_{X^I} \to N_{\O,X^I}$ is almost of ind-finite presentation and ind-proper using the same argument as in Proposition \ref{prop:R} with the key new input being that the map $N_{\O,X^I} \hookrightarrow N_{\K,X^I}$ has both these properties. Lastly, $\td$ has coherent pullback using the same argument as in Proposition \ref{prop:convspacedef} with the new input being that $\Delta: N_{\O,X^I} \to N_{\O,X^I} \times_{X^I} N_{\O,X^I}$ has \stable coherent pullback. 

There is also the natural generalization of Proposition \ref{prop:oneconvtorulethemall} which identifies this product on $\Coh^{\hGOX}(\hR_{X^I})$ with the natural convolution product obtained from the realization of $\hR_{X^I}/\hGOX$ as the fiber product from (\ref{eq:Rsymmdiagram-fact}). As in Section \ref{sec:convolution}, it follows that this convolution is associative. Moreover, applying Proposition \ref{prop:cohrigidity}, we find that $\Coh^{\hGOX}(\hR_{X^I})$ is also rigid. 

\subsection{Unital structure}\label{sec:globalunital}

To obtain a unital structure we will use the intermediate space $\hR_{X^{I/J}}$ defined as the fiber product
\begin{equation}\label{eq:factorizationR2}
        \begin{tikzpicture}[baseline=(current  bounding  box.center),thick,>=\arrtip]
        \newcommand*{\ha}{3}; \newcommand*{\va}{1.5};
        \node (aa) at (0,0) {$\hR_{X^{I/J}}$}; \node (ab) at (\ha,0) {$ N_{\O,X^{I \sqcup J}}$};
        \node (ba) at (0,-\va) {$\T_{X^{I/J}}$}; \node (bb) at (\ha,-\va) {$ N_{\K,X^{I/J}}$};
        \draw[->] (aa) to node[above] {$i_1$} (ab); \draw[->] (ba) to node[above] {$ j_1 $} (bb);
        \draw[->] (aa) to node[left] {$i_2 $} (ba); \draw[->] (ab) to node[left] {$ j_2$} (bb);
        \end{tikzpicture}
\end{equation}
Here $j_2$ is induced by the inclusion of $\hz_{I \sqcup J} \smallsetminus z_I$ into $\hz_{I \sqcup J}$. Moreover,
$$\T_{X^{I/J}} := G_{\K, X^{I/J}} \times_{G_{\O,X^{I \sqcup J}}} N_{\O,I \sqcup J}$$
where $G_{\O,X^{I \sqcup J}} \hookrightarrow G_{\K, X^{I/J}}$ again via the inclusion of $\hz_{I \sqcup J} \smallsetminus z_I$ into $\hz_{I \sqcup J}$. Notice that if $J = \varnothing$ this recovers $\hR_{X^I}$. 

As in (\ref{eq:Rsymmdiagram-fact}), we have an alternative description of $\hR_{X^{I/J}}/{\widehat{G}_{\O,X^{I \sqcup J}}}$ as the fiber product 
$$      
\begin{tikzpicture}[baseline=(current  bounding  box.center),thick,>=\arrtip]
\node (a) at (0,0) {$\hR_{X^{I/J}}/{\widehat{G}_{\O,X^{I \sqcup J}}}$};
\node (b) at (3.9,0) {$N_{\O,X^{I \sqcup J}}/{\widehat{G}_{\O,X^{I \sqcup J}}}$};
\node (c) at (0,-1.5) {$N_{\O,X^{I \sqcup J}}/{\widehat{G}_{\O,X^{I \sqcup J}}}$};
\node (d) at (3.9,-1.5) {$N_{\K,X^{I/J}}/{\widehat{G}_{\K,X^{I/J}}}$};
\draw[->] (a) to node[above] {$\pi_1$} (b);
\draw[->] (b) to node[right] {$ $} (d);
\draw[->] (a) to node[left] {$\pi_2$}(c);
\draw[->] (c) to node[above] {$ $} (d);
\end{tikzpicture}
$$
A convolution structure on $\Coh^{\widehat{G}_{\O,X^{I \sqcup J}}}(\hR_{X^{I/J}})$ can then be obtained as in Section \ref{sec:globalconv} or by applying the usual convolution construction on fiber products as in Section \ref{sec:convolution}. 

Next we will construct the following correspondence
\begin{equation}\label{eq:jetloopcorrs}
\begin{tikzpicture}[baseline=(current  bounding  box.center),thick,>=\arrtip]
\newcommand*{\ha}{3}; \newcommand*{\hb}{3}; \newcommand*{\va}{1.5};
        \node (aa) at (0,0) {$\hR_{X^{I/J}}$};
        \node (ab) at (\hb,0) {$\hR_{X^{I \sqcup J}}$};
        \node (ba) at (0,-\va) {$\hR_{X^I} \times X^J$};
        \draw[->] (aa) to node[above] {$i$} (ab);
        \draw[->] (aa) to node[left] {$q$} (ba);
\end{tikzpicture}
\end{equation}
To define $i$ consider the following commutative diagram 
\begin{equation}
	\begin{tikzpicture}[baseline=(current  bounding  box.center),thick,>=\arrtip]
		\newcommand*{\ha}{3}; 
		\newcommand*{\va}{-1.5};
		\node (a) at (0,0) {$\hR_{X^{I/J}}$};
		\node (b) at (\ha,0) {$N_{\O,X^{I \sqcup J}}$};
		\node (c) at (0,\va) {$\T_{X^{I/J}}$};
		\node (d) at (\ha,\va) {$N_{\K,X^{I/J}}$};
		\node (e) at (0,2*\va) {$\T_{X^{I \sqcup J}}$};
		\node (f) at (\ha,2*\va) {$N_{\K,X^{I \sqcup J}}$};
		\draw[->] (a) to node[above] {$i_1$} (b);
		\draw[->] (a) to node[left] {$i_2$} (c);
		\draw[->] (c) to node[above] {$j_1$} (d);
		\draw[->] (b) to node[right] {$j_2$} (d);
		\draw[->] (c) to node[left] {} (e);
		\draw[->] (d) to node[right] {} (f);
		\draw[->] (e) to node[above] {} (f);
	\end{tikzpicture}
\end{equation}
Commutativity of the outer rectangle induces a map $i: \hR_{X^{I/J}} \to \hR_{X^{I \sqcup J}}$. Moreover, in the commutative square 
\begin{equation*}
	\begin{tikzpicture}[baseline=(current  bounding  box.center),thick,>=\arrtip]
		\newcommand*{\ha}{3}; 
		\newcommand*{\va}{-1.5};
		\node (a) at (0,0) {$\hR_{X^{I/J}}$};
		\node (b) at (\ha,0) {$\hR_{X^{I \sqcup J}}$};
		\node (c) at (0,\va) {$\T_{X^{I/J}}$};
		\node (d) at (\ha,\va) {$\T_{X^{I \sqcup J}}$};
		\draw[->] (a) to node[above] {$i$} (b);
		\draw[->] (a) to node[left] {} (c);
		\draw[->] (c) to node[above] {} (d);
		\draw[->] (b) to node[right] {} (d);
	\end{tikzpicture}
\end{equation*}
the vertical maps and the bottom map are ind-proper and almost of ind-finite presentation since, by construction, they are all the base change of maps with these properties (the bottom map is the base change of $\Gr_{X^{I/J}} \to \Gr_{X^{I \sqcup J}}$). It follows that $i$ is also ind-proper and almost of ind-finite presentation.  

To define $q$ consider the following commutative cube
\begin{equation}\label{eq:cube}
	\begin{tikzpicture}[baseline=(current bounding box.center),thick,>=\arrtip]
		\newcommand*{\ha}{1.7}; \newcommand*{\hb}{1.7}; \newcommand*{\hc}{1.7};
		\newcommand*{\va}{-1.0}; \newcommand*{\vb}{-1.0}; \newcommand*{\vc}{-1.0}; 
		
		\node (ab) at (\ha,0) {$N_{\O,X^I} \times X^J$};
		\node (ad) at (\ha+\hb+\hc,0) {$N_{\O,X^{I \sqcup J}}$};
		\node (ba) at (0,\va) {$\hR_{X^I} \times X^J$};
		\node (bc) at (\ha+\hb,\va) {$\hR_{X^{I/J}}$};
		\node (cb) at (\ha,\va+\vb) {$N_{\K,X^I} \times X^J$};
		\node (cd) at (\ha+\hb+\hc,\va+\vb) {$N_{\K,X^{I/J}}$};
		\node (da) at (0,\va+\vb+\vc) {$\T_{X^I} \times X^J$};
		\node (dc) at (\ha+\hb,\va+\vb+\vc) {$\T_{X^{I/J}}$};
		
		\draw[->] (ad) -- (ab);
		\draw[->] (ba) -- (ab);
		\draw[<-] (cb) -- (ab);
		\draw[->] (bc) -- (ad);
		\draw[<-] (cd) -- (ad);
		\draw[<-] (da) -- (ba);
		\draw[->] (cd) -- (cb);
		\draw[->] (da) -- (cb);
		\draw[->] (dc) -- (cd);
		\draw[->] (dc) -- (da);
		
		\draw[-,line width=6pt,draw=white] (bc) -- (ba);
		\draw[dashed,->] (bc) -- (ba);
		\draw[-,line width=6pt,draw=white] (dc) -- (bc);
		\draw[<-] (dc) -- (bc);
	\end{tikzpicture}
\end{equation}
where the morphisms in the rear square are the obvious maps induced by inclusions such as $\hz_{I \sqcup J} \smallsetminus z_I \hookrightarrow \hz_{I \sqcup J}$. We claim that all the faces not involving the dotted map are Cartesian. 

The left and right squares are Cartesian by construction. To see that the rear square it is Cartesian it suffices to show that 
\begin{equation*}
	\begin{tikzpicture}[baseline=(current bounding box.center),thick,>=\arrtip]
		\newcommand*{\ha}{3}; \newcommand*{\va}{-1.5};
		
		\node (a) at (0,0) {$N_{\O,X^{I \sqcup J}}$};
		\node (b) at (\ha,0) {$N_{\O,X^I}$};
		\node (c) at (0,\va) {$N_{\K,X^{I/J}}^k$};
		\node (d) at (\ha,\va) {$N_{\K,X^I}^k$};
		
		\draw[->] (a) -- (b);
		\draw[->] (a) -- (c);
		\draw[->] (b) -- (d);
		\draw[->] (c) -- (d);
	\end{tikzpicture}
\end{equation*}
is Cartesian. In the notation of Lemma \ref{lem:flat} this square can be written as 
\begin{equation*}
	\begin{tikzpicture}[baseline=(current bounding box.center),thick,>=\arrtip]
		\newcommand*{\ha}{3}; \newcommand*{\va}{-1.5};
		
		\node (a) at (0,0) {$N_{\K,X^{I/J}}^{0,\infty/\infty}$};
		\node (b) at (\ha,0) {$N_{\K,X^{I/J}}^{0,\infty/0}$};
		\node (c) at (0,\va) {$N_{\K,X^{I/J}}^{k,\infty/\infty}$};
		\node (d) at (\ha,\va) {$N_{\K,X^{I/J}}^{k,\infty/0}$};
		
		\draw[->] (a) -- (b);
		\draw[->] (a) -- (c);
		\draw[->] (b) -- (d);
		\draw[->] (c) -- (d);
	\end{tikzpicture}
\end{equation*}
where the $\infty$s are understood to mean the obvious limits. The fact that this square is Cartesian then follows since the faces of (\ref{eq:Ncart}) are all Cartesian.

The bottom square is given by the following composition of smaller squares 
\begin{equation*}
	\begin{tikzpicture}[baseline=(current bounding box.center),thick,>=stealth]
		\newcommand*{\ha}{3.9}; \newcommand*{\hb}{5.2}; \newcommand*{\hc}{3.9}; \newcommand*{\va}{-1.5};
		
		\node (a) at (0,0) {$\T_{X^{I/J}}$};
		\node (b) at (\ha,0) {$G_{\K,X^{I/J}} \times_{G_{\O,X^{I \sqcup J}}} N_{\K,X^{I/J}}$};
		\node (c) at (\ha+\hb,0) {$\Gr_{X^I} \times_{X^I} N_{\K,X^{I/J}}$};
		\node (d) at (\ha+\hb+\hc,0) {$N_{\K,X^{I/J}}$};
		\node (e) at (0,\va) {$\T_{X^I} \times X^J$};
		\node (f) at (\ha,\va) {$G_{\K,X^I} \times_{G_{\O,X^I}} N_{\K,X^I} \times X^J$};
		\node (g) at (\ha+\hb,\va) {$\Gr_{X^I} \times_{X^I} N_{\K,X^I} \times X^J$};
		\node (h) at (\ha+\hb+\hc,\va) {$N_{\K,X^I} \times X^J$};
		
		\draw[->] (a) -- node[above] {} (b);
		\draw[->] (b) -- node[above] {$\sim$} (c);
		\draw[->] (c) -- node[above] {} (d);
		\draw[->] (a) -- node[left] {} (e);
		\draw[->] (b) -- node[left] {} (f);
		\draw[->] (c) -- node[left] {} (g);
		\draw[->] (d) -- node[left] {} (h);
		\draw[->] (e) -- node[above] {} (f);
		\draw[->] (f) -- node[above] {$\sim$} (g);
		\draw[->] (g) -- node[above] {} (h);
	\end{tikzpicture}
\end{equation*}

where the first top (resp. bottom) map is induced by the inclusion of $N_{\O,X^{I \sqcup J}}$ into $N_{\K,X^{I/J}}$ (resp. $N_{\O,X^I}$ into $N_{\K,X^I}$) and the last top and bottom maps are projections. Each small square is clearly Cartesian which means that the total rectangle is also Cartesian. The dotted map in (\ref{eq:cube}), which defines $q$, is then induced by the commutativity of the other other squares. 

Associated to $i$ and $q$ we have the following correspondence of quotient stacks 
\begin{equation}\label{eq:jetloopcorrs2}
\begin{tikzpicture}[baseline=(current  bounding  box.center),thick,>=\arrtip]
\newcommand*{\ha}{3}; \newcommand*{\hb}{4.0}; \newcommand*{\va}{1.6};
\node[matrix] at (0,0) {
        \node (aa) at (0,0) {$\hR_{X^{I/J}}/{\widehat{G}_{\O,X^{I \sqcup J}}}$};
        \node (ab) at (\hb,0) {$\hR_{X^{I \sqcup J}}/{\widehat{G}_{\O,X^{I \sqcup J}}}$};
        \node (ba) at (0,-\va) {$\hR_{X^I}/\hGOX \times X^J$};
        \draw[->] (aa) to node[above] {$i$} (ab);
        \draw[->] (aa) to node[left] {$q$} (ba);\\
};
\end{tikzpicture}
\end{equation}
It is this correspondence that we will use to define our functors. Moreover, as in previous cases, this correspondence has a more formal description in terms of fiber products. More precisely, given a composition $Y \to Y' \to Z$ of stacks one has an induced morphism $Y \times_Z Y \to Y' \times_Z Y'$. If we take the composition 
$$N_{\O,X^{I \sqcup J}}/\widehat{G}_{\O,X^{I \sqcup J}} \to N_{\K,X^{I/J}}/\widehat{G}_{\K,X^{I/J}} \to N_{\K,X^{I \sqcup J}}/\widehat{G}_{\K,X^{I \sqcup J}}$$
then one can show, as in the proof of Proposition \ref{prop:symmetricRdef}, that the induced map is $i$ from (\ref{eq:jetloopcorrs2}). 

On the other hand, given a Cartesian square of stacks 
\begin{equation*}
	\begin{tikzpicture}[baseline=(current bounding box.center),thick,>=stealth]
		\newcommand*{\ha}{3}; \newcommand*{\va}{-1.5};
		
		\node (a) at (0,0) {$Y$};
		\node (b) at (\ha,0) {$Y'$};
		\node (c) at (0,\va) {$Z$};
		\node (d) at (\ha,\va) {$Z'$};
		
		\draw[->] (a) -- (b);
		\draw[->] (a) -- (c);
		\draw[->] (b) -- (d);
		\draw[->] (c) -- (d);
	\end{tikzpicture}
\end{equation*}
one obtains a natural map $Y \times_Z Y \to Y' \times_{Z'} Y'$. If one applies this construction to the Cartesian square (\ref{eq:fact-cart}) below then this recovers the map $q$ from (\ref{eq:jetloopcorrs2}). 

\begin{Lemma}\label{lem:cartesian}
The following natural inclusions 
\begin{equation*}
	\begin{tikzpicture}[baseline=(current  bounding  box.center),thick,>=\arrtip]
		\newcommand*{\ha}{3}; \newcommand*{\va}{-1.5};
		\node (a) at (0,0) {$\hz_{I \sqcup J}$};
		\node (b) at (\ha,0) {$\hz_{I}$};
		\node (c) at (0,\va) {$\hz_{I \sqcup J} \smallsetminus z_I$};
		\node (d) at (\ha,\va) {$\hz_I \smallsetminus z_I$};
		\draw[->] (b) to node[above] {} (a);
		\draw[->] (d) to node[above] {} (b);
		\draw[->] (d) to node[left] {} (c);
		\draw[->] (c) to node[left] {} (a);
	\end{tikzpicture}
\end{equation*}
induce a Cartesian square 
\begin{equation}\label{eq:fact-cart}
	\begin{tikzpicture}[baseline=(current  bounding  box.center),thick,>=\arrtip]
		\newcommand*{\ha}{3.5}; \newcommand*{\va}{-1.5};
		\node (a) at (0,0) {$N_{\O,X^{I \sqcup J}}/\widehat{G}_{\O,X^{I \sqcup J}}$};
		\node (b) at (\ha,0) {$N_{\O,X^I}/\widehat{G}_{\O,X^I}$};
		\node (c) at (0,\va) {$N_{\K,X^{I/J}}/\widehat{G}_{\K,X^{I/J}}$};
		\node (d) at (\ha,\va) {$N_{\K,X^I}/\widehat{G}_{\K,X^I}$};
		\draw[->] (a) to node[above] {} (b);
		\draw[->] (a) to node[left] {} (c);
		\draw[->] (c) to node[above] {} (d);
		\draw[->] (b) to node[right] {} (d);
	\end{tikzpicture}
\end{equation}
\end{Lemma}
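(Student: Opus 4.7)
The plan is to establish the Cartesian property in two stages: first for the underlying ind-schemes and groups separately, and then for the induced quotient stacks.

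For the first stage, I will show that both the square of $N$-ind-schemes and the square of $\widehat{G}$-groups obtained by forgetting the quotients are themselves Cartesian. The geometric input is a pushout presentation of the formal neighborhood: $\hz_{I \sqcup J}$ is covered by the formal subscheme $\hz_I$ (sitting inside $\hz_{I \sqcup J}$ via the inclusion $z_I \subset z_{I \sqcup J}$) and by its open formal subscheme $\hz_{I \sqcup J} \smallsetminus z_I$, with intersection $\hz_I \smallsetminus z_I$. For any affine target $Y$, this pushout becomes a pullback of functors of points, giving $\Maps(\hz_{I \sqcup J}, Y) \cong \Maps(\hz_I, Y) \times_{\Maps(\hz_I \smallsetminus z_I, Y)} \Maps(\hz_{I \sqcup J} \smallsetminus z_I, Y)$. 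Taking $Y = N$ yields the Cartesian square of $N$-ind-schemes, and taking $Y = G$ (which is affine) together with the observation that the outer $\C^\times \times \C^\times$-factors are constant across the four corners yields the corresponding square for the $\widehat{G}$-groups.

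For the second stage, I will pass to quotient stacks using the fact that the classifying prestack functor $\mathrm{B}$ preserves finite limits of group prestacks; hence the Cartesian square of $\widehat{G}$-groups yields a Cartesian square of classifying prestacks $\mathrm{B}\widehat{G}_\ast$. Concretely, on $S$-points this says that a $\widehat{G}_{\O,X^{I \sqcup J}}$-torsor over $S$ is canonically the same data as a $\widehat{G}_{\O,X^I}$-torsor $P_2$ and a $\widehat{G}_{\K,X^{I/J}}$-torsor $P_3$ together with an identification of their induced $\widehat{G}_{\K,X^I}$-torsors. Coupling this with the Cartesian square of $N$-spaces from the first stage then gives an equivalence of $S$-points between $N_{\O,X^{I\sqcup J}}/\widehat{G}_{\O,X^{I\sqcup J}}$ and the fiber product on the right: from a compatible pair of equivariant maps $P_2 \to N_{\O,X^I}$ and $P_3 \to N_{\K,X^{I/J}}$ one recovers the $\widehat{G}_{\O,X^{I \sqcup J}}$-torsor $P_1$ and a $\widehat{G}_{\O,X^{I \sqcup J}}$-equivariant map $P_1 \to N_{\O,X^{I \sqcup J}}$ by gluing the two components along their common restriction.

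The main obstacle is the second stage: tracking torsor data and equivariant maps across four different group schemes and confirming that everything really does descend to the quotient stacks. The first stage, while geometrically essential, is a mostly formal consequence of the pushout property of formal neighborhoods applied to affine targets.
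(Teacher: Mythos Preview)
Your overall strategy is sound, but the second stage contains a genuine gap: the claim that the classifying-prestack functor $\mathrm{B}$ preserves finite limits of group prestacks is false. For a counterexample take $G_2 = G_3 = \{e\}$ mapping to a nontrivial group $G_4$: the pullback of groups is $\{e\}$, so applying $\mathrm{B}$ gives a point, whereas $\pt \times_{\mathrm{B}G_4} \pt \simeq G_4$. What you actually need, and what does hold here, is that $\mathrm{B}$ takes a pullback square of groups to a pullback square of stacks provided one of the legs is a \emph{surjection}: in the case at hand $\widehat{G}_{\K,X^{I/J}} \to \widehat{G}_{\K,X^I}$ is surjective, and under this hypothesis the fiber product $\mathrm{B}G_2 \times_{\mathrm{B}G_4} \mathrm{B}G_3$ is connected with automorphism group $G_2 \times_{G_4} G_3$ at the basepoint, hence agrees with $\mathrm{B}G_1$. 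You must isolate and invoke this surjectivity explicitly; without it the torsor-gluing step does not go through (a $G_2$-torsor and a $G_3$-torsor with identified induced $G_4$-torsors need not arise from a $G_1$-torsor).

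For comparison, the paper sidesteps this by factoring the square vertically: a top half in which only the $N$-spaces change while the groups stay $\widehat{G}_{\O}$ (Cartesianness then follows from the $N$-square via Lemma~\ref{lem:Cartesianstacks}), and a bottom half which reduces to the square of pure classifying stacks $\pt/\widehat{G}_\ast$ and is handled by exhibiting concretely the product decomposition $\widehat{G}_{\K,X^I}/\widehat{G}_{\O,X^{I\sqcup J}} \cong (\widehat{G}_{\K,X^I}/\widehat{G}_{\K,X^{I/J}}) \times_{X^{I\sqcup J}} (\widehat{G}_{\K,X^I}/\widehat{G}_{\O,X^I})$. Your pushout approach to the underlying $N$- and $G$-squares is more conceptual than the paper's use of the finite-dimensional approximations from Lemma~\ref{lem:flat}, but note that the pushout $\hz_{I\sqcup J} \simeq \hz_I \sqcup_{\hz_I \smallsetminus z_I} (\hz_{I\sqcup J}\smallsetminus z_I)$ in the sense relevant for mapping into affine targets is itself a formal-gluing statement that requires its own justification.
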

\begin{proof}
We can factor (\ref{eq:fact-cart}) as 
\begin{equation}\label{eq:fact-cart2}
	\begin{tikzpicture}[baseline=(current  bounding  box.center),thick,>=\arrtip]
		\newcommand*{\ha}{3.5}; \newcommand*{\va}{-1.5};
		\node (a) at (0,0) {$N_{\O,X^{I \sqcup J}}/\widehat{G}_{\O,X^{I \sqcup J}}$};
		\node (b) at (\ha,0) {$N_{\O,X^I}/\widehat{G}_{\O,X^I}$};
		\node (c) at (0,\va) {$N_{\K,X^{I/J}}/\widehat{G}_{\O,X^{I \sqcup J}}$};
		\node (d) at (\ha,\va) {$N_{\K,X^I}/\widehat{G}_{\O,X^I}$};
		\node (e) at (0,2*\va) {$N_{\K,X^{I/J}}/\widehat{G}_{\K,X^{I/J}}$};
		\node (f) at (\ha,2*\va) {$N_{\K,X^I}/\widehat{G}_{\K,X^I}$};
		\draw[->] (a) to node[above] {} (b);
		\draw[->] (a) to node[left] {} (c);
		\draw[->] (c) to node[above] {} (d);
		\draw[->] (b) to node[right] {} (d);
		\draw[->] (c) to node[left] {} (e);
		\draw[->] (d) to node[right] {} (f);
		\draw[->] (e) to node[above] {} (f);
	\end{tikzpicture}
\end{equation}
The top square is Cartesian because the rear square in (\ref{eq:cube}) is Cartesian. To see that the bottom square is Cartesian it suffices to show that 
\begin{equation}\label{eq:fact-cart3}
	\begin{tikzpicture}[baseline=(current  bounding  box.center),thick,>=\arrtip]
		\newcommand*{\ha}{3}; \newcommand*{\va}{-1.5};
		\node (a) at (0,0) {$pt/\widehat{G}_{\O,X^{I \sqcup J}}$};
		\node (b) at (\ha,0) {$pt/\widehat{G}_{\O,X^I}$};
		\node (c) at (0,\va) {$pt/\widehat{G}_{\K,X^{I/J}}$};
		\node (d) at (\ha,\va) {$pt/\widehat{G}_{\K,X^I}$};
		\draw[->] (a) to node[above] {} (b);
		\draw[->] (a) to node[left] {} (c);
		\draw[->] (c) to node[above] {} (d);
		\draw[->] (b) to node[right] {} (d);
	\end{tikzpicture}
\end{equation}
is Cartesian. Now consider the following commutative square
\begin{equation*}
\begin{tikzpicture}[baseline=(current  bounding  box.center),thick,>=\arrtip]
	\newcommand*{\ha}{3}; \newcommand*{\va}{-1.5};
	\node (a) at (0,0) {$\widehat{G}_{\O,X^{I \sqcup J}}$};
	\node (b) at (\ha,0) {$\widehat{G}_{\O,X^I}$};
	\node (c) at (0,\va) {$\widehat{G}_{\K,X^{I/J}}$};
	\node (d) at (\ha,\va) {$\widehat{G}_{\K,X^I}$};
	\draw[->] (a) to node[above] {$\alpha_1$} (b);
	\draw[->] (a) to node[left] {$\beta_1$} (c);
	\draw[->] (c) to node[above] {$\alpha_2$} (d);
	\draw[->] (b) to node[right] {$\beta_2$} (d);
\end{tikzpicture}
\end{equation*}
where we view everything as group schemes over $X^{I \sqcup J}$. Notice that the $\alpha_i$ are surjective and the $\beta_i$ injective. One can check that $\ker(\beta_1) \cong \ker(\beta_2)$ and that the quotients $\widehat{G}_{\K,X^I}/\widehat{G}_{\O,X^I}$ and $\widehat{G}_{\K,X^{I/J}}/\widehat{G}_{\O,X^{I \sqcup J}}$ are both isomorphic to $\Gr_{X^I}\times X^J$. From this one can conclude that 
$$\widehat{G}_{\K,X^I}/\widehat{G}_{\O,X^{I \sqcup J}} \cong \widehat{G}_{\K,X^I}/\widehat{G}_{\K,X^{I/J}} \times_{X^{I \sqcup J}} \widehat{G}_{\K,X^I}/\widehat{G}_{\O,X^I}.$$

In other words, one has a Cartesian square 
\begin{equation*}
	\begin{tikzpicture}[baseline=(current  bounding  box.center),thick,>=\arrtip]
		\newcommand*{\ha}{3.5}; \newcommand*{\va}{-1.5};
		\node (a) at (0,0) {$\widehat{G}_{\K,X^I}/\widehat{G}_{\O,X^{I \sqcup J}}$};
		\node (b) at (\ha,0) {$\widehat{G}_{\K,X^I}/\widehat{G}_{\O,X^I}$};
		\node (c) at (0,\va) {$\widehat{G}_{\K,X^I}/\widehat{G}_{\K,X^{I/J}}$};
		\node (d) at (\ha,\va) {$X^{I \sqcup J}$};
		\draw[->] (a) to node[above] {} (b);
		\draw[->] (a) to node[left] {} (c);
		\draw[->] (c) to node[above] {} (d);
		\draw[->] (b) to node[right] {} (d);
	\end{tikzpicture}
\end{equation*}
Taking the quotient by $\widehat{G}_{\K,X^I}$ gives us (\ref{eq:fact-cart3}). 
\end{proof}

We now arrive at the main result of this subsection. 

\begin{Proposition}\label{prop:unital}
In the notation of (\ref{eq:jetloopcorrs2}) the functors $i_*$ and $q^*$ preserve coherent sheaves. Moreover, their composition induces a monoidal functor 
\begin{align*}
\eta_I^{I \sqcup J}:  \Coh^{\hGOX}(\hR_{X^I} \times X^J) & \to \Coh^{{\widehat{G}_{\O,X^{I \sqcup J}}}}(\hR_{X^{I \sqcup J}}) \\
(-) & \mapsto i_* q^* (-)
\end{align*}
\end{Proposition}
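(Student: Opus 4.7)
The plan is to handle the two assertions separately and then combine them.

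I would first show that $i_*$ and $q^*$ individually preserve coherent sheaves. The map $i: \hR_{X^{I/J}} \to \hR_{X^{I \sqcup J}}$ is ind-proper and almost of ind-finite presentation, as was verified in the discussion preceding (\ref{eq:cube}) by base change from the corresponding property of $\T_{X^{I/J}} \to \T_{X^{I \sqcup J}}$, which itself reduces to the analogous property on Beilinson--Drinfeld Grassmannians. Pushforward along such a map preserves coherent sheaves on the relevant quotient stacks. For $q^*$, using Lemma \ref{lem:cartesian} and the fiber-product perspective explained after it, the map $q$ is realized as a base change of the natural morphism $N_{\K, X^{I/J}}/\widehat{G}_{\K, X^{I/J}} \to N_{\K, X^I}/\widehat{G}_{\K, X^I} \times X^J$ along the projection $\hR_{X^I}/\hGOX \times X^J \to N_{\K, X^I}/\widehat{G}_{\K, X^I} \times X^J$ induced by $\pi_1$. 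By Lemma \ref{lem:flat} (and its remark for $G$), the maps at the level of finite approximations $N_{\K, X^{I/J}}^{k, \ell_I/\ell_J} \to N_{\K, X^I}^{k, \ell_I} \times X^J$ are flat between smooth schemes, and the inverse limit is weakly smooth in the sense of \tmdefweaklysmooth. Its base change $q$ then has \stable coherent pullback, so $q^*$ preserves coherent sheaves.

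The monoidality of $\eta_I^{I \sqcup J} = i_* q^*$ is the main content. By the proof of Proposition \ref{prop:monoidalconv} applied to the factorization setting, each of the three convolution categories in question arises from an algebra object in the $\infty$-category of correspondences, with underlying object a fiber product of the form $Y \times_Z Y$. Concretely, set $Y = N_{\O,X^{I \sqcup J}}/\widehat{G}_{\O,X^{I \sqcup J}}$, $Z_1 = N_{\K,X^{I/J}}/\widehat{G}_{\K,X^{I/J}}$, $Z_2 = N_{\K,X^{I \sqcup J}}/\widehat{G}_{\K,X^{I \sqcup J}}$, and $Y' = N_{\O,X^I}/\hGOX \times X^J$, $Z' = N_{\K,X^I}/\widehat{G}_{\K,X^I} \times X^J$. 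The factorization $Y \to Z_1 \to Z_2$ realizes $i$ as a morphism of convolution algebras: the evident closed immersions $(Y \times_{Z_1} Y) \times_Y (Y \times_{Z_1} Y) \hookrightarrow (Y \times_{Z_2} Y) \times_Y (Y \times_{Z_2} Y)$ intertwine the multiplications, and the units (pushforwards from $Y$) agree. Pushforward along such a morphism is monoidal. Similarly, the Cartesian square of Lemma \ref{lem:cartesian} exhibits $Y \cong Y' \times_{Z'} Z_1$, making $q$ the base change of $Z_1 \to Z'$; this realizes $q$ as the pullback of a morphism of convolution algebras, and pullback along such a morphism is monoidal via the base-change isomorphisms of Section \ref{sec:convgen}. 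Composing, $\eta_I^{I \sqcup J}$ is monoidal.

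The main obstacle will be organizing this last step at the $\infty$-categorical level. The underlying Cartesian squares and properness/flatness properties have all been verified in this section --- notably the Cartesian faces of (\ref{eq:cube}) and the square of Lemma \ref{lem:cartesian} --- but assembling these into coherent morphisms of algebra objects in $\Corr$, parallel to the construction in Proposition \ref{prop:monoidalconv}, requires careful bookkeeping of the various base-change isomorphisms and their compatibilities with the associativity and unit constraints.
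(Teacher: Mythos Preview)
Your overall strategy matches the paper's: it proves exactly these two monoidality statements as separate lemmas (Lemma~\ref{lem:fact2} for $i_*$ via the composition $Y \to Z_1 \to Z_2$, and Lemma~\ref{lem:fact1} for $q^*$ via the Cartesian square of Lemma~\ref{lem:cartesian}), then composes. Both lemmas are proved by elementary base-change manipulations at the level of objects, so your worry about $\infty$-categorical coherence is one the paper simply does not engage with in this proof.

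There is, however, a concrete mistake in your treatment of $q$. You claim $q$ is the base change of $Z_1 \to Z'$ (i.e.\ of $N_{\K,X^{I/J}}/\widehat{G}_{\K,X^{I/J}} \to N_{\K,X^I}/\widehat{G}_{\K,X^I}\times X^J$) along $X' \to Z'$. This is false: with $Y = Y'\times_{Z'} Z_1$ from Lemma~\ref{lem:cartesian}, one has $X'\times_{Z'} Z_1 \cong Y \times_{Z'} Y'$, not $Y\times_{Z_1} Y = X$. The correct identification, which the paper extracts from the cube~(\ref{eq:cubeyz}) in the proof of Lemma~\ref{lem:fact1}, is that $q$ is the base change of $Y \to Y'$ (the map on arc spaces $N_{\O,X^{I\sqcup J}}/\widehat{G}_{\O,X^{I\sqcup J}} \to N_{\O,X^I}/\widehat{G}_{\O,X^I}$) along the ind-proper, almost ind-finitely presented projection $\pi_i: X' \to Y'$. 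It is this map $Y \to Y'$ whose \stable coherent pullback one must verify (and which follows from the flatness statements in Lemma~\ref{lem:flat}), not the map on loop spaces. Once you correct this, your argument goes through and is the paper's.
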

\begin{Remark} 
There is an analogous functor
$$\eta_J^{I \sqcup J}: \Coh^{\widehat{G}_{\O,X^J}}(X^I \times \hR_{X^J}) \to \Coh^{\widehat{G}_{\O,X^{I \sqcup J}}}(\hR_{X^{I \sqcup J}})$$
which is also monoidal.
\end{Remark}
\begin{proof}
The functor $i_*$ (resp. $q^*$) preserves coherent sheaves and is monoidal as a consequence of the discussion above and Lemma \ref{lem:fact1} (resp. Lemma \ref{lem:fact2}). The result follows. 
\end{proof}

\begin{Lemma}\label{lem:fact1}
Consider a Cartesian square of stacks 
\begin{equation*}
	\begin{tikzpicture}[baseline=(current  bounding  box.center),thick,>=\arrtip]
		\newcommand*{\ha}{3}; \newcommand*{\va}{-1.5};
		\node (a) at (0,0) {$Y$};
		\node (b) at (\ha,0) {$Y'$};
		\node (c) at (0,\va) {$Z$};
		\node (d) at (\ha,\va) {$Z'$};
		\draw[->] (a) to node[above] {} (b);
		\draw[->] (a) to node[left] {} (c);
		\draw[->] (c) to node[above] {} (d);
		\draw[->] (b) to node[right] {} (d);
	\end{tikzpicture}
\end{equation*}
where $X := Y \times_Z Y$ and $X' := Y' \times_{Z'} Y'$ satisfy the hypotheses (\ref{eq:convhypos}). If  the map $Y \to Y'$ has \stable coherent pullback then so does the induced map $q: X \to X'$. Moreover, the map  $q^*: \Coh(X') \to \Coh(X)$ is monoidal. 
\end{Lemma}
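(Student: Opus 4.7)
The approach is to exhibit $q\colon X \to X'$ as a base change of $f := Y \to Y'$, so that the \stable coherent pullback hypothesis on $f$ transfers directly, and to observe that the same base-change pattern lifts one level up to the correspondence that defines the convolution.

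First I would unwind the fiber products. Since the hypothesis square is Cartesian we have $Y \cong Z \times_{Z'} Y'$, and a direct computation yields
\[ X = Y \times_Z Y \cong Y' \times_{Z'} Y \cong X' \times_{Y'} Y, \]
where the last identification uses the projection $\pi_2'\colon X' \to Y'$ on the first factor and the map $f$ on the second. Under this, $q$ is literally the first projection $X' \times_{Y'} Y \to X'$, exhibiting it as the base change of $f$ along $\pi_2'$. By hypothesis (\ref{eq:convhypos})(3) applied to $X'$, the map $\pi_2'$ is ind-proper and almost ind-finitely presented, hence in particular tamely presented, so the \stable coherent pullback of $f$ transfers to $q$ by the base-change stability of this property \cite[\tmpropindPbaseprops]{CWtm}.

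For the monoidality of $q^*$, the key observation is that the same pattern propagates to the convolution diagram: another computation using $Y \cong Z \times_{Z'} Y'$ shows
\[ X \times_Y X \cong Z \times_{Z'} (X' \times_{Y'} X'), \qquad X \cong Z \times_{Z'} X', \]
and the square
\begin{equation*}
\begin{tikzpicture}[baseline=(current bounding box.center),thick,>=\arrtip]
\node (a) at (0,0) {$X \times_Y X$};
\node (b) at (3.8,0) {$X' \times_{Y'} X'$};
\node (c) at (0,-1.5) {$X$};
\node (d) at (3.8,-1.5) {$X'$};
\draw[->] (a) to node[above] {$q_{\conv}$} (b);
\draw[->] (a) to node[left] {$m$} (c);
\draw[->] (b) to node[right] {$m'$} (d);
\draw[->] (c) to node[above] {$q$} (d);
\end{tikzpicture}
\end{equation*}
is then Cartesian. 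Since $m'$ is ind-proper and almost ind-finitely presented, base change gives $q^* m'_* \cong m_* q_{\conv}^*$. Combined with the evident commutativity $\delta' \circ q_{\conv} = (q \times q) \circ \delta$ and the standard identity $(q \times q)^*(\cF' \boxtimes \cG') \cong q^*\cF' \boxtimes q^*\cG'$, this yields
\[ q^*(\cF' \conv \cG') = q^* m'_* \delta'^*(\cF' \boxtimes \cG') \cong m_* \delta^* (q^*\cF' \boxtimes q^*\cG') = q^*\cF' \conv q^*\cG'. \]
Compatibility with the unit follows from a parallel Cartesian square involving the unit maps $e\colon Y \to X$ and $e'\colon Y' \to X'$.

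The main obstacle will be upgrading the previous paragraph to a full monoidal structure in the $\infty$-categorical sense. The cleanest route is to follow the construction of Proposition \ref{prop:monoidalconv} directly: the map of pairs $(Y,Z) \to (Y',Z')$ induces a map of Cech nerves, hence a morphism of the associated algebras in $\Corr(\indGStkkadtm)_{prop,\,\acp}$, which upon applying the lax symmetric monoidal functor $\Coh(-)$ produces the desired monoidal structure on $q^*$. Verifying that this morphism lives in the requisite $1$-full subcategory amounts to checking, level-wise in the simplicial diagram, that the base-changed maps retain \stable coherent pullback and ind-properness/almost-ind-finite-presentation; by the same argument used for $q$ itself each such check reduces to the corresponding property over $(Y',Z')$, which is where the bookkeeping lies.
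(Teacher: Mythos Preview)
Your proposal is correct and follows essentially the same approach as the paper: both arguments exhibit $q$ as the base change of $Y\to Y'$ along the ind-proper, almost ind-finitely presented projection $\pi_2'\colon X'\to Y'$, then establish that the square relating $m$ and $m'$ is Cartesian and conclude monoidality by base change together with $\delta' \circ q_{\conv} = (q\times q)\circ\delta$. The paper organizes the first step via a cubic diagram and derives the $m$-square's Cartesianness by a 2-out-of-3 argument after rewriting $\pi_1\circ m$ as an iterated projection, whereas you do both by direct fiber-product manipulation using $X\cong Z\times_{Z'}X'$; these are cosmetic differences. Your final paragraph on the $\infty$-categorical upgrade via the map of \v{C}ech nerves goes beyond what the paper records (its proof, like that of Lemma~\ref{lem:fact2}, only exhibits the isomorphism on objects), but your outline is the natural way to make that precise.
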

\begin{proof}
Consider the following cubic diagram.
\begin{equation}\label{eq:cubeyz}
	\begin{tikzpicture}[baseline=(current  bounding  box.center),thick,>=\arrtip]
		\newcommand*{\ha}{1.5}; \newcommand*{\hb}{1.5}; \newcommand*{\hc}{1.5};
		\newcommand*{\va}{-.9}; \newcommand*{\vb}{-.9}; \newcommand*{\vc}{-.9}; 
		\node (ab) at (\ha,0) {$Y'$};
		\node (ad) at (\ha+\hb+\hc,0) {$X'$};
		\node (ba) at (0,\va) {$Y$};
		\node (bc) at (\ha+\hb,\va) {$X$};
		\node (cb) at (\ha,\va+\vb) {$Z'$};
		\node (cd) at (\ha+\hb+\hc,\va+\vb) {$Y'$};
		\node (da) at (0,\va+\vb+\vc) {$Z$};
		\node (dc) at (\ha+\hb,\va+\vb+\vc) {$Y$};
		\draw[<-] (ab) to node[above] {} (ad);
		\draw[<-] (ab) to node[above] {} (ba);
		\draw[->] (ab) to node[left,pos=.8] {} (cb);
		\draw[<-] (ad) to node[above] {} (bc);
		\draw[->] (ad) to node[right] {} (cd);
		\draw[->] (ba) to node[above] {} (da);
		\draw[<-] (cb) to node[above,pos=.2] {} (cd);
		\draw[<-] (cb) to node[above] {} (da);
		\draw[<-] (cd) to node[above] {} (dc);
		\draw[<-] (da) to node[above,pos=.6] {} (dc);
		
		\draw[-,line width=6pt,draw=white] (ba) to  (bc);
		\draw[<-] (ba) to node[above,pos=.75] {} (bc);
		\draw[-,line width=6pt,draw=white] (bc) to  (dc);
		\draw[->] (bc) to node[right,pos=.2] {} (dc);
	\end{tikzpicture}
\end{equation}
The front and back faces are Cartesian by construction and the bottom face is Cartesian by assumption. It follows that the top face is also Cartesian. This gives us the left Cartesian squares below where $i \in \{1,2\}$. Repeating the argument also gives us the right Cartesian squares below. 
\begin{equation*}
	\begin{tikzpicture}[baseline=(current  bounding  box.center),thick,>=\arrtip]
		\newcommand*{\ha}{3}; \newcommand*{\hb}{3}; \newcommand*{\va}{-1.5};
		\node[matrix] at (0,0) {
			\node (aa) at (0,0) {$X$};
			\node (ab) at (\ha,0) {$X'$};
			\node (ba) at (0,\va) {$Y$};
			\node (bb) at (\ha,\va) {$Y'$};
			\draw[->] (aa) to node[above] {$q$} (ab); 
			\draw[->] (aa) to node[left] {$\pi_i$} (ba);
			\draw[->] (ba) to node[below] {} (bb);
			\draw[->] (ab) to node[right] {$\pi_i$} (bb); \\
		};
		\node[matrix] at (5.5,0) {
			\node (aa) at (0,0) {$X \times_Y X$};
			\node (ab) at (\hb,0) {$X' \times_{Y'} X'$};
			\node (ba) at (0,\va) {$X$};
			\node (bb) at (\hb,\va) {$X'$};
			\draw[->] (aa) to node[above] {$q'$} (ab); 
			\draw[->] (aa) to node[left] {$\pi_i$} (ba);
			\draw[->] (ba) to node[above] {$q$} (bb);
			\draw[->] (ab) to node[right] {$\pi_i$} (bb); \\
		};
	\end{tikzpicture}
\end{equation*}

Since the maps $\pi_i$ are almost ind-finitely presented the left squares above imply that $q$ has \stable coherent pullback . 

Now consider the following commutative diagram.
\begin{equation*}\label{eq:xy}
	\begin{tikzpicture}[baseline=(current bounding box.center),thick,>=\arrtip]
		\newcommand*{\ha}{3}; \newcommand*{\hb}{3}; \newcommand*{\va}{-1.5};
		
		\node (a) at (0,0) {$X \times X$};
		\node (a') at (\ha,0) {$X \times_Y X$};
		\node (a'') at (\ha+\hb,0) {$X$};
		\node (a''') at (2*\ha+\hb,0) {$Y$};
		\node (b) at (0,\va) {$X' \times X'$};
		\node (b') at (\ha,\va) {$X' \times_{Y'} X'$};
		\node (b'') at (\ha+\hb,\va) {$X'$};
		\node (b''') at (2*\ha+\hb,\va) {$Y'$};
		
		\draw[->] (a') to node[above] {$\Delta$} (a);
		\draw[->] (a') to node[above] {$m$} (a'');
		\draw[->] (a'') to node[above] {$\pi_1$} (a''');
		\draw[->] (b') to node[above] {$\Delta'$} (b);
		\draw[->] (b') to node[above] {$m'$} (b'');
		\draw[->] (b'') to node[above] {$\pi_1$} (b''');
		
		\draw[->] (a) to node[left] {$q \times q$} (b);
		\draw[->] (a') to node[right] {$q'$} (b');
		\draw[->] (a'') to node[right] {$q$} (b'');
		\draw[->] (a''') to node[right] {} (b''');
	\end{tikzpicture}
\end{equation*}
The composition of the two rightmost squares can be rewritten as 
\begin{equation*}
	\begin{tikzpicture}[baseline=(current bounding box.center),thick,>=\arrtip]
		\newcommand*{\ha}{3}; \newcommand*{\hb}{3}; \newcommand*{\va}{-1.5};
		
		\node (a) at (0,0) {$X \times_Y X$};
		\node (a') at (\ha,0) {$X$};
		\node (a'') at (\ha+\hb,0) {$Y$};
		\node (b) at (0,\va) {$X' \times_{Y'} X'$};
		\node (b') at (\ha,\va) {$X'$};
		\node (b'') at (\ha+\hb,\va) {$Y'$,};
		
		\draw[->] (a) to node[above] {$\pi_1$} (a');
		\draw[->] (a') to node[above] {$\pi_1$} (a'');
		\draw[->] (b) to node[above] {$\pi_1$} (b');
		\draw[->] (b') to node[above] {$\pi_1$} (b'');
		
		\draw[->] (a) to node[left] {$q'$} (b);
		\draw[->] (a') to node[right] {$q$} (b');
		\draw[->] (a'') to node[right] {} (b'');
	\end{tikzpicture}
\end{equation*}
where both small squares above are Cartesian. It follows that the total square is Cartesian and subsequently so is the middle square in (\ref{eq:xy}). Since $m$ and $m'$ are ind-proper and almost ind-finitely presented it follows that this middle square satisfies base change. This means that for $\cF,\cG \in \Coh(X')$ we have 
\begin{align*}
q^*(\cF * \cG) 
&\cong q^* m'_* \Delta'^* (\cF \boxtimes \cG) \\
&\cong m_* q'^* \Delta'^* (\cF \boxtimes \cG) \\
&\cong m_* \Delta^* (q \times q)^* (\cF \boxtimes \cG) \cong q^*(\cF) * q^* (\cG).\qedhere
\end{align*}
\end{proof}

\begin{Lemma}\label{lem:fact2}
Consider a composition $Y \to Z \to Z'$ where $X := Y \times_Z Y$ and $X' := Y \times_{Z'} Y$ satisfy the hypotheses (\ref{eq:convhypos}). If the induced morphism $i: X \to X'$ is ind-proper, almost ind-finitely presented, then $i_*: \Coh(X) \to \Coh(X')$ is monoidal. 
\end{Lemma}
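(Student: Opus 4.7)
The plan is to reduce monoidality of $i_*$ to base change along a single Cartesian square. Let $\pi_1, \pi_2 : X \to Y$ and $\pi'_1, \pi'_2 : X' \to Y$ denote the two projections; by construction $\pi'_k \circ i \cong \pi_k$. Consider the natural map $j : X \times_Y X \to X' \times_Y X'$ induced by $i$ on each factor together with the identifications $\pi'_2 \circ i = \pi_2$, $\pi'_1 \circ i = \pi_1$. The key compatibilities are:
\begin{enumerate}
\item The square
\[
\begin{tikzpicture}[baseline=(current bounding box.center),thick,>=\arrtip]
\newcommand*{\ha}{3.2}; \newcommand*{\va}{-1.4};
\node (a) at (0,0) {$X \times_Y X$};
\node (b) at (\ha,0) {$X' \times_Y X'$};
\node (c) at (0,\va) {$X \times X$};
\node (d) at (\ha,\va) {$X' \times X'$};
\draw[->] (a) to node[above] {$j$} (b);
\draw[->] (a) to node[left] {$\delta_X$} (c);
\draw[->] (c) to node[above] {$i \times i$} (d);
\draw[->] (b) to node[right] {$\delta_{X'}$} (d);
\end{tikzpicture}
\]
is Cartesian: its pullback consists of $(x_1,x_2) \in X \times X$ with $\pi'_2(i(x_1)) = \pi'_1(i(x_2))$, which equals $X \times_Y X$ since $\pi'_k \circ i \cong \pi_k$.
\item The multiplications intertwine: $i \circ m_X \cong m_{X'} \circ j$, since both sides are obtained by forgetting the middle factor in the threefold fiber product of $Y$ over $Z$ (resp.\ $Z'$).
\end{enumerate}

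With these in hand, the argument runs as follows. For $\cF, \cG \in \Coh(X)$,
\begin{align*}
i_*(\cF \conv \cG) &\cong i_* m_{X*} \delta_X^*(\cF \boxtimes \cG) \\
&\cong m_{X'*} j_* \delta_X^*(\cF \boxtimes \cG) \quad\text{by (2),}\\
&\cong m_{X'*} \delta_{X'}^* (i \times i)_*(\cF \boxtimes \cG) \quad\text{by base change along (1),}\\
&\cong m_{X'*} \delta_{X'}^* (i_*\cF \boxtimes i_*\cG) \quad\text{by K\"unneth,}\\
&\cong i_*\cF \conv i_*\cG.
\end{align*}
The base change step is the one requiring technical care: it uses that $\delta_{X'}$ has \stable coherent pullback and $i \times i$ is ind-proper and almost ind-finitely presented (the latter since $i$ has these properties by hypothesis, which are stable under products). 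This is exactly the content of \cite[\tmpropupshriekupstargenICcaseIG]{CWtm} or its ind-geometric refinement \cite[\igpropuppershriekftdbasechange]{CWig}. The K\"unneth isomorphism $(i \times i)_*(\cF \boxtimes \cG) \cong i_*\cF \boxtimes i_*\cG$ holds by the lax symmetric monoidal structure on $\Coh$ from $\Corr(\indGStkkadtm)_{prop,coh}$ \cite[\tmsecindextcohcase]{CWtm}.

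For the full coherent (not merely object-level) monoidal structure, the cleanest formulation is to observe that the Čech nerve of $Y \to Z$ (resp.\ $Y \to Z'$) defines an algebra in $\Corr(\indGStkk)$ with underlying object $X$ (resp.\ $X'$), as in the proof of Proposition \ref{prop:monoidalconv}, and that the composition $Y \to Z \to Z'$ produces a morphism of Čech nerves, hence a morphism of algebras in correspondences, whose underlying $1$-cell is $i$ viewed as the correspondence $X \xleftarrow{\id} X \xrightarrow{i} X'$. Since $i$ is ind-proper and almost ind-finitely presented by hypothesis (and $\id$ trivially has \stable coherent pullback), this morphism of algebras lies in the subcategory $\Corr(\indGStkkadtm)_{prop,coh}$, and applying the lax symmetric monoidal functor $\Coh$ yields the desired monoidal structure on $i_*$. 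The main obstacle is just bookkeeping — verifying the Cartesianness in (1) and the base-change hypothesis — which the ind-geometric framework of \cite{CWtm, CWig} handles cleanly.
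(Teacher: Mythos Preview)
Your proof is correct and follows essentially the same approach as the paper: the same Cartesian square, the same chain of isomorphisms via base change, and the same appeal to ind-properness of $i \times i$ together with \stable coherent pullback of $\delta_{X'}$. The paper justifies Cartesianness of the square slightly differently (noting that both $\delta_X$ and $\delta_{X'}$ are base changes of $\Delta_Y: Y \to Y \times Y$, which is cleaner in the derived setting than your point-set description), and it does not spell out the higher-coherence argument via \v{C}ech nerves that you add at the end, but this is a natural elaboration consistent with how Proposition~\ref{prop:monoidalconv} is proved.
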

\begin{proof}
Consider the following commutative diagram
\begin{equation*}
	\begin{tikzpicture}[baseline=(current bounding box.center),thick,>=\arrtip]
		\newcommand*{\ha}{3}; \newcommand*{\hb}{3}; \newcommand*{\va}{-1.5};
		
		\node (a) at (0,0) {$X \times X$};
		\node (a') at (\ha,0) {$X \times_Y X$};
		\node (a'') at (\ha+\hb,0) {$X$};
		\node (b) at (0,\va) {$X' \times X'$};
		\node (b') at (\ha,\va) {$X' \times_{Y} X'$};
		\node (b'') at (\ha+\hb,\va) {$X'$};
		
		\draw[->] (a') to node[above] {$\Delta$} (a);
		\draw[->] (a') to node[above] {$m$} (a'');
		\draw[->] (b') to node[above] {$\Delta'$} (b);
		\draw[->] (b') to node[above] {$m'$} (b'');
		
		\draw[->] (a) to node[left] {$i \times i$} (b);
		\draw[->] (a') to node[right] {$i'$} (b');
		\draw[->] (a'') to node[right] {$i$} (b'');
	\end{tikzpicture}
\end{equation*}
Note that all stacks in this diagram are ind-tamely presented and coherent. The left square is Cartesian because both $\Delta$ and $\Delta'$ are obtained by base change from $\Delta_Y: Y \to Y \times Y$. This base change also implies that $\Delta$ and $\Delta'$ have \stable coherent pullback. Then for $\cF,\cG \in \Coh(X)$ we have
\begin{align*}
i_* (\cF * \cG) 
&\cong i_* m_* \Delta^* (\cF \boxtimes \cG) \cong m'_* i'_* \Delta^* (\cF \boxtimes \cG) \\
&\cong m'_* \Delta'^* (i \times i)_* (\cF \boxtimes \cG) \cong m'_* \Delta'^* (i_* \cF \boxtimes i_* \cG) \\
&\cong (i_* \cF) * (i_* \cG).
\end{align*}
where the third isomorphism uses the fact that $i'$ and $i \times i$ are ind-proper and almost ind-finitely presented and that $\Delta$ and $\Delta'$ have \stable coherent pullback. 
\end{proof}

Finally, putting everything together we obtain the main result of this section. 

\begin{proof}[Proof of Theorem \ref{thm:rmatrix}]
From \cite[Thm. 5.10]{CW1} we obtain a system of renormalized $r$-matrices assuming we have a factorization structure satisfying conditions (1)-(4) from \cite[Sec. 5.2]{CW1}. Condition (1) follows from the evident compatibility between the global convolution diagram in Section \ref{sec:globalconv} with the factorization structure. Condition (2) is a consequence of the fact that $\hR_{X} \cong \hR \times X$. Condition (3) is a special case of Proposition ~\ref{prop:unital}. Finally, condition (4) is a consequence of the discussion in Section \ref{sec:globalconv}. 
\end{proof} 

\section{Examples}\label{sec:GLn}

We conlude by studying the case $(G,N) = (GL_n, \C^n)$. The space $\hR_{GL_n, \C^n}$ admits a lattice model extending that of $\Gr_{GL_n}$, so this example lets us to illustrate more concretely the abstract theory developed in earlier sections. Moreover, we can directly establish the existence of the anticipated cluster structure when $n=2$. In general, the category $\KPGLn := \KP{GL_n}{\C^n}^\eta$ is expected to categorify a quantum cluster algebra associated to the following quiver:
\begin{equation}\label{fig:cluster}
\begin{tikzpicture}[baseline=(current  bounding  box.center),thick,>=\arrtip]
\newcommand*{\Ddotsdist}{2}
\newcommand*{\shft}{1}
\newcommand*{\DrawDots}[1]{
  \fill ($(#1) + .25*(\Ddotsdist,0)$) circle (.03);
  \fill ($(#1) + .5*(\Ddotsdist,0)$) circle (.03);
  \fill ($(#1) + .75*(\Ddotsdist,0)$) circle (.03);
}
\node [matrix] (Q) at (0,0)
{
\coordinate [label={below:$\cP_{1,0}$}] (2) at (0,0);
\coordinate [label={below:$\cP_{2,0}$}] (4) at (2,0);
\coordinate (6) at (4,0);
\coordinate (2n-4) at (6-\shft,0);
\coordinate [label={below:$\cP_{n-1,0}$}] (2n-2) at (8-\shft,0);
\coordinate [label={below:$\cP_{n,0}$}] (2n) at (10-\shft,0);

\coordinate [label={$\cP_{1,1}$}] (1) at (0,2);
\coordinate [label={$\cP_{2,1}$}] (3) at (2,2);
\coordinate (5) at (4,2);
\coordinate (2n-5) at (6-\shft,2);
\coordinate [label={$\cP_{n-1,1}$}] (2n-3) at (8-\shft,2);
\coordinate [label={$\cP_{0,\det}$}] (2n-1) at (10-\shft,2);

\foreach \v in {1,2,3,4,2n-3,2n-2,2n} {\fill (\v) circle (.06);};
\foreach \s/\t in {1/2,3/4,2n-3/2n-2} {
  \draw [->,shorten <=1.7mm,shorten >=1.7mm] ($(\s)+(0.06,0)$) to ($(\t)+(0.06,0)$);
  \draw [->,shorten <=1.7mm,shorten >=1.7mm] ($(\s)-(0.06,0)$) to ($(\t)-(0.06,0)$);
};

\newlength\squareEdgeLengthb
\setlength\squareEdgeLengthb{0.12cm} 

\draw (2n-1) ++(-0.5\squareEdgeLengthb,-0.5\squareEdgeLengthb) rectangle ++(\squareEdgeLengthb,\squareEdgeLengthb);

\draw [->,shorten <=1.7mm,shorten >=1.7mm] ($(2n-1)$) to ($(2n)$);
\draw [->,shorten <=1.7mm,shorten >=1.7mm] ($(2n-2)$) to ($(2n)$);

\foreach \s/\t in {2/3,4/5,2n-4/2n-3,2n-2/2n-1} {
  \draw [->,shorten <=1.7mm,shorten >=1.7mm] ($(\s)$) to ($(\t)$);
};
\foreach \s/\t in {4/1,6/3,2n-2/2n-5,2n/2n-3} {
  \draw [shorten <=1.7mm,shorten >=1.7mm] ($(\s)$) to ($(\s)!.5!(\t)$);
  \draw [->,shorten <=1.7mm,shorten >=1.7mm] ($(\s)!.5!(\t)$) to ($(\t)$);
};
\DrawDots{4-\shft*.5,1};\\
};
\end{tikzpicture}
\end{equation}
Here the square vertex is frozen, the labels indicate corresponding simple objects in $\KPGLn$, and we note that when $n=1$ this specializes to the quiver (\ref{eq:introquiver}) from the introduction. 

Below we construct some of the expected exchange relations, and in the $n=2$ case these are enough to prove that we indeed have a monoidal cluster categorification (Theorem \ref{thm:qcatthminthesec}). We also connect duals in $\KPGLtwo$ to the twist automorphism (alias DT transformation, monodromy operator, or spectrum generator). Finally, we emphasize that these results are consistent with the calculation of the BPS quiver of the associated gauge theory \cite{ACCERV14}, providing a key consistency check that the main construction of this paper is physically correct.

\subsection{The lattice model}

Throughout Section \ref{sec:GLn} we adopt the standing convention that $\Gr_{GL_n}$ refers to the reduced locus of the affine Grassmannian, likewise for symbols denoting Schubert and convolution varieties. As we only perform computations involving sheaves supported on the reduced locus, this will be sufficient for our purposes while reducing notational overhead.

First recall the lattice model for the affine Grassmannian of $GL_n$. We have an identification
\begin{align*}
\Gr &\congto \{ \cO\text{-lattices}\: L \subset \N_\cK \}\\
[g] &\:\mapsto\: gL_0
\end{align*}
where $N := \C^n$, and where $L_0$ denotes the standard lattice $N_\cO \subset N_\cK$ (we will write $L_0$ and $N_\O$ interchangeably). 

Setting $\Gr^k := \Gr_{\omega^\vee_k}$ for $1 \leq k \leq n$, we have an induced identification
$$\Gr^k  = \{L \overset{k}\subset L_0: tL_0 \subset L\},$$
where $L \overset{k}\subset L_0$ indicates that $\dim(L_0/L)=k$.
This space is isomorphic to the finite-dimensional Grassmannian of $k$-dimensional subspaces of $\C^n \cong L_0/tL_0$. 

The convolution spaces of such varieties can similarly be described as
\begin{align*}
\Gr^{k_1} \ttimes \dots \ttimes \Gr^{k_m} &\congto \{L_m \overset{k_m}\subset \cdots \overset{k_2}\subset L_1 \overset{k_1}\subset L_0: tL_{i-1} \subset L_i \}\\
[g_1,\dotsc,g_m] &\:\mapsto\: g_1 \cdots g_m L_0 \subset \cdots \subset g_1 L_0 \subset L_0.
\end{align*}
The multiplication map $m$ to $\Gr$ is then given by the forgetful map
\begin{align*}
m: \Gr^{k_1} \ttimes \dots \ttimes \Gr^{k_m} & \rightarrow \Gr \\
(L_m \subset \dots \subset L_1 \subset L_0) &\mapsto (L_m \subset L_0).
\end{align*}
We also have a lattice description of the anti-fundamental Schubert varieties as
$$\Gr_{-w_0 \omega^\vee_k} = \{L_0 \overset{k}\subset L: tL \subset L_0\}.$$
We denote this variety by $\Gr^{-k}$ for short.

The space $\hR := \hR_{GL_n,\C^n}$, or more precisely its reduced classical locus $\hR^{\cl}$, inherits a lattice description from $\Gr$. Specifically, its defining diagram as an intersection of bundles translates directly into the lattice model as follows.
\begin{equation}\label{eq:latticeintersectiondiagram}
\begin{tikzpicture}
[baseline=(current  bounding  box.center),thick,>=\arrtip]
\node (a) at (0,0) {$\hR^{\cl} \cong \{L \subset N_\cK, s \in L \cap L_0\}$};
\node (b) at (6.5,0) {$\Gr \times \N_\cO \cong \{L \subset N_\cK, s \in L_0\}$};
\node (c) at (0,-1.5) {$\T \cong \{L \subset N_\cK, s \in L\}$};
\node (d) at (6.5,-1.5) {$\Gr \times \N_\cK \cong \{L \subset N_\cK, s \in N_\cK\}$};
\draw[right hook->] (a) to node[above] {$$} (b);
\draw[right hook->] (b) to node[right] {$$} (d);
\draw[right hook->] (a) to node[left] {$$}(c);
\draw[right hook->] (c) to node[above] {$$} (d);
\end{tikzpicture}
\end{equation}
The derived structure on $\hR$ is determined by the Koszul resolution of $\N_\O \subset N_\cK$. In analogy with $\Gr^k$ we will write $\hR_k := \hR_{\omega_k^\vee}$ and $\T_k := \T_{\omega_k^\vee}$, similarly for $\hR_{-k}$ and $\T_{-k}$. From (\ref{eq:latticeintersectiondiagram}) we then obtain  identifications
\begin{equation*}
\hR_k^{\cl} = \{s \in L \overset{k}\subset L_0: tL_0 \subset L\} \quad \hR_{-k}^{\cl} = \{s \in L_0 \overset{k} \subset L: tL \subset L_0 \}.
\end{equation*}

\subsection{Koszul-perverse coherent sheaves} We now describe the most basic examples of simple Koszul-perverse sheaves on $\hR_{GL_n, \C^n}$. From now on we write $\hGO$ for $GL_{n,\O} \rtimes \C^\times$ and $\eta: \C^\times \to GL_n$ for the inclusion of the center. Since $\eta$ acts with weight one on $\C^n$, we may consider the category $\KPGLn := \cK \cP^\eta_{GL_n, \C^n}$. In order to simplify expressions, we abuse notation by writing $\la 1 \ra := [1]\{-1\}$ (this convention agrees with \cite{CW1}). Since there is no one-dimensional $\hGO$-representation on which $\eta$ acts with weight one, there is no Koszul shift in $\KPGLn$ and hence no ambiguity. 

Recall the following notation for tautological bundles on $\Gr$. We write 
$$ \cO_{\Gr^k} \otimes (L/tL_0) \in \Coh^\hGO(\Gr^k) $$
for the sheaf of sections of the bundle over $\Gr^k$ whose fiber over $[L] \in \Gr^k$ is $L/tL_0$, notating other tautological bundles on $\Gr$ and $\hR$ similarly.

Given $k \in [1,n]$ we write $i_k: \hR_k^{\cl} \into \hR$ for the natural embedding, similarly for $i_{-k}: \hR_{-k}^{\cl} \into \hR$. Given $\ell \in \Z$ we then define
\begin{equation}\label{eq:Pkldef}
\begin{gathered}
\cP_{k,\ell} := i_{k*} (\O_{\hR_k^{\cl}} \otimes \det(L_0/L)^\ell) \la \tfrac12 \dim \Gr^k \ra [-k\ell] \\
\cP_{-k,\ell} := i_{-k*} (\O_{\hR_{-k}^{\cl}} \otimes \det(L/L_0)^\ell) \la \tfrac12 \dim \Gr^{-k} \ra [-k\ell]\\
\cP_{0,\det} := i_{0*} (\O_{\hR^{\cl}_0} \otimes \det(L_0/tL_0)) \la -n \ra.
\end{gathered}
\end{equation}
For $\ell=0$ we also abbreviate $\cP_k := \cP_{k,\ell}$ and $\cP_{-k,\ell} := \cP_{-k,\ell}$. 

By abuse we also use the notation (\ref{eq:Pkldef}) for the corresponding sheaves on $\Gr$, i.e. the sheaves defined by the same formula but writing $i_{\pm k}$ for the embedding $\Gr^{\pm k} \into \Gr$. This notation was used in \cite[Section 2.2]{CW1} except for the presence of the shift $\la -k \ell \ra$. Without the shift these are simple objects in the perverse heart of $\Coh^\hGO(\Gr)$, hence after applying $\la -k \ell \ra$ they are simple objects in its Koszul-perverse heart. It then follows from Theorem \ref{thm:koszul-perverse} that the sheaves (\ref{eq:Pkldef}) are simple objects of $\KPGLn$. 

The t-exactness of the functors $\sigma_1^* i_{1*}$ and $\sigma_2^* i_{2*}$ is now illustrated concretely by the following calculation, noting that by definition tensoring with $\Lambda^i(L_0/L)^\vee [i]$ is t-exact for the Koszul-perverse t-structure on $\Coh^\hGO(\Gr)$. 

\begin{Proposition}\label{prop:extheta}
We have isomorphisms 
\begin{align*}
& \sigma_1^* i_{1*} (\cP_{-k,\ell}) \cong \cP_{-k,\ell}, \ \ \text{ } \ \   \sigma_1^* i_{1*}(\cP_{k,\ell}) \cong \cP_{k,\ell} \otimes \bigoplus_{i \ge 0} \Lambda^i(L_0/L)^\vee [i], \\
& \sigma_2^* i_{2*} (\cP_{k,\ell}) \cong \cP_{k,\ell}, \ \ \text{ } \ \  \sigma_2^* i_{2*}(\cP_{-k,\ell}) \cong \cP_{-k,\ell} \otimes \bigoplus_{i \ge 0} \Lambda^i(L/L_0)^\vee [i].
\end{align*}
\end{Proposition}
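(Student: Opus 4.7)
The plan is to translate the lattice descriptions of the subschemes $\hR_{\pm k}^{\cl}$ into explicit identifications inside $\Gr \times \N_\cO$ and $\T$, and then compute the derived pullback by $\sigma_1$ or $\sigma_2$ in each case. First I will observe that inside $\Gr \times \N_\cO = \{(L,s): s \in L_0\}$ the constraint ``$s \in L_0$'' is automatic, while inside $\T = \{(L,s): s \in L\}$ the constraint ``$s \in L$'' is. Combining these with the lattice descriptions $\hR_k^{\cl} = \{s \in L \overset{k}\subset L_0: tL_0 \subset L\}$ and $\hR_{-k}^{\cl} = \{s \in L_0 \overset{k}\subset L: tL \subset L_0\}$ yields the key geometric picture. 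Namely, in $\Gr \times \N_\cO$ the subscheme $\hR_{-k}^{\cl}$ is the full product $\Gr^{-k} \times \N_\cO$, and in $\T$ the subscheme $\hR_k^{\cl}$ is the full restriction $\T|_{\Gr^k}$; whereas in $\Gr \times \N_\cO$ the subscheme $\hR_k^{\cl}$ is the proper subbundle corresponding to $L \subset L_0$ over $\Gr^k$, and in $\T$ the subscheme $\hR_{-k}^{\cl}$ is the proper subbundle corresponding to $L_0 \subset L$ over $\Gr^{-k}$.

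The two ``full bundle'' isomorphisms $\sigma_1^* i_{1*} \cP_{-k,\ell} \cong \cP_{-k,\ell}$ and $\sigma_2^* i_{2*} \cP_{k,\ell} \cong \cP_{k,\ell}$ then follow from flat base change applied to the Cartesian squares
\begin{equation*}
\begin{tikzpicture}[baseline=(current  bounding  box.center),thick,>=\arrtip]
\node (a) at (0,0) {$\Gr^{-k}$}; \node (b) at (3,0) {$\Gr^{-k} \times \N_\cO$};
\node (c) at (0,-1.5) {$\Gr$}; \node (d) at (3,-1.5) {$\Gr \times \N_\cO$};
\draw[right hook->] (a) to (c); \draw[right hook->] (b) to (d);
\draw[->] (a) to node[above] {$\sigma_1$} (b); \draw[->] (c) to node[above] {$\sigma_1$} (d);
\node (e) at (6,0) {$\Gr^k$}; \node (f) at (9,0) {$\T|_{\Gr^k}$};
\node (g) at (6,-1.5) {$\Gr$}; \node (h) at (9,-1.5) {$\T$,};
\draw[right hook->] (e) to (g); \draw[right hook->] (f) to (h);
\draw[->] (e) to node[above] {$\sigma_2$} (f); \draw[->] (g) to node[above] {$\sigma_2$} (h);
\end{tikzpicture}
\end{equation*}
together with the observation that no derived correction arises because the zero section is being pulled back through a full bundle restriction. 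For the two ``subbundle'' cases, the essential input is the standard Koszul computation: for an inclusion of vector bundles $W \hookrightarrow V$ over a base $X$ with zero section $\sigma: X \to V$ and inclusion $\iota: W \hookrightarrow V$, one has $\sigma^* \iota_* \cO_W \cong \bigoplus_{i \ge 0} \Lambda^i(V/W)^\vee[i]$, obtained by tensoring the Koszul resolution of $\cO_W$ as an $\cO_V$-module against $\cO_X$. Applied to $(W,V) = (L,L_0)$ over $\Gr^k$ this yields the extra factor $\bigoplus_i \Lambda^i(L_0/L)^\vee[i]$ in $\sigma_1^* i_{1*}(\cP_{k,\ell})$, and applied to $(W,V) = (L_0, L)$ over $\Gr^{-k}$ it yields $\bigoplus_i \Lambda^i(L/L_0)^\vee[i]$ in $\sigma_2^* i_{2*}(\cP_{-k,\ell})$; the line bundle $\det(L_0/L)^\ell$ and cohomological/loop shifts pass through the computation unchanged.

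The main technical point to address is that $\N_\cO$ is infinite-dimensional, so the Koszul resolution must be interpreted within the ind-tamely presented framework of Section \ref{sec:R}. This is straightforward: either one carries out the computation on the finite-dimensional approximations $\hR_\alpha^{k,\ell}$ where $\N_\cO$ is replaced by a finite-dimensional quotient (whose colimit recovers $\hR$ by Lemma \ref{lem:Cohcoliminvlim} and the inductive construction in the proof of Theorem~\ref{thm:koszul-perverse}), or one invokes directly the Koszul-lifting framework of Section \ref{sec:koszullift}, in which the formulas $\sigma^* \iota_* (-) \cong (-) \otimes \bigoplus_i \Lambda^i(V/W)^\vee[i]$ are precisely the content of the cone sequences appearing in the proofs of Lemmas \ref{lem:koszultexists} and \ref{lem:koszulconserv}. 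Either route is routine once the lattice identifications of the previous paragraph are in place.
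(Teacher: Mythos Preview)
Your proposal is correct and follows essentially the same approach as the paper's proof: both identify $\hR_{\pm k}^{\cl}$ inside $\T$ and $\Gr \times \N_\cO$ via the lattice model, observe that two of the four cases are full-bundle restrictions (hence trivial), and compute the remaining two via the finite Koszul complex associated to the rank-$k$ quotient $L_0/L$ or $L/L_0$. The paper phrases the Koszul step as the resolution of the vanishing locus of the tautological section $1 \mapsto s$ of $L/L_0$ on $\T_{-k}$, while you phrase it as $\sigma^*\iota_*\cO_W$ for a subbundle inclusion $W \hookrightarrow V$; these are the same computation, and your added remark about the infinite-dimensional ambient bundle is correct but not strictly needed since the relevant Koszul complex has finite length.
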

\begin{proof}
We compute $\sigma_2^* i_{2*}$ explicitly, the computation of $\sigma_1^* i_{1*}$ being similar. In the first case we have  $\hR_k^{\cl} \cong \T_k$ and so 
$$\sigma_2^* i_{2*}(\cP_{k,\ell}) \cong \sigma_2^*(\O_{\T_k} \otimes \pi_{\Gr^k}^* \det(L_0/L)^\ell) \la \tfrac12 \dim \Gr^k \ra [-k\ell] \cong \cP_{k,\ell}.$$
In the second case $\hR_{-k}^{\cl} \cong \{s \in L_0 \subset L\} \subset \{L_0 \subset L \ni s\} \cong \T_{-k}$., hence
\begin{align*}
\sigma_2^* i_{2*}(\cP_{-k,\ell}) 
&\cong \sigma_2^*(\O_{\hR_{-k}^{\cl}} \otimes \pi_{\Gr^{-k}}^* \det(L/L_0)^\ell) \la \tfrac12 \dim \Gr^{-k} \ra [-k\ell] \\
&\cong \bigoplus_{i \ge 0} \O_{\Gr^{-k}} \otimes \Lambda^i(L/L_0)^\vee [i] \otimes \det(L/L_0)^\ell \la \tfrac12 \dim \Gr^{-k} \ra [-k\ell] \\
&\cong \cP_{-k,\ell} \otimes \bigoplus_{i \ge 0} \Lambda^i(L/L_0)^\vee [i].
\end{align*}
Here to obtain the second isomorphism we use the Koszul resolution arising from the fact that $\{s \in L_0 \subset L\} \subset \{L_0 \subset L \ni s\} = \T_{-k}$ is the vanishing locus of the map $\O_{\T_{-k}} \to \O_{\T_{-k}} \otimes (L/L_0)$ given by $1 \mapsto s$. 
\end{proof}

\subsection{Exchange relations for $GL_n$}\label{sec:GLnmut} 
We now establish the existence of certain categorified exchange relations in $\KPGLn$. Specifically, we consider the exchange relations associated to the mutations of the purported cluster of (\ref{fig:cluster}). The most interesting of these is the mutation at $\cP_{n,0}$, which generalizes the computation performed in the $n=1$ case in Section~\ref{sec:abex}. 

\begin{Proposition}\label{prop:mutation1}
In $\KPGLn$ we have exact sequences
\begin{gather}
\label{eq:mut1}  0 \to \cP_{0,\det} * \cP_{n-1,\ell} \{-n\} \to \cP_{n,\ell} * (\cP_{0,\det} * \cP_{-1,-\ell}) \{n\} \to \cP_{n-1,\ell+1} \to 0 \\
\label{eq:mut2}  0 \to \cP_{n-1,\ell+1} \to (\cP_{-1,-\ell} * \cP_{0,\det}) * \cP_{n,\ell} \{-n\} \to \cP_{n-1,\ell} * \cP_{0,\det} \{n\} \to 0
\end{gather}
for any $\ell \in \Z$. 
\end{Proposition}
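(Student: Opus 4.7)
The strategy is to reduce to an explicit computation in the lattice model, using Proposition \ref{prop:oneconvtorulethemall} to rewrite each convolution as $m_* \td^*(\cF \tbox \cG)$, and then to invoke the t-exactness of convolution (Theorem \ref{thm:convsecmainthm}) to promote derived-category statements into short exact sequences in $\KPGLn$. First I would handle the simple building blocks. The sheaf $\cP_{0,\det}$ is supported on $\hR_0^{\cl} \cong \N_\O$ and is essentially the structure sheaf twisted by the determinant character of $L_0/tL_0$; convolution with it therefore acts as a global equivariant twist, so $\cP_{0,\det} \conv \cP_{-1,-\ell}$ and $\cP_{-1,-\ell} \conv \cP_{0,\det}$ can be written explicitly as the pushforward from $\hR_{-1}^{\cl}$ of a concrete $\hGO$-equivariant line bundle, placed in the appropriate Koszul shift.

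The main computation is the triple convolution $\cP_{n,\ell} \conv \cP_{0,\det} \conv \cP_{-1,-\ell}$. Working in the lattice model, a point of $\hR_n^{\cl} \ttimes \hR_{-1}^{\cl}$ consists of nested lattices $L_2 \supset L_1 \overset{n}{\subset} L_0$ with $tL_0 \subset L_1$, $tL_2 \subset L_1$, and $\dim L_2/L_1 = 1$, together with appropriately matching section data. The first condition forces $L_1 = tL_0$, and the remaining constraints then place $L_2 \subset L_0$ with $\dim L_0/L_2 = n-1$, so that the multiplication map identifies the classical locus of the convolution space with $\hR_{n-1}^{\cl}$ via the identity $\omega_n^\vee + (-w_0 \omega_1^\vee) = \omega_{n-1}^\vee$. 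Tracking the section data and the derived structure coming from the Koszul fiber product defining $\hR$, one finds that the derived pushforward $m_* \td^*(\cP_{n,\ell} \tbox \cdots)$ is the pushforward to $\hR_{n-1}$ of the structure sheaf of a subscheme $Z \subset \hR_{n-1}$ which agrees with $\hR_{n-1}^{\cl}$ classically but carries an extra derived direction, twisted by the appropriate line bundle. The Koszul resolution associated to the codimension-one equation cutting out this extra direction produces a short exact sequence
\begin{equation*}
0 \to \cO_{\hR_{n-1}^{\cl}} \la -1 \ra \to \cO_Z \to \cO_{\hR_{n-1}^{\cl}} \to 0
\end{equation*}
(after tensoring with the relevant bundle), which, once pushed forward, becomes (\ref{eq:mut1}) provided the outer terms are correctly identified with $\cP_{0,\det} \conv \cP_{n-1,\ell}\{-n\}$ and $\cP_{n-1,\ell+1}$ using Theorem \ref{thm:koszulsimples} and the definitions in (\ref{eq:Pkldef}). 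The fact that the pushed-forward sequence remains short exact in $\KPGLn$, rather than a longer triangle, follows from the t-exactness in Theorem \ref{thm:convsecmainthm}, since all three terms are manifestly Koszul-perverse. The symmetric sequence (\ref{eq:mut2}) is obtained either by an analogous computation on the opposite convolution ordering, or by applying the duality $\cF \mapsto \cF^R$ of Theorem \ref{thm:adjoints} to (\ref{eq:mut1}) and using the explicit formulas for duals of the simples $\cP_{k,\ell}$.

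The main obstacle lies in the second paragraph: performing the derived geometric computation of the triple convolution with sufficient precision to pin down all equivariant characters, Koszul shifts, and loop shifts. The derived structure on $\hR$ interacts nontrivially with the convolution (the defining intersection of $\T$ and $\Gr \times \N_\O$ in $\Gr \times \N_\K$ is not classical), and the bookkeeping needed to match the explicit normalizations in (\ref{eq:Pkldef}) with the pushforward of the Koszul resolution is delicate. Once this geometric identification is carried out cleanly, the exchange relations follow as a direct consequence of t-exactness.
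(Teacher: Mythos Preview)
Your overall strategy---compute the convolution in the lattice model, extract a distinguished triangle, and use Theorem \ref{thm:convsecmainthm} to promote it to a short exact sequence in $\KPGLn$---matches the paper's. But you have the geometry of the two cases swapped, and this is not just a labeling issue: the two sequences arise from genuinely different mechanisms.

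For (\ref{eq:mut1}) the crucial observation is that $\hR_n^{\cl} = \T_n$, since $L_1 = tL_0 \subset L_0$ forces any section of $L_1$ to already lie in $L_0$. Consequently the base change defining $\convspace'_{n,-1}$ introduces \emph{no} derived structure: the convolution space is the classical scheme $\{tL_0 = L_1 \overset{1}\subset L_2 \overset{n-1}\subset L_0,\; v \in tL_0\}$. Under $m$ this lands not on all of $\hR_{n-1}^{\cl}$ but on the proper divisor $D \subset \hR_{n-1}^{\cl}$ cut out by the condition $v \in tL_0$. The triangle is then the ordinary divisor sequence
\[
\cO_{\hR_{n-1}^{\cl}} \otimes (L/tL_0)^\vee \to \cO_{\hR_{n-1}^{\cl}} \to \cO_D,
\]
rotated and twisted by $\det(L_0/tL_0)[-n]$. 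Your claim that the classical locus of the convolution space is all of $\hR_{n-1}^{\cl}$ with ``an extra derived direction'' is incorrect for this ordering.

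What you describe is exactly what happens for (\ref{eq:mut2}). There $\hR_{-1}^{\cl} \hookrightarrow \T_{-1}$ is a codimension-one embedding, so $\convspace'_{-1,n} \cong \hR_{-1}^{\cl} \times_{\T_{-1}} (\Gr_{-1} \ttimes \hR_n^{\cl})$ is a genuine derived self-intersection, and the triangle
\[
\cO_{\hR_{n-1}^{\cl}} \otimes (t^{-1}L/L_0)^\vee[1] \to \cO_{\hR_{-1}^{\cl}} \conv \cO_{\hR_n^{\cl}} \to \cO_{\hR_{n-1}^{\cl}}
\]
comes from the Koszul description of that derived fiber product, not from a divisor. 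The paper does this computation directly rather than via duality; your suggestion to derive (\ref{eq:mut2}) from (\ref{eq:mut1}) by applying $(-)^R$ would also work but requires the explicit dual formulas that only appear later.
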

\begin{proof}
We consider the case $\ell=0$ as the general case is very similar. To compute $\cP_n \conv \cP_{-1}$ we consider the following diagram of Cartesian squares. 
\begin{equation*}
	\begin{tikzpicture}[baseline=(current  bounding  box.center),thick,>=\arrtip]
		\newcommand*{\ha}{3}; \newcommand*{\va}{-1.5};
		
		\node (aa) at (0,0) {$\hR_n^{\cl} \ttimes \hR_{-1}^{\cl}$};
		\node (ab) at (\ha,0) {$\convspace'_{n,-1}$};
		\node (ba) at (-\ha,\va) {$\hR_n$};
		\node (bb) at (0,\va) {$\hR_n \ttimes \hR_{-1}$};
		\node (bc) at (\ha,\va) {$\convspace_{n,-1}$};
		\node (bd) at (3*\ha, \va) {$\hR_{n-1}$};
		\node (ca) at (-\ha,2*\va) {$\T_n$};
		\node (cb) at (0,2*\va) {$\T_n \ttimes \hR_{-1}$};
		\node (cc) at (\ha,2*\va) {$\Gr_n \ttimes \hR_{-1}$};
		\node (cd) at (2*\ha, 2*\va) {$\Gr_n \ttimes \T_{-1}$};
		\node (ce) at (3*\ha,2*\va) {$\T_{n-1}$};
		
		\draw[<-] (aa) to node[above] {$\hat{d}$} (ab);
		\draw[->] (aa) to node[left] {$\cl$} (bb);
		\draw[->] (ab) to node[right] {$\cl'$} (bc);
		\draw[<-] (ba) to node[above] {$p$} (bb);
		\draw[<-] (bb) to node[above] {$\td$} (bc);
		\draw[->] (bc) to node[above] {$\tm$} (bd);
		\draw[->] (ba) to node[left] {$i_2$} (ca);
		\draw[->] (bb) to node[left] {$i_2 \ttimes \id$} (cb);
		\draw[->] (bc) to node[right] {$i$} (cc);
		\draw[->] (bd) to node[right] {$i_2$} (ce);
		\draw[->] (cb) to node[above] {$p$} (ca);
		\draw[<-] (cb) to node[above] {$d$} (cc);
		\draw[->] (cc) to node[above] {$\id \ttimes i_2$} (cd);
		\draw[->] (cd) to node[above] {$m$} (ce);
		
	\end{tikzpicture}
\end{equation*}
Here the bottom two rows are restrictions of (\ref{eq:conv}). We have
$$\O_{\hR_n^{\cl}} * \O_{\hR_{-1}^{\cl}} \cong \tm_* \td^* \cl_* (\O_{\hR_n^{\cl} \ttimes \hR_{-1}^{\cl}}) \cong \tm_* \cl'_* \hat{d}^*_{\cl} (\O_{\hR_n^{\cl} \ttimes \hR_{-1}^{\cl}}) \cong \tm_* \cl'_* (\O_{\convspace_{n,-1}^{\cl}}).$$
We also have
$$\convspace'_{n,-1} \cong \hR_n^{\cl} \times_{\T_n} (\Gr_n \ttimes \hR_{-1}^{\cl}) \cong \Gr_n \ttimes \hR_{-1}^{\cl} \cong \{tL_0=L_1 \overset{1}\subset L_2 \overset{n-1} \subset L_0, v \in tL_0 \}$$
since $\hR_n^{\cl} = \T_n$. Then the composition $\tm \circ \cl': \convspace'_{n,-1} \to \hR_{n-1}$ factors as
$$\convspace'_{n,-1} \hookrightarrow \hR_{n-1}^{\cl} \xrightarrow{\cl} \hR_{n-1}$$
where the first map is the obvious inclusion. It follows that $\O_{\hR_n^{\cl}} * \O_{\hR_{-1}^{\cl}} \cong \cl_*(\O_D)$ where
$$D \subset \hR_{n-1}^{\cl} = \{tL_0 \overset{1}\subset L \overset{n-1} \subset L_0, v \in L\}$$
is the divisor defined by requiring that $v \in tL_0$.

To describe this divisor consider the canonical section
$$\O_{\hR_{n-1}^{\cl}} \to \O_{\hR_{n-1}^{\cl}} \otimes (L/tL_0)$$
on $\hR_{n-1}^{\cl}$ induced by $v$. This section clearly vanishes along $D$ and thus we obtain the standard exact triangle
$$\O_{\hR_{n-1}^{\cl}} \otimes (L/tL_0)^\vee \to \O_{\hR_{n-1}^{\cl}} \to \O_D,$$
or equivalently
$$\O_{\hR_{n-1}^{\cl}} \to \O_{\hR_n^{\cl}} * \O_{\hR_{-1}^{\cl}} \to \O_{\hR_{n-1}^{\cl}} \otimes (L/tL_0)^\vee [1].$$
Twisting by $\det(L_0/tL_0)[-n]$ we obtain (\ref{eq:mut1}).

To compute $\cP_{-1} \conv \cP_n$ we consider the similar diagram
\begin{equation*}
	\begin{tikzpicture}[baseline=(current  bounding  box.center),thick,>=\arrtip]
		\newcommand*{\ha}{3}; \newcommand*{\va}{-1.5};
		
		\node (aa) at (0,0) {$\hR_{-1}^{\cl} \ttimes \hR_n^{\cl}$};
		\node (ab) at (\ha,0) {$\convspace'_{-1,n}$};
		\node (ba) at (-\ha,\va) {$\hR_{-1}$};
		\node (bb) at (0,\va) {$\hR_{-1} \ttimes \hR_n$};
		\node (bc) at (\ha,\va) {$\convspace_{-1,n}$};
		\node (bd) at (3*\ha, \va) {$\hR_{n-1}$};
		\node (ca) at (-\ha,2*\va) {$\T_{-1}$};
		\node (cb) at (0,2*\va) {$\T_{-1} \ttimes \hR_{n}$};
		\node (cc) at (\ha,2*\va) {$\Gr_{-1} \ttimes \hR_n$};
		\node (cd) at (2*\ha, 2*\va) {$\Gr_{-1} \ttimes \T_n$};
		\node (ce) at (3*\ha,2*\va) {$\T_{n-1}$};
		
		\draw[<-] (aa) to node[above] {$\hat{d}$} (ab);
		\draw[->] (aa) to node[left] {$\cl$} (bb);
		\draw[->] (ab) to node[right] {$\cl'$} (bc);
		\draw[<-] (ba) to node[above] {$p$} (bb);
		\draw[<-] (bb) to node[above] {$\td$} (bc);
		\draw[->] (bc) to node[above] {$\tm$} (bd);
		\draw[->] (ba) to node[left] {$i_2$} (ca);
		\draw[->] (bb) to node[left] {$i_2 \ttimes \id$} (cb);
		\draw[->] (bc) to node[right] {$i$} (cc);
		\draw[->] (bd) to node[right] {$i_2$} (ce);
		\draw[->] (cb) to node[above] {$p$} (ca);
		\draw[<-] (cb) to node[above] {$d$} (cc);
		\draw[->] (cc) to node[above] {$\id \ttimes i_2$} (cd);
		\draw[->] (cd) to node[above] {$m$} (ce);
		
	\end{tikzpicture}
\end{equation*}
of Cartesian squares. As above we have
$$\O_{\hR_{-1}^{\cl}} * \O_{\hR_n^{\cl}} \cong \tm_* \cl'_* (\O_{\convspace'_{-1,n}})$$
as well as
$$\convspace'_{-1,n} \cong \hR_{-1}^\cl \times_{\T_{-1}} (\Gr_{-1} \ttimes \hR_n^\cl).$$
This time, however, $\hR_{-1}^\cl \hookrightarrow \T_{-1}$ is a codimension one embedding of bundles. To understand this fiber product we use the following diagram of fiber products 
\begin{equation*}
	\begin{tikzpicture}[baseline=(current  bounding  box.center),thick,>=\arrtip]
		\newcommand*{\ha}{3}; \newcommand*{\va}{-1.5};
		
		\node (aa) at (0,0) {$\convspace'_{-1,n}$};
		\node (ab) at (\ha,0) {$\hR_{-1}^\cl \times_{\T_{-1}} \hR_{-1}^\cl$};
		\node (ac) at (2*\ha,0) {$\hR_{-1}^\cl$};
		\node (ba) at (0,\va) {$\Gr_{-1} \ttimes \hR_n^\cl$};
		\node (bb) at (\ha,\va) {$\hR_{-1}^\cl$};
		\node (bc) at (2*\ha,\va) {$\T_{-1}$};
		
		\draw[->] (aa) to node[above] {} (ab);
		\draw[->] (ab) to node[above] {} (ac);
		\draw[->] (aa) to node[left] {} (ba);
		\draw[->] (ab) to node[left] {} (bb);
		\draw[->] (ac) to node[right] {} (bc);
		\draw[->] (ba) to node[below] {} (bb);
		\draw[->] (bb) to node[below] {} (bc);
	\end{tikzpicture}
\end{equation*}
On $\hR_{-1}^\cl \times_{\T_{-1}} \hR_{-1}^\cl$ we have the triangle
$$\O_{\hR_{-1}^\cl} \otimes (L_1/L_0)^\vee [1] \to \O_{\hR_{-1}^\cl \times_{\T_{-1}} \hR_{-1}^\cl} \to \O_{\hR_{-1}^\cl},$$
where we identify $(\hR_{-1}^\cl \times_{\T_{-1}} \hR_{-1}^\cl)^\cl$ with $\hR_{-1}^\cl$. This follows since the normal bundle of $\hR_{-1}^\cl \subset \T_{-1}$ is the line bundle $L_1/L_0$. It further follows that on $\convspace'_{-1,n}$ we have the triangle
$$\O_{\Gr_{-1} \ttimes \hR_n^\cl} \otimes (L_1/L_0)^\vee [1] \to \O_{\convspace'_{-1,n}} \to \O_{\Gr_{-1} \ttimes \hR_n^\cl},$$
where we identify $(\convspace'_{-1,n})^\cl$ with $\Gr_{-1} \ttimes \hR_n^\cl$. Since
$$\Gr_{-1} \ttimes \hR_n^\cl \cong \hR^\cl_{n-1} = \{tL_0 \overset{1}\subset L \overset{n-1}\subset L_0, v \in L\},$$
applying $\tm_* \cl'_*$ leaves us with the triangle
$$\O_{\hR_{n-1}^\cl} \otimes (t^{-1}L/L_0)^\vee [1] \to \O_{\hR_{-1}^{\cl}} * \O_{\hR_n^{\cl}} \to \O_{\hR_{n-1}^\cl}.$$
Twisting by $\det(L_0/tL_0) [-n]\{2\}$ and noting that $t^{-1}L/L_0 \cong L/tL_0 \{2\}$, we recover (\ref{eq:mut2}).
\end{proof}

We also have the following exact sequences in $\KPGLn$, corresponding to mutations at the other vertices of (\ref{fig:cluster}). We omit the proof as it is essentially identical to that of \cite[Prop. 2.17]{CW1}, and only nontrivially involves the geometry of $\Gr$ rather than that of $\hR$. 

\begin{Proposition}\label{prop:oldmutations}
	In $\KPGLn$ we have exact sequences
	\begin{gather*}
		0 \to \cP_{k-1,\ell} * \cP_{k+1,\ell} \{-1\} \to \cP_{k,\ell+1} * \cP_{k,\ell-1} \{-2k\} \to \cP_{k,\ell} * \cP_{k,\ell} \to 0 \\
		0 \to \cP_{k,\ell} * \cP_{k,\ell} \to \cP_{k,\ell-1} * \cP_{k,\ell+1} \{2k\} \to \cP_{k+1,\ell} * \cP_{k-1,\ell} \{1\} \to 0
	\end{gather*}
	for any $k \in [1,n-1]$ and $\ell \in \Z$. 
\end{Proposition}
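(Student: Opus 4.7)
The plan is to reduce to the analogous exact sequences in the coherent Satake category of $\Gr_{GL_n}$, which are \cite[Prop.~2.17]{CW1}. The key structural point is that for positive $k$, the classical locus $\hR_k^\cl$ coincides with $\T_k$ as a bundle over $\Gr^k$, so each $\cP_{k,\ell}$ appearing in the statement is the pushforward $i_{2*}$ of a sheaf supported on $\T_k \subset \T$, twisted by the line bundle $\det(L_0/L)^\ell$ pulled back from $\Gr^k$. Consequently, the convolutions $\cP_{k,\ell_1} \conv \cP_{k,\ell_2}$ are supported in the classical locus of $\hR$, do not see the derived intersection structure, and reduce to convolutions of sheaves on twisted products of ordinary Schubert varieties in $\Gr$.

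Concretely, I would set up the convolution correspondence diagram in direct analogy with the proof of Proposition \ref{prop:mutation1}. Since $\hR_k^\cl = \T_k$ and the same holds at the level of twisted products, the relevant convolution variety $\convspace'_{k,k}$ is identified with $\T_k \ttimes \T_k$, a bundle over $\Gr^k \ttimes \Gr^k$ with fiber $\N_\cO$. Then $\cP_{k,\ell_1} \conv \cP_{k,\ell_2}$ is computed, up to loop and cohomological shifts, as the pushforward along $\tm \circ \cl'$ of the structure sheaf of $\convspace'_{k,k}$ twisted by $\det(L_0/L_1)^{\ell_1} \otimes \det(L_1/L_2)^{\ell_2}$. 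I would then import the computation from \cite[Prop.~2.17]{CW1}: the two exact sequences on $\Gr$ arise from Koszul-type resolutions coming respectively from the refinement map $\Gr^{k-1} \ttimes \Gr^1 \ttimes \Gr^1 \to \Gr^k \ttimes \Gr^k$ and from a dual resolution involving $\Gr^{k+1}$. These pull back along the bundle projection $\convspace'_{k,k} \to \Gr^k \ttimes \Gr^k$ unchanged, since the bundle is trivializable up to line bundles pulled back from the base, and then push forward along $\tm \circ \cl'$ to yield the desired triangles on $\hR$. Exactness in $\KPGLn$ (rather than just as exact triangles) then follows from Theorem \ref{thm:convsecmainthm}, since all terms are convolutions of Koszul-perverse sheaves.

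The main obstacle is the bookkeeping of the $\hGO$-equivariant grading shifts $\{-2k\}$, $\{-1\}$, $\{1\}$, $\{2k\}$ in the statement, which track the $\C^\times$-weights of $\det(L_0/L)$ and the normal bundles entering the Koszul resolutions on $\convspace'_{k,k}$. One must confirm these match the shifts in \cite{CW1} once the convention $\la 1 \ra = [1]\{-1\}$ and the extra $[-k\ell]$ shift present in our definition of $\cP_{k,\ell}$ relative to that of \cite{CW1} are accounted for.
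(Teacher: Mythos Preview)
Your proposal is correct and matches the paper's own treatment exactly: the paper omits the proof, noting only that it is essentially identical to \cite[Prop.~2.17]{CW1} and nontrivially involves the geometry of $\Gr$ rather than $\hR$. You have correctly identified the structural reason this reduction works---namely that $\hR_k^\cl = \T_k$ for positive $k$, so the derived intersection structure plays no role and the convolutions are computed entirely on the $\Gr$ side.
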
 

Likewise, the following follows from a direct generalization of the proof of (the easy case~of) \cite[Prop. 2.10]{CW1}. 

\begin{Proposition}\label{prop:realsimples}
In $\KPGLn$ the objects $\cP_{k,\ell}$ and $\cP_{-k,\ell}$ are real
	for any $k \in [1,n]$ and $\ell \in \Z$. That is, $\cP_{k,\ell} \conv \cP_{k,\ell}$ and $\cP_{-k,\ell} \conv \cP_{-k,\ell}$ are also simple. 
\end{Proposition}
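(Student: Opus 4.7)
The plan is to reduce to an explicit geometric computation via the lattice model, following the template for $\cP_k^{\Gr} \conv \cP_k^{\Gr}$ in $\Coh^{\hGO}(\Gr_{GL_n})$ from \cite[Prop. 2.10]{CW1} and adapting it to $\hR$. First, I would reduce to the $\ell = 0$ case. The sheaf $\cP_{k,\ell}$ differs from $\cP_k$ by tensoring with the pullback of $\det(L_0/L)^\ell$ along $\hR_k^{\cl} \to \Gr^k$, together with a cohomological shift; analogously for $\cP_{-k,\ell}$. Since this line bundle is pulled back from the affine Grassmannian factor, tensoring commutes with the convolution diagram up to an explicit twist of the result, and the conclusion is that $\cP_{\pm k, \ell} \conv \cP_{\pm k, \ell}$ is simple if and only if $\cP_{\pm k, 0} \conv \cP_{\pm k, 0}$ is.

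For $\cP_k \conv \cP_k$ with $k > 0$, I would compute via Proposition \ref{prop:oneconvtorulethemall}, writing $\cP_k \conv \cP_k \cong m_* \td^*(\cP_k \tbox \cP_k)$. The crucial simplification is that $\hR_k^{\cl} \cong \T_k$, so no derived corrections appear: the twisted external product $\cP_k \tbox \cP_k$ is a shift of the structure sheaf of $\hR_k^{\cl} \ttimes \hR_k^{\cl}$, and its pullback under $\td$ is a shift of the structure sheaf of the convolution space $\convspace_{k,k}$, which fibers over $\Gr^k \ttimes \Gr^k$ with fiber $\N_\cO$. The heart of the argument is then to invoke the easy case of \cite[Prop. 2.10]{CW1}: in type $A$, the convolution map $m_\Gr : \Gr^k \ttimes \Gr^k \to \Gr_{\leq 2\omega^\vee_k}$ is a birational semismall resolution. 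This birationality lifts fiberwise to the $\hR$-level multiplication $m: \convspace_{k,k} \to \hR$, so $m_*$ of the structure sheaf of $\convspace_{k,k}$ is (up to shift) the pushforward from an irreducible subvariety of $\hR$. By the classification of simples in Theorem \ref{thm:koszul-perverse}, this pushforward is a single simple object of $\KPGLn$, up to an overall shift.

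The case of $\cP_{-k} \conv \cP_{-k}$ is analogous but requires handling the genuine derived structure: $\hR_{-k}^{\cl}$ embeds in $\T_{-k}$ with codimension $k$, giving a Koszul resolution of $i_{-k*} \cO_{\hR_{-k}^{\cl}}$ as a sheaf on $\T_{-k}$. Consequently, $\cP_{-k} \tbox \cP_{-k}$ and $\td^*(\cP_{-k} \tbox \cP_{-k})$ acquire additional cohomology in negative degrees coming from exterior powers of $(L/L_0)^\vee$. The main obstacle of the plan is to verify that, after pushforward along the birational map $m$, these derived contributions reassemble precisely into a single structure-sheaf-like object supported on an irreducible subvariety of $\hR$. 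I would expect this to follow from the observation that the Koszul complex on the convolution space pushes forward to the Koszul complex associated to the corresponding embedding inside the image variety, by base change against the birational $\Gr$-level convolution map --- reducing once more to the easy case of \cite[Prop. 2.10]{CW1}. The upshot in both cases is that $\cP_{\pm k, \ell} \conv \cP_{\pm k, \ell}$ is identified, up to shifts, with one of the simples $\cP_{\l^\vee, \mu}$ from Theorem \ref{thm:koszul-perverse}, completing the proof.
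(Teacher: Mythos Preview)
Your treatment of $\cP_{k,\ell} \conv \cP_{k,\ell}$ is correct and is the direct generalization the paper intends. However, your analysis of $\cP_{-k,\ell} \conv \cP_{-k,\ell}$ misidentifies a difficulty that is not actually present. The fiber product $\convspace'_{-k,-k} = \hR_{-k}^\cl \times_{\T_{-k}} (\Gr^{-k} \ttimes \hR_{-k}^\cl)$ carries no derived structure: while $\hR_{-k}^\cl \hookrightarrow \T_{-k}$ does have codimension $k$, the \emph{other} leg of the fiber product is flat. In lattice terms, $\Gr^{-k} \ttimes \hR_{-k}^\cl \to \T_{-k}$ is the projection
\[
\{L_0 \subset L_1 \subset L_2,\ s \in L_1\} \longrightarrow \{L_0 \subset L_1,\ s \in L_1\}
\]
forgetting $L_2$, which is a Grassmannian bundle. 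Hence $\convspace'_{-k,-k}$ is the classical scheme $\{L_0 \subset L_1 \subset L_2,\ s \in L_0\} \cong (\Gr^{-k} \ttimes \Gr^{-k}) \times N_\cO$, the multiplication map is $m_\Gr \times \id_{N_\cO}$, and the easy case of \cite[Prop.~2.10]{CW1} applies exactly as in your $+k$ argument.

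The confusion may stem from Proposition~\ref{prop:mutation1}, where the computation of $\cP_{-1} \conv \cP_n$ \emph{does} acquire derived structure. The difference is that there the map $\Gr_{-1} \ttimes \hR_n^\cl \to \T_{-1}$ forces $s \in tL_1 \subsetneq L_1$ and is a sub-bundle inclusion rather than a flat projection. For $\cP_{-k} \conv \cP_{-k}$ no such constraint appears, and the case is just as easy as $\cP_k \conv \cP_k$ --- which is why the paper invokes only the easy case of \cite[Prop.~2.10]{CW1} without further comment.
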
 

\subsection{Cluster structure for $GL_2$} We now further specialize to the case $n = 2$, where we have the following result. 
\begin{Theorem}\label{thm:qcatthminthesec}
The category $\KPGLtwo$ is a quantum monoidal cluster categorification in which 
$(\{\cP_{1,1}, \cP_{1,0}, \cP_{2,0}, \cP_{0,\det}\}, L, \wt{B})$ is a quantum monoidal seed whose coefficient and exchange matrices are
\begin{equation*}
	L = \begin{pmatrix}
		0 & -2 & -2 & 2 \\
		2 & 0 & 0 & 2 \\
		2 & 0 & 0 & 4 \\
		-2 & -2 & -4 & 0
	\end{pmatrix}, \quad \wt{B} = \begin{pmatrix}
	0 & 2 & -1 \\
	-2 & 0 & 1 \\
	1 & -1 & 0 \\
	0 & -1 & 1 \\
\end{pmatrix}.
\end{equation*}
\end{Theorem}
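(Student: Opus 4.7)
The plan is to verify directly the three defining axioms of a quantum monoidal cluster categorification: the $q$-commutativity encoded by $L$, the categorified exchange relations for each mutable vertex, and the realness of the mutated cluster variables. Throughout, we exploit the lattice model of Section \ref{sec:GLn} to make explicit computations tractable.

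First I would verify the coefficient matrix $L$. By Proposition \ref{prop:realsimples}, $\cP_{1,1}$, $\cP_{1,0}$, and $\cP_{2,0}$ are real, and $\cP_{0,\det}$ is real as well since it is the pushforward of a line bundle from $\hR_0^{\cl}$ and direct computation shows $\cP_{0,\det} \conv \cP_{0,\det}$ is simple. Corollary \ref{cor:introLeclerc} then implies each pair of seed simples $q$-commutes, and the entries of $L$ are obtained by computing the loop shifts $\Lambda(\cF,\cG)$ pairwise. These shifts can be read off from explicit convolution computations using fiber diagrams analogous to those in the proof of Proposition \ref{prop:mutation1}, together with Proposition \ref{prop:extheta} for the interaction of Koszul-perversity with the intersection structure. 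Most entries of $L$ reduce to the nonderived case of $\Gr_{GL_2}$ already handled in \cite{CW1}; the rows and column involving $\cP_{0,\det}$ require new computations with the derived enhancement, but are straightforward since $\cP_{0,\det}$ is supported at the identity.

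Second I would verify the three exchange relations. The mutation at $\cP_{2,0}$ (the vertex with exchange column $(-1,1,0,1)^T$) is precisely the $n=2$, $\ell=0$ case of Proposition \ref{prop:mutation1}: the two exact sequences (\ref{eq:mut1}) and (\ref{eq:mut2}) give the categorified exchange relation identifying the mutated variable as $\cP_{0,\det} \conv \cP_{-1,0}$ (or its shift $\cP_{-1,0} \conv \cP_{0,\det}$). The mutations at $\cP_{1,0}$ and $\cP_{1,1}$ (with exchange columns $(2,0,-1,-1)^T$ and $(0,-2,1,0)^T$ respectively) are instances of Proposition \ref{prop:oldmutations} with $n=2$ and $\ell=0$ or $\ell=1$; the mutated variables are $\cP_{2,\ell \pm 1}$ up to a shift, which remain simple.

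Third I would confirm realness of the mutated cluster variables. The variables $\cP_{2,\ell\pm 1}$ arising from mutation at $\cP_{1,0}$ and $\cP_{1,1}$ are real by Proposition \ref{prop:realsimples}. The main obstacle is the mutated variable $\cP_{0,\det} \conv \cP_{-1,0}$ at vertex $\cP_{2,0}$: one must show it is a real simple object, and identify it explicitly (it should correspond to the shifted structure sheaf of a derived subscheme of $\hR_{-1}$ analogous to the $n=1$ case of Section \ref{sec:abex}). I would compute $\cP_{0,\det} \conv \cP_{-1,0}$ using the lattice model, showing that it is the pushforward of a line bundle twist of the structure sheaf of the codimension-one subscheme $\{s \in L_0 \overset{1}\subset L : tL \subset L_0, s \in tL\} \subset \hR_{-1}^{\cl}$, suitably shifted. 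Simplicity then follows from Theorem \ref{thm:koszul-perverse}, and realness from a direct convolution calculation parallel to the proof of Proposition \ref{prop:mutation1}. Once this is in hand, iterating mutations and checking that all cluster variables remain real real simple objects (it suffices to check one round of mutations given the symmetry of the finite type $A_2^{(1)}$ quiver) completes the verification that $\KPGLtwo$ is a quantum monoidal cluster categorification with the stated seed. The hardest step is the explicit identification and realness of $\cP_{0,\det} \conv \cP_{-1,0}$, since it is the only mutated variable that does not already appear among the $\cP_{k,\ell}$.
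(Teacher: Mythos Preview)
Your proposal has a genuine gap. You correctly identify that Propositions~\ref{prop:mutation1} and~\ref{prop:oldmutations} furnish the exchange relations at the three mutable vertices of the \emph{initial} seed, but you then assert that ``it suffices to check one round of mutations given the symmetry of the finite type $A_2^{(1)}$ quiver.'' This is where the argument breaks down: $A_2^{(1)}$ is \emph{affine}, not finite type, and its exchange graph is infinite. After mutating $\cP_{2,0}$ you land in a cluster of the form $\{\cP_{1,1},\cP_{1,0},\cP_{-1,0}\}$ in the ``middle strip'' of the exchange graph, and further mutations there (e.g.\ replacing $\cP_{1,1}$ by $\cP_{-1,1}$) require exact sequences of an entirely different shape, namely those of Proposition~\ref{prop:mutation3}:
\[
0 \to \cP_{0,0}\{-1\} \to \cP_{1,1+\ell}\conv\cP_{-1,1-\ell}\conv\cP_{0,\det}^{-1} \to \cP_{1,\ell}\conv\cP_{-1,-\ell} \to 0,
\]
together with the commutativity relations of Proposition~\ref{prop:commute}. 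These are not consequences of the results you cite, and their proofs involve a genuinely different geometric analysis (two-component convolution spaces and nontrivial divisor sequences on $\convspace'_{-1,1}$). The paper's argument works precisely because the infinite $A_2^{(1)}$ exchange graph has only finitely many \emph{types} of triangles, and Propositions~\ref{prop:mutation1}, \ref{prop:oldmutations}, \ref{prop:commute}, \ref{prop:mutation3} together cover them all.

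Your identification of the ``hardest step'' is also off. The object $\cP_{0,\det}\conv\cP_{-1,0}$ is not supported on a codimension-one subscheme as you suggest: $\cP_{0,\det}$ lives on the identity component and convolution with it is simply tensoring by the equivariant line bundle $\det(L_0/tL_0)$ (up to shift), so $\cP_{0,\det}\conv\cP_{-1,0}$ is manifestly a real simple of the form $\cP_{-1,0}$ twisted by a line bundle. No separate argument is needed there. The actual new work lies in Propositions~\ref{prop:commute} and~\ref{prop:mutation3}, which you do not address.
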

The terminology used here is reviewed in \cite[Sec. 4.4]{CW1}, following \cite{KKKO18, BZ05}. The exchange matrix $\wt{B}$ is the adjacency matrix of the quiver from (\ref{fig:cluster}), and is of affine type $A_2^{(1)}$. The exchange graph of the $A_2^{(1)}$ cluster algebra is known to be the following: 
\begin{equation}\label{fig:GL2C2quiver}
\begin{tikzpicture}
[baseline=(current  bounding  box.center),thick,>=\arrtip]
\newcommand*{\ah}{4*.9}
\newcommand*{\av}{2*.9}
\newcommand*{\bv}{3.5*.9}
\node (aa) at (0,-\av-\bv) {$\cP_{-2,0}$};
\node (ba) at (-1.5*\ah,-\av) {$\cP_{-1,-2}$};
\node (bb) at (-.5*\ah,-\av) {$\cP_{-1,-1}$};
\node (bc) at (.5*\ah,-\av) {$\cP_{-1,0}$};
\node (bd) at (1.5*\ah,-\av) {$\cP_{-1,1}$};
\node (ca) at (-1.5*\ah,\av) {$\cP_{1,2}$};
\node (cb) at (-.5*\ah,\av) {$\cP_{1,1}$};
\node (cc) at (.5*\ah,\av) {$\cP_{1,0}$};
\node (cd) at (1.5*\ah,\av) {$\cP_{1,-1}$};
\node (da) at (0,\av+\bv) {$\cP_{2,0}$};

\draw[-] (aa) to (ba); \draw[-] (aa) to (bb); \draw[-] (aa) to (bc); \draw[-] (aa) to (bd);
\draw[-] (da) to (ca); \draw[-] (da) to (cb); \draw[-] (da) to (cc); \draw[-] (da) to (cd);
\draw[-] (ba) to (bb); \draw[-] (bb) to (bc); \draw[-] (bc) to (bd);
\draw[-] (ca) to (cb); \draw[-] (cb) to (cc); \draw[-] (cc) to (cd);
\draw[-] (ca) to (bb); \draw[-] (cb) to (bc); \draw[-] (cc) to (bd);
\draw[-] (ca) to (ba); \draw[-] (cb) to (bb); \draw[-] (cc) to (bc); \draw[-] (cd) to (bd);

\foreach \c in {ba, bb, bc} {
        \coordinate (bl) at ($(\c)+(.15*\ah,.3*\av)$);
        \coordinate (br) at ($(\c)+(.62*\ah,.3*\av)$);
        \coordinate (tl) at ($(\c)+(.15*\ah,1.24*\av)$);
        \fill (bl) circle (.06); \fill (br) circle (.06); \fill (tl) circle (.06);
        \draw[->,shorten <=1.7mm,shorten >=1.7mm] (bl) to (tl);
        \draw[->,shorten <=1.7mm,shorten >=1.7mm] (bl) to (br);
        \draw[->,shorten <=1.7mm,shorten >=1.7mm] (tl) to (br);
};

\foreach \c in {cb, cc, cd} {
\coordinate (bl) at ($(\c)+(-.15*\ah,-.3*\av)$);
\coordinate (br) at ($(\c)+(-.62*\ah,-.3*\av)$);
\coordinate (tl) at ($(\c)+(-.15*\ah,-1.24*\av)$);
\fill (bl) circle (.06); \fill (br) circle (.06); \fill (tl) circle (.06);
\draw[<-,shorten <=1.7mm,shorten >=1.7mm] (bl) to (tl);
\draw[<-,shorten <=1.7mm,shorten >=1.7mm] (bl) to (br);
\draw[<-,shorten <=1.7mm,shorten >=1.7mm] (tl) to (br);
};

\coordinate (vaa) at ($(ba)+(1.15*\ah,-.6*\bv)$);
\coordinate (vab) at ($(ba)+(.55*\ah,-.3*\av)$);
\coordinate (vac) at ($(ba)+(.9*\ah,-.3*\av)$);
\coordinate (vba) at ($(bb)+(.5*\ah,-.675*\bv)$);
\coordinate (vbb) at ($(bb)+(.25*\ah,-.3*\av)$);
\coordinate (vbc) at ($(bb)+(.75*\ah,-.3*\av)$);
\coordinate (vca) at ($(bd)+(-1.15*\ah,-.6*\bv)$);
\coordinate (vcb) at ($(bd)+(-.55*\ah,-.3*\av)$);
\coordinate (vcc) at ($(bd)+(-.9*\ah,-.3*\av)$);
\coordinate (vda) at ($(ca)+(1.15*\ah,.6*\bv)$);
\coordinate (vdb) at ($(ca)+(.55*\ah,.3*\av)$);
\coordinate (vdc) at ($(ca)+(.9*\ah,.3*\av)$);
\coordinate (vea) at ($(cb)+(.5*\ah,.675*\bv)$);
\coordinate (veb) at ($(cb)+(.25*\ah,.3*\av)$);
\coordinate (vec) at ($(cb)+(.75*\ah,.3*\av)$);
\coordinate (vfa) at ($(cd)+(-1.15*\ah,.6*\bv)$);
\coordinate (vfb) at ($(cd)+(-.55*\ah,.3*\av)$);
\coordinate (vfc) at ($(cd)+(-.9*\ah,.3*\av)$);

\foreach \va/\vb/\vc in {vaa/vab/vac, vda/vdb/vdc} {
\draw[->,shorten <=1.7mm,shorten >=1.7mm] (\vc) to (\va);
\draw[->,shorten <=1.7mm,shorten >=2.7mm] (\va) to (\vb);
\draw [->,shorten <=1.7mm,shorten >=1.7mm] ($(\vb)+(0,0.06)$) to ($(\vc)+(0,0.06)$);
\draw [->,shorten <=1.7mm,shorten >=1.7mm] ($(\vb)-(0,0.06)$) to ($(\vc)-(0,0.06)$);
};

\foreach \va/\vb/\vc in {vca/vcb/vcc, vfa/vfb/vfc} {
        \draw[<-,shorten <=1.7mm,shorten >=1.7mm] (\vc) to (\va);
        \draw[<-,shorten <=1.7mm,shorten >=2.7mm] (\va) to (\vb);
        \draw [<-,shorten <=1.7mm,shorten >=1.7mm] ($(\vb)+(0,0.06)$) to ($(\vc)+(0,0.06)$);
        \draw [<-,shorten <=1.7mm,shorten >=1.7mm] ($(\vb)-(0,0.06)$) to ($(\vc)-(0,0.06)$);
};

\foreach \va/\vb/\vc in {vba/vbb/vbc, vea/veb/vec} {
        \draw[->,shorten <=1.7mm,shorten >=1.7mm] (\vc) to (\va);
        \draw[->,shorten <=1.7mm,shorten >=1.7mm] (\va) to (\vb);
        \draw [->,shorten <=1.7mm,shorten >=1.7mm] ($(\vb)+(0,0.06)$) to ($(\vc)+(0,0.06)$);
        \draw [->,shorten <=1.7mm,shorten >=1.7mm] ($(\vb)-(0,0.06)$) to ($(\vc)-(0,0.06)$);
};

\foreach \c in {vaa, vab, vac, vba, vbb, vbc, vca, vcb, vcc, vda, vdb, vec, vea, veb, vdc, vfa, vfb, vfc} {\fill (\c) circle (.06);};

\node at (-1.8*\ah,0) {{\huge $\cdots$}};
\node at (1.8*\ah,0) {{\huge $\cdots$}};

\end{tikzpicture}
\end{equation}
Here we have inscribed the quivers associated to each cluster, and have labeled the vertices by the simple objects associated to them by Theorem \ref{thm:qcatthminthesec}. Given this exchange graph, the claim follows immediately from Propositions~\ref{prop:mutation1} and~\ref{prop:oldmutations} together with Propositions~\ref{prop:commute} and~\ref{prop:mutation3}. The above statement is somewhat abusive in that we do not prove $K_0(\KPGLtwo)$ is the relevant quantum cluster algebra (as opposed to a possibly larger algebra containing~it), but this question is not about the t-structure per se, and is addressed from a different perspective in \cite[Sec. 1.5]{FKRD18}. 

\begin{Proposition}\label{prop:commute}
In $\KPGLtwo$ we have isomorphisms
\begin{equation}\label{eq:GLtwocommute}
\begin{gathered}
 \cP_2 * \cP_{1,\ell} \cong \cP_{1,\ell} * \cP_2 \{-2\ell\},  \ \  \text{ and } \ \  \cP_{-2} * \cP_{-1,\ell} \cong \cP_{-1,\ell} * \cP_{-2} \{2\ell\}, \\
 \cP_{1,\ell} * \cP_{1,\ell+1} \cong \cP_{1,\ell+1} * \cP_{1,\ell} \{-2\}  \ \  \text{ and } \ \   \cP_{-1,\ell} * \cP_{-1,\ell+1} \cong \cP_{-1,\ell+1} * \cP_{-1,\ell} \{2\}, \\
  \cP_{1,\ell} * \cP_{-1,-\ell} \cong \cP_{-1,-\ell} * \cP_{1,\ell}  \ \  \text{ and } \ \   \cP_{1,\ell} * \cP_{-1,-\ell+1} \cong \cP_{-1,-\ell+1} * \cP_{1,\ell}.
\end{gathered}
\end{equation}
\end{Proposition}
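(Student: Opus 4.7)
The plan is to deduce all six isomorphisms from Corollary~\ref{cor:introLeclerc}: if $\cF$ or $\cG$ is real and their K-theory classes $q$-commute in $K^{\hGO}(\hR)$, then $\cF \conv \cG \cong \cG \conv \cF$ up to a loop shift. Reality of every simple appearing in~(\ref{eq:GLtwocommute}) is supplied by Proposition~\ref{prop:realsimples}, so only two things remain: verify $q$-commutation at the level of $K$-theory, and pin down the precise loop shift in each line.

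For the $q$-commutation I would use the t-exact functor $\sigma_2^* i_{2*}: \Coh^{\hGO}(\hR) \to \Coh^{\hGO}(\Gr)$, which descends to a ring homomorphism on $K$-theory by the remark following Theorem~\ref{thm:convsecmainthm}. By Proposition~\ref{prop:extheta}, the class $[\cP_{k,\ell}]$ maps to $[\cP_{k,\ell}]$ on $\Gr$, and $[\cP_{-k,\ell}]$ maps to $[\cP_{-k,\ell}]$ times a Koszul-type factor coming from the alternating sum of $[\Lambda^i(L/L_0)^\vee[i]]$. The $q$-commutation of the resulting expressions in $K^{\hGO}(\Gr)$ is part of the structure of the coherent Satake category for $GL_2$ established in~\cite{CW1}; injectivity of $[\sigma_2^* i_{2*}]$ on the relevant subalgebra, which can be checked by restricting further to a generic point of the Schubert stratum, then lifts $q$-commutation back to $K^{\hGO}(\hR)$.

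The loop shifts are determined by a short explicit calculation in each case. The relations involving $\cP_{\pm 2}$ are the easiest, since $\omega_2^\vee$ is central in $GL_2$ and $\hR_{\pm 2}^{\cl}$ is a single fiber of $\hR \to \Gr$, so convolving with $\cP_{\pm 2}$ amounts to twisting by a character of the center, which immediately reads off the shift $\{\mp 2\ell\}$. The middle relation $\cP_{1,\ell} \conv \cP_{1,\ell+1} \cong \cP_{1,\ell+1} \conv \cP_{1,\ell}\{-2\}$ and its negative counterpart reduce, after applying $\sigma_2^* i_{2*}$, to the corresponding identity for $GL_2$ coherent Satake proved in~\cite{CW1}. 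The mixed relations $\cP_{1,\ell} \conv \cP_{-1,-\ell}$ and $\cP_{1,\ell} \conv \cP_{-1,-\ell+1}$ are handled by the same lattice calculation used in the proof of Proposition~\ref{prop:mutation1}: pulling back structure sheaves along $\td$ and pushing forward along $\tm$ identifies each product as the structure sheaf of an explicit (possibly derived) subscheme of $\hR_0$, from which the loop shift can be read off directly.

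The main obstacle I anticipate is the bookkeeping of cohomological and internal grading shifts in the mixed relations, where the derived structure on $\hR_0$ is essential and the classical convolution space differs from $\convspace$ by a Koszul thickening. Once the shift on one side of each pair is fixed by explicit computation, the shift on the other follows automatically from the $q$-commutation established above, so only one side per line demands genuine geometric work.
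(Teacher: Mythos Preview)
Your strategy via Corollary~\ref{cor:introLeclerc} is an interesting alternative to the paper's direct computation, but it has two gaps that prevent it from going through as written.

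First, the injectivity of $[\sigma_2^* i_{2*}]$ on $K$-theory is doing all the work in your $q$-commutation argument, and the justification ``by restricting further to a generic point of the Schubert stratum'' is not a proof. You would need a localization or dévissage argument comparing the stratifications of $\hR$ and $\Gr$, together with the observation that on each stratum $\hR_{\lambda^\vee}^{\cl}$ the map is a vector-bundle pullback and hence a $K$-theory isomorphism. This is plausible but nontrivial, and nothing in the paper establishes it. Without it, knowing that the images $q$-commute in $K^{\hGO}(\Gr)$ tells you nothing about the preimages.

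Second, your description of the mixed products is wrong: $\cP_1 \conv \cP_{-1}$ and $\cP_{-1} \conv \cP_1$ are \emph{not} supported on $\hR_0$. Since $\Gr^{-1} = \Gr_{-w_0\omega_1^\vee}$ and $\omega_1^\vee - w_0\omega_1^\vee = 2\omega_1^\vee - \omega_2^\vee \neq 0$ for $GL_2$, these products live over $\hR_{\le \lambda^\vee}$ with $\lambda^\vee = 2\omega_1^\vee - \omega_2^\vee$, a space with two Schubert strata. This is precisely where the paper's computation takes place and where the interesting geometry lies: the convolution space $\convspace'_{-1,1}$ has two irreducible components, and the isomorphism $\cP_{-1} \conv \cP_1 \cong \cP_1 \conv \cP_{-1}$ comes from showing that one component contributes nothing after pushforward (its tautological bundle restricts to $\cO_{\P^1}(-1)$ on fibers), leaving a space identified with $\convspace'_{1,-1}$. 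The second mixed relation uses a different exact triangle on the same pair of components and a different vanishing. None of this is visible from Proposition~\ref{prop:mutation1}, which concerns a different coweight entirely.

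The paper's approach is thus genuinely different: it defers the first four isomorphisms to \cite{CW1} (as you would) but handles the mixed ones by an explicit two-component analysis of the convolution space, not by a $K$-theoretic reduction. Your route would be cleaner if the injectivity were in hand, but as it stands the direct computation is both necessary and where the actual content of the mixed case resides.
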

\begin{proof}
The first two isomorphisms follow as in \cite[Prop. 2.10, Rem. 2.14]{CW1}, and as before we omit them. It remains to establish the last isomorphisms, which we prove when $\ell=0$ as the general case is similar (and also follows by tensoring with a suitable global line bundle). 

Consider the following diagram of Cartesian squares, where $\l^\vee := 2 \omega_1 - \omega_2$. 
\begin{equation*}
	\begin{tikzpicture}[baseline=(current  bounding  box.center),thick,>=\arrtip]
		\newcommand*{\ha}{3}; \newcommand*{\va}{-1.5};
		
		\node (aa) at (0,0) {$\hR_1^{\cl} \ttimes \hR_{-1}^{\cl}$};
		\node (ab) at (\ha,0) {$\convspace'_{1,-1}$};
		\node (ba) at (-\ha,\va) {$\hR_1$};
		\node (bb) at (0,\va) {$\hR_1 \ttimes \hR_{-1}$};
		\node (bc) at (\ha,\va) {$\convspace_{1,-1}$};
		\node (bd) at (3*\ha, \va) {$\hR_{\le \l^\vee}$};
		\node (ca) at (-\ha,2*\va) {$\T_1$};
		\node (cb) at (0,2*\va) {$\T_1 \ttimes \hR_{-1}$};
		\node (cc) at (\ha,2*\va) {$\Gr_1 \ttimes \hR_{-1}$};
		\node (cd) at (2*\ha, 2*\va) {$\Gr_1 \ttimes \T_{-1}$};
		\node (ce) at (3*\ha,2*\va) {$\T_{\le \l^\vee}$};
		
		\draw[<-] (aa) to node[above] {$\hat{d}$} (ab);
		\draw[->] (aa) to node[left] {$\cl$} (bb);
		\draw[->] (ab) to node[right] {$\cl'$} (bc);
		\draw[<-] (ba) to node[above] {$p$} (bb);
		\draw[<-] (bb) to node[above] {$\td$} (bc);
		\draw[->] (bc) to node[above] {$\tm$} (bd);
		\draw[->] (ba) to node[left] {$i_2$} (ca);
		\draw[->] (bb) to node[left] {$i_2 \ttimes \id$} (cb);
		\draw[->] (bc) to node[right] {$i$} (cc);
		\draw[->] (bd) to node[right] {$i_2$} (ce);
		\draw[->] (cb) to node[above] {$p$} (ca);
		\draw[<-] (cb) to node[above] {$d$} (cc);
		\draw[->] (cc) to node[above] {$\id \ttimes i_2$} (cd);
		\draw[->] (cd) to node[above] {$m$} (ce);
		
	\end{tikzpicture}
\end{equation*}
Since $\hR_1 = \T_1$ it follows that
$$\convspace'_{1,-1} \cong \Gr_1 \ttimes \hR_{-1}^\cl = \{(L_1,L_2,v): tL_0 \overset{1}\subset L_1 \overset{1}\subset L_0, L_1 \overset{1}\subset L_2 \overset{1}\subset t^{-1}L_1, v \in L_1\}.$$
It further follows that
$$\cP_1 * \cP_{-1} \cong \pi_*(\O_{\convspace'_{1,-1}})[1]\{-1\}$$
where $\pi := \tm \circ \cl'$ forgets $L_1$ and lands in $\hR^\cl_{\le \l^\vee}$. 

A similar argument now shows that 
$$\cP_{-1} * \cP_1 \cong \pi_*(\O_{\convspace'_{-1,1}}) [1]\{-1\}$$
where 
$$\convspace'_{-1,1} = \{(L_1,L_2,v): L_0 \overset{1}\subset L_1 \overset{1}\subset t^{-1}L_0, tL_1 \overset{1}\subset L_2 \overset{1}\subset L_1, v \in L_0 \cap L_2\}.$$
and $\pi$ forgets $L_1$. To relate these two note that $\convspace'_{-1,1}$ has two components $\convspace_1$ and $\convspace_2$, consisting of the locus where $L_0=L_2$ and $v \in tL_1$ respectively. The intersection $\convspace_1 \cap \convspace_2 \subset \convspace_1$ is carved out by the zero section of the tautological map 
$$v: \O_{\convspace_1} \to \O_{\convspace_1} \otimes (L_0/tL_1)$$
and subsequently gives us the standard exact triangle
$$\O_{\convspace_1} \otimes (L_0/tL_1)^\vee \to  \O_{\convspace'_{-1,1}} \to \O_{\convspace_2}.$$
Applying $\pi_*$ to this triangle we find that the left hand term vanishes since $\pi$ restricted to $\convspace_1$ is a $\P^1$ bundle and $L_0/tL_1$ restricts to $\O_{\P^1}(-1)$ on the fibers. Thus 
$$\cP_{-1} * \cP_1 \cong \pi_*( \O_{\convspace'_{-1,1}}) [1]\{-1\} \cong \pi_*(\O_{\convspace_2}) [1]\{-1\} \cong \pi_*(\O_{\convspace'_{1,-1}})[1]\{-1\} \cong \cP_{-1} * \cP_1,$$
where the second to last isomorphism is because we can identify $\convspace_2$ with $\convspace'_{1,-1}$. This established the bottom left isomorphism in (\ref{eq:GLtwocommute}).

Changing the line bundles in the argument above, we first find that 
\begin{align*}
\cP_1 * \cP_{-1,1} &\cong \pi_*(\O_{\convspace'_{1,-1}} \otimes (L_2/L_1))\{-1\} \\
\cP_{-1,1} * \cP_1 &\cong \pi_*(\O_{\convspace'_{-1,1}} \otimes (L_1/L_0)) \{-1\}.
\end{align*}
This time, to compute the latter pushforward, we note that the intersection $\convspace_1 \cap \convspace_2 \subset \convspace_2$ is the divisor consisting of the locus where $L_0=L_2$. This locus is carved out by the zero section of the map 
$$\O_{\convspace_2} \otimes (L_0/tL_1) \to \O_{\convspace_2} \otimes (L_1/L_2),$$
which subsequently gives us the standard exact triangle
\begin{equation}\label{eq:section}
\O_{\convspace_2} \otimes (L_0/tL_1) \otimes (L_1/L_2)^\vee \to \O_{\convspace'_{-1,1}} \to \O_{\convspace_1}.
\end{equation}
Tensoring with $(L_1/L_0) \{-1\}$ and applying $\pi_*$ it is the right hand term which vanishes this time. Thus we obtain
\begin{align*}
\cP_{-1,1} * \cP_1 
&\cong \pi_*(\O_{\convspace_2} \otimes (L_1/tL_1) \otimes (L_1/L_2)^\vee) \{-1\} \\
&\cong \pi_*(\O_{\convspace_2} \otimes (L_2/tL_1) \{-1\}) \\
&\cong \pi_*(\O_{\convspace'_{1,-1}} \otimes (L_2/L_1))\{-1\} \\&\cong \cP_1 * \cP_{-1,1}.\qedhere
\end{align*}
\end{proof}

\begin{Proposition}\label{prop:mutation3}
In $\KPGLtwo$ we have exact sequences
\begin{equation}\label{eq:mutGLtwo}
\begin{gathered}
0 \to \cP_{0,0} \{-1\} \to \cP_{1,1+\ell} * \cP_{-1,1-\ell} * \cP_{0,\det}^{-1} \to \cP_{1,\ell} * \cP_{-1,-\ell} \to 0 \\
0 \to \cP_{-1,-\ell} * \cP_{1,\ell} \to \cP_{0,\det}^{-1} * \cP_{-1,1-\ell} * \cP_{1,1+\ell} \to \cP_{0,0} \{1\} \to 0 \\
0 \to \cP_{1,1+\ell} * \cP_{-1,-\ell} \to \cP_{1,\ell} * \cP_{-1,-1-\ell} * \cP_{0,\det} \to \cP_{0,0} \{1\} \to 0 \\
0 \to \cP_{0,0} \{-1\} \to \cP_{0,\det} * \cP_{-1,-1-\ell} * \cP_{1,\ell} \to \cP_{-1,-\ell} * \cP_{1,1+\ell} \to 0 
\end{gathered}
\end{equation}
for any $\ell \in \Z$.
\end{Proposition}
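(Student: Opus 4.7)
The strategy is parallel to the proofs of Propositions~\ref{prop:mutation1} and~\ref{prop:commute}: compute each product of simples via pushforward along the map $\pi\colon \convspace'_{1,-1}\to \hR_{\le \l^\vee}$ (with $\l^\vee = 2\omega_1 - \omega_2$), or its analogue $\pi\colon \convspace'_{-1,1}\to \hR_{\le \l^\vee}$, and extract the required short exact sequences from natural triangles of coherent sheaves on these correspondences. Since $\cP_{0,\det}$ is a twist of the monoidal unit by a pulled-back line bundle, it is invertible under convolution and twisting by $\cP_{0,\det}^{\ell}$ reduces to the case $\ell = 0$. The four sequences split naturally into two pairs: sequences~1 and~4 concern products $\cP_{1,*} \conv \cP_{-1,*}$ and are computed on $\convspace'_{1,-1}$, while sequences~2 and~3 concern the reversed products $\cP_{-1,*} \conv \cP_{1,*}$ and are computed on $\convspace'_{-1,1}$.

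For the first pair, recall that $\convspace'_{1,-1} \cong \{(L_1, L_2, v) : tL_0 \subset L_1 \subset L_0,\ L_1 \subset L_2 \subset t^{-1}L_1,\ v \in L_1\}$, and that the forgetful map $\pi$ is an isomorphism over $\hR^{\cl}_{\l^\vee}$ but a $\P^1$-fibration over the exceptional locus $\{L_1 = L_2\} \cong \hR_0^{\cl} \subset \hR_0$. I would compute $\cP_{1,0} \conv \cP_{-1,0}$ as $\pi_*(\O_{\convspace'_{1,-1}})[1]\{-1\}$ and $\cP_{1,1} \conv \cP_{-1,1} \conv \cP_{0,\det}^{-1}$ by first absorbing $\cP_{0,\det}^{-1}$ into a global determinant twist and then taking the appropriate $\pi_*$. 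The difference between these two pushforwards comes from the Koszul-type short exact sequence on $\convspace'_{1,-1}$ associated to the divisor $\{L_1 = L_2\}$. After pushing this sequence to $\hR_{\le \l^\vee}$, using that $\pi$ restricted to this divisor is a $\P^1$-bundle over $\hR_0^{\cl}$, exactly one of the contributions survives as $\cP_{0,0}\{-1\}$, yielding sequence~1. Sequence~4 is obtained from the dual triangle with the reversed direction of the Koszul resolution.

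For the second pair, I would use the decomposition $\convspace'_{-1,1} = \convspace_1 \cup \convspace_2$ described in the proof of Proposition~\ref{prop:commute}, together with the short exact triangle~(\ref{eq:section}) relating $\O_{\convspace_1}$, $\O_{\convspace'_{-1,1}}$, and $\O_{\convspace_2}$ up to an explicit line bundle twist. Pushing this triangle forward along $\pi$ with the appropriate determinant twists, the contribution of one component collapses via a pushforward of $\O_{\P^1}(-1)$, while the contribution of the other yields the $\cP_{0,0}\{\pm 1\}$ correction. The two possible tensor twists give sequences~2 and~3 respectively.

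The main obstacle will be the careful bookkeeping of determinant twists $\det(L_0/L_1)$, $\det(L/L_0)$, $\det(L_0/tL_0)$, $\det(L_1/L_2)$, together with the loop shifts $\{\pm\}$ and cohomological shifts $[\pm]$, so that the extension terms match the targets $\cP_{0,0}\{\pm 1\}$ on the nose rather than some other shift of the unit. A secondary point is to verify that the triangles produced above are genuine short exact sequences in the heart $\KPGLtwo$, rather than merely exact triangles in the derived category. By Theorem~\ref{thm:convsecmainthm} all three terms are automatically Koszul-perverse (the outer terms are explicitly identified as simples or as convolutions of simples, and the middle term is a convolution of Koszul-perverse sheaves), so one only needs to check that the connecting maps are nonzero; this in turn follows from the nonvanishing of the renormalized $r$-matrices provided by Theorem~\ref{thm:rmatrix}.
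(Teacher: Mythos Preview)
Your overall strategy matches the paper's: reduce to $\ell = 0$ by twisting with $\cP_{0,\det}$, realize the convolutions as pushforwards from $\convspace'_{\pm 1,\mp 1}$, and extract the sequences from natural triangles there. Two concrete points need correction, however. First, the pairing is off: sequences 1 and 3 both involve products $\cP_{1,*}\conv\cP_{-1,*}$ and live on $\convspace'_{1,-1}$, while sequences 2 and 4 both involve $\cP_{-1,*}\conv\cP_{1,*}$ and live on $\convspace'_{-1,1}$. The paper proves sequences 1 and 2 explicitly and declares 3 and 4 parallel. Second, the divisor you name on $\convspace'_{1,-1}$ is wrong: $\{L_1 = L_2\}$ is empty since $L_1 \overset{1}{\subset} L_2$. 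The relevant divisor is $D = \{L_0 = L_2\}$, cut out by the map $t\colon (L_0/L_1) \to (L_1/tL_2)\{2\}$, giving the triangle
\[
\O_{\convspace'_{1,-1}} \otimes (L_0/L_1)\otimes(L_2/L_1)\otimes\det(L_2/tL_2)^\vee\{-2\} \longrightarrow \O_{\convspace'_{1,-1}} \longrightarrow \O_D.
\]
After shifting and applying $\pi_*$ the three terms become $\cP_{1,1}\conv\cP_{-1,1}\conv\cP_{0,\det}^{-1}$, $\cP_1\conv\cP_{-1}$, and $\cP_{0,0}[1]\{-1\}$; the last identification uses only that $\pi|_D$ is a $\P^1$-bundle over $\hR_0^{\cl}$ with $\pi_*\O_D \cong \O_{\hR_0^{\cl}}$, so no $\O_{\P^1}(-1)$ vanishing is invoked here. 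That device does appear, as you anticipate, in the $\convspace'_{-1,1}$ argument for sequence 2 via the triangle~(\ref{eq:section}).

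Your final paragraph is unnecessary. An exact triangle whose three terms all lie in the heart of a t-structure is automatically a short exact sequence in that heart; no nontriviality of connecting maps or appeal to renormalized $r$-matrices is required.
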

\begin{proof}
We work out the case $\ell=0$, the general one again being similar. We also omit the derivation of the last two sequences, as they directly parallel those of the first two. Using the same argument and notation as in the proof of Proposition \ref{prop:commute}, we have 
\begin{align*}
\cP_1 * \cP_{-1} &\cong \pi_*(\O_{\convspace'_{1,-1}})[1]\{-1\} \\
\cP_{1,1} * \cP_{-1,1} * \cP_{0,\det}^{-1} &\cong \pi_*(\O_{\convspace'_{1,-1}} \otimes (L_0/L_1) \otimes (L_2/L_1) \otimes \det(L_2/tL_2)^\vee) [1]\{-3\}.
\end{align*}
To relate these two consider the divisor $D \subset \convspace'_{1,-1}$ consisting of the locus where $L_0=L_2$. We have the natural map of line bundles
$$t: \O_{\convspace'_{1,-1}} \otimes (L_0/L_1) \to \O_{\convspace'_{1,-1}} \otimes (L_1/tL_2) \{2\},$$
which vanishes precisely along $D$. It follows that on $\convspace'_{1,-1}$ we have the triangle
$$\O_{\convspace'_{1,-1}} \otimes (L_0/L_1) \otimes (L_2/L_1) \otimes \det(L_2/tL_2)^\vee \{-2\} \to \O_{\convspace'_{1,-1}} \to \O_D.$$
Shifting by $[1]\{-1\}$ and applying $\pi_*$ we arrive at the triangle
$$\cP_{1,1} * \cP_{-1,1} * \cP_{0,\det}^{-1} \to \cP_1 * \cP_{-1} \to \cP_{0,0} [1]\{-1\},$$
which after rotation gives the first sequence in (\ref{eq:mutGLtwo}). 

Similarly, we have 
\begin{align*}
\cP_{-1} * \cP_1 &\cong \pi_*(\O_{\convspace'_{-1,1}}) [1]\{-1\} \\
\cP_{-1,1} * \cP_{0,\det}^{-1} * \cP_{1,1} &\cong \pi_*(\O_{\convspace'_{-1,1}} \otimes (L_0/tL_1)^\vee \otimes (L_1/L_2)) [1]\{-3\}.
\end{align*}
This time we rewrite (\ref{eq:section}) as
$$\O_{\convspace_2} \to \O_{\convspace'_{-1,1}} \otimes (L_0/tL_1)^\vee \otimes (L_1/L_2)  \to \O_{\convspace_1} \otimes (L_1/L_0) \otimes (t^{-1}L_0/L_1)^\vee \{2\}.$$
Finally, we shift by $[1]\{-1\}$ and apply $\pi_*$. Using the observations that $\pi_*(\O_{\convspace_2})[1]\{-1\} \cong \cP_{-1} * \cP_1$, that $\pi_*(\O_{\convspace_1} \otimes (L_1/L_0) \otimes (t^{-1}L_0/L_1)^\vee) \cong \cP_{0,0} [-1]$, and that $\cP_{-1,1} * \cP_{0,\det}^{-1} \cong \cP_{0,\det}^{-1} * \cP_{-1,1} \{-2\}$, we obtain the second sequence in (\ref{eq:mutGLtwo}). 
\end{proof}

\subsection{Duality and the twist}\label{sec:adjoints} 
Given Theorem \ref{thm:adjoints}, the following result is an immediate consequence of Lemmas~\ref{lem:adjoint1} and \ref{lem:adjoint2}, which compute the action of $(-)^*$ and $\D_1$. 
\begin{Proposition}\label{prop:dualsforGLn}
In $\KPGLn$ we have
	\begin{align*}
		\cP_{k,\ell}^L \cong (\cP_{0,\det} \{k\} )^{*(-k)} * \cP_{-k,-\ell+n} , \ \ & \ \ 
		\cP_{k,\ell}^R \cong (\cP_{0,\det} \{k\} )^{*k} * \cP_{-k,-\ell-n+1} \\
		\cP_{-k,\ell}^L \cong (\cP_{0,\det} \{-k\})^{*k} * \cP_{k,-\ell-n+1} , \ \ & \ \ 
		\cP_{-k,\ell}^R \cong (\cP_{0,\det} \{-k\})^{*(-k)} * \cP_{k,-\ell+n} .
	\end{align*}
for all $k \in [1,n]$ and $\ell \in \Z$. 
\end{Proposition}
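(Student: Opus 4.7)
The plan is to apply Theorem~\ref{thm:adjoints}, which gives $\cF^L \cong \D_1(\cF^*)$ and $\cF^R \cong \D_1(\cF)^*$ for every $\cF \in \Coh^\hGO(\hR)$. Applied to $\cF = \cP_{\pm k,\ell}$, this reduces the entire proposition to computing (i) the action of the involution $(-)^*$ on the simples $\cP_{\pm k,\ell}$ and (ii) the action of the relative duality functor $\D_1$ on the same. These two computations will be packaged as Lemmas~\ref{lem:adjoint1} and \ref{lem:adjoint2}, and the four stated formulas will then follow by substitution, using that $\D_1$ is monoidal by Corollary~\ref{cor:Ds} to distribute $\D_1$ across the $\cP_{0,\det}$ factors appearing after $(-)^*$.

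For Lemma~\ref{lem:adjoint1} (the action of $(-)^*$), I would apply Proposition~\ref{prop:*-atlas}, which identifies $\cF^* \cong s^*(\cO_{\Gr_{\le {\l^\vee}^*}} \tbox \cF)$ for $\cF$ supported on $\hR_{\le \l^\vee}$. For $GL_n$ the involution on dominant coweights sends $\omega_k^\vee$ to $-w_0 \omega_k^\vee$, which differs from the dominant coweight indexing $\hR_{-k}^\cl$ by a multiple of the central cocharacter $\omega_n^\vee$; this discrepancy is the geometric origin of the $\cP_{0,\det}$ factors in the final formulas. Tracing the Cartesian square of Proposition~\ref{prop:*-atlas} through the lattice model identifies $\cP_{k,\ell}^*$ with an explicit pushforward from $\hR_{-k}^\cl$, which can then be rewritten as $\cP_{-k,\ell'}$ convolved with a definite power of $\cP_{0,\det}$; the computation for $\cP_{-k,\ell}^*$ is symmetric, with the roles of $L_0$ and $L$ swapped.

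For Lemma~\ref{lem:adjoint2} (the action of $\D_1$), I would appeal directly to relative Serre duality for the projection $\pi_1 \colon \hR/\hGO \to \N_\cO/\hGO$, using the description $\omega_1 \cong i_1^! p_1^*(\omega_{1,\Gr})$ from Section~\ref{sec:duality}. Since $\cP_{k,\ell}$ is the pushforward of a tautological line bundle from $\hR_k^\cl \cong \T_k$, itself a vector bundle over the smooth finite-type scheme $\Gr^k$, the computation of $\cHom(\cP_{k,\ell},\omega_1)$ reduces to identifying the dualizing complex of $\T_k \to \N_\cO$, which is controlled by the top exterior power of the cotangent bundle of $\Gr^k$ together with the determinant of the relative tangent bundle of $\T_k \to \Gr^k$. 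The analogous computation for $\cP_{-k,\ell}$ uses that $\hR_{-k}^\cl \subset \T_{-k}$ is cut out by a regular section, so that its dualizing complex is accessible via the Koszul resolution. Both computations should yield outputs of the form $\D_1(\cP_{\pm k,\ell}) \cong \cP_{\pm k, \ell'}$, with $\ell'$ an explicit affine function of $\ell$, $k$, and $n$.

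Combining the two lemmas through $\cP_{\pm k, \ell}^L \cong \D_1(\cP_{\pm k,\ell}^*)$ and $\cP_{\pm k, \ell}^R \cong \D_1(\cP_{\pm k, \ell})^*$ then yields the four formulas. The hard part will be the simultaneous bookkeeping of cohomological shifts, loop-rotation grading shifts, and line-bundle twists, all of which must align precisely to reproduce the shifts $-\ell+n$ and $-\ell-n+1$ in the proposition. In particular, the convention $\la 1 \ra = [1]\{-1\}$ combined with the derived intersection structure of $\hR$ produces several cancellations between shifts coming from the dualizing complex, those from the Koszul resolution defining $\hR$, and the perverse normalizations built into (\ref{eq:Pkldef}); tracking all of these simultaneously — rather than any conceptual subtlety — is where the bulk of the work will lie.
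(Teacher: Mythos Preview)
Your overall strategy is correct and matches the paper's exactly: reduce via Theorem~\ref{thm:adjoints} to computing $(-)^*$ and $\D_1$ on the $\cP_{\pm k,\ell}$, using Proposition~\ref{prop:*-atlas} for the former and the description $\omega_1 \cong i_1^! p_1^*(\omega_{1,\Gr})$ for the latter.

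However, your predictions for the two intermediate lemmas are inverted. In the paper $(-)^*$ acts cleanly as $\cP_{k,\ell}^* \cong \cP_{-k,\ell}$ with \emph{no} $\cP_{0,\det}$ factor: the coweight $-w_0\omega_k^\vee$ is precisely what indexes $\Gr^{-k}$ and hence $\hR_{-k}$, so there is no central-cocharacter discrepancy of the kind you describe. The $\cP_{0,\det}$ factors arise entirely from $\D_1$, coming from the $\det(L_0/tL_0)^k$ term in the dualizing sheaf $\omega_{\Gr^k} \cong \cO_{\Gr^k} \otimes \det(L_0/L)^{-n} \otimes \det(L_0/tL_0)^k[k(n-k)]$. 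You also have the relative difficulty of the two $\D_1$ computations reversed: for $\cP_{-k,\ell}$ the relevant embedding $i_1 \circ \cl: \hR_{-k}^\cl \to \Gr^{-k} \times N_\cO$ is an \emph{isomorphism} (since $\hR_{-k}^\cl = \{s \in L_0 \overset{k}\subset L\}$ already has $s \in N_\cO$), so no Koszul resolution is needed and the normal bundle is trivial; it is the $\cP_{k,\ell}$ case where the codimension-$k$ embedding with normal bundle $L_0/L$ enters. These mispredictions would self-correct upon actually executing the computations you outline, but note that your appeal to Corollary~\ref{cor:Ds} becomes unnecessary once $(-)^*$ is seen to produce a single simple rather than a convolution.
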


The most interesting feature of Proposition \ref{prop:dualsforGLn} is that, in the $n = 2$ case, it confirms the expectation that duality functors categorify the twist automorphism of the associated cluster algebra (see \cite[Sec. 6.3]{CW1} for a detailed discussion of this). This can be seen from (\ref{fig:GL2C2quiver}) as follows. The clusters in the middle rows have acyclic quivers, hence the action of the twist is given by a sequence of source mutations. Thus, for example, the cluster containing $\{\cP_{-1,-1}, \cP_{1,1}, \cP_{-1,0}\}$ is taken after three source mutations to the cluster containing $\{\cP_{1,0}, \cP_{-1,1}, , \cP_{1,-1}\}$. But by Proposition \ref{prop:dualsforGLn} the latter are exactly the left duals of the former, up to powers of the frozen variable $\cP_{0,\det}$. Moreover, since sheaves supported on the identity are manifestly closed under duals, it follows that for arbitrary $n$ the duality functors define a discrete integrable system on the K-theoretic Coulomb branch. 

\begin{Lemma}\label{lem:adjoint1}
In $\KPGLn$ we have $\cP_{k,\ell}^* \cong \cP_{-k,\ell}$ for all $k \in [1,n]$ and $\ell \in \Z$.
\end{Lemma}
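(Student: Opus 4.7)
The plan is to apply Proposition \ref{prop:*-atlas} with $\l^\vee = \omega_k^\vee$, reducing the claim to a direct lattice-model calculation. For $GL_n$ the dual cocharacter $\omega_k^{\vee*} = -w_0 \omega_k^\vee$ is again minuscule, and at the level of reduced loci the Schubert variety $\Gr_{\le \omega_k^{\vee*}}$ coincides with the antifundamental $\Gr^{-k}$, similarly for $\hR$. This gives
$$ \cP_{k,\ell}^* \cong s^*(\O_{\Gr^{-k}} \tbox \cP_{k,\ell}), $$
where $s: \hR_{-k} \to \Gr^{-k} \ttimes \hR_k$ is the closed immersion determined by the Cartesian square of Proposition \ref{prop:*-atlas}.

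I would then identify $s$ in the lattice model. The map $s_\Gr: \Gr^{-k} \to \Gr^{-k} \ttimes \Gr_k$ sends $L$ to the pair $(L, L_0)$, since the very conditions defining $\Gr^{-k}$, namely $L_0 \overset{k}{\subset} L$ and $tL \subset L_0$, say exactly that $L_0$ is a point of $\Gr_k$ relative to the twisted ambient lattice $L$; as a sanity check one verifies $\pi_2 \circ s_\Gr = \iota$ by writing $L = gL_0$ and recognizing $(L, L_0) \mapsto g^{-1}L_0$ under untwisting. By compatibility with $p: \hR_{-k} \to \Gr^{-k}$, the map $s$ lifts this to $(L, s') \mapsto (L, L_0, s')$, which lies in $\Gr^{-k} \ttimes \hR_k^\cl$ because $s' \in L_0 \cap L = L_0$. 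Its image is the closed subvariety of $\Gr^{-k} \ttimes \hR_k^\cl$ cut out by $L_2 = L_0$, naturally identified with $\hR_{-k}^\cl$.

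With $s$ explicit, the computation of $s^*(\O_{\Gr^{-k}} \tbox \cP_{k,\ell})$ is bookkeeping: in the twisted product $\Gr^{-k} \ttimes \hR_k$ the line bundle $\det(L_0/L)^\ell$ defining $\cP_{k,\ell}$ appears as $\det(L_1/L_2)^\ell$, and pulling back along $s$ sets $L_1 = L, L_2 = L_0$, yielding the line bundle $\det(L/L_0)^\ell$ on $\hR_{-k}^\cl$. The shifts $\la \tfrac12 \dim \Gr^k \ra [-k\ell]$ match those appearing in the definition of $\cP_{-k,\ell}$ since $\dim \Gr^{-k} = \dim \Gr^k = k(n-k)$. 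Pushing forward to $\hR$ identifies the result with $\cP_{-k,\ell}$, completing the proof.

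The main point requiring care is the twist convention in $\Gr^{-k} \ttimes \hR_k$: the ``ambient lattice'' for the $\hR_k$ factor is the first factor's variable lattice $L_1$, not the fixed $L_0$, so pulling back along $s$ effectively interchanges the roles of $L$ and $L_0$. Once this is tracked correctly, the reduced-locus convention of Section \ref{sec:GLn} ensures there are no derived contributions to worry about, and the identification of line bundles becomes a direct matter of substitution.
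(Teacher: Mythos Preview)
Your proposal is correct and follows essentially the same approach as the paper: both apply Proposition~\ref{prop:*-atlas}, identify $s$ (or rather $s^\cl$) in the lattice model as sending $(s' \in L_0 \overset{k}{\subset} L)$ to $(L, L_0, s')$, and then read off the line bundle $\det(L/L_0)^\ell$ and matching shifts. The paper is slightly more explicit about the base-change step, drawing the Cartesian square with $\hR_{-k}^\cl \to \Gr^{-k} \ttimes \hR_k^\cl$ over $\hR_{-k} \to \Gr^{-k} \ttimes \hR_k$ to justify $s^*(\id \ttimes \cl)_* \cong \cl_*(s^\cl)^*$, whereas you absorb this into the reduced-locus remark at the end; but the content is the same.
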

\begin{proof}
	The argument (and result) closely parallels \cite[Lem. 3.8]{CW1}. By Proposition~\ref{prop:*-atlas} we have $\cP_{k,\ell}^* \cong s^*(\O_{\Gr^{-k}} \tbox \cP_{k,\ell})$, where $s: \hR_{-k} \to \Gr^{-k} \ttimes \hR_k$. We have a Cartesian square 
	\begin{equation*}
		\begin{tikzpicture}[baseline=(current  bounding  box.center),thick,>=\arrtip]
			\newcommand*{\ha}{3}; \newcommand*{\va}{-1.5};
			
			\node (aa) at (0,0) {$\hR_{-k}^{\cl}$};
			\node (ab) at (\ha,0) {$\Gr^{-k} \ttimes \hR_k^{\cl}$};
			\node (ba) at (0,\va) {$\hR_{-k}$};
			\node (bb) at (\ha,\va) {$\Gr^{-k} \ttimes \hR_k$};
			
			\draw[->] (aa) to node[above] {$s^\cl$} (ab);
			\draw[->] (aa) to node[left] {$\cl$} (ba);
			\draw[->] (ab) to node[right] {$\id \ttimes \cl$} (bb);
			\draw[->] (ba) to node[below] {$s$} (bb);
			
		\end{tikzpicture}
	\end{equation*}
	where the map $s^{\cl}$ is given by 
	$$(s \in L_0 \overset{k} \subset L) \mapsto ((L_0 \overset{k} \subset L), (s \in L_0 \overset{k} \subset L)).$$
It follows that
	\begin{align*}
		\cP_{k,\ell}^* 
		& \cong s^* (\id \ttimes \cl)_* (\O_{\Gr^{-k}} \tbox \cP_{k,\ell})\\ &\cong \cl_* (s^{\cl})^* (\O_{\Gr^{-k} \ttimes \hR_k^\cl} \otimes \det(L/L_0)^\ell) \la \tfrac12 \dim \Gr^k \ra [-k \ell] \\
		& \cong \cl_* (\O_{\hR^\cl_{-k}} \otimes \det(L/L_0)^\ell) \la \tfrac12 \dim \Gr^{-k} \ra [-k \ell],
	\end{align*}
	which by definition s $\cP_{-k,\ell}$. 
\end{proof}

\begin{Lemma}\label{lem:adjoint2}
In $\KPGLn$ we have 
	\begin{gather*}
		\D_1(\cP_{k,\ell})  \cong (\cP_{0,\det} \{-k\})^{*k} * \cP_{k,-\ell-n+1} \\
		\D_1(\cP_{-k,\ell})  \cong (\cP_{0,\det} \{k\})^{*(-k) } * \cP_{-k,-\ell+n} 
	\end{gather*}
for all $k \in [1,n]$ and $\ell \in \Z$.
\end{Lemma}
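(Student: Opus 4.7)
The proof uses Grothendieck duality for the closed embeddings $i_{\pm k}: \hR_{\pm k}^{\cl} \hookrightarrow \hR$, together with explicit geometry, to reduce the computation of $\D_1(\cP_{\pm k,\ell}) = \cHom(\cP_{\pm k,\ell}, \omega_1)$ to a calculation with line bundles on $\hR_{\pm k}^{\cl}$. Recall from Section~\ref{sec:duality} that $\omega_1 \cong i_1^! p_1^*(\omega_{1,\Gr})$ with $p_1: \Gr \times N_\O \to \Gr$. Since $\cP_{\pm k,\ell} = i_{\pm k *}(\O_{\hR_{\pm k}^{\cl}} \otimes \det(L_0/L)^{\pm \ell}) \la \tfrac12 \dim \Gr^{\pm k} \ra [-k\ell]$, Grothendieck duality reduces the task to computing $(i_1 \circ i_{\pm k})^! p_1^*(\omega_{1,\Gr})$ as an explicit twisted dualizing complex on $\hR_{\pm k}^{\cl}$.

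First I would factor $i_1 \circ i_{\pm k}$ through a finite-type subspace. For $\cP_{k,\ell}$, since $\hR_k^{\cl} \cong \T_k$, this factors as $\hR_k^\cl \xrightarrow{f_k} \Gr^k \times N_\O \hookrightarrow \Gr \times N_\O$ where $f_k$ is a regular embedding of codimension $k$ cut out by the condition $t \in L$, with normal bundle $p_k^*(L_0/L)$. For $\cP_{-k,\ell}$ the composite $i_1 \circ i_{-k}: \hR_{-k}^\cl \to \Gr^{-k} \times N_\O$ is actually an isomorphism (as $s \in L_0 \subset L$ is equivalent to $s \in L_0$ once $L_0 \subset L$ is imposed), so only base change is needed. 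In both cases, base change along the Cartesian square relating $\iota_{\pm k}$ to $i_{\pm k}^\Gr$ yields $\iota_{\pm k}^! p_1^*(\omega_{1,\Gr}) \cong p_1^*(\omega_{1,\Gr^{\pm k}})$; in the $+k$ case the regular-embedding formula $f_k^!(\cE) \cong f_k^*(\cE) \otimes \det(L_0/L)[-k]$ then handles the remaining contribution.

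Next I would compute $\omega_{1,\Gr^{\pm k}}$ explicitly. Using $T_{\Gr^k} \cong \cHom(L/tL_0, L_0/L)$, we get $\det T_{\Gr^k} \cong \det(L_0/L)^n \otimes \det(L_0/tL_0)^{-k}$; analogously $T_{\Gr^{-k}} \cong \cHom(L/L_0, t^{-1}L_0/L)$ yields a parallel formula, but with $\det(t^{-1}L_0/L_0)$ appearing in place of $\det(L_0/tL_0)$, contributing an additional loop-rotation weight $\{-2n\}$. Combining with the twist by $\det(L_0/L)^{\mp \ell}$ and all cohomological, Koszul-perverse, and loop shifts, we arrive at pushforwards of explicit line bundles on $\hR_{\pm k}^\cl$. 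These can be recognized as the claimed convolutions by using that $\cP_{0,\det}$ is invertible in $\KPGLn$ (being supported on the identity, so that convolution with $\cP_{0,\det}\{\mp k\}$ corresponds geometrically to tensoring by $\det(L_0/tL_0)$ with the specified loop shift), the remaining twist in $\det(L_0/L)$ then assembling into the claimed $\cP_{\pm k,-\ell \mp n + 1}$ (respectively $\cP_{\pm k,-\ell \pm n}$) factor.

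The main obstacle will be bookkeeping: consistently tracking the three kinds of shifts---loop-rotation $\{-\}$, homological $[-]$, and Koszul-perverse $\la - \ra = [1]\{-1\}$---through the sequence of adjunctions and base changes. In particular, the asymmetry between $-\ell - n + 1$ for $\cP_{k,\ell}$ and $-\ell + n$ for $\cP_{-k,\ell}$ ultimately traces back to the $t^{-1}$ factor in $T_{\Gr^{-k}}$ and the fact that only the $+k$ case involves the additional homological shift $[-k]$ from the regular embedding $f_k$; reconciling these requires particular care with loop-rotation conventions for $\det(t^{-1}L_0/L_0)$.
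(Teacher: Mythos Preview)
Your proposal is correct and follows essentially the same approach as the paper: Grothendieck duality for the closed immersion $\hR_{\pm k}^{\cl} \hookrightarrow \Gr^{\pm k} \times N_\O$, the regular-embedding formula with normal bundle $L_0/L$ in the $+k$ case (and an isomorphism, i.e.\ trivial normal bundle, in the $-k$ case), and the explicit dualizing sheaves $\omega_{\Gr^{\pm k}}$ (which the paper cites from \cite[Lem.~3.9]{CW1} rather than rederiving from the tangent bundle). One small slip: the defining condition for $f_k$ is $s \in L$, not ``$t \in L$''.
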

\begin{proof}
	Given the closed immersions $\hR_k^\cl \xrightarrow{\cl} \hR_k \xrightarrow{i_1} \Gr^k \times N_\O$ we have
	\begin{align*}
		\D_1(\cP_{k,\ell}) 
		& \cong \cHom(\cl_*(\O_{\hR_k^\cl} \otimes \det(L_0/L)^\ell) \la \tfrac12 k(n-k) \ra [-k\ell], i_1^! (\omega_{\Gr^k})) \\
		& \cong \cl_* \cHom(\O_{\hR_k^\cl} \otimes \det(L_0/L)^\ell, \cl^! i_1^! (\omega_{\Gr^k})) \la - \tfrac12 k(n-k) \ra [k\ell].
	\end{align*}
	The composition $i_1 \circ \cl$ is the inclusion 
	$$(s \in L \overset{k} \subset L_0) \mapsto (L \overset{k} \subset L_0, s \in L_0)$$
	which is of codimension $k$ with normal bundle $L_0/L$. Thus 
	$$\cl^! i_1^!(\omega_{\Gr^k}) \cong \cl^* i_1^* (\omega_{\Gr^k}) \otimes \det(L_0/L) [-k].$$
	We also have
	$$\omega_{\Gr^k} \cong \O_{\Gr^k} \otimes \det(L_0/L)^{-n} \otimes \det(L_0/tL_0)^k [k(n-k)],$$
	see \cite[Lem. 3.9]{CW1}. Putting these together we find
	\begin{align*}
		\D_1(\cP_{k,\ell}) 
		&\cong \cl_* (\O_{\hR_k^\cl} \otimes \det(L_0/L)^{-\ell-n+1} \otimes \det(L_0/tL_0)^k) \la \tfrac12 k(n-k) \ra \{k(n-k) \} [k\ell-k] \\
		&\cong \cP_{0,\det}^{*k} * \cP_{k,-\ell-n+1} \{-k^2\}.
	\end{align*}
	Computing $\D_1(\cP_{-k,\ell})$ is similar (in fact easier, since the normal bundle is trivial in this case) using the fact that 
	\begin{equation*}
\omega_{\Gr^{-k}} \cong \O_{\Gr^{-k}} \otimes \det(L/L_0)^n \otimes \det(t^{-1}L_0/L_0)^{-k} [k(n-k)].\qedhere
	\end{equation*}
\end{proof}

\bibliographystyle{amsalpha}
\bibliography{bibKoszul}

\end{document}